\theoremstyle{plain}
\newtheorem{theorem}{Theorem}[section]
\newtheorem{proposition}[theorem]{Proposition}
\newtheorem{lemma}[theorem]{Lemma}
\newtheorem{corollary}[theorem]{Corollary}
\theoremstyle{definition}
\newtheorem{definition}[theorem]{Definition}
\theoremstyle{remark}
\newtheorem{remark}[theorem]{Remark}
\newtheorem{example}[theorem]{Example}
\newcommand{\Kern}{\mathrm{Ker}}
\newcommand{\Beeld}{\mathrm{Im}}
\newcommand{\Mor}{\mathrm{Mor}}
\renewcommand{\lim}{\mathrm{lim}}
\newcommand{\Ext}{\mathrm{Ext}}
\newcommand{\Hom}{\mathrm{Hom}}
\newcommand{\RHom}{\mathrm{RHom}}
\newcommand{\op}{^{\mathrm{op}}}
\newcommand{\Ob}{\mathrm{Ob}}
\newcommand{\N}{\mathbb{N}}
\newcommand{\AAA}{\mathfrak{a}}
\newcommand{\BBB}{\mathfrak{b}}
\newcommand{\CCC}{\mathfrak{c}}
\newcommand{\EEE}{\mathfrak{e}}
\newcommand{\XXX}{\mathfrak{x}}
\newcommand{\YYY}{\mathfrak{y}}
\newcommand{\ZZZ}{\mathfrak{z}}
\newcommand{\CC}{\mathbf{C}}
\newcommand{\Set}{\ensuremath{\mathsf{Set}} }
\newcommand{\Alg}{\ensuremath{\mathsf{Alg}} }
\newcommand{\Mod}{\ensuremath{\mathsf{Mod}} }
\newcommand{\Bimod}{\ensuremath{\mathsf{Bimod}} }
\newcommand{\Map}{\ensuremath{\mathsf{Map}} }
\newcommand{\Mas}{\ensuremath{\mathsf{Mas}} }
\newcommand{\Cat}{\ensuremath{\mathsf{Cat}} }
\newcommand{\Inj}{\ensuremath{\mathsf{Inj}}}
\newcommand{\Des}{\ensuremath{\mathrm{Des}}}
\newcommand{\lra}{\longrightarrow}
\newcommand{\ra}{\rightarrow}
\newcommand{\aaa}{\ensuremath{\mathcal{A}}}
\newcommand{\bbb}{\ensuremath{\mathcal{B}}}
\newcommand{\ccc}{\ensuremath{\mathcal{C}}}
\newcommand{\ddd}{\ensuremath{\mathcal{D}}}
\newcommand{\eee}{\ensuremath{\mathcal{E}}}
\newcommand{\nnn}{\ensuremath{\mathcal{N}}}
\newcommand{\ooo}{\ensuremath{\mathcal{O}}}
\newcommand{\rrr}{\ensuremath{\mathcal{R}}}
\newcommand{\sss}{\ensuremath{\mathcal{S}}}
\newcommand{\ttt}{\ensuremath{\mathcal{T}}}
\newcommand{\uuu}{\ensuremath{\mathcal{U}}}
\newcommand{\vvv}{\ensuremath{\mathcal{V}}}
\newcommand{\www}{\ensuremath{\mathcal{W}}}
\newcommand{\xxx}{\ensuremath{\mathcal{X}}}
\newcommand{\zzz}{\ensuremath{\mathcal{Z}}}
\title{Hochschild cohomology with support}
\author{Wendy Lowen} 
\address[Wendy Lowen]{Universiteit Antwerpen, Departement Wiskunde-Informatica, Middelheimcampus,
Middelheimlaan 1,
2020 Antwerp, Belgium}
\email{wendy.lowen@ua.ac.be}
\thanks{The author acknowledges the support of the European Union for the ERC grant No 257004-HHNcdMir and the support of the Research Foundation Flanders (FWO) under Grant No G.0112.13N}
\begin{document}
\maketitle

\begin{abstract}
In this paper we investigate the functoriality properties of map-graded Hochschild complexes. We show that the category $\Map$ of map-graded categories is naturally a stack over the category of small categories endowed with a certain Grothendieck topology of $3$-covers. For a related topology of $\infty$-covers on the cartesian morphisms in $\Map$, we prove that taking map-graded Hochschild complexes defines a sheaf. From the functoriality related to ``injections'' between map-graded categories, we obtain Hochschild complexes ``with support''. We revisit Keller's arrow category argument from this perspective, and introduce and investigate a general Grothendieck construction which encompasses both the map-graded categories associated to presheaves of algebras and certain generalized arrow categories, which together constitute a pair of complementary tools for deconstructing Hochschild complexes.
\end{abstract}

%\section{The map-graded Hochschild complex}

%\subsection{Simplicial structure}

\section{Introduction}

Hochschild cohomology originated as a cohomology theory for algebras $A$. While the degree $0$ cohomology is the center of $A$ and the degree $1$ cohomology corresponds to the derivations of $A$, the degree $2$ cohomology group parametrizes first order deformations of $A$. The relation between Hochschild cohomology and deformation theory is in fact more profound, and can be understood in terms of the structured Hochschild complex $\CC(A)$. In the mean time, Hochschild cohomology and Hochschild complexes have been defined for a wide range of objects of algebro-geometric nature, from schemes \cite{swan} and presheaves of algebras \cite{gerstenhaberschack2} to differential graded, exact \cite{kellerdih} and abelian categories \cite{lowenvandenberghhoch}, and various links with deformation theory have been established in these contexts \cite{gerstenhaberschack1} \cite{lowenvandenberghab}, \cite{lowenmap}.

An important shortcoming of Hochschild cohomology when compared, for instance, to Hochschild homology, is its lack of functoriality. Even for algebras, a morphism $f: A \lra B$ does not naturally give rise to a map between the Hochschild cohomologies $HH^{\ast}(A)$ and $HH^{\ast}(B)$ in either direction. When we turn from algebras to linear categories, i.e. algebras with several objects in the sense of \cite{mitchell}, the situation becomes somewhat better. More precisely, the inclusion of a full subcategory $\BBB \subseteq \AAA$ naturally gives rise to a ``restriction'' map $\CC(\AAA) \lra \CC(\BBB)$ between the Hochschild complexes. Furthermore, this observation of \emph{limited functoriality} provides a way to relate the Hochschild complexes of categories related by a bimodule, through an intermediate \emph{arrow category}. In \cite{kellerdih}, the \emph{arrow category argument} for structured ($B_{\infty}$-algebra) Hochschild complexes is developed by Keller in the context of dg categories. The main applications of this argument can be divided into two types:
\begin{enumerate}
\item Relating the Hochschild complex of a more involved object (the arrow category) to the Hochschild complexes of it's easier building blocks (two smaller categories and a bimodule relating them).
\item Proving that two objects have isomorphic Hochschild complexes in the homotopy category of $B_{\infty}$-algebras, by relating them via a suitable bimodule.
\end{enumerate}
Applications of type (1) can be seen as generalizations of results on the Hochschild cohomology of triangular matrix algebras, a topic which, since the work of Happel \cite{happel}, has received a lot of attention \cite{platzeck, guccione, green, snashall, cibils, guin, saorin}. An application of type (2) is the relation between the Hochschild complexes of Koszul dual algebras obtained in \cite{kellerdih}. The argument is also extensively used in \cite{lowenvandenberghhoch} to compare various candiate Hochschild complexes of abelian categories and ringed spaces. The paper \cite{lowenvandenberghhoch} also contains some results more in the spirit of (1), like the existence of Mayer-Vietoris sequences for Hochschild cohomology of ringed spaces.

The main aim of the current work is to provide a comprehensive treatment of the natural tools for ``breaking down'' complicated Hochschild complexes into easier pieces. 
In subsequent work, we will apply our results both to improve our understanding of deformation theory, for instance of schemes, and to the computation of Hochschild cohomology groups of various origins, for instance for singular schemes in the absence of the classical HKR decomposition.

The framework we choose for our exposition is that of map-graded categories \cite{lowenmap}. Although a more powerful theory can be obtained in the combined context of ``map-graded differential graded categories'', for simplicity we present our work in the context of linear map-graded categories. A map-graded category $\AAA$ can be viewed as a group-graded algebra with several objects. It has an underlying grading category $\uuu$, object sets $\AAA_U$ for $U \in \uuu$, and morphism modules $\AAA_u(A,A')$ for $u: U \lra U'$ in $\uuu$, $A \in \AAA_U$ and $A' \in \AAA_{U'}$. Thus, the grading category $\uuu$ can be seen as prescribing a certain shape for $\AAA$. The following are examples of naturally map-graded categories:
\begin{enumerate}
\item[(i)] For a presheaf of $k$-algebras $\aaa: \uuu \lra \Alg(k)$, there is an associated $\uuu$-graded category $\AAA$ obtained as a kind of Grothendieck construction from $\aaa$. The structured Hochschild complex $\CC(\AAA)$ controls the deformation theory of $\aaa$ as a \emph{twisted} presheaf of algebras \cite{lowenmap} and calculates the Hochschild cohomology of $\aaa$ from \cite{gerstenhaberschack} as shown in \cite{lowenvandenberghCCT}.
\item[(ii)] For two linear categories $\AAA$ and $\BBB$ and an $\AAA$-$\BBB$-bimodule $M$, the arrow category $(\BBB \rightarrow_M \AAA)$ is naturally graded over the path category of $\bullet \rightarrow \bullet$.\end{enumerate}
Our point of view is that limited functoriality is determined by grading categories in a fundamental way. In \S \ref{parmapgraded}, we first endow the category $\Cat$ of small categories with the Grothendieck pretopology of $n$-covers (for $n \in \N$) for which a collection of functors $(\varphi_i: \vvv_i \lra \uuu)_{i \in I}$ is an $n$-cover provided that it induces a jointly surjective collection of maps $(\nnn(\varphi_i): \nnn_n(\vvv_i) \lra \nnn(\uuu))_{i \in I}$ between $n$-simplices of the simplicial nerves. We show that the category $\Map$ of map-graded categories is naturally fibered over $\Cat$ (Proposition \ref{propcart}) and constitutes a stack for $n \geq 3$ (Corollary \ref{corstack}). Let $\Map_c \subseteq \Map$ denote the full subcategory of cartesian morphisms  with respect to the fibered category $\Map \lra \Cat$.  In \S \ref{parfunct}, we show that taking Hochschild complexes defines a functor (Proposition \ref{propfunct})
$$\CC: \Map_c \lra B_{\infty}: (\uuu, \AAA) \longmapsto \CC_{\uuu}(\AAA).$$
Now endow $\Map_c$ with the pretopology of $\infty$-covers for which the collection $((\varphi_i, F_i): (\vvv_i, \BBB_i) \lra (\uuu, \AAA))_{i \in I}$ is an $\infty$-cover provided that the collection $(\varphi_i: \vvv_i \lra \uuu)_{i \in I}$ is an $\infty$-cover in $\Cat$, i.e an $n$-cover for every $n \geq 0$. Our theorem \ref{mainsheaf} implies:

\begin{theorem}
The functor $\CC: \Map_c \lra B_{\infty}$ is a sheaf for the pretopology of $\infty$-covers on $\Map$.
\end{theorem}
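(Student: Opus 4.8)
The plan is to reduce the sheaf condition to a degreewise computation of an equalizer of $k$-modules, then to make the combinatorics of map-graded Hochschild complexes completely explicit, and finally to invoke an elementary fact about equalizers of powers. Since $\CC$ takes values in $B_{\infty}$-algebras (Proposition \ref{propfunct}), and limits there are created by the forgetful functor to complexes of $k$-modules, which are in turn computed degreewise, it suffices to show: for every $\infty$-cover $((\varphi_i,F_i)\colon(\vvv_i,\BBB_i)\to(\uuu,\AAA))_{i\in I}$ and every $n\geq 0$, the canonical diagram
\[
\CC^n_{\uuu}(\AAA)\longrightarrow \textstyle\prod_{i}\CC^n_{\vvv_i}(\BBB_i)\rightrightarrows\prod_{i,j}\CC^n_{\vvv_{ij}}(\BBB_{ij})
\]
(with $\CC^{n}$ the degree $n$ component) is an equalizer of $k$-modules. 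Here $\vvv_{ij}=\vvv_i\times_{\uuu}\vvv_j$ is the fibre product in $\Cat$ and $\BBB_{ij}$ its cartesian lift to $\Map$; such a lift exists and is independent of the chosen projection, because $\Cat$ has fibre products, cartesian morphisms are stable under base change, and $\BBB_i\cong\varphi_i^{\ast}\AAA$, $\BBB_j\cong\varphi_j^{\ast}\AAA$ by the very definition of cartesian morphism, whence $\pr_1^{\ast}\BBB_i\cong(\varphi_i\pr_1)^{\ast}\AAA=(\varphi_j\pr_2)^{\ast}\AAA\cong\pr_2^{\ast}\BBB_j$ (Proposition \ref{propcart}). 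Moreover $\BBB_{ij}$ is again cartesian over $(\uuu,\AAA)$, composition of cartesian morphisms being cartesian.

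The second step is to spell out both sides. Recall that $\CC^n_{\uuu}(\AAA)=\prod_{\sigma}T(\sigma)$, the product ranging over the \emph{decorated $n$-simplices} $\sigma$ of $(\uuu,\AAA)$ --- that is, chains $U_0\xrightarrow{u_1}\cdots\xrightarrow{u_n}U_n$ in $\uuu$ together with objects $A_{l}\in\AAA_{U_{l}}$ --- with $T(\sigma)=\Hom_k\big(\AAA_{u_n}(A_{n-1},A_n)\otimes\cdots\otimes\AAA_{u_1}(A_0,A_1),\ \AAA_{u_n\cdots u_1}(A_0,A_n)\big)$. Because $F_i$ is cartesian, the map it induces on decorated $n$-simplices has fibre over $\sigma$ in natural bijection with $\nnn_n(\varphi_i)^{-1}(|\sigma|)$, the set of $n$-simplices of $\vvv_i$ lying over the underlying simplex $|\sigma|\in\nnn_n(\uuu)$ --- the objects of a lift being forced, since $F_i$ is a bijection on the objects of each fibre --- and $T(\tilde\sigma)=T(\sigma)$ canonically for each lift $\tilde\sigma$. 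Reindexing the product therefore gives $\prod_i\CC^n_{\vvv_i}(\BBB_i)\cong\prod_{\sigma}T(\sigma)^{S_{\sigma}}$ with $S_{\sigma}:=\coprod_{i\in I}\nnn_n(\varphi_i)^{-1}(|\sigma|)$. Since the nerve functor preserves fibre products one has $\nnn_n(\vvv_{ij})=\nnn_n(\vvv_i)\times_{\nnn_n(\uuu)}\nnn_n(\vvv_j)$, so the same analysis yields $\prod_{i,j}\CC^n_{\vvv_{ij}}(\BBB_{ij})\cong\prod_{\sigma}T(\sigma)^{S_{\sigma}\times S_{\sigma}}$. I would then check that under these identifications the two structure maps of the diagram are, factorwise in $\sigma$, the two maps $T(\sigma)^{S_{\sigma}}\rightrightarrows T(\sigma)^{S_{\sigma}\times S_{\sigma}}$ induced by the projections $S_{\sigma}\times S_{\sigma}\to S_{\sigma}$, and that the map out of $\CC^n_{\uuu}(\AAA)$ is on each factor the diagonal $T(\sigma)\to T(\sigma)^{S_{\sigma}}$; this is precisely the statement that the restriction maps of Proposition \ref{propfunct} act on $\CC^n$ by pulling the index back along the underlying functor on decorated simplices.

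The argument then closes with the elementary remark that, for a set $S$ and a $k$-module $M$, the equalizer of the pair $M^{S}\rightrightarrows M^{S\times S}$ induced by the two projections is the diagonal copy of $M$ precisely when $S\neq\emptyset$. And $S_{\sigma}\neq\emptyset$ for \emph{every} decorated $n$-simplex $\sigma$, since the family $(\varphi_i)_i$ is in particular an $n$-cover: the simplex $|\sigma|\in\nnn_n(\uuu)$ is hit by some $\nnn_n(\varphi_i)$. Taking the product over all $\sigma$ shows that the displayed diagram is an equalizer of $k$-modules, for every $n$. Hence the underlying complexes of $\CC_{\uuu}(\AAA)$ and of the \v{C}ech equalizer agree, and by the functoriality of Proposition \ref{propfunct} the comparison maps are morphisms of $B_{\infty}$-algebras, so $\CC_{\uuu}(\AAA)$ is in fact the equalizer in $B_{\infty}$. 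As the $\infty$-cover was arbitrary, this is the asserted sheaf condition.

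The main obstacle is the second step: converting the definition of the fibered category $\Map\to\Cat$ and of cartesian morphism into the bijections above, and --- more delicately --- checking that they are natural enough to intertwine not merely the graded $k$-modules but the Hochschild differentials and the entire $B_{\infty}$-structure, i.e. that the restriction functor of Proposition \ref{propfunct} genuinely acts on $\CC^n$ by reindexing along maps of decorated simplices. Once that bookkeeping is in place, the $\infty$-cover hypothesis enters only through the soft fact about equalizers of powers recalled above; this is also what singles out $\infty$-covers, rather than $n$-covers for any fixed finite $n$, as the correct notion here, since $\CC^{n}$ records chains of unbounded length $n$. (The quantitative, degreewise form of this reduction is what Theorem \ref{mainsheaf} records.)
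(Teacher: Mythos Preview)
Your argument is correct and is essentially the paper's own proof, reorganized in more categorical language: where the paper glues a compatible family by choosing a lift $(v,B)$ of each $(u,A)$ and then checks well-definedness via the pullback compatibility, you package the same computation as the equalizer of $T(\sigma)^{S_\sigma}\rightrightarrows T(\sigma)^{S_\sigma\times S_\sigma}$ being the diagonal $T(\sigma)$ when $S_\sigma\neq\varnothing$. The reindexing step you flag as ``the main obstacle'' is exactly what the paper's explicit formula $\phi_{(u,A)}=F_i\circ\phi_{i,(v,B)}\circ(F_i^{-1})^{\otimes n}$ encodes, and your use of Lemma \ref{lemnervepb} to identify lifts to $\BBB_{ij}$ with $S_\sigma\times S_\sigma$ is the paper's well-definedness check unwound; note also that your simplification $S_\sigma\cong\coprod_i\nnn_n(\varphi_i)^{-1}(|\sigma|)$ genuinely uses cartesian rather than subcartesian, so your proof as written covers the introduction's statement for $\Map_c$ but not the slightly stronger Theorem \ref{mainsheaf} for $\Map_{sc}$ (for which one would keep $S_\sigma=\coprod_i\nnn_n(F_i)^{-1}(\sigma)$ throughout).
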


As an application of the theorem, in \S \ref{parMV} we obtain a Mayer-Vietoris sequence of Hochschild complexes for a $\uuu$-graded category $\AAA$ and two subcategories $\varphi_i: \vvv_i \subseteq \uuu$ for $i \in \{1,2\}$ that constitute an $\infty$-cover of $\uuu$:
$$0 \lra \CC_{\uuu}(\AAA) \lra \CC_{\vvv_1}(\AAA^{\varphi_1}) \oplus \CC_{\vvv_2}(\AAA^{\varphi_2}) \lra \CC_{\vvv_1 \cap \vvv_2}(\AAA^{\varphi}) \lra 0.$$
Here $(\vvv_i, \AAA^{\varphi_i}) \lra (\uuu, \AAA)$ and $(\vvv_1 \cap \vvv_2, \AAA^{\varphi}) \lra (\uuu, \AAA)$ are chosen to be cartesian.

In \S \ref{parhochsup}, we start from a single subcategory $\varphi: \vvv \subseteq \uuu$, and a cartesian functor $(\vvv, \BBB) \lra (\uuu, \AAA)$.  This gives rise to a surjective morphism between Hochschild complexes $\CC_{\uuu}(\AAA) \lra \CC_{\vvv}(\BBB)$ of which we investigate the kernel $\CC_{\uuu \setminus \vvv}(\AAA)$. The results we obtain depend upon the assumption that the $\uuu$-morphisms outside of $\vvv$ constitute an ideal $\zzz$ in $\uuu$. In this case we show in Proposition \ref{propidsubcat} that 
$$\CC_{\uuu \setminus \vvv}(\AAA) \cong \CC_{\uuu}(\AAA, (1_{\AAA})_{\zzz})$$
where $(1_{\AAA})_{\zzz}$ is the natural restriction of $1_{\AAA}$ to an $\AAA$-bimodule supported on $\zzz$ (i.e. with zero values outside of $\zzz$).
An example where our setup applies is the situation where $\uuu$ is the category associated to a collection of open subsets of a topological space $X$ ordered by inclusion, $\vvv$ is the full subcategory of subsets $U \subseteq V$ for a fixed subset $V$, and $\zzz$ contains the inclusions $U' \subseteq U$ with $U \nsubseteq V$.  

In \S \ref{pararrow}, we revisit the arrow category construction from \cite{kellerdih} in the map-graded context. For a $(\uuu, \AAA)$-$(\vvv, \BBB)$-bimodule $(S, M)$, we take the natural inclusion $\vvv \coprod \uuu \lra (\vvv \rightarrow_S \uuu)$ and corresponding cartesian functor 
$$(\vvv \coprod \uuu, \BBB \coprod \AAA) \lra (\vvv \rightarrow_S \uuu, \BBB \rightarrow_M \AAA)$$
as starting point for obtaining map-graded analogues of some of the main results from \cite{kellerdih}. Sections \S \ref{parconn} and \S \ref{parderived} are entirely modelled upon the treatment in \cite{kellerdih}, and mainly formulate results from \cite{kellerdih} in the map-graded context, making use of the natural Hom and tensor functors from \S \ref{parbimod}. Further, in \S \ref{pararrowthin}, we give an intrinsic characterization of arrow categories based upon the \emph{thin ideals} introduced in \S \ref{parthin}. 

In \S \ref{pargroth}, we present a unified framework for constructing map-graded categories and deconstructing their Hochschild complexes. Our main observation is that both examples (i) and (ii) that we gave of map-graded categories can be viewed as special cases of a generalized Grothendieck construction for map-graded categories. The classical Grothendieck construction from \cite{SGA1} takes a pseudofunctor $\uuu \lra \Cat$ as input and turns it into a category fibered over $\uuu$. The construction from \cite{lowenmap} of which we gave an example in (i) is clearly a $k$-linearized version of this construction, using the category $\Cat(k)$ of $k$-linear categories instead of $\Cat$. If we relax $\Cat(k)$ to the bicategory $\underline{\Cat}(k)$ of $k$-linear categories and bimodules, we can in fact describe \emph{any} $\uuu$-graded category as a kind of Grothendieck construction of a naturally associated pseudofunctor 
$$\AAA: \uuu \lra \underline{\Cat}(k): U \longmapsto \AAA_U.$$
In the paper, we go yet another step further and start from a pseudofunctor
$$(\uuu, \AAA): \ccc \lra \underline{\Map}: C \longmapsto (\uuu_C, \AAA_C)$$
where $\ccc$ is a small category and $\underline{\Map}$ is the bicategory of map-graded categories and bimodules described in \S \ref{parbimod}. Allowing arbitrary bimodules rather than functors between map-graded categories allows us to capture the arrow category with respect to a bimodule from (ii). In general, we can now deconstruct the Hochschild complex of the Grothendieck construction $(\tilde{\uuu}, \tilde{\AAA})$ of $(\uuu, \AAA)$ based upon the internal structure of $\ccc$. Here, the strategy is to cover $(\tilde{\uuu}, \tilde{\AAA})$ by other Grothendieck constructions, based upon base change for pseudofunctors from \S \ref{parbase}. For instance, in Proposition \ref{propcstar}, we prove the following:

\begin{proposition}\label{propintro}
Suppose $\ccc$ has finite products. Let $\ccc^{\ast}$ be the category $\ccc$ with terminal object $\ast$ adjoined. Put $(\tilde{\uuu}|_{\ast}, \tilde{\AAA}|_{\ast}) = (\tilde{\uuu}, \tilde{\AAA})$ and let $(\tilde{\uuu}|_{C}, \tilde{\AAA}|_{C})$ be the Grothendieck construction of the restriction of $(\uuu, \AAA)$ to $\ccc/C$.
Let $(C_i)_{i \in I}$ be a collection of objects in $\ccc$ such that for every $C \in \ccc$ there exists a map $C \lra C_i$ for some $i$. There is a natural functor
$$\CC: \ccc^{\ast} \lra B_{\infty}: C \longmapsto \CC_{\tilde{\uuu}|_C}(\tilde{\AAA}|_C)$$
which satisfies the sheaf property with respect to the collection of maps $(C_i \lra \ast)_{i \in I}$ in $\ccc^{\ast}$.
\end{proposition}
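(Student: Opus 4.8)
The plan is to obtain $\CC$ as a composite $\ccc^{\ast} \lra \Map_c \lra B_{\infty}$ in which the second functor is that of Proposition \ref{propfunct} and the first sends $C$ to $(\tilde{\uuu}|_C, \tilde{\AAA}|_C)$, and then to deduce the sheaf property for the family $(C_i \lra \ast)_{i \in I}$ from the sheaf property of $\CC$ on $\Map_c$ (Theorem \ref{mainsheaf}).

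\emph{Construction of the functor.} For a morphism $f \colon C \lra C'$ of $\ccc$, postcomposition with $f$ is a functor $\ccc/C \lra \ccc/C'$, and for the unique morphism $C \lra \ast$ of $\ccc^{\ast}$ the domain functor is a functor $\ccc/C \lra \ccc$; these assignments are strictly functorial on $\ccc^{\ast}$ by associativity of composition in $\ccc$ (with $\ccc/\ast = \ccc$ matching the convention $(\tilde{\uuu}|_{\ast}, \tilde{\AAA}|_{\ast}) = (\tilde{\uuu}, \tilde{\AAA})$). Since the domain functor $\ccc/C \lra \ccc$ factors as $\ccc/C \lra \ccc/C' \lra \ccc$, the restriction of $(\uuu, \AAA)$ to $\ccc/C$ is the restriction along $\ccc/C \lra \ccc/C'$ of its restriction to $\ccc/C'$; hence the base change formalism of \S \ref{parbase} identifies $(\tilde{\uuu}|_C, \tilde{\AAA}|_C)$ with the base change of $(\tilde{\uuu}|_{C'}, \tilde{\AAA}|_{C'})$ along the induced functor $\tilde{\uuu}|_C \lra \tilde{\uuu}|_{C'}$. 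In particular this functor underlies a \emph{cartesian} morphism in $\Map$, and these compose functorially, so we obtain a functor $\ccc^{\ast} \lra \Map_c$; composing with $\CC \colon \Map_c \lra B_{\infty}$ gives $\CC \colon \ccc^{\ast} \lra B_{\infty}$, $C \longmapsto \CC_{\tilde{\uuu}|_C}(\tilde{\AAA}|_C)$.

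\emph{Reduction to Theorem \ref{mainsheaf}.} Since $\ast$ is terminal and $\ccc$ has finite products, $C_i \times_{\ast} C_j$ is the product $C_i \times C_j$ of $\ccc$, and the universal property of the product gives an isomorphism $\ccc/(C_i \times C_j) \cong (\ccc/C_i) \times_{\ccc} (\ccc/C_j)$. The base change formalism of \S \ref{parbase} is compatible with such pullbacks, so it yields a natural identification of $(\tilde{\uuu}|_{C_i \times C_j}, \tilde{\AAA}|_{C_i \times C_j})$ with the fibre product of $(\tilde{\uuu}|_{C_i}, \tilde{\AAA}|_{C_i}) \lra (\tilde{\uuu}, \tilde{\AAA}) \longleftarrow (\tilde{\uuu}|_{C_j}, \tilde{\AAA}|_{C_j})$ in $\Map$, compatibly with the restriction maps. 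Therefore the sheaf property of $\CC \colon \ccc^{\ast} \lra B_{\infty}$ for $(C_i \lra \ast)_{i \in I}$ coincides with the sheaf property of $\CC \colon \Map_c \lra B_{\infty}$ for the family of cartesian morphisms $\bigl((\tilde{\uuu}|_{C_i}, \tilde{\AAA}|_{C_i}) \lra (\tilde{\uuu}, \tilde{\AAA})\bigr)_{i \in I}$; by Theorem \ref{mainsheaf} this holds as soon as the underlying functors $\tilde{\uuu}|_{C_i} \lra \tilde{\uuu}$ form an $\infty$-cover in $\Cat$. An element of $\nnn_n(\tilde{\uuu})$ is a chain of $n$ composable morphisms in $\tilde{\uuu}$, which under the projection $\tilde{\uuu} \lra \ccc$ lies over a chain $C_0 \lra C_1 \lra \cdots \lra C_n$ in $\ccc$. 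By hypothesis there is a morphism $h \colon C_n \lra C_i$ for some $i \in I$; the composites $C_k \lra \cdots \lra C_n \xrightarrow{h} C_i$ equip each $C_k$ with a structure map to $C_i$ compatible with the chain, which exhibits a lift of the given simplex along $\tilde{\uuu}|_{C_i} \lra \tilde{\uuu}$ (the map induced by the domain functor $\ccc/C_i \lra \ccc$). Hence $(\tilde{\uuu}|_{C_i} \lra \tilde{\uuu})_{i \in I}$ is an $n$-cover for every $n$, i.e.\ an $\infty$-cover, and the proof is complete.

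\emph{Main obstacle.} The essential content is entirely contained in \S \ref{parbase}: one needs that the generalized Grothendieck construction valued in $\underline{\Map}$ (so allowing bimodules, not just functors) sends a restricted pseudofunctor to a cartesian morphism and commutes with the pullbacks of indexing categories used above, and that it does so at the level of the full map-graded, i.e.\ bimodule, structure rather than merely of the underlying grading categories. Once that is in place, the product identification, the reduction to Theorem \ref{mainsheaf}, and the short simplicial verification of the $\infty$-cover condition are all formal; in particular note that it is the finite-products hypothesis, together with the passage to $\ccc^{\ast}$, that makes the overlaps $C_i \times_{\ast} C_j$ available and expressible again as Grothendieck constructions $(\tilde{\uuu}|_{C_i \times C_j}, \tilde{\AAA}|_{C_i \times C_j})$.
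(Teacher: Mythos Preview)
Your proof is correct and follows essentially the same route as the paper's: you factor $\CC$ through $\Map_c$ via the base change formalism of \S\ref{parbase}, identify the pullbacks $C_i \times_\ast C_j$ with products in $\ccc$ and the corresponding Grothendieck constructions with pullbacks in $\Map$, and then invoke Theorem~\ref{mainsheaf} after checking the $\infty$-cover condition. The only cosmetic difference is that the paper packages your direct simplex-lifting argument as the combination of Example~\ref{excover}(2) (that $(\ccc/C_i \to \ccc)_i$ is an $\infty$-cover) and Proposition~\ref{proppseudocover} (that Grothendieck constructions preserve $n$-covers), whereas you carry out the lift by hand; the content is identical.
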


Let us now restrict our attention to the case where $\ccc$ is the category associated to a poset. Then it is not hard to see that $\ccc$ can be covered by path categories of:
$$\bullet \rightarrow \bullet \rightarrow \dots \rightarrow \dots \rightarrow \bullet \rightarrow \bullet$$
In \S \ref{pargenarrow}, we consider the Grothendieck constructions of such categories as ``generalized arrow categories'', and deconstruct them using iterated arrow category constructions.
As such, we see in \S \ref{parcoverarrow} how the sheaf property for Hochschild complexes on the one hand, and the arrow category construction on the other hand, can be seen as complementary tools for deconstructing Hochschild cohomology. 

In the final section \S \ref{parcomp}, we consider a pseudofunctor
$(\uuu, \AAA): \ccc \lra \Map_c$ with some extension $(\uuu^{\star}, \ccc^{\star}): \ccc^{\ast} \lra \Map_c$, which we compare to the natural pseudofunctor of Grothendieck constructions
$$(\uuu^{\ast}, \AAA^{\ast}): \ccc^{\ast} \lra \Map_c: C \lra (\tilde{\uuu}|_{C}, \tilde{\AAA}|_{C}).$$
Our main Theorem \ref{thmmaincomp} encompasses the following comparison result:

\begin{theorem}\label{thmintro}
There is a morphism of pseudofunctors landing in the homotopy category of $B_{\infty}$-algebras
$$\CC_{\uuu^{\ast}}(\AAA^{\ast}) \lra \CC_{\uuu^{\star}}(\AAA^{\star})$$
in which every component morphism is a quasi-isomorphism except possibly the top one.
The situation where the top morphism is a quasi-isomorphism too can be characterized by the usual bimodule criterion from \cite{kellerdih}.
\end{theorem}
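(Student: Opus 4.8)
The plan is to build the comparison one object of $\ccc^{\ast}$ at a time, and then to recognise the resulting family as a morphism of pseudofunctors by using that, after \S\ref{parfunct} (Proposition \ref{propfunct}), $\CC$ is a genuine functor $\Map_c \lra B_{\infty}$ rather than one defined only up to homotopy. Fix $C \in \ccc^{\ast}$. Using base change for pseudofunctors from \S\ref{parbase}, I would view $(\tilde{\uuu}|_C,\tilde{\AAA}|_C)$ as the Grothendieck construction over the slice $\ccc/C$ (over $\ccc$ itself when $C=\ast$) and cover it, in the sense of the $\infty$-cover pretopology on $\Map_c$, by the Grothendieck constructions of the generalized arrow categories $\bullet\rightarrow\dots\rightarrow\bullet$ attached to the chains of $\ccc/C$, following \S\ref{pargenarrow}--\S\ref{parcoverarrow}. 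On the $(\uuu^{\star},\AAA^{\star})$ side the matching pieces are $\CC$ of the corresponding coproduct categories $\BBB_1\coprod\dots\coprod\BBB_r$, and the iterated arrow category construction of \S\ref{pararrow}, together with the $\Hom$ and tensor functors of \S\ref{parbimod}, is what produces, for each chain, a $B_{\infty}$-morphism from the arrow-category piece to the coproduct piece; these glue, over the cover, to the $C$-component of the comparison.

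The analysis of each component then rests on two things. First, the sheaf property: by the main sheaf theorem \ref{mainsheaf} and Proposition \ref{propcstar}, restricting along the $\infty$-cover detects whether a component is a quasi-isomorphism, so it suffices to treat the chain pieces separately. Second, the map-graded form of Keller's arrow category argument from \S\ref{parconn} and \S\ref{parderived}: for a single arrow category $(\vvv\rightarrow_S\uuu,\BBB\rightarrow_M\AAA)$ it exhibits the comparison $\CC(\BBB\rightarrow_M\AAA)\lra\CC(\BBB\coprod\AAA)$ as a restriction map along the cartesian inclusion $\vvv\coprod\uuu\hookrightarrow(\vvv\rightarrow_S\uuu)$, so that all its ``inner'' components are automatically quasi-isomorphisms, while the cone of the remaining one is, by Proposition \ref{propidsubcat}, the support Hochschild complex $\CC_{\uuu\setminus\vvv}(\BBB\rightarrow_M\AAA)$ with coefficients in the bimodule supported off the diagonal --- and this is acyclic precisely when $M$ satisfies the criterion of \cite{kellerdih}. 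Iterating over a chain of length $r$ turns the $C$-component into $r$ nested cones of this type. For $C\neq\ast$ the slice $\ccc/C$ has a terminal object, so every chain can be taken to terminate at it and the only bimodules that occur are the transition data of $(\uuu,\AAA)$ itself restricted to $\ccc/C$; the corresponding cones are acyclic unconditionally (indeed, in the case where $(\uuu^{\star},\AAA^{\star})$ and $(\uuu^{\ast},\AAA^{\ast})$ already agree on $\ccc$ the $C$-component is the identity), so these components are quasi-isomorphisms. Only the component at $\ast$ involves a genuinely new bimodule, namely the one linking the top value $(\uuu^{\star}_{\ast},\AAA^{\star}_{\ast})$ to the pieces over the cover $(C_i\lra\ast)$, and its single surviving cone is acyclic exactly under the bimodule criterion of \cite{kellerdih} applied to that bimodule.

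To finish one must see that the componentwise morphisms assemble into a morphism of pseudofunctors with values in $\mathrm{Ho}(B_{\infty})$: functoriality of $\CC$ on $\Map_c$ and naturality of base change in $\ccc$ make the chain-piece morphisms compatible with the transition bimodules of $(\uuu,\AAA)$, and the sheaf property then forces the glued components to respect the structure $2$-cells of both pseudofunctors, with the passage to $\mathrm{Ho}(B_{\infty})$ absorbing the remaining coherence ambiguity. I expect the real obstacle to be precisely this coherence: checking that the quasi-isomorphisms of \S\S\ref{parconn}--\ref{parderived} and the cone identifications of Proposition \ref{propidsubcat} are natural over all chains and all of $\ccc$ at once, so that they descend along the $\infty$-cover. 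A secondary point, mostly bookkeeping, is to verify that the covers of the $(\tilde{\uuu}|_C,\tilde{\AAA}|_C)$ used here genuinely satisfy the $n$-simplex surjectivity condition of \S\ref{parmapgraded} for every $n$, which is what the combinatorial hypotheses on $\ccc$ guarantee; the rest builds directly on the fibration and stack statements of \S\ref{parmapgraded}, Proposition \ref{propcart} and Corollary \ref{corstack}.
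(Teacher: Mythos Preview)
Your plan has a genuine gap at the step where you pass from the cover back to the whole object. You write that ``by the main sheaf theorem \ref{mainsheaf} and Proposition \ref{propcstar}, restricting along the $\infty$-cover detects whether a component is a quasi-isomorphism, so it suffices to treat the chain pieces separately.'' But Theorem \ref{mainsheaf} only says that $\CC$ is a sheaf, i.e.\ that $\CC_{\tilde{\uuu}|_C}(\tilde{\AAA}|_C)$ is the equalizer of $\prod_i \CC(\tilde{\AAA}|_{\ddd_i}) \rightrightarrows \prod_{i,j} \CC(\tilde{\AAA}|_{\ddd_{ij}})$ in $\Mod(k)$. This is an underived limit; it does not follow that a map out of the equalizer is a quasi-isomorphism as soon as each $\CC(\tilde{\AAA}|_{\ddd_i}) \lra \CC_{\uuu_C}(\AAA_C)$ is one. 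The target $\CC_{\uuu_C}(\AAA_C)$ has no compatible cover here, so there is no parallel sheaf diagram to compare against. Related to this, your description of the $(\uuu^{\star},\AAA^{\star})$ side as consisting of ``coproduct categories $\BBB_1\coprod\dots\coprod\BBB_r$'' is not right: for $C\neq\ast$ the target is simply $\CC_{\uuu_C}(\AAA_C)$, a single category, and the component morphism $F_C^{\ast}$ is the restriction along the inclusion of $(\uuu_C,\AAA_C)$ at the terminal object $1_C$ of $\ccc/C$. No gluing is needed to define it.

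The paper's argument bypasses covers entirely at this stage. Since $\ccc/C$ is a delta with terminal object $1_C$, Example \ref{exdelta} gives a single thin-ideal decomposition $\ccc/C \cong (\ddd_C \rightarrow_Z e_C)$, and by Proposition \ref{proparrowc} one has $(\tilde{\uuu}|_C,\tilde{\AAA}|_C) \cong (\tilde{\vvv}_C \rightarrow_{\psi} \uuu_C,\ \tilde{\BBB}_C \rightarrow_G \AAA_C)$ for a subcartesian functor $(\psi,G):(\tilde{\vvv}_C,\tilde{\BBB}_C)\lra(\uuu_C,\AAA_C)$ assembled from the $(\varphi_c,F_c)$. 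Then a single application of Proposition \ref{propcompat2} (the fully faithful case of Keller's argument) shows that $F_C^{\ast}=\varphi_{\uuu}^{\ast}$ is a quasi-isomorphism and that the square \eqref{eqcompat} commutes in $\mathsf{ho}(B_{\infty})$; no iteration over chains and no sheaf property is used. The sheaf property and the chain covers of \S\ref{parcoverarrow} enter only later, once the morphism of pseudofunctors is in hand, to transport Mayer--Vietoris type statements from $\CC_{\uuu^{\ast}}(\AAA^{\ast})$ to $\CC_{\uuu^{\star}}(\AAA^{\star})$. Your idea to iterate arrow-category decompositions along chains would work on each chain individually, but the missing bridge is exactly the descent of quasi-isomorphisms along an underived equalizer, which is not available; the paper's one-step decomposition avoids this issue.
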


The proof of the theorem is heavily based upon Keller's arrow category argument in the case of a fully faithful functor $\BBB \lra \AAA$, which is in fact a special case of our theorem.
A combination of Proposition \ref{propintro} and Theorem \ref{thmintro} allows us to pull a deconstruction of Hochschild complexes of Grothendieck constructions back to a deconstruction of the Hochschild complexes of the categories $(\uuu^{\star}_{\ast}, \AAA^{\star}_{\ast})$ and $(\uuu_C, \AAA_C)$ in which we are primarily interested - at least in the homotopy category of $B_{\infty}$-algebras. In particular, exact sequences of Mayer-Vietoris type now naturally give rise to Mayer-Vietoris exact triangles, inducing the desired long exact cohomology sequences.
As an application, we recover the Mayer-Vietoris triangles for ringed spaces from \cite[\S 7.9]{lowenvandenberghhoch}.

\vspace{0,5cm}

\noindent \emph{Acknowledgement.} The author is very grateful to Michel Van den Bergh for the original idea of introducing Hochschild complexes of map-graded categories, as a tool for obtaining a Hochschild cohomology local-to-global spectral sequence for arbitrary ringed spaces, dating back to one of her research stays at the Mittag-Leffler institute in 2004. A write up of the envisaged approach, which involves the theory of hypercoverings, remains a joint work in progress \cite{lowenvandenberghlocglob}.

\section{Map-graded categories} \label{parmapgraded}

Let $k$ be a commutative ground ring.
In this section we introduce the category $\Map$ of small $k$-linear map-graded categories and functors, which is naturally fibered over the category $\Cat$ of small categories and functors. We also introduce the intermediate category $\Mas$ of map-graded sets. A map-graded set $(\uuu, \AAA)$ consists of a small category $\uuu$ and for every object $U \in \uuu$, a set $\AAA_U$. A map-graded category $(\uuu, \AAA)$ is a map-graded set with additionally, for every morphism $u: U \lra U'$ in $\uuu$, $A \in \AAA_U$ and $A' \in \AAA_{U'}$, a $k$-module $\AAA_u(A, A')$. These modules are endowed with category-like composition and identity morphism \cite{lowenmap}. Every map-graded set or category $(\uuu, \AAA)$ has an associated object $(\uuu^{\sharp}, \AAA^{\sharp})$ of the same kind, with ``ungrouped object sets'', i.e. with $\Ob(\uuu^{\sharp}) = \coprod_{U \in \uuu}\AAA_U$ and $(\AAA^{\sharp})_{A} = \{A\}$. We define the \emph{nerve} $\nnn(\AAA)$ of $(\uuu, \AAA)$ to be the simplicial nerve $\nnn(\uuu^{\sharp})$. For small categories, map-graded sets, and map-graded categories, we define pretopologies of $n$-covers by declaring a collection of functors to be an $n$-cover provided that the induced collection of maps between $n$-simplices of the nerves is jointly surjective (Definition \ref{defncover}).
We prove that for $n \geq 0$, $\Mas$ is a stack over $\Cat$ (Theorem \ref{thmmasstack}) and for $n \geq 3$, $\Map$ is a stack over $\Mas$ (Theorem \ref{thmmapstack}) and over $\Cat$ (Corollary \ref{corstack}).

\subsection{Fibered categories and stacks}\label{parfibstack}

In this section we recall the basic concepts concerning fibered categories and stacks \cite{SGA1} \cite{vistoli}, and give some results we will use later on.

Let $\uuu$ be a category. A \emph{category over $\uuu$} is a functor $F: \aaa \lra \uuu$. The functor is percieved from the point of view of its fibers. That is, for every $U \in \uuu$ we consider the fiber of objects over $U$:
$$\aaa_U = \{ A \in \aaa \,\, |\,\, F(A) = U\}$$
and for $u: V \lra U$, $A \in \aaa_U$, $B \in \aaa_{V}$,  we consider the fiber of morphisms over $u$:
$$\aaa_{u}(B,A) = \{ a \in \aaa(B,A) \,\, |\,\, F(a) = u \}.$$

A morphisms $a: B \lra A$ in $\aaa$ with $F(a) = u: V \lra U$ is \emph{cartesian (with respect to $F$)} provided that for every $v: W \lra V$ in $\uuu$ and $C \in \aaa_W$, the composition map
$$a-: \aaa_v(C,B) \lra \aaa_{uv}(C,A)$$
is an isomorphism.

The category $\aaa$ is called \emph{fibered over $\uuu$} provided that for every $u: V \lra U$ in $\uuu$ and $A \in \aaa_U$, there is an object $u^{\ast}A \in \aaa_V$ and a cartesian morphism $\delta^{u, A} \in \aaa_u(u^{\ast}A, A)$. The choice, for every $u$ and $A$, of such a cartesian morphism $\delta^{u,A}$ is called a \emph{choice of cartesian morphisms}. For a fibered category with a choice of cartesian morphisms, there is an associated pseudofunctor
$$\aaa: \uuu^{\op} \lra \Cat: U \longmapsto \aaa(U)$$
where $\Cat$ denotes the category of small categories. Here $\aaa(U)$ is the category with object set given by $\aaa_U$ and $\aaa(U)(B,A) = \aaa_{1_U}(B,A)$. For a map $u: V \lra U$ in $\uuu$, the corresponding functor $u^{\ast}: \aaa(U) \lra \aaa(V)$ is naturally determined by the chosen cartesian morphisms.

\begin{proposition}\label{funchoice}
Let $F: \aaa \lra \uuu$ be a fibered category over $\uuu$ with a choice of cartesian morphisms. Suppose for $v: W \lra V$, $u: V \lra U$ in $\uuu$ and $A \in \aaa_U$, and for the cartesian morphisms $\delta^{u,A}: u^{\ast}A \lra A$, $\delta^{v, u^{\ast}A}: v^{\ast} u^{\ast} A \lra u^{\ast} A$ and $\delta^{uv, A}: (uv)^{\ast} A \lra A$, we have $v^{\ast} u^{\ast} A = (uv)^{\ast} A$ and $\delta^{u,A} \delta^{v, u^{\ast} A} = \delta^{uv, A}$. Then the associated pseudofunctor $\aaa: \uuu^{\op} \lra \Cat$ is actually a functor.
\end{proposition}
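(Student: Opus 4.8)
The plan is to trace through the standard construction of the pseudofunctor $\aaa\colon \uuu^{\op}\to\Cat$ attached to a fibered category equipped with a choice of cartesian morphisms, and to observe that the two displayed hypotheses say exactly that its composition‑coherence isomorphisms are forced to be identities, i.e. that the cleavage is split. Two elementary facts about cartesian morphisms will be used throughout. First, the composite of cartesian morphisms is cartesian, so $\delta^{u,A}\delta^{v,u^{\ast}A}\colon v^{\ast}u^{\ast}A\to A$ is cartesian over $uv$. Second, if $f\colon A'\to A$ and $g\colon A''\to A$ are both cartesian over the same $u\colon V\to U$ (with $A',A''\in\aaa_V$), then there is a unique $h\in\aaa_{1_V}(A'',A')$ with $fh=g$, and $h$ is an isomorphism: existence and uniqueness come from $f$ being cartesian, applying the same to $g$ produces $h'$ with $gh'=f$, and cancelling the injective maps $f-$ and $g-$ on the endomorphism fibres over $1_V$ gives $hh'=1_{A'}$, $h'h=1_{A''}$. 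Recall also that $u^{\ast}$ sends $a\in\aaa_{1_U}(B,A)$ to the unique $u^{\ast}(a)\in\aaa_{1_V}(u^{\ast}B,u^{\ast}A)$ with $\delta^{u,A}\,u^{\ast}(a)=a\,\delta^{u,B}$.

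Granting this, recall how the coherence data of $\aaa$ is produced. For composable $W\xrightarrow{v}V\xrightarrow{u}U$ and $A\in\aaa_U$, the morphisms $\delta^{u,A}\delta^{v,u^{\ast}A}$ and $\delta^{uv,A}$ are both cartesian over $uv$ with target $A$, so by the second fact there is a unique isomorphism $\gamma^{u,v}_A\in\aaa_{1_W}(v^{\ast}u^{\ast}A,(uv)^{\ast}A)$ with $\delta^{uv,A}\,\gamma^{u,v}_A=\delta^{u,A}\delta^{v,u^{\ast}A}$, and these assemble into the natural isomorphism $\gamma^{u,v}\colon v^{\ast}u^{\ast}\Rightarrow(uv)^{\ast}$. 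Likewise, since $1_A$ is cartesian over $1_U$, comparing $1_A$ with $\delta^{1_U,A}$ produces the unit isomorphism $\mathrm{id}_{\aaa(U)}\Rightarrow(1_U)^{\ast}$; under the usual normalization of the cleavage one has $\delta^{1_U,A}=1_A$, so this unit is the identity and $(1_U)^{\ast}=\mathrm{id}_{\aaa(U)}$ on the nose, leaving only the constraint $\gamma^{u,v}$ to analyze. (If one prefers not to assume normalization, the computation below applied with $u$ or $v$ an identity shows $(1_U)^{\ast}$ is idempotent and acts as the identity on its image, from which the unit is seen to be the identity directly.)

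Now invoke the hypotheses: for all composable $v,u$ and all $A$ we have $v^{\ast}u^{\ast}A=(uv)^{\ast}A$ and $\delta^{u,A}\delta^{v,u^{\ast}A}=\delta^{uv,A}$, that is, the two cartesian lifts compared above literally coincide. Substituting into the defining relation of $\gamma^{u,v}_A$ yields $\delta^{uv,A}\,\gamma^{u,v}_A=\delta^{uv,A}=\delta^{uv,A}\,1_{(uv)^{\ast}A}$, and since $\delta^{uv,A}$ is cartesian the map $\delta^{uv,A}-$ is injective on $\aaa_{1_W}\bigl((uv)^{\ast}A,(uv)^{\ast}A\bigr)$, whence $\gamma^{u,v}_A=1_{(uv)^{\ast}A}$. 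Thus every component of $\gamma^{u,v}$ is an identity; its source $v^{\ast}u^{\ast}A$ and target $(uv)^{\ast}A$ therefore coincide (as the first hypothesis also directly states), and naturality of $\gamma^{u,v}$, which reads $(uv)^{\ast}(a)\,\gamma^{u,v}_B=\gamma^{u,v}_A\,v^{\ast}u^{\ast}(a)$, forces $(uv)^{\ast}(a)=v^{\ast}u^{\ast}(a)$ on morphisms. Hence $v^{\ast}\circ u^{\ast}=(uv)^{\ast}$ as functors, and together with $(1_U)^{\ast}=\mathrm{id}_{\aaa(U)}$ this is precisely the assertion that $\aaa\colon\uuu^{\op}\to\Cat$ is a genuine functor rather than merely a pseudofunctor (this is just the standard correspondence between split cleavages and strict functors). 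The proof has no genuinely hard step; the only things to watch are keeping track of which instance of the cartesian universal property pins down each arrow, and — a minor point — the normalization convention for the unit isomorphism.
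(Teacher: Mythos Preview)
The paper states this proposition without proof, treating it as a standard fact about fibered categories; your argument is the usual one and is correct. One small caveat: your parenthetical handling of the unit constraint does not quite close the gap without the normalization $\delta^{1_U,A}=1_A$, since showing $(1_U)^{\ast}$ is idempotent and the identity on its image does not by itself force $1_U^{\ast}A=A$ for \emph{every} $A$. In practice the paper's chosen cartesian morphisms (e.g.\ $\delta^{\varphi,\XXX}$, $\delta^{\varphi,F,\AAA}$, $\delta^{\varphi,\AAA}$) visibly satisfy this normalization, so the convention is implicitly in force and the point is harmless.
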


Next we recall the definition of a pretopology. Let $\uuu$ be a category with pullbacks. A \emph{pretopology} on $\uuu$ consists of the notion of a \emph{covering collection} of maps $(U_i \lra U)_{i \in I}$, also called a \emph{cover} of the object $U$, satisfying the following axioms:
\begin{enumerate}
\item The collection consisting of $1_{U}: U \lra U$ is a cover of $U$.
\item If $(U_i \lra U)_{i \in I}$ is a cover of $U$ and $u: V \lra U$ an arbitrary map, the the collection of pullbacks $(V \times_{U} U_i \lra V)_{i \in I}$ is a cover of $V$.
\item If $(U_i \lra U)_{i \in I}$ is a cover of $U$ and for every $i \in I$, $(U_{ij} \lra U_i)_{j \in J_i}$ is a cover of $U_i$, then the collection of compositions $(U_{ij} \lra U_i \lra U)_{i\in I, j \in J_i}$ is a cover of $U$.
\end{enumerate}

Consider a fibered category $\aaa$ over $\uuu$ with a choice of cartesian morphisms and associated pseudofunctor. Let there be given a pretopology on $\uuu$ and let $S = (U_i \lra U)_{i \in I}$ be a cover of $U \in \uuu$. The \emph{descent category} $\Des(S, \aaa)$ is defined in the following way. An object, called a \emph{descent datum}, consists of a collection $(A_i)_{i \in I}$ of objects with $A_i \in \aaa(U_i)$, together with for every $i, j \in I$ an isomorphism $\alpha_1^{\ast}(A_i) \cong \alpha_2^{\ast}(A_j)$ in $\aaa(U_{ij})$ for the pullback
$$\xymatrix{ {U_i} \ar[r] & {U} \\ {U_{ij}} \ar[u]^{\alpha_1} \ar[r]_{\alpha_2} & {U_j.} \ar[u] }$$
These isomorphisms have to satisfy the natural compatibility requirement on triple pullbacks.
A morphism between descent data $(A_i)_i \lra (B_i)_i$ consists of compatible morphisms $A_i \lra B_i$ in $\aaa(U_i)$.
The fibered category $\aaa$ is called a \emph{stack} (resp. a \emph{prestack}) provided that the natural comparison functor
$$\aaa(U) \lra \Des(S, \aaa): A \longmapsto (u_i^{\ast}A)_i$$
is an equivalence of categories (resp. fully faithful) for every cover $S = (u_i: U_i \lra U)_{i \in I}$.

Next we collect some useful facts concerning composable functors  $G: \aaa \lra \xxx$ and $F: \xxx \lra \uuu$. We suppose all three categories have pullbacks and the functors $F$ and $G$ preserve pullbacks.

\begin{proposition}\label{propfibtrans}
\begin{enumerate}
\item If a morphism $a: A \lra A'$ in $\aaa$ is cartesian with respect to $G$ and $G(a)$ is cartesian with respect to $F$, then $a$ is cartesian with respect to $FG$.
\item If both $F$ and $G$ are fibered, then so is $FG$.
\end{enumerate}
\end{proposition}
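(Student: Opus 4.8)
The plan is to establish (1) directly from the definition of cartesian morphism by stratifying hom-sets first according to $G$ and then according to $F$, and then to deduce (2) at once from (1) together with the fibrations $F$ and $G$. This is the usual transitivity of fibrations (cf. \cite{vistoli}), but I record the argument in the fiber-bijection language used above.

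For part (1), I would let $a \colon A \lra A'$ in $\aaa$ be cartesian with respect to $G$, write $\xi_0 = G(a) \colon X \lra X'$, suppose $\xi_0$ is cartesian with respect to $F$, and set $u = F(\xi_0) \colon V \lra U$, so that $FG(a) = u$. Fix $w \colon W \lra V$ in $\uuu$ and $C \in \aaa$ with $FG(C) = W$, and put $Y = G(C)$, so $F(Y) = W$. It remains to show that composition with $a$ is a bijection $\aaa_w(C,A) \lra \aaa_{uw}(C,A')$, where the subscripts now refer to the fibers of $FG$. The key observation is that a morphism $c \colon C \lra A$ lies over $w$ for $FG$ precisely when $G(c) \colon Y \lra X$ lies over $w$ for $F$; hence there is a partition
$$\aaa_w(C,A) = \coprod_{\xi \in \xxx_w(Y,X)} \{\, c \in \aaa(C,A) \mid G(c) = \xi \,\},$$
where the subscript $w$ on $\xxx$ refers to the fibers of $F$, and likewise $\aaa_{uw}(C,A')$ is partitioned over $\eta \in \xxx_{uw}(Y,X')$. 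Composition with $a$ carries the $\xi$-summand into the $(\xi_0\xi)$-summand. Since $\xi_0$ is cartesian for $F$, the map $\xi \mapsto \xi_0\xi$ is a bijection $\xxx_w(Y,X) \lra \xxx_{uw}(Y,X')$, matching the index sets; and since $a$ is cartesian for $G$, composition with $a$ is a bijection from the $\xi$-summand of $\aaa_w(C,A)$ onto the $(\xi_0\xi)$-summand of $\aaa_{uw}(C,A')$ for each $\xi$. Assembling these bijections over the index set shows composition with $a$ is a bijection, i.e. $a$ is cartesian for $FG$.

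For part (2), I would assume $F$ and $G$ are fibered. Given $u \colon V \lra U$ in $\uuu$ and $A \in \aaa$ with $FG(A) = U$, put $X = G(A)$, so $F(X) = U$. As $F$ is fibered, pick a cartesian (for $F$) morphism $\delta \colon u^{\ast}X \lra X$ over $u$; as $G$ is fibered, pick a cartesian (for $G$) morphism $\gamma \colon \delta^{\ast}A \lra A$ over $\delta$. Then $FG(\delta^{\ast}A) = F(u^{\ast}X) = V$, and $G(\gamma) = \delta$ is cartesian for $F$, so part (1) applies and $\gamma$ is cartesian for $FG$. Thus $\delta^{\ast}A$ together with $\gamma$ provides the required cartesian lift, and $FG$ is fibered.

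I do not expect a genuine obstacle here; the only point needing care is the bookkeeping in part (1) — correctly identifying the fibers of $FG$ with the disjoint union, over the fibers of $F$, of the fibers of $G$, and checking that the two invocations of the cartesian property are applied with the right test data ($C$ over $Y$ for $G \colon \aaa \lra \xxx$, and $Y$ over $W$ for $F \colon \xxx \lra \uuu$). The standing pullback hypotheses on the three categories and two functors play no role in this particular argument.
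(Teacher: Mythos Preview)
Your proof is correct. The paper states this proposition without proof, as one of several standard facts about fibered categories collected in \S\ref{parfibstack}; your argument is the expected one, carried out carefully in the fiber-bijection language the paper uses. The stratification of $\aaa_w(C,A)$ over $\xxx_w(Y,X)$ is exactly right, and the two applications of the cartesian hypotheses (to $a$ for $G$ on each summand, to $\xi_0$ for $F$ on the index set) are invoked with the correct test data. Part (2) follows from (1) just as you say.
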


\begin{proposition}
Suppose $\uuu$ is endowed with a pretopology. There is a pretopology on $\xxx$ for which $(x_i: X_i \lra X)_i$ is a cover of $X$ if and only if $(F(x_i): F(X_i) \lra F(X))_i$ is a cover of $F(X)$.
\end{proposition}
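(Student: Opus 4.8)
The plan is to take as candidate covering families on $\xxx$ exactly those described in the statement — declare $(x_i : X_i \lra X)_i$ to be a cover of $X$ precisely when $(F(x_i): F(X_i) \lra F(X))_i$ is a cover of $F(X)$ for the given pretopology on $\uuu$ — and then to verify the three pretopology axioms one at a time, in each case transporting the question along $F$ and invoking the corresponding axiom for $\uuu$. Note that $G$ and $\aaa$ play no role here: only the functor $F : \xxx \lra \uuu$, the existence of pullbacks in $\xxx$, and the hypothesis that $F$ preserves pullbacks are used. Since $\xxx$ has pullbacks, the notion of a pretopology on $\xxx$ makes sense to begin with.

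For axiom (1), functoriality gives $F(1_X) = 1_{F(X)}$, and $\{1_{F(X)}\}$ is a cover of $F(X)$ by axiom (1) for $\uuu$; hence $\{1_X\}$ is a cover of $X$. For axiom (3) (transitivity), suppose $(x_i : X_i \lra X)_i$ is a cover and, for each $i$, $(x_{ij} : X_{ij} \lra X_i)_{j \in J_i}$ is a cover of $X_i$. Applying $F$ and using functoriality, the composition $X_{ij} \lra X_i \lra X$ is sent to $F(x_i) F(x_{ij})$; since $(F(x_i))_i$ is a cover of $F(X)$ and each $(F(x_{ij}))_j$ is a cover of $F(X_i)$, axiom (3) for $\uuu$ shows that $(F(x_i)F(x_{ij}) : F(X_{ij}) \lra F(X))_{i,j}$ is a cover of $F(X)$, i.e. $(X_{ij} \lra X)_{i,j}$ is a cover of $X$.

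The one step deserving slightly more care is axiom (2) (stability under base change). Let $(x_i : X_i \lra X)_i$ be a cover of $X$ and $y : Y \lra X$ an arbitrary map in $\xxx$. The pullbacks $Y \times_X X_i$ exist in $\xxx$, and because $F$ preserves pullbacks, $F(Y \times_X X_i)$ together with the images of the two projections is again a pullback of $F(X_i) \lra F(X)$ along $F(y) : F(Y) \lra F(X)$; in particular the canonical comparison $F(Y \times_X X_i) \lra F(Y) \times_{F(X)} F(X_i)$ is an isomorphism commuting with the maps to $F(Y)$. Thus the family $(F(Y \times_X X_i) \lra F(Y))_i$ agrees, up to isomorphism over $F(Y)$, with the family of base changes $(F(Y) \times_{F(X)} F(X_i) \lra F(Y))_i$, which is a cover of $F(Y)$ by axiom (2) for $\uuu$ applied to the cover $(F(x_i))_i$ along $F(y)$; hence $(Y \times_X X_i \lra Y)_i$ is a cover of $Y$. (Here one uses the standard fact that a pretopology's covering families are insensitive to replacing a chosen pullback by an isomorphic one over the base.)

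I do not expect a genuine obstacle: the argument is a routine translation along $F$. If anything, the only subtlety is the one just isolated — ensuring that ``$F$ preserves pullbacks'' is precisely what licenses identifying $F$ applied to a pullback square in $\xxx$ with the corresponding pullback square in $\uuu$, structure maps included, so that axiom (2) for $\uuu$ can legitimately be invoked.
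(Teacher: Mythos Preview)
Your proof is correct and complete. The paper states this proposition without proof, treating it as a routine fact collected for later use; your direct verification of the three pretopology axioms, with the only nontrivial point being that $F$ preserving pullbacks is exactly what is needed for stability under base change, is precisely the expected argument.
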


\begin{proposition}
Consider morphisms in $\xxx$:
$$\xymatrix{ {X_2} \ar[r]_{x_2} & {X_1} \ar[r]_{x_1} & {X}}$$
\begin{enumerate}
\item If $x_2$ and $x_1$ are cartesian, then so is $x_1x_2$.
\item If $x_1$ and $x_1 x_2$ are cartesian, then so is $x_2$.
\end{enumerate}
\end{proposition}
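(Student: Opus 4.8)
The plan is to unwind the definition of a cartesian morphism --- here always understood with respect to the functor $F: \xxx \lra \uuu$ --- and to reduce both assertions to two elementary facts about maps of sets: a composite of bijections is a bijection, and if $g$ and $g \circ f$ are bijections then so is $f$. Write $u_1 = F(x_1): U_1 \lra U$ and $u_2 = F(x_2): U_2 \lra U_1$, so that $F(x_1 x_2) = u_1 u_2$. Fix an object $C \in \xxx$, put $W = F(C)$, and fix an arbitrary morphism $w: W \lra U_2$ in $\uuu$. The point is that, by associativity of composition in $\xxx$ and functoriality of $F$, the map induced by post-composition with $x_1 x_2$,
$$(x_1 x_2) \circ - : \xxx_w(C, X_2) \lra \xxx_{u_1 u_2 w}(C, X),$$
is the composite of $x_2 \circ - : \xxx_w(C, X_2) \lra \xxx_{u_2 w}(C, X_1)$ with $x_1 \circ - : \xxx_{u_2 w}(C, X_1) \lra \xxx_{u_1 u_2 w}(C, X)$.

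For part (1), cartesianness of $x_2$ applied to the test morphism $w: W \lra U_2$ says the first factor is a bijection, and cartesianness of $x_1$ applied to the test morphism $u_2 w: W \lra U_1$ says the second factor is a bijection; hence the composite $(x_1 x_2) \circ -$ is a bijection for every such $C$ and $w$, which is exactly the assertion that $x_1 x_2$ is cartesian. For part (2), cartesianness of $x_1 x_2$ now gives that the composite $(x_1 x_2) \circ -$ is a bijection, while cartesianness of $x_1$ gives that the second factor $x_1 \circ -$ is a bijection; consequently the first factor $x_2 \circ - : \xxx_w(C, X_2) \lra \xxx_{u_2 w}(C, X_1)$ is a bijection --- it is injective because the composite is, and surjective because $x_1 \circ -$ is injective and the composite is surjective. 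Since $C$ and $w$ were arbitrary, $x_2$ is cartesian.

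I do not expect a real obstacle here, since the statement is a purely formal consequence of the definition. The only point that needs a little care is matching the universal quantifier in the definition of ``cartesian'' --- the test morphism ranging over all maps out of $F(C)$ --- with the two morphisms $w$ and $u_2 w$ that appear in the factorization above, so that each cartesianness hypothesis is invoked for a legitimate test morphism. One could instead run the same argument through the equivalent ``unique factorization'' description of cartesian morphisms, but phrasing everything in terms of bijections between graded hom-sets keeps the bookkeeping minimal.
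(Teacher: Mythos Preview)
Your argument is correct and is precisely the standard unwinding of the definition; the paper states this proposition without proof, treating it as a well-known formal fact about cartesian morphisms, so there is nothing to compare against.
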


\begin{proposition} Consider a commutative square in $\xxx$:
$$\xymatrix{ {X_1} \ar[r]^{x_1} & {X} \\ {X_{12}} \ar[u]^{y_1} \ar[r]_{y_2} & {X_2} \ar[u]_{x_2} }$$
\begin{enumerate}
\item If the square is a pullback and $x_2$ is cartesian, then so is $y_1$.
\item If the image of the square under $F$ is a pullback, and the morphisms $x_1$, $x_2$, $y_1$, $y_2$ are cartesian, then the square is a pullback.
\end{enumerate}
\end{proposition}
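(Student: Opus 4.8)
The plan for both parts is to unwind the relevant universal property against an arbitrary object $C$ of $\xxx$. The two ingredients throughout are the definition of cartesian morphism, which provides bijections between suitable fibers $\xxx_v(C,-)$ of hom-sets, and the fact that $F$ sends the given square to a pullback square in $\uuu$ (which holds because the square is a pullback in $\xxx$ and $F$ preserves pullbacks).

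For (1), to check that $y_1\colon X_{12}\to X_1$ is cartesian I would fix $v\colon W\to FX_{12}$ in $\uuu$ and $C\in\xxx_W$, and show that $y_1\circ(-)\colon\xxx_v(C,X_{12})\to\xxx_{F(y_1)v}(C,X_1)$ is bijective. The pullback property of the square in $\xxx$, together with the pullback property of its $F$-image in $\uuu$, identifies $\xxx_v(C,X_{12})$ with the set of pairs $(g,h)$ where $g\in\xxx_{F(y_1)v}(C,X_1)$, $h\in\xxx_{F(y_2)v}(C,X_2)$ and $x_1g=x_2h$; here the $\uuu$-pullback is exactly what forces a filler $f$ to lie in the fiber over $v$. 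Since $x_1g$ automatically lies in $\xxx_{F(x_2)F(y_2)v}(C,X)$ by commutativity, cartesianness of $x_2$ says that $g$ determines $h$ uniquely, so the projection $(g,h)\mapsto g$ is a bijection, as desired.

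For (2), to check that the square is a pullback in $\xxx$ I would fix $C\in\xxx$ and a cone $g\colon C\to X_1$, $h\colon C\to X_2$ with $x_1g=x_2h$, and produce the unique filler $f\colon C\to X_{12}$. Applying $F$ and using that the $F$-image square is a pullback in $\uuu$ yields a unique $w\colon FC\to FX_{12}$ with $F(y_1)w=F(g)$ and $F(y_2)w=F(h)$, and then cartesianness of $y_1$ gives a unique $f\in\xxx_w(C,X_{12})$ with $y_1f=g$. It remains to see that $y_2f=h$: both lie in $\xxx_{F(h)}(C,X_2)$ and become equal after composition with $x_2$ (using the commutativity of the square and the cone relation $x_1g=x_2h$), so cartesianness of $x_2$ --- used here only for injectivity of $x_2\circ(-)$ on one fiber --- forces $y_2f=h$. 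Uniqueness of $f$ runs the same way: any second filler has $F$-image $w$ by the $\uuu$-pullback, hence coincides with $f$ by cartesianness of $y_1$.

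I do not expect a real difficulty here; the argument is a diagram chase. The only delicate point --- and the natural place to slip --- is the bookkeeping of fibers: at each use of a pullback property or of cartesianness one must check that the morphism produced genuinely lies over the intended base arrow, and this is precisely where the hypothesis that $F$ preserves pullbacks gets consumed (in (1) to locate the filler in the fiber over $v$, in (2) to produce $w$ and to pin down the $F$-image of any competing filler).
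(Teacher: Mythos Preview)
The paper states this proposition without proof, so there is nothing to compare against directly. Your argument is correct and is the natural one: in each part you reduce to the bijectivity condition defining cartesian morphisms and use the $\uuu$-pullback (obtained via the hypothesis that $F$ preserves pullbacks in (1), given outright in (2)) to keep track of fibers. One small observation: in your proof of (2) you only invoke cartesianness of $y_1$ and $x_2$, not of $x_1$ or $y_2$; this is fine and simply shows that the proposition holds under slightly weaker hypotheses than stated.
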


\begin{proposition}
There is a pretopology on $\xxx$ for which $(x_i: X_i \lra X)_i$ is a cover of $X$ if and only if the morphisms $x_i$ are cartesian with repect to $F$.
\end{proposition}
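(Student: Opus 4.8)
The plan is simply to check that the three axioms in the definition of a pretopology recalled above are satisfied by the proposed notion of covering family, using the stability properties of cartesian morphisms from the two preceding propositions. First I would record the relevant standing hypotheses of this subsection: $\xxx$ has pullbacks, so the second pretopology axiom — which refers to pullbacks of covering families — is meaningful on $\xxx$; and $F$ preserves pullbacks, so every pullback square in $\xxx$ is sent to a pullback square in $\uuu$, which is exactly the configuration in which the cited statements about cartesian morphisms were proved.

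For axiom (1), I would note that the identity $1_X : X \lra X$ is always cartesian with respect to $F$: writing $F(1_X) = 1_U$, the comparison map $\xxx_v(C,X) \lra \xxx_{1_U v}(C,X)$ given by composition with $1_X$, for any $v : W \lra U$ and $C \in \xxx_W$, is the identity map and hence an isomorphism; so the singleton family $(1_X)$ is a covering family. For axiom (2), given a covering family $(x_i : X_i \lra X)_{i \in I}$ — so each $x_i$ is cartesian — and an arbitrary map $y : Y \lra X$, I form the pullbacks $Y \times_X X_i$ in $\xxx$ and apply the proposition on commutative squares, part (1): in the pullback square whose right-hand edge is $x_i$ and whose top edge is $y$, cartesianness of $x_i$ forces the projection $Y \times_X X_i \lra Y$ to be cartesian, so $(Y \times_X X_i \lra Y)_{i \in I}$ is again a covering family. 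For axiom (3), given a covering family $(x_i : X_i \lra X)_{i \in I}$ and, for each $i$, a covering family $(x_{ij} : X_{ij} \lra X_i)_{j \in J_i}$ — so all the $x_i$ and all the $x_{ij}$ are cartesian — the proposition on composites of cartesian morphisms, part (1), shows each composite $x_i x_{ij}$ is cartesian, whence $(x_i x_{ij} : X_{ij} \lra X)_{i \in I, j \in J_i}$ is a covering family. This verifies all three axioms and establishes the claim.

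I do not expect any genuine obstacle here: the argument is a direct bookkeeping exercise once the two stability lemmas for cartesian morphisms are in hand. The only points meriting a moment's care are that the pullbacks invoked in axiom (2) actually exist in $\xxx$, which is guaranteed by hypothesis, and that the orientation in the cited square is matched correctly to the base-change situation — that is, that it is precisely the edge of the pullback square parallel to $x_i$ (the projection onto $Y$), and not the one parallel to $y$, that inherits cartesianness.
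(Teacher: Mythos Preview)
Your argument is correct; the paper states this proposition without proof, as part of a section recalling background facts, so there is nothing to compare against. The verification you give is the standard one, and your care about which edge of the pullback square inherits cartesianness is well placed.
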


\begin{proposition}\label{proppretopx}
Suppose $\uuu$ is endowed with a pretopology. There is a pretopology on $\xxx$ for which $(x_i: X_i \lra X)_i$ is a cover of $X$ if and only if the following two conditions hold:
\begin{enumerate} 
\item $(F(x_i): F(X_i) \lra F(X))_i$ is a cover of $F(X)$;
\item every $x_i$ is cartesian with respect to $F$.
\end{enumerate}
\end{proposition}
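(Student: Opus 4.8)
The plan is to verify the three pretopology axioms directly for the proposed class of covering families on $\xxx$, each time combining the corresponding axiom for the pretopology on $\uuu$ with the stability properties of cartesian morphisms recorded in the preceding propositions. It is worth noticing at the outset that a family $(x_i \colon X_i \lra X)_i$ satisfies conditions (1) and (2) precisely when it is a covering family for \emph{both} of the two pretopologies on $\xxx$ already exhibited above: the one whose covers are the families $(x_i)_i$ with $(F(x_i))_i$ a cover of $F(X)$, and the one whose covers are the families of morphisms that are cartesian with respect to $F$. Since the intersection of two pretopologies on a fixed category with pullbacks is again a pretopology (each of the three axioms is a condition stable under intersection of the covering classes), this already settles the statement formally; since what is wanted here is a plan, I would nonetheless spell the axioms out.

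For the identity axiom, the singleton $(1_X \colon X \lra X)$ qualifies because $F(1_X) = 1_{F(X)}$ is a cover of $F(X)$ by the identity axiom on $\uuu$, and $1_X$ is cartesian with respect to $F$ straight from the definition, composition with $1_X$ being the identity map on each $\xxx_v(C, X)$. For base change, I would take a family $(x_i \colon X_i \lra X)_i$ satisfying (1) and (2) together with an arbitrary $y \colon Y \lra X$, and form the pullbacks $p_i \colon Y \times_X X_i \lra Y$ in $\xxx$; since $F$ preserves pullbacks, $F(p_i)$ is the base change of $F(x_i)$ along $F(y)$, so $(F(p_i))_i$ covers $F(Y)$ by the base change axiom on $\uuu$, and since the defining square for $Y \times_X X_i$ is a pullback whose opposite edge $x_i$ is cartesian, $p_i$ is cartesian by the earlier proposition on pullback squares. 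For transitivity, given $(x_i)_i$ as above and, for each $i$, a family $(x_{ij} \colon X_{ij} \lra X_i)_j$ satisfying (1) and (2), I would use $F(x_i x_{ij}) = F(x_i)F(x_{ij})$ together with the transitivity axiom on $\uuu$ to see that $(F(x_i x_{ij}))_{i,j}$ covers $F(X)$, and the earlier proposition on composable cartesian morphisms to see that each $x_i x_{ij}$ is cartesian.

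I do not anticipate a genuine obstacle, as the whole argument is a chase through the definitions; the step calling for the most care is base change, where one must invoke \emph{both} that $F$ preserves pullbacks — so that the image family is a cover in $\uuu$ — and that cartesianness is preserved under pullback along an arbitrary morphism, which is exactly part (1) of the preceding proposition on pullback squares.
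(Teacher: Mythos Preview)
Your argument is correct. The paper states this proposition without proof, as one of several standard facts about fibered categories collected in \S\ref{parfibstack} for later use; your direct verification of the three pretopology axioms, together with the slick observation that the class in question is the intersection of the two pretopologies already exhibited in the preceding two propositions, fills this gap completely and in the most natural way.
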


Now suppose both $F$ and $G$ are fibered, and consider a choice of cartesian morphisms for both functors. For a morphism $u: V \lra U$ in $\uuu$ and $A \in \aaa_U$ with respect to $FG$, we obtain the cartesian morphism $x = \delta^{u, G(A)}: u^{\ast}G(A) \lra G(A)$ in $\xxx$. Next we obtain the cartesian morphism $\delta^{x, A}: x^{\ast}A \lra A$. We make a choice of cartesian morphisms for $FG$ by putting $u^{\ast}A = x^{\ast}A$ and $\delta^{u,A} = \delta^{x, A}$.
Let $\uuu$ be endowed with a pretopology and endow $\xxx$ with the pretopology described in Proposition \ref{proppretopx}.
Let $\xxx^F$ be the pseudofunctor associated to $F: \xxx \lra \uuu$, $\aaa^{G}$ the one associated to $G: \aaa \lra \xxx$ and $\aaa^{FG}$ the one associated to $FG: \aaa \lra \uuu$.

\begin{proposition}\label{stacktrans}
If $\xxx^F$ and $\aaa^G$ are stacks (resp. prestacks), then so is $\aaa^{FG}$.
\end{proposition}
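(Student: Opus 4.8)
The plan is to transport the stack (resp.\ prestack) property along the two fibrations $F\colon \xxx \lra \uuu$ and $G\colon \aaa \lra \xxx$ by comparing descent categories. Fix a cover $S = (u_i\colon U_i \lra U)_{i \in I}$ of $U \in \uuu$ and an object $A \in \aaa$ with $FG(A) = U$; write $X = G(A) \in \xxx_U$. First I would observe that, by the pretopology described in Proposition \ref{proppretopx}, the collection $S^X = (x_i\colon u_i^{\ast}X \lra X)_{i \in I}$ obtained from the chosen cartesian morphisms for $F$ is a cover of $X$ in $\xxx$: its image under $F$ is exactly $S$, and each $x_i$ is cartesian by construction. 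The key structural point is the identification of descent categories
$$\Des(S, \aaa^{FG}) \;\simeq\; \coprod_{(X_i)_i \in \Des(S, \xxx^F)} \Des\big(S^{X}, \aaa^{G}\big),$$
or more precisely: a descent datum for $\aaa^{FG}$ along $S$ is the same as a descent datum $(X_i)_i$ for $\xxx^F$ along $S$ together with a descent datum for $\aaa^{G}$ along the induced cover $S^{X}$ of any object $X$ realizing $(X_i)_i$ (which exists and is unique up to canonical isomorphism since $\xxx^F$ is a stack). This follows by unwinding definitions, using the compatibility of the chosen cartesian morphisms for $FG$ with those of $F$ and $G$ fixed before the statement, so that the fibres $\aaa^{FG}(U_i)$ coincide with $\aaa^{G}(u_i^{\ast}X)$.

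Granting this identification, the argument is a two-step diagram chase. I would factor the comparison functor $\aaa^{FG}(U) \lra \Des(S, \aaa^{FG})$ as
$$\aaa^{FG}(U) \;=\; \aaa^{G}(X) \;\lra\; \Des\big(S^{X}, \aaa^{G}\big) \;\lra\; \Des(S, \aaa^{FG}),$$
where the first functor is the comparison functor for the cover $S^X$ of $X$ with respect to $G$, and the second is the inclusion of the ``fibre'' of $\Des(S,\aaa^{FG})$ over the particular descent datum $(u_i^\ast X)_i \in \Des(S,\xxx^F)$ that comes from $X$ itself. If $\aaa^G$ is a prestack, the first functor is fully faithful; if moreover $\xxx^F$ is a prestack, then the descent datum $(u_i^\ast X)_i$ already determines $X$ up to isomorphism via the (fully faithful) comparison functor for $\xxx^F$, and a routine check shows the composite is fully faithful, giving that $\aaa^{FG}$ is a prestack. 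If both $\aaa^G$ and $\xxx^F$ are stacks, then additionally every descent datum for $\aaa^{FG}$ lies, up to isomorphism, over some effective descent datum $(X_i)_i$ for $\xxx^F$, hence over the $X$ it glues to; essential surjectivity of the comparison functor for $S^X$ with respect to $G$ then yields a preimage in $\aaa^G(X) = \aaa^{FG}(U)$, so $\aaa^{FG}$ is a stack.

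The main obstacle I anticipate is bookkeeping with the \emph{cocycle (triple-overlap) conditions} and pullbacks. The pretopology on $\xxx$ from Proposition \ref{proppretopx} is defined via pullbacks, and one must check that the pullbacks $U_i \times_U U_j$ computed in $\uuu$ lift compatibly to pullbacks $u_i^\ast X \times_X u_j^\ast X$ in $\xxx$ (this is exactly the content of the earlier proposition on commutative squares: the relevant squares become pullbacks because all four morphisms are cartesian and $F$ sends them to a pullback). Once this compatibility of overlaps — double and triple — is in place, the matching of cocycle data on both levels is formal, and the identification of descent categories above goes through. I would also note at the start that all the hypotheses needed to even state the result (pullbacks in $\aaa$, $\xxx$, $\uuu$; $F$, $G$ preserving pullbacks; the chosen compatible cartesian morphisms) are exactly those assumed in the run-up to the statement, so no extra hypotheses are required.
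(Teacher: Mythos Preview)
The paper does not prove this proposition; it is recorded without proof in the preliminary \S \ref{parfibstack} as a standard fact about fibered categories. Your overall strategy --- decompose a descent datum for $\aaa^{FG}$ into an $\xxx^F$-descent datum (glued via the stack hypothesis on $\xxx^F$) together with an $\aaa^G$-descent datum over the resulting cover in $\xxx$ --- is the standard argument and is correct in outline.

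There is, however, a recurring slip in your identifications. You write $\aaa^{FG}(U) = \aaa^G(X)$ and, earlier, that ``the fibres $\aaa^{FG}(U_i)$ coincide with $\aaa^{G}(u_i^{\ast}X)$''; both are false. The category $\aaa^{FG}(U)$ contains \emph{all} $A'$ with $FG(A') = U$, and different such $A'$ may sit over different objects $G(A') \in \xxx^F(U)$. Likewise, your coproduct description of $\Des(S, \aaa^{FG})$ cannot be right as stated, since morphisms of descent data need not fix the underlying $\xxx^F$-datum. The correct formulation is that $G$ induces a commutative square
\[
\xymatrix{ {\aaa^{FG}(U)} \ar[r] \ar[d]_G & {\Des(S, \aaa^{FG})} \ar[d]^G \\ {\xxx^F(U)} \ar[r] & {\Des(S, \xxx^F)} }
\]
whose bottom arrow is an equivalence (resp.\ fully faithful) by hypothesis, and such that over each $X \in \xxx^F(U)$ the induced functor on fibres is isomorphic to the comparison functor $\aaa^G(X) \lra \Des(S^X, \aaa^G)$, again an equivalence (resp.\ fully faithful). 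Verifying this fibrewise identification is exactly the pullback-and-cocycle bookkeeping you anticipate at the end; once it is in place, the conclusion follows from the elementary fact that a functor over an equivalence of bases which restricts to an equivalence on each fibre is itself an equivalence. With this reformulation your argument goes through.
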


\subsection{Map-graded categories and functors}\label{mapgraded}

Let $\uuu$ be a base category.
A \emph{$\uuu$-graded set} $\XXX$ consists of the datum, for every $U \in \uuu$, of a set $\XXX_U$.
A \emph{($k$-linear) $\uuu$-graded category} (see \cite{lowenmap}) $\AAA$ consists of the following data:

\begin{itemize}
\item For every $U \in \uuu$, a set $\AAA_U$ of \emph{objects over $U$}.
\item For every $u: V \lra U$ in $\uuu$, $B \in \AAA_V$, $A \in \AAA_U$, a $k$-module $\AAA_u(B,A)$ of \emph{morphisms over $u$}.
\item For a further $v: W \lra V$ in $\uuu$ and $C \in \AAA_W$, a \emph{composition} map
$$\AAA_u(B,A) \otimes_k \AAA_v(C,B) \lra \AAA_{uv}(C,A).$$
\item For $A \in \AAA_U$, an \emph{identity element}
$$1_A \in \AAA_{1_U}(A,A).$$
\end{itemize}

These data should satisfy the obvious category-type axioms, i.e. the composition has to be associative and the identity elements have to act identically under composition.
There is an associated $k$-linear category $\tilde{\AAA}$ with $\Ob(\tilde{\AAA}) = \coprod_{U \in \uuu} \AAA_U$ and $\AAA(A_V, A_U) = \oplus_{u \in \uuu(V,U)} \AAA_u(A_V, A_U)$.
Clearly, a $\uuu$-graded category has an underlying $\uuu$-graded set given by the object sets.

\begin{remark}\label{remlin}\label{remgraded}
The definition of a $k$-linear $\uuu$-graded category is a $k$-linearized version of the notion of a category over $\uuu$ from \S \ref{parfibstack}. Indeed, if we drop $k$-linearity from the definition, a $\uuu$-graded category $\aaa$ now has a natural associated category $\tilde{\aaa}$ with $\tilde{\aaa}(A_V, A_U) = \coprod_{u \in \uuu(V,U)} \AAA_u(A_V, A_U)$. Hence, there is a natural functor $\tilde{\aaa} \lra \uuu$, and the datum of this functor is equivalent to the datum of $\aaa$.
To avoid confusion, for us a \emph{$\uuu$-graded category} will impicitely mean a $k$-linear $\uuu$-graded category over some fixed $k$, whereas we will refer to the non-linear variant explicitely as a \emph{non-linear $\uuu$-graded category}.
\end{remark}

\begin{example}\label{exmonoid}
Suppose $\uuu$ has a single object $\ast$ and $\AAA_{\ast} = \{\ast\}$. Then $G = \uuu(\ast, \ast)$ is a monoid, and $\AAA$ corresponds to a $G$-graded algebra $A$ with $A_g = \AAA_g(\ast, \ast)$. Thus, in the spirit of \cite{mitchell}, we can view map-graded categories as monoid-graded algebras with several objects.
\end{example}

\begin{example}\label{exstandgr}
A $k$-linear category $\AAA$ can be made into a graded category in several natural ways:
\begin{enumerate} 
\item Let $\uuu_{\AAA}$ be the category with $\Ob(\uuu_{\AAA}) = \Ob(\AAA)$ and $\uuu_{\AAA}(B,A) = \{\ast \}$ for all $B, A \in \AAA$. Then we can make $\AAA$ into a $\uuu_{\AAA}$-graded category $\AAA_{st}$ with $(\AAA_{st})_A = \{ A \}$ and $(\AAA_{st})_{\ast}(B,A) = \AAA(B,A)$. We refer to this grading as the \emph{standard grading} on $\AAA$.
\item Let $e$ be the category with one object $\ast$ and one morphism $1_{\ast}$. Then we can make $\AAA$ into an $e$-graded category $\AAA_{tr}$ with $(\AAA_{tr})_{\ast} = \Ob(\AAA)$ and $(\AAA_{tr})_{1_{\ast}}(B,A) = \AAA(B,A)$. We refer to this grading as the \emph{trivial grading} on $\AAA$.
\item Clearly, every partition of $\Ob(\AAA)$ will give rise to a graded incarnation of $\AAA$ ``in between'' the two extremes described in (1) and (2).
\end{enumerate}
\end{example}

\begin{example}\label{exfreegr}
Let $\uuu$ be a category. The \emph{free $\uuu$-graded category} $k\uuu$ is defined by putting $k\uuu_U = \{ U \}$ and $k\uuu_u(V,U) = k$ for all $u: V \lra U \in \uuu$. Compositions are defined by means of the multiplication of $k$, and identity elements are provided by the unit of $k$. Instead of $k$, we can actually use an arbitrary $k$-algebra $A$ and perform a similar construction.
\end{example}

\begin{example}\label{exsharp}
Let $\XXX$ be a $\uuu$-graded set. There is an associated small category $\uuu^{\sharp}$ and a $\uuu^{\sharp}$-graded set $\XXX^{\sharp}$ involving only singleton sets above the objects of $\uuu^{\sharp}$. Precisely, we put $\Ob(\uuu^{\sharp}) = \coprod_{U \in \uuu} \XXX_U$ and $\uuu^{\sharp}(X_U, X_{U'}) = \uuu(U, U')$, and $\XXX^{\sharp}_{X_U} = \{ X_U \}$.  If $\AAA$ is a $\uuu$-graded category, we further obtain the $\uuu^{\sharp}$-graded category $\AAA^{\sharp}$ with in addition $\AAA^{\sharp}_u(A_U, A_{U'}) = \AAA_u(A_U, A_{U'})$.
\end{example}

\begin{example}\label{extrst}
With the notations of Example \ref{exstandgr}, for a $k$-linear category, we have $(\uuu_{\AAA}, \AAA_{st}) = (e, \AAA_{tr})^{\sharp}$.
\end{example}

Let $\varphi: \vvv \lra \uuu$ be a functor, $\XXX$ a $\uuu$-graded set and $\YYY$ a $\vvv$-graded set. A \emph{$\varphi$-graded map} $F$ consists of maps $F_V: \YYY_V \lra \XXX_{\varphi(V)}$ for every $V \in \vvv$.

Let $\AAA$ be a $\uuu$-graded category and $\BBB$ a $\vvv$-graded category.
A \emph{$\varphi$-graded functor} $F$ consists of the following data:

\begin{itemize}
\item For every $V \in \vvv$, a map $F_V: \BBB_V \lra \AAA_{\varphi(V)}$.
\item For every $v: V \lra V'$ in $\vvv$, $B \in \BBB_V$ and $B' \in \BBB_{V'}$, a $k$-linear map $$\BBB_v(B,B') \lra \AAA_{\varphi(v)}(F(B), F(B')).$$
\end{itemize}

These data should satisfy the obvious functoriality-type axioms, i.e. $F$ respects compositions and identity elements.
Clearly, a $\varphi$-graded functor has an underlying $\varphi$-graded map between object sets.
Let an underlying ground ring $k$ be fixed. Map-graded categories (resp. sets) and functors (resp. maps) constitute a category $\Map$ (resp. $\Mas$) in the following way. An object of $\Map$ (resp. $\Mas$) is given by a small category $\uuu$ and a $\uuu$-graded category (resp. set) $\AAA$. A morphism $(\uuu, \AAA) \longmapsto (\vvv, \BBB)$ is given by a functor $\varphi: \uuu \lra \vvv$ and a $\varphi$-graded functor (resp. map) $F: \AAA \lra \BBB$. 

\begin{example}
Let $\AAA$ be a linear category. Let $(\AAA_{st}, \uuu_{\AAA})$ be the graded category resulting from the standard grading on $\AAA$ as in Example \ref{exstandgr} (1) and let $(\AAA_{tr}, e)$ be the graded category resulting from the trivial grading on $\AAA$ as in Example \ref{exstandgr} (2). Let $\varphi: \uuu_{\AAA} \lra e$ be the unique functor. The map $(\AAA_{st})_A = \{ A \} \lra \Ob(\AAA) = (\AAA_{tr})_{\ast}: A \longmapsto A$ and the maps $1: (\AAA_{st})_{\ast}(A,A') = \AAA(A,A') \lra \AAA(A,A') = (\AAA_{tr})_{\ast}(A,A')$ define a $\varphi$-graded functor.
\end{example}

\begin{example}\label{exadj}
Let $\AAA$ be a $\uuu$-graded category and $\BBB$ a linear category with associated $e$-graded category $\BBB_{tr}$ with trivial grading as in Example \ref{exstandgr} (2) and let $\varphi: \uuu \lra e$ be the unique functor. There is a one-one correspondence between $\varphi$-graded functors $\AAA \lra \BBB_{tr}$ and $k$-linear functors $\tilde{\AAA} \lra \BBB$.
\end{example}

\begin{example}\label{exfunsharp}
With the notations of Example \ref{exsharp}, let $\AAA$ be a $\uuu$-graded set resp. category and let $\varphi_{\AAA}: \uuu^{\sharp} \lra \uuu$ be the forgetful functor. There is a $\varphi_{\AAA}$-graded map resp. functor $F_{\AAA}: \AAA^{\sharp} \lra \AAA$ given by $F(A_U) = A_U \in \AAA_U$ (and $1: \AAA^{\sharp}_u(A,A') \lra \AAA_u(A,A')$ in the category case).
A graded map resp. functor $(\varphi, F): (\vvv, \BBB) \lra (\uuu, \AAA)$ gives rise to a commutative square
$$\xymatrix{ {(\vvv, \BBB)} \ar[r]^{(\varphi, F)} & {(\uuu, \AAA)} \\ {(\vvv^{\sharp}, \BBB^{\sharp})} \ar[u]^{(\varphi_{\BBB}, F_{\BBB})} \ar[r]_{(\varphi^{\sharp}, F^{\sharp})} & {(\uuu^{\sharp}, \AAA^{\sharp}).} \ar[u]_{(\varphi_{\AAA}, F_{\AAA})} }$$
We thus obtain a natural functor 
$$(-)^{\sharp}: \Map \lra \Map: (\uuu, \AAA) \longmapsto (\uuu^{\sharp}, \AAA^{\sharp})$$
and similarly in the case of $\Mas$.
\end{example}

\begin{example}
Denote the category of small $k$-linear categories by $\Cat(k)$. There are natural functors $\Map \lra \Cat(k): \AAA \longmapsto \tilde{\AAA}$ and $\Cat(k) \lra \Map: \AAA \longmapsto \AAA_{tr}$. By Example \ref{exadj}, the first functor is left adjoint to the second functor.
\end{example}

\subsection{Two fibered categories of map-graded categories}\label{parfibmap}

Let $k$ be a fixed commutative ground ring. We consider the categories $\Map$ and $\Mas$ as defined in \S \ref{mapgraded}, as well as the category $\Cat$ of small categories.
There are natural forgetful functors
$$\Psi_1: \Map \lra \Mas: (\uuu, \AAA) \longmapsto (\uuu, \XXX_{\AAA})$$
where $\XXX_{\AAA}$ is the underlying $\uuu$-graded set of objects of $\AAA$ and
$$\Psi_0: \Mas \lra \Cat: (\uuu, \AAA) \longmapsto \uuu.$$
Further, all three categories $\Map$, $\Mas$ and $\Cat$ have pullbacks and the two functors $\Psi_1$ and $\Psi_0$ preserve them. We first look at $\Cat$. For functors $\varphi_1: \vvv_1 \lra \uuu$ and $\varphi_2: \vvv_2 \lra \uuu$, the pullback
\begin{equation}\label{pb}
\xymatrix{{\vvv_1} \ar[r]^{\varphi_1} & {\uuu} \\ {\vvv_1 \times_{\uuu} \vvv_2} \ar[u]^{\alpha_1} \ar[r]_{\alpha_2} & {\vvv_2} \ar[u]_{\varphi_2}}
\end{equation}
is given by the category $\vvv_1 \times_{\uuu} \vvv_2$ where
$$\Ob(\vvv_1 \times_{\uuu} \vvv_2) = \Ob(\vvv_1) \times_{\Ob(\uuu)} \Ob(\vvv_2)$$ and
$$(\vvv_1 \times_{\uuu} \vvv_2)((V_1, V_2), (V_1', V_2')) = \vvv_1(V_1, V_1') \times_{\uuu(\varphi_1(V_1), \varphi_2(V_2))} \vvv_2(V_2, V_2')$$
are given by the pullbacks in the category of sets.
%We denote $$\varphi_1 \wedge \varphi_2 = \varphi_1\alpha_1 = \varphi_2 \alpha_2.$$
For a collection of functors $\varphi_i: \vvv_i \lra \uuu$, we similarly obtain a limit category $\prod_{i, \uuu} \vvv_i$. 

\begin{example}\label{exint}
Consider subcategories $\vvv_1 \subseteq \uuu$ and $\vvv_2 \subseteq \uuu$. We define the category $\vvv_1 \cap \vvv_2$ with $\Ob(\vvv_1 \cap \vvv_2) = \Ob(\vvv_1) \cap \Ob(\vvv_2)$ and $(\vvv_1 \cap \vvv_2)(V,V') = \vvv_1(V,V') \cap \vvv_2(V,V')$. The canonical inclusion functors $\vvv_1 \cap \vvv_2 \subseteq \vvv_1$ and $\vvv_2 \subseteq \vvv_1 \cap \vvv_2$ induce an isomorphism of categories $\vvv_1 \cap \vvv_2 \cong \vvv_1 \times_{\uuu} \vvv_2$.
\end{example}

Pullbacks in $\Mas$ and $\Map$ are described in a similar fashion. For instance, the pullback of graded categories
is described by
$$\xymatrix{ {(\vvv_1, \BBB_1)} \ar[r]^{(\varphi_1, F_1)} & {(\uuu, \AAA)}\\   {(\vvv_1 \times_{\uuu} \vvv_2, \BBB_1 \times_{\AAA} \BBB_2)} \ar[u]^{(\alpha_1, G_1)} \ar[r]_-{(\alpha_2, G_2)} & {(\vvv_2, \BBB_2)} \ar[u]_-{(\varphi_2, F_2)} }$$
with underlying pullback of categories described by \eqref{pb} and with
$$(\BBB_1 \times_{\AAA} \BBB_2)_{(V_1, V_2)} = (\BBB_1)_{V_1} \times_{\AAA_{\varphi_1(V_1)}} (\BBB_2)_{V_2}$$
and
$$(\BBB_1 \times_{\AAA} \BBB_2)_{(v_1, v_2)}((B_1, B_2), (B_1', B_2')) = (\BBB_1)_{v_1}(B_1, B_1') \times_{\AAA_{\varphi_1(v_1)}(F_1(B_1), F_1(B_1'))} (\BBB_2)_{v_2}(B_2, B_2').$$

\begin{example}\label{expullsharp}
The functors $(-)^{\sharp}: \Map \lra \Map$ and $(-)^{\sharp}: \Mas \lra \Mas$ from Example \ref{exfunsharp} preserves pullbacks.
\end{example}

Thus, the results of \S \ref{parfibstack} apply to the composable functors $\Psi_0$ and $\Psi_1$.
In the remainder of this section we show that both these functors are fibered.

We start with $\Psi_0$. Let $\varphi: \vvv \lra \uuu$ be a functor and $\XXX$ a $\uuu$-graded set. We define the $\vvv$-graded set $\XXX^{\varphi}$ with $$\XXX^{\varphi}_V = \XXX_{\varphi(V)}$$
and the morphism of graded sets $(\varphi, \delta^{\varphi, \XXX}): (\vvv, \XXX^{\varphi}) \lra (\uuu, \XXX)$ with $\delta^{\varphi, \XXX}_V: \XXX^{\varphi}_V = \XXX_{\varphi(V)} \lra \XXX_{\varphi(V)}$ given by the identity morphism.

\begin{proposition}
A morphism $(\varphi, F): (\vvv, \YYY) \lra (\uuu, \XXX)$ of graded sets is cartesian with respect to $\Psi_0$ if and only if for every $V \in \vvv$ the map $F_V: \YYY_V \lra \XXX_{\varphi(V)}$ is an isomorphism of sets.
\end{proposition}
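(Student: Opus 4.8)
The plan is to unravel the definition of cartesianness for $\Psi_0$ and reduce the claim to an elementary observation of Yoneda type. Fix the morphism $(\varphi, F)\colon (\vvv, \YYY) \lra (\uuu, \XXX)$; it lies over $\varphi\colon \vvv \lra \uuu$ in $\Cat$, so by definition it is cartesian with respect to $\Psi_0$ precisely when, for every functor $\psi\colon \www \lra \vvv$ in $\Cat$ and every $\www$-graded set $\ZZZ$, post-composition with $(\varphi, F)$ induces a bijection
\[
\Mas_{\psi}\big((\www,\ZZZ),(\vvv,\YYY)\big) \lra \Mas_{\varphi\psi}\big((\www,\ZZZ),(\uuu,\XXX)\big).
\]
Here a $\psi$-graded map $\ZZZ \lra \YYY$ is nothing but a family $(G_W\colon \ZZZ_W \lra \YYY_{\psi(W)})_{W \in \www}$ of maps of sets, and similarly on the right, so that both Hom-sets decompose as products over $W \in \www$ of ordinary function sets, and under this decomposition the displayed map is the product of the post-composition maps $F_{\psi(W)} \circ -\colon \Hom_{\Set}(\ZZZ_W, \YYY_{\psi(W)}) \lra \Hom_{\Set}(\ZZZ_W, \XXX_{\varphi\psi(W)})$.

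For the ``if'' implication, suppose each $F_V$ is a bijection of sets. Then for every $W$ the map $F_{\psi(W)} \circ -$ is a bijection, being post-composition with a bijection, and hence so is the product of these over all $W$. Since this holds for arbitrary $\psi$ and $\ZZZ$, the morphism $(\varphi, F)$ is cartesian.

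For the ``only if'' implication, suppose $(\varphi, F)$ is cartesian and fix $V \in \vvv$. I would test cartesianness against $\www = e$, the one-object one-morphism category of Example~\ref{exstandgr}(2), the functor $\psi\colon e \lra \vvv$ selecting the object $V$, and the $e$-graded set $\ZZZ$ with $\ZZZ_{\ast}$ a chosen one-element set. Then the canonical identifications $\Mas_{\psi}((e,\ZZZ),(\vvv,\YYY)) \cong \YYY_V$ and $\Mas_{\varphi\psi}((e,\ZZZ),(\uuu,\XXX)) \cong \XXX_{\varphi(V)}$ carry the post-composition map exactly to $F_V$, which must therefore be a bijection. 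Alternatively, one can first verify directly that the morphism $(\varphi, \delta^{\varphi, \XXX})\colon (\vvv, \XXX^{\varphi}) \lra (\uuu, \XXX)$ constructed just above, all of whose components are identity maps, is cartesian (this simultaneously shows $\Psi_0$ is a fibered category), and then observe that any morphism over $\varphi$ with bijective components factors as $(\varphi, \delta^{\varphi,\XXX})$ composed with the fiber isomorphism $(1_{\vvv}, F)$ in $\Mas_{\vvv}$, hence is cartesian, while conversely a cartesian lift of $\varphi$ is unique up to such a fiber isomorphism and so has bijective components.

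I do not expect any real obstacle: the whole content is keeping the bookkeeping of the fibered-category definitions straight — in particular remembering that the base in the general definition of ``cartesian'' is $\Cat$ itself, so the test objects $C$ range over $\www$-graded sets for test functors $\psi\colon \www \lra \vvv$ — and tracking the composition of graded maps componentwise. The one mildly delicate point is choosing the test data economically, i.e.\ realising that a single one-object category together with a singleton-valued graded set already recovers each $F_V$.
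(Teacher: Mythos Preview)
Your proof is correct. The paper states this proposition without proof, treating it as an immediate unwinding of the definitions; your argument supplies exactly that unwinding, and both the ``if'' direction via the product decomposition of the fiber Hom-sets and the ``only if'' direction via the singleton test at $\www = e$ are clean and complete.
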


\begin{proposition}\label{propfib0}
\begin{enumerate}
\item The morphisms $(\varphi, \delta^{\varphi, \XXX})$ are cartesian with respect to $\Psi_0$.
\item The category $\Mas$ is fibered over $\Cat$ through $\Psi_0$.
\end{enumerate}
\end{proposition}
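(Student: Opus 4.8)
The plan is to read off both statements directly from the construction of $(\vvv, \XXX^\varphi)$ together with the cartesian-morphism criterion proved just above, treating $\Psi_0 \colon \Mas \lra \Cat$ as the category over $\Cat$ to which the general formalism of \S\ref{parfibstack} applies.

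For (1), I would fix a functor $\varphi \colon \vvv \lra \uuu$ and a $\uuu$-graded set $\XXX$, and note that by construction $(\varphi, \delta^{\varphi,\XXX}) \colon (\vvv, \XXX^\varphi) \lra (\uuu, \XXX)$ lies over $\varphi$ and has each component $\delta^{\varphi,\XXX}_V \colon \XXX^\varphi_V = \XXX_{\varphi(V)} \lra \XXX_{\varphi(V)}$ equal to the identity, hence an isomorphism of sets; the preceding proposition then immediately gives that $(\varphi, \delta^{\varphi,\XXX})$ is cartesian with respect to $\Psi_0$. If one wants a self-contained argument instead of invoking that proposition, one unwinds the definition of a cartesian morphism: for $\psi \colon \www \lra \vvv$ and a $\www$-graded set $\ZZZ$, a $\psi$-graded map $\ZZZ \lra \XXX^\varphi$ is precisely a family of maps $(\ZZZ_W \lra \XXX_{\varphi\psi(W)})_{W \in \www}$, which is literally the same datum as a $\varphi\psi$-graded map $\ZZZ \lra \XXX$, and postcomposition with $\delta^{\varphi,\XXX}$ acts as the identity on this common datum because every $\delta^{\varphi,\XXX}_{\psi(W)}$ is an identity map; hence the composition map $\Mas_\psi((\www,\ZZZ),(\vvv,\XXX^\varphi)) \lra \Mas_{\varphi\psi}((\www,\ZZZ),(\uuu,\XXX))$ is a bijection.

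For (2), I would simply invoke the definition of a fibered category from \S\ref{parfibstack}: given $\varphi \colon \vvv \lra \uuu$ in $\Cat$ and an object $(\uuu,\XXX)$ of $\Mas$ lying over $\uuu$, take $\varphi^{\ast}(\uuu,\XXX) := (\vvv, \XXX^\varphi)$, which lies over $\vvv$, together with the morphism $\delta^{\varphi,\XXX}$, which lies over $\varphi$ and is cartesian by part (1). This furnishes a cartesian lift for every pair $(\varphi, (\uuu,\XXX))$, so $\Psi_0$ is fibered and the $\delta^{\varphi,\XXX}$ form a choice of cartesian morphisms. There is essentially no obstacle here: the only point that takes a moment is checking that composing with $\delta^{\varphi,\XXX}$ is the identity at the level of component maps, which is forced by those components being identities. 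I would also remark, for later use, that this choice is strictly functorial, namely $\psi^{\ast}\varphi^{\ast}\XXX = (\varphi\psi)^{\ast}\XXX$ and $\delta^{\varphi,\XXX}\delta^{\psi,\XXX^\varphi} = \delta^{\varphi\psi,\XXX}$ hold on the nose, so that by Proposition \ref{funchoice} the associated pseudofunctor $\Cat^{\op} \lra \Cat$ is in fact an honest functor.
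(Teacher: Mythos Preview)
Your proposal is correct and matches the paper's approach exactly: the paper gives no explicit proof for this proposition, treating it as an immediate consequence of the preceding characterization of cartesian morphisms (each $\delta^{\varphi,\XXX}_V$ is an identity, hence an isomorphism) together with the definition of a fibered category. Your additional remark on strict functoriality anticipates precisely what the paper states later as a separate proposition.
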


Next we look at $\Psi_1$. 
Let $(\varphi, F): (\vvv, \YYY) \lra (\uuu, \XXX)$ be a morphism of graded sets, and let $\AAA$ be a $\uuu$-graded category with underlying graded set $\XXX$. We define the $\vvv$-graded category $\AAA^{\varphi, F}$ with
$$\AAA^{\varphi, F}_V = \YYY_V$$
and with, for $v: V \lra V'$ in $\vvv$, $Y \in \YYY_V$, $Y' \in \YYY_{V'}$:
$$\AAA^{\varphi, F}_v(Y, Y') = \AAA_{\varphi(v)}(F(Y), F(Y'))$$
and with, for another $v': V' \lra V''$ in $\vvv$ and $Y'' \in \YYY_{V''}$, the composition in $\AAA^{\varphi, F}$
$$\AAA_{\varphi(v')}(F(Y'), F(Y'')) \otimes \AAA_{\varphi(v)}(F(Y), F(Y')) \lra \AAA_{\varphi(v' v)}(F(Y), F(Y''))$$
defined by the composition in $\AAA$, and similarly for the identity elements in $\AAA^{\varphi, F}_{1_V}(Y,Y) = \AAA_{1_{\varphi(V)}}(F(Y), F(Y))$.
We define the morphism of graded categories
$(\varphi, \delta = \delta^{\varphi, F, \AAA}): (\vvv, \AAA^{\varphi, F}) \lra (\uuu, \AAA)$ by the morphisms $\delta_V = F_V: \YYY_V \lra \AAA_{\varphi(V)}$ and the identity morphisms
$$\AAA^{\varphi, F}_v(Y, Y') = \AAA_{\varphi(v)}(F(Y), F(Y')) \lra \AAA_{\varphi(v)}(F(Y), F(Y')).$$

\begin{proposition}\label{lemcart1}
A morphism $(\varphi, F): (\vvv, \BBB) \lra (\uuu, \AAA)$ of graded categories is cartesian with respect to $\Psi_1$ if and only if for every $v: V \lra V'$ in $\vvv$, $B \in \BBB_V$, $B' \in \BBB_{V'}$, the map $\BBB_v(B,B') \lra \AAA_{\varphi(v)}(F(B), F(B'))$ is an isomorphism. 
\end{proposition}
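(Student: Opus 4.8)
The plan is to reduce the statement to the auxiliary construction $(\vvv, \AAA^{\varphi, F})$ and its structure morphism $\delta^{\varphi, F, \AAA}$ built just above, where we also write $F$ for the underlying graded map of the graded functor $F$. The key observation is that the underlying graded set of $\AAA^{\varphi, F}$ is $\XXX_{\BBB}$, the same as that of $\BBB$, and that $(\varphi, F)$ factors in $\Map$ as
\[(\vvv, \BBB) \xrightarrow{(1_{\vvv}, \tilde{F})} (\vvv, \AAA^{\varphi, F}) \xrightarrow{\delta^{\varphi, F, \AAA}} (\uuu, \AAA),\]
where $\tilde{F}$ is the identity on objects and, on morphism modules, is the structure map $F_v: \BBB_v(B, B') \lra \AAA_{\varphi(v)}(F(B), F(B')) = \AAA^{\varphi, F}_v(B, B')$ of $F$. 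Since composition and identities in $\AAA^{\varphi, F}$ are those inherited from $\AAA$ and $F$ respects composition and identities, $\tilde{F}$ is a morphism in $\Map$ lying over $1_{\vvv}$; and the hypothesis that every $F_v$ is an isomorphism is exactly the statement that $\tilde{F}$ is an isomorphism in $\Map$.

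For the implication $(\Leftarrow)$, I would verify the defining lifting property of a cartesian morphism directly. Given a $\www$-graded category $\CCC$, a morphism $(\psi, H): (\www, \XXX_{\CCC}) \lra (\vvv, \XXX_{\BBB})$ in $\Mas$, and a graded functor $K: \CCC \lra \AAA$ lying over $\Psi_1(\varphi, F) \circ (\psi, H)$, one defines $G: \CCC \lra \BBB$ over $(\psi, H)$ by $G_w := (F_{\psi(w)})^{-1} \circ K_w$ on morphism modules. Injectivity of the maps $F_v$ forces $G$ to be the only possible lift, and a routine check --- using that $F$ and $K$ are graded functors together with injectivity of the $F_v$ --- shows that $G$ respects composition and identities, hence is a graded functor and the required unique lift. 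Thus $(\varphi, F)$ is cartesian with respect to $\Psi_1$.

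For the implication $(\Rightarrow)$, I would apply the lifting property twice. First, with base morphism $1_{(\vvv, \XXX_{\BBB})}$, test object $(\vvv, \AAA^{\varphi, F})$ and incoming morphism $\delta^{\varphi, F, \AAA}$, we obtain a unique $\tilde{G}: \AAA^{\varphi, F} \lra \BBB$ over $1_{\vvv}$ --- necessarily the identity on objects --- with $(\varphi, F) \circ \tilde{G} = \delta^{\varphi, F, \AAA}$; on morphism modules this says that $F_v \circ \tilde{G}_v$ is the identity, so each $F_v$ is a split epimorphism. To upgrade this to an isomorphism, observe that $(\varphi, F) \circ \tilde{G} \circ \tilde{F} = \delta^{\varphi, F, \AAA} \circ \tilde{F} = (\varphi, F) = (\varphi, F) \circ 1_{\BBB}$, so the uniqueness part of the lifting property --- now with test object $(\vvv, \BBB)$ --- gives $\tilde{G} \circ \tilde{F} = 1_{\BBB}$, i.e. $\tilde{G}_v \circ F_v$ is the identity. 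Hence every $F_v$ is an isomorphism, with inverse $\tilde{G}_v$. The only point that needs genuine care is this second use of the lifting property: the first use alone yields merely a one-sided inverse, and one has to feed the object $\BBB$ itself back into the universal property to conclude; everything else parallels the treatment of $\Psi_0$ in Proposition \ref{propfib0}.
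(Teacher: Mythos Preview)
Your proof is correct. The paper states this proposition without proof, treating it as a routine verification; your argument supplies the details cleanly. The factorization $(\varphi, F) = \delta^{\varphi, F, \AAA} \circ (1_{\vvv}, \tilde{F})$ through the canonical cartesian morphism is exactly the right organizing device, and both directions go through as you describe. In particular, the point you flag as needing care in the $(\Rightarrow)$ direction --- that the first lift gives only a one-sided inverse $\tilde{G}$ with $F_v \circ \tilde{G}_v = 1$, and one must invoke uniqueness a second time with test object $\BBB$ itself to obtain $\tilde{G}_v \circ F_v = 1$ --- is genuinely the only subtlety, and you handle it correctly.
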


\begin{proposition}\label{propfib1}
\begin{enumerate}
\item The morphisms $(\varphi, \delta^{\varphi, F, \AAA})$ are cartesian with respect to $\Psi_1$.
\item The category $\Map$ is fibered over $\Mas$ through $\Psi_1$.
\end{enumerate}
\end{proposition}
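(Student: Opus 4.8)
The plan is to reduce both assertions to the characterization of $\Psi_1$-cartesian morphisms obtained in Proposition \ref{lemcart1}; the only real work lies in checking that the data $\AAA^{\varphi,F}$ and $\delta^{\varphi,F,\AAA}$ written down just above are well-defined.

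First I would verify that $\AAA^{\varphi,F}$ is a genuine $\vvv$-graded category. Its object sets are the $\YYY_V$ and its morphism modules are $\AAA^{\varphi,F}_v(Y,Y') = \AAA_{\varphi(v)}(F(Y), F(Y'))$, with composition and identities transported from $\AAA$ using functoriality of $\varphi$ on indices (so $\varphi(v'v) = \varphi(v')\varphi(v)$ and $\varphi(1_V) = 1_{\varphi(V)}$) and $F$ on objects. Associativity of composition and the unit axioms then follow at once from the corresponding axioms in $\AAA$, applied to elements of $\AAA_{\varphi(v'')}$, $\AAA_{\varphi(v')}$, $\AAA_{\varphi(v)}$ and to $1_{F(Y)} \in \AAA_{1_{\varphi(V)}}(F(Y),F(Y))$. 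In the same breath one checks that $\delta^{\varphi,F,\AAA}$ is a morphism of graded categories: on objects it is $\delta_V = F_V : \YYY_V \lra \AAA_{\varphi(V)} = \XXX_{\varphi(V)}$, and on morphism modules it is the identity of $\AAA_{\varphi(v)}(F(Y),F(Y'))$; since the composition and units of $\AAA^{\varphi,F}$ were defined to be literally those of $\AAA$, these maps respect composition and identities. One also reads off directly that $\Psi_1(\varphi, \delta^{\varphi,F,\AAA}) = (\varphi, F)$, where $\XXX$ denotes the underlying graded set of $\AAA$.

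For part (1) I would then simply invoke Proposition \ref{lemcart1}: for every $v: V \lra V'$ in $\vvv$ and $Y \in \AAA^{\varphi,F}_V$, $Y' \in \AAA^{\varphi,F}_{V'}$, the component $\AAA^{\varphi,F}_v(Y,Y') \lra \AAA_{\varphi(v)}(F(Y),F(Y'))$ of $\delta^{\varphi,F,\AAA}$ is by construction an identity map, hence an isomorphism, which is exactly the condition in Proposition \ref{lemcart1} for $(\varphi,\delta^{\varphi,F,\AAA})$ to be cartesian with respect to $\Psi_1$. For part (2), being fibered over $\Mas$ through $\Psi_1$ means that for each morphism $(\varphi,F): (\vvv,\YYY) \lra (\uuu,\XXX)$ in $\Mas$ and each object of $\Map$ in the fiber of $\Psi_1$ over $(\uuu,\XXX)$ --- that is, each $\uuu$-graded category $\AAA$ with underlying graded set $\XXX$ --- there is a cartesian morphism over $(\varphi,F)$ with that target. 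The object $(\vvv, \AAA^{\varphi,F})$ lies in the fiber over $(\vvv,\YYY)$ since $\AAA^{\varphi,F}_V = \YYY_V$, and $(\varphi,\delta^{\varphi,F,\AAA})$ is such a cartesian morphism by part (1); this establishes the claim.

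I do not expect a genuine obstacle here: the proof is the evident $k$-linear, object-transported analogue of the treatment of $\Psi_0$ in Proposition \ref{propfib0}. The one point requiring minor care is keeping the identifications $\AAA^{\varphi,F}_v(Y,Y') = \AAA_{\varphi(v)}(F(Y),F(Y'))$ straight when writing out the associativity and unit axioms, since here --- unlike in the $\Psi_0$ case --- the objects are moved by $F$ rather than left fixed.
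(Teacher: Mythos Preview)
Your proposal is correct and matches the paper's implicit approach: the paper states Proposition~\ref{lemcart1} immediately before Proposition~\ref{propfib1} and gives no explicit proof of the latter, treating it as an obvious consequence of the characterization together with the fact that the component maps of $\delta^{\varphi,F,\AAA}$ are identities. Your write-up spells out exactly the routine verifications the paper leaves to the reader.
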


By Proposition \ref{propfibtrans}, as a consequence of Propositions \ref{propfib0} and \ref{propfib1}, the category $\Map$ is fibered over $\Cat$ through $\Psi$. We end this section by describing the choice of cartesian morphisms which follows from the higher choices for $\Psi_0$ and $\Psi_1$.

Let $\AAA$ be a $\uuu$ graded category and let $\varphi: \vvv \lra \uuu$ be a functor. We define the $\vvv$-graded category $\AAA^{\varphi}$ with 
$$\AAA^{\varphi}_V = \AAA_{\varphi(V)}$$ and with, for $v: V \lra V'$ in $\vvv$, $A \in \AAA^{\varphi}_V$ and $A' \in \AAA^{\varphi}_{V'}$
$$\AAA^{\varphi}_v(A, A') = \AAA_{\varphi(v)}(A,A').$$
For another $v': V' \lra V''$ in $\vvv$ and $A'' \in \AAA^{\varphi}_{V''}$, the composition in $\AAA^{\varphi}$
$$\AAA_{\varphi(v')}(A', A'') \otimes \AAA_{\varphi(v)}(A,A') \lra \AAA_{\varphi(v'v)}(A,A'')$$
is defined by the composition in $\AAA$, and similarly for the identity elements in $\AAA_{\varphi(1_V)}(A,A) = \AAA_{1_{\varphi(V)}}(A,A)$.
We further define the morphism of graded categories $(\varphi,\delta^{\varphi, \AAA}): (\vvv, \AAA^{\varphi}) \lra (\uuu, \AAA)$
for which the maps
$$\AAA^{\varphi}_V \lra \AAA_{\varphi(V)}$$ are given by identities and the maps
$$\AAA^{\varphi}_v(A, A') \lra \AAA_{\varphi(v)}(A,A')$$
as well. 

\begin{proposition}\label{propcart}
\begin{enumerate}
\item The morphisms $(\varphi, \delta^{\varphi, \AAA})$ are cartesian with respect to $\Psi_1$.
\item The category $\Map$ is fibered over $\Cat$ through $\Psi$.
\end{enumerate}
\end{proposition}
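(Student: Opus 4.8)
The plan is to deduce both statements from the transitivity results for fibered categories in \S\ref{parfibstack}, applied to the composable functors $\Psi_1: \Map \lra \Mas$ and $\Psi_0: \Mas \lra \Cat$, whose composite is $\Psi$. The key observation is that the objects $\AAA^{\varphi}$ and morphisms $(\varphi, \delta^{\varphi, \AAA})$ introduced just before the proposition are precisely the cartesian data produced by the two-step recipe preceding Proposition \ref{stacktrans}: first take the $\Psi_0$-cartesian lift $(\varphi, \delta^{\varphi, \XXX_{\AAA}})$ of $\varphi$ along the underlying graded set $\XXX_{\AAA}$ of $\AAA$, then take the $\Psi_1$-cartesian lift of \emph{that} morphism. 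Concretely, since the component maps of $\delta^{\varphi, \XXX_{\AAA}}$ are identities, one has $\AAA^{\varphi} = \AAA^{\varphi, \delta^{\varphi, \XXX_{\AAA}}}$ and $\delta^{\varphi, \AAA} = \delta^{\varphi, \delta^{\varphi, \XXX_{\AAA}}, \AAA}$ in the notation of the construction before Proposition \ref{propfib1}.

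For (1), I would invoke directly the characterization of $\Psi_1$-cartesian morphisms in Proposition \ref{lemcart1}. For $(\varphi, \delta^{\varphi, \AAA}): (\vvv, \AAA^{\varphi}) \lra (\uuu, \AAA)$, the structure map $\AAA^{\varphi}_v(A, A') \lra \AAA_{\varphi(v)}(\delta^{\varphi,\AAA}(A), \delta^{\varphi,\AAA}(A'))$ is by construction the identity of $\AAA_{\varphi(v)}(A,A')$ for every $v: V \lra V'$ in $\vvv$ and all $A \in \AAA^{\varphi}_V$, $A' \in \AAA^{\varphi}_{V'}$; in particular it is an isomorphism, so Proposition \ref{lemcart1} applies. (Equivalently, this is the special case of Proposition \ref{propfib1}(1) in which the graded-set morphism $(\varphi, F)$ is taken to be the $\Psi_0$-cartesian lift $(\varphi, \delta^{\varphi, \XXX_{\AAA}})$.)

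For (2), recall from \S\ref{parfibmap} that $\Map$, $\Mas$, $\Cat$ all have pullbacks and that $\Psi_0$ and $\Psi_1$ preserve them, so the hypotheses of Proposition \ref{propfibtrans} are met. Since $\Mas$ is fibered over $\Cat$ through $\Psi_0$ (Proposition \ref{propfib0}) and $\Map$ is fibered over $\Mas$ through $\Psi_1$ (Proposition \ref{propfib1}), Proposition \ref{propfibtrans}(2) gives that $\Map$ is fibered over $\Cat$ through $\Psi = \Psi_0\Psi_1$. To pin down the cartesian data explicitly, note that for $\varphi: \vvv \lra \uuu$ and $(\uuu, \AAA)$ the candidate is $(\vvv, \AAA^{\varphi})$ together with $(\varphi, \delta^{\varphi, \AAA})$, and this morphism is $\Psi$-cartesian by Proposition \ref{propfibtrans}(1): by part (1) it is $\Psi_1$-cartesian, and its image $\Psi_1(\varphi, \delta^{\varphi, \AAA})$ equals $(\varphi, \delta^{\varphi, \XXX_{\AAA}})$ on the nose (both have identity component maps), which is $\Psi_0$-cartesian by Proposition \ref{propfib0}(1); hence $(\varphi, \delta^{\varphi, \AAA})$ is cartesian for $\Psi_0\Psi_1$. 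This simultaneously records that the $(\varphi, \delta^{\varphi, \AAA})$ form the choice of cartesian morphisms for $\Psi$ induced by those chosen for $\Psi_0$ and $\Psi_1$.

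There is no serious obstacle: the argument is entirely a matter of unwinding the explicit constructions and citing \S\ref{parfibstack}. The only point needing a moment's care is the compatibility of the two one-step constructions -- verifying that $\Psi_1$ carries $(\varphi, \delta^{\varphi, \AAA})$ exactly to the graded-set morphism $(\varphi, \delta^{\varphi, \XXX_{\AAA}})$, so that Proposition \ref{propfibtrans}(1) can be invoked in part (2); this is immediate once one observes that both morphisms are built from identity component maps.
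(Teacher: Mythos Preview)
Your proposal is correct and follows exactly the approach the paper intends: the paragraph preceding Proposition~\ref{propcart} already states that the result follows from Proposition~\ref{propfibtrans} applied to Propositions~\ref{propfib0} and~\ref{propfib1}, and that $(\varphi, \delta^{\varphi, \AAA})$ is the two-step cartesian lift obtained by first applying $\delta^{\varphi, \XXX_{\AAA}}$ and then $\delta^{\varphi, \delta^{\varphi, \XXX_{\AAA}}, \AAA}$. Your unwinding of these identifications and your direct appeal to Proposition~\ref{lemcart1} for part~(1) are precisely what the paper leaves implicit.
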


\begin{proposition}\label{lemcart}
A morphism $(\varphi, F): (\vvv, \BBB) \lra (\uuu, \AAA)$ of graded categories is cartesian with respect to $\Psi$ if and only if the following hold:
\begin{enumerate}
\item For $V \in \vvv$, the map $F_V: \BBB_V \lra \AAA_{\varphi(V)}$ is an isomorphism.
\item For $v: V \lra V'$ in $\vvv$, $B \in \BBB_V$, $B' \in \BBB_{V'}$, the map $\BBB_v(B,B') \lra \AAA_{\varphi(v)}(F(B), F(B'))$ is an isomorphism. 
\end{enumerate}
\end{proposition}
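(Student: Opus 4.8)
The plan is to characterize cartesianness with respect to the composite functor $\Psi = \Psi_0 \Psi_1$ by unwinding the transitivity result of Proposition \ref{propfibtrans}(1) together with the explicit descriptions of cartesian morphisms for $\Psi_0$ and $\Psi_1$ already obtained. First I would recall that a morphism $(\varphi, F): (\vvv,\BBB) \lra (\uuu,\AAA)$ factors through the intermediate category $\Mas$ as $(\vvv,\BBB) \lra (\vvv, \XXX_{\BBB}) \lra (\uuu, \XXX_{\AAA})$, where the first arrow lies in a fiber of $\Psi_0$ (it is the identity on underlying graded sets) and the second arrow is $\Psi_1((\varphi,F))$. The strategy then is: (i) show that if conditions (1) and (2) hold, then $(\varphi,F)$ is cartesian for $\Psi$; (ii) show the converse.

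For direction (i), assume (1) and (2). Condition (1) says each $F_V$ is an isomorphism of sets, which by the proposition preceding \ref{propfib0} means that $\Psi_1((\varphi,F)) = (\varphi, F): (\vvv, \XXX_{\BBB}) \lra (\uuu, \XXX_{\AAA})$ is cartesian with respect to $\Psi_0$. Condition (2) says each map $\BBB_v(B,B') \lra \AAA_{\varphi(v)}(F(B),F(B'))$ is an isomorphism, which by Proposition \ref{lemcart1} means $(\varphi,F)$ is cartesian with respect to $\Psi_1$. Proposition \ref{propfibtrans}(1) then immediately gives that $(\varphi,F)$ is cartesian with respect to $\Psi = \Psi_0\Psi_1$. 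Alternatively, and perhaps more cleanly for the paper, one can simply verify directly that $(\varphi, \delta^{\varphi,\AAA}): (\vvv, \AAA^{\varphi}) \lra (\uuu,\AAA)$ is cartesian for $\Psi$ — this is Proposition \ref{propcart}(1), which follows the same way — and observe that any $(\varphi,F)$ satisfying (1) and (2) factors as an \emph{isomorphism} $(\vvv,\BBB) \cong (\vvv, \AAA^{\varphi})$ over $1_{\vvv}$ followed by $(\varphi,\delta^{\varphi,\AAA})$; since isomorphisms in a fiber are cartesian and composites of cartesian morphisms over composable base maps are cartesian, we are done.

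For direction (ii), suppose $(\varphi,F)$ is cartesian with respect to $\Psi$. The cartesian morphism $(\varphi, \delta^{\varphi,\AAA})$ supplied by Proposition \ref{propcart}(1) has the same image $\varphi$ under $\Psi$, so by the uniqueness of cartesian lifts up to unique isomorphism in the fiber over $1_{\vvv}$, there is an isomorphism $(\vvv,\BBB) \cong (\vvv,\AAA^{\varphi})$ in $\Map$ lying over $1_{\vvv}$ and compatible with the two morphisms to $(\uuu,\AAA)$. Spelling out what an isomorphism over $1_{\vvv}$ is — a $\vvv$-graded functor that is bijective on each object set $\BBB_V \to \AAA^{\varphi}_V = \AAA_{\varphi(V)}$ and an isomorphism on each morphism module — and composing with $\delta^{\varphi,\AAA}$ (which is the identity on objects and morphisms) yields exactly conditions (1) and (2) for $(\varphi,F)$.

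The main obstacle I anticipate is bookkeeping rather than mathematics: one must be careful that the "choice of cartesian morphisms for $\Psi$" described just before the statement — namely $u^{\ast}A = x^{\ast}A$ with $x = \delta^{u,G(A)}$, obtained by composing the $\Psi_0$- and $\Psi_1$-choices — genuinely coincides with the direct construction $\AAA^{\varphi}$ given in the paragraph preceding Proposition \ref{propcart}, so that Proposition \ref{propcart}(1) is legitimately available. Once that identification is in hand (it is a direct unwinding: $\XXX_{\AAA}^{\varphi}$ has $V \mapsto \XXX_{\AAA, \varphi(V)}$ and then $(\XXX_{\AAA}^{\varphi})$-graded lift of $\AAA$ along the identity-on-objects map recovers $\AAA^{\varphi}_v(A,A') = \AAA_{\varphi(v)}(A,A')$), the rest is the formal argument above, and the characterization in conditions (1)–(2) is just the conjunction of the fiberwise-isomorphism criteria from the proposition before \ref{propfib0} and from Proposition \ref{lemcart1}.
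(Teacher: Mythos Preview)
Your proposal is correct and follows precisely the approach the paper sets up: the paper does not give an explicit proof of this proposition, but the surrounding text (``By Proposition \ref{propfibtrans}, as a consequence of Propositions \ref{propfib0} and \ref{propfib1}\dots'') makes clear that the intended argument is exactly the one you give, combining the characterizations of cartesian morphisms for $\Psi_0$ and $\Psi_1$ via the transitivity of Proposition \ref{propfibtrans}(1) for one direction, and uniqueness of cartesian lifts for the other.
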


To distinguish between the notions arrising in Propositions \ref{lemcart1} and \ref{lemcart}, we make the following

\begin{definition}\label{defcart}
Consider a morphism $(\varphi, F): (\vvv, \BBB) \lra (\uuu, \AAA)$ between graded categories.
\begin{enumerate}
\item $(\varphi, F)$ is called \emph{cartesian} if it is cartesian with respect to $\Psi$.
\item $(\varphi, F)$ is called \emph{subcartesian} if it is cartesian with respect to $\Psi_1$.
\end{enumerate}
\end{definition}

\begin{remark}
\begin{enumerate}
\item If $(\varphi, F)$ is cartesian, then it is subcartesian.
\item $(\varphi,F)$ is subcartesian if and only if $(\varphi^{\sharp}, F^{\sharp})$ as defined in Example \ref{exfunsharp} is cartesian.
\end{enumerate}
\end{remark}

\begin{example}\label{exfff}
Let $\BBB \lra \AAA$ be a fully faithful $k$-linear functor between $k$-linear categories. With the notations from Example \ref{exstandgr} the natural graded functor $(e, \BBB_{tr}) \lra (e, \AAA_{tr})$ is subcartesian and the functor
$(\uuu_{\BBB}, \BBB_{st}) \lra (\uuu_{\AAA}, \AAA_{st})$ is cartesian (see also Example \ref{extrst}).
\end{example}

\subsection{Sites of categories, graded sets and graded categories}\label{parsite}

In this section, we will introduce pretopologies on the categories $\Cat$, $\Mas$ and $\Map$.
We start by introducing appropriate nerves. For a small category $\uuu$, we denote by $\nnn(\uuu)$ the simplicial nerve of $\uuu$. Concretely, for $n \geq 1$, $\nnn_n(\uuu)$ consists of the $n$-simplices
$$\xymatrix{ {U_0} \ar[r]_{u_0} & {U_1} \ar[r]_{u_1} & {\dots} \ar[r]_{u_{n-1}} & {U_n}}$$
with $U_i \in \Ob(\uuu)$ and $u_i \in \Mor(\uuu)$, and $\nnn_0(\uuu) = \Ob(\uuu)$. Note that $\nnn_1(\uuu) = \Mor(\uuu)$.
For a functor $\varphi: \vvv \lra \uuu$, we obtain an induced map $\nnn(\varphi): \nnn(\vvv) \lra \nnn(\uuu)$ between the  nerves, with components $\nnn_n(\varphi): \nnn_n(\vvv) \lra \nnn_n(\uuu)$.

For a $\uuu$-graded category or set $\AAA$, we consider the associated category $\uuu^{\sharp}$ from Example \ref{exsharp} with $\Ob(\uuu^{\sharp}) = \coprod_{U \in \uuu} \AAA_U$ and $\uuu^{\sharp}(A_U, A'_{U'}) = \uuu(U, U')$. We define the \emph{nerve} of $\AAA$ to be $\nnn(\AAA) = \nnn(\uuu, \AAA) = \nnn(\uuu^{\sharp})$. 
By Example \ref{exfunsharp}, a graded map or functor $(\varphi, F): (\vvv, \BBB) \lra (\uuu, \AAA)$ induces an associated map $\nnn(F): \nnn(\BBB) \lra \nnn(\AAA)$ given by $\nnn(\varphi^{\sharp}): \nnn(\vvv^{\sharp}) \lra \nnn(\uuu^{\sharp})$.

\begin{definition}\label{defncover}
Let $n \in \N \cup \{\infty\}$.  A collection of functors (resp. graded maps, resp. graded functors) $(F_i: \BBB_i \lra \AAA)_{i \in I}$ is an \emph{$n$-cover} of $\AAA$ in $\Cat$ (resp. in $\Mas$, resp. in $\Map$) if for all $k \in \N$ with $k \leq n$, the collection of maps $(\nnn_k(F_i))_{i \in I}$ is jointly surjective.
\end{definition}

Thus, a collection of graded maps (resp. graded functors) $(\varphi_i, F_i): (\vvv_i, \BBB_i) \lra (\uuu, \AAA)$ is an $n$-cover of $\AAA$ in $\Mas$ (resp. in $\Map$) if and only if the collection of functors $(\varphi^{\sharp}: \vvv_i^{\sharp} \lra \uuu^{\sharp})_{i \in I}$ is an $n$-cover in $\Cat$.

\begin{lemma}\label{lemcover}
Let $n \in \N$. 
Consider a collection of functors (resp. graded maps, resp. graded functors) $\sss = (F_i: \BBB_i \lra \AAA)_{i \in I}$. If the collection of maps $(\nnn_n(F_i))_{i \in I}$ is jointly surjective, then $\sss$ is an $n$-cover in $\Cat$ (resp. in $\Mas$, resp. in $\Map$).
\end{lemma}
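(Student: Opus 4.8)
The plan is to reduce the statement to the case of $\Cat$ and then to exploit that the nerve $\nnn(-)$ is a simplicial set, so that joint surjectivity in one degree automatically propagates to all lower degrees via the standard trick of inserting identity morphisms and then deleting them again.

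First I would dispose of the reduction. By the remark following Definition~\ref{defncover}, a collection of graded maps (resp.\ graded functors) $(\varphi_i, F_i)\colon (\vvv_i, \BBB_i) \lra (\uuu, \AAA)$ is an $n$-cover in $\Mas$ (resp.\ in $\Map$) if and only if the collection of functors $(\varphi_i^{\sharp}\colon \vvv_i^{\sharp} \lra \uuu^{\sharp})_{i \in I}$ is an $n$-cover in $\Cat$, and by the construction of $\nnn(-)$ together with Example~\ref{exfunsharp} one has $\nnn_k(F_i) = \nnn_k(\varphi_i^{\sharp})$ for every $k$. Hence it suffices to prove the statement in $\Cat$: if the collection $(\nnn_n(\varphi_i)\colon \nnn_n(\vvv_i) \lra \nnn_n(\uuu))_{i \in I}$ is jointly surjective, then $(\nnn_k(\varphi_i))_{i \in I}$ is jointly surjective for every $0 \leq k \leq n$. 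The case $n = 0$ is vacuous, so I would assume $n \geq 1$.

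Now fix $k$ with $0 \leq k \leq n$ and an arbitrary $k$-simplex $\sigma = (U_0 \xrightarrow{u_0} \dots \xrightarrow{u_{k-1}} U_k)$ in $\nnn_k(\uuu)$. Form the $n$-simplex $\hat{\sigma} \in \nnn_n(\uuu)$ obtained by appending $n-k$ copies of the identity $1_{U_k}$:
$$\hat{\sigma} = (U_0 \xrightarrow{u_0} \dots \xrightarrow{u_{k-1}} U_k \xrightarrow{1_{U_k}} U_k \xrightarrow{1_{U_k}} \dots \xrightarrow{1_{U_k}} U_k).$$
By hypothesis there is an index $i \in I$ and an $n$-simplex $\tau \in \nnn_n(\vvv_i)$ with $\nnn_n(\varphi_i)(\tau) = \hat{\sigma}$. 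Let $\tau' \in \nnn_k(\vvv_i)$ be the $k$-simplex obtained from $\tau$ by truncating it to its first $k$ composable morphisms, i.e.\ by $n-k$ applications of the last face operator $d_n$ of the simplicial set $\nnn(\vvv_i)$. Since $\nnn(\varphi_i)$ is a morphism of simplicial sets, it commutes with these face operators, so $\nnn_k(\varphi_i)(\tau')$ is the $k$-truncation of $\hat{\sigma}$, which is exactly $\sigma$. Thus $\sigma \in \Beeld(\nnn_k(\varphi_i))$, and as $\sigma$ was arbitrary, $(\nnn_k(\varphi_i))_{i\in I}$ is jointly surjective.

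I do not expect any genuine obstacle here: the argument is the elementary fact that a simplicial map surjective in degree $n$ is surjective in all degrees $\leq n$. The only points that deserve to be spelled out are the naturality in the category variable of the face operators of the nerve (immediate from the explicit description of $\nnn(-)$ in \S\ref{parsite}) and the identity $\nnn_k(F_i) = \nnn_k(\varphi_i^{\sharp})$ invoked in the reduction step, which is part of the definition of the nerve of a map-graded object together with Example~\ref{exfunsharp}.
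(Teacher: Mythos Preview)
Your proof is correct and follows essentially the same approach as the paper: pad a $k$-simplex to an $n$-simplex by inserting identities, lift it using the joint surjectivity hypothesis, and then truncate back to degree $k$ using that nerve maps are simplicial. The only cosmetic difference is that the paper inserts the $n-k$ identities at the front (at $U_0$) rather than at the back (at $U_k$), and does not explicitly invoke the simplicial language of face operators.
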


\begin{proof}
Let $k < n$. Let us look at the graded cases, and let $\AAA$ be graded over $\uuu$ and $\BBB$ over $\vvv_i$. Every $k$-simplex $u$ for $\uuu^{\sharp}$ gives rise to an $n$-simplex $u'$ by adding $n - k$ identity maps $1_{U_0}: U_0 \lra U_0$. If $u' = \varphi_i(v')$ for some $i$ and $v' \in \vvv_i^{\sharp}$, then removing the first $n -k$ maps from $v'$ yields a $k$-simplex $v$ with $\varphi(v) = u$.
\end{proof}

\begin{lemma}\label{lemnervepb}
Let $n \in \N$. Consider a pullback in $\Cat$, $\Mas$ or $\Map$:
$$\xymatrix{ {\BBB_1} \ar[r]^{F_1} & {\AAA} \\ {\BBB_1 \times_{\AAA} \BBB_2} \ar[u]^{G_1} \ar[r]_-{G_2} & {\BBB_2} \ar[u]_{F_2} }$$
We have $\nnn_n(\BBB_1 \times_{\AAA} \BBB_2) \cong \nnn_n(\BBB_1) \times_{\nnn(\AAA)} \nnn(\BBB_2).$
\end{lemma}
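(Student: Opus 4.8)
The statement to prove is Lemma \ref{lemnervepb}: for a pullback square of categories (resp. graded sets, resp. graded categories), the $n$-simplices of the nerve of the pullback are the pullback of the $n$-simplices of the nerves. The plan is to reduce everything to the case of small categories via the $(-)^{\sharp}$ construction, and there to do a direct check that the nerve functor $\nnn_n$ preserves the relevant pullbacks. First I would recall from Example \ref{exfunsharp} that passing to $(\uuu^{\sharp},\AAA^{\sharp})$ is functorial, and from Example \ref{expullsharp} that $(-)^{\sharp}$ preserves pullbacks; since by definition $\nnn_n(\AAA) = \nnn_n(\uuu^{\sharp})$, this means it suffices to prove the lemma for a pullback of small categories $\vvv_1 \times_{\uuu} \vvv_2$. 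So the graded cases follow formally from the $\Cat$ case, and I only need to treat $\Cat$.

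**The $\Cat$ case.** Here I would argue concretely using the explicit description of the pullback $\vvv_1 \times_{\uuu} \vvv_2$ given in \S\ref{parfibmap}: its objects are pairs of objects agreeing in $\uuu$, and its morphisms are pairs of morphisms agreeing in $\uuu$, both computed as pullbacks in $\Set$. An $n$-simplex of $\nnn(\vvv_1 \times_\uuu \vvv_2)$ is a chain
$$\xymatrix{ {(W_0, W_0')} \ar[r] & {(W_1,W_1')} \ar[r] & {\dots} \ar[r] & {(W_n,W_n')} }$$
of composable morphisms in the pullback category. Unwinding the definition of morphisms in $\vvv_1 \times_\uuu \vvv_2$, such a chain is precisely the same data as a pair consisting of an $n$-simplex $(W_0 \to \dots \to W_n)$ in $\vvv_1$ and an $n$-simplex $(W_0' \to \dots \to W_n')$ in $\vvv_2$ whose images under $\varphi_1$ and $\varphi_2$ coincide as $n$-simplices of $\uuu$ — which is exactly an element of $\nnn_n(\vvv_1) \times_{\nnn_n(\uuu)} \nnn_n(\vvv_2)$. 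For $n = 0$ this is the statement $\Ob(\vvv_1 \times_\uuu \vvv_2) = \Ob(\vvv_1) \times_{\Ob(\uuu)} \Ob(\vvv_2)$, and for $n \geq 1$ it follows by combining the object-pullback with the $n$-fold morphism-pullback and checking that the composability conditions on the two sides match up. I would also note that, since $\nnn(\AAA) = \nnn(\uuu^\sharp)$ and $\nnn(\uuu)$ has $\nnn_m(\uuu)$ in each degree, writing $\nnn_n(\BBB_1) \times_{\nnn(\AAA)} \nnn(\BBB_2)$ in the statement means the degreewise pullback $\nnn_n(\BBB_1) \times_{\nnn_n(\AAA)} \nnn_n(\BBB_2)$; I'd make this identification explicit at the start.

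**Main obstacle.** There is no deep obstacle here — this is a bookkeeping lemma — but the one place to be careful is the bijection at the level of morphism-chains: one must check that an $n$-chain in $\vvv_1 \times_\uuu \vvv_2$ is the same as a compatible pair of $n$-chains, rather than merely that there is a map in one direction. The point is that composition in $\vvv_1 \times_\uuu \vvv_2$ is computed componentwise, so a composable string of pair-morphisms is literally a pair of composable strings with matching $\uuu$-image; the only thing to verify is that "matching $\uuu$-image of each morphism" for the pair is equivalent to "matching $\uuu$-image as an $n$-simplex", which is immediate since an $n$-simplex of $\nnn(\uuu)$ is determined by (and consists of) its string of morphisms together with its source object. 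I would dispatch this in one or two sentences and then invoke Examples \ref{exfunsharp} and \ref{expullsharp} to transport the conclusion to $\Mas$ and $\Map$.
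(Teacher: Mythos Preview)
Your argument is correct. The paper actually states this lemma without proof, treating it as an elementary observation; your write-up supplies precisely the routine verification one would expect, and your reduction of the $\Mas$ and $\Map$ cases to the $\Cat$ case via $(-)^{\sharp}$ (using Example \ref{expullsharp} and the definition $\nnn(\AAA) = \nnn(\uuu^{\sharp})$) is exactly the intended mechanism.
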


%\begin{proof}
%Since the functor $\Mas \lra \Cat: (\uuu, \AAA) \lra \uuu^{\sharp}$ preserves pullbacks, it suffices to look at $\Cat$, where the statement is easily seen to hold true.
%\end{proof}

\begin{proposition}\label{pretop}
Let $n \in \N \cup \{ \infty \}$ be fixed. The $n$-covers from Definition \ref{defncover} define pretopologies on $\Cat$, $\Mas$ and $\Map$. 
\end{proposition}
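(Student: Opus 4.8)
The plan is to verify the three pretopology axioms directly from Definition \ref{defncover}, treating all three categories $\Cat$, $\Mas$ and $\Map$ uniformly via the nerve functor $\nnn$; by the remark following Definition \ref{defncover} the graded cases reduce to the case of $\Cat$ applied to the associated $\sharp$-categories, so it suffices to argue in terms of jointly surjective families of maps of simplicial sets $\nnn_k(\varphi_i): \nnn_k(\vvv_i) \lra \nnn_k(\uuu)$ for $k \leq n$. First I would check axiom (1): for the singleton family $\{1_{\uuu}\}$ each $\nnn_k(1_{\uuu}) = 1_{\nnn_k(\uuu)}$ is a bijection, hence trivially jointly surjective, so $\{1_{\uuu}\}$ is an $n$-cover for every $n$.

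Next, axiom (3), stability under composition, is the easiest: if $(\varphi_i: \vvv_i \lra \uuu)_i$ is an $n$-cover and each $(\psi_{ij}: \www_{ij} \lra \vvv_i)_j$ is an $n$-cover, then for $k \leq n$ the family $(\nnn_k(\varphi_i))_i$ is jointly surjective and each $(\nnn_k(\psi_{ij}))_j$ is jointly surjective; given a $k$-simplex $\sigma$ of $\uuu$ pick $i$ and a $k$-simplex $\tau$ of $\vvv_i$ with $\nnn_k(\varphi_i)(\tau) = \sigma$, then pick $j$ and a $k$-simplex $\rho$ of $\www_{ij}$ with $\nnn_k(\psi_{ij})(\rho) = \tau$; functoriality of $\nnn$ gives $\nnn_k(\varphi_i \psi_{ij})(\rho) = \sigma$, so the composite family is an $n$-cover.

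The main point is axiom (2), stability under base change. Given an $n$-cover $(\varphi_i: \vvv_i \lra \uuu)_i$ and an arbitrary map $\psi: \www \lra \uuu$, I must show the pullback family $(\www \times_{\uuu} \vvv_i \lra \www)_i$ is an $n$-cover. Here Lemma \ref{lemnervepb} does the real work: it gives, for each $k \leq n$, a natural isomorphism $\nnn_k(\www \times_{\uuu} \vvv_i) \cong \nnn_k(\www) \times_{\nnn_k(\uuu)} \nnn_k(\vvv_i)$ compatible with the projections. Joint surjectivity of $(\nnn_k(\varphi_i))_i$ is then transported through this pullback of sets: given a $k$-simplex $w \in \nnn_k(\www)$, its image $\nnn_k(\psi)(w) \in \nnn_k(\uuu)$ lifts to some $v \in \nnn_k(\vvv_i)$ with $\nnn_k(\varphi_i)(v) = \nnn_k(\psi)(w)$, and the pair $(w,v)$ defines a $k$-simplex of $\nnn_k(\www) \times_{\nnn_k(\uuu)} \nnn_k(\vvv_i) \cong \nnn_k(\www \times_{\uuu} \vvv_i)$ mapping to $w$. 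Thus the pullback family is again jointly surjective on $k$-simplices for all $k \leq n$, as required. For the case $n = \infty$ all three verifications hold for every finite $k$ simultaneously, so the $\infty$-covers form a pretopology as well.

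The only mild subtlety — and the step I would be most careful about — is making sure Lemma \ref{lemnervepb} applies verbatim in $\Mas$ and $\Map$, i.e. that the pullback in $\Map$ of a cartesian-type diagram has the expected $\sharp$-nerve; but this is exactly the content of the explicit description of pullbacks in \S \ref{parfibmap} together with Example \ref{expullsharp}, so no new argument is needed. I would conclude by remarking that the three pretopologies on $\Cat$, $\Mas$, $\Map$ are compatible with the forgetful functors $\Psi_0$, $\Psi_1$ in the sense of the propositions of \S \ref{parfibstack}, which is what licenses the later stack statements.
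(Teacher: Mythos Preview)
Your proof is correct and follows essentially the same approach as the paper: identity and composition are immediate, and pullback stability is reduced to the $\Set$ fact that pullbacks of jointly surjective families are jointly surjective, via the nerve-preserves-pullbacks statement (Lemma \ref{lemnervepb}, which packages Example \ref{expullsharp} together with the $\Cat$ case). The paper's own proof is a three-line version of exactly this argument.
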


\begin{proof}
The identity and glueing properties are immediate. Concerning the pullback property, we note that in $\Set$, the pullback of a jointly surjective collection of maps is jointly surjective. Hence, the result follows from Example \ref{expullsharp}.
%For the pullback property, it suffices to consider 
%$$\xymatrix{ {\uuu'} \ar[r]^{\psi} & {\uuu} \\ {\uuu' \times_{\uuu} \vvv_i} \ar[u]^{\varphi'_i} \ar[r] & {\vvv_i} \ar[u]_{\varphi_i}}$$
%where $(\varphi_i)_{i \in I}$ is a cover of $\uuu$. For an arbitrary $u' \in \nnn_k(\uuu')$, $\psi(u') = \varphi_i(v) \in \nnn_k(\uuu)$ for some $i \in I$ and some $v \in \nnn_k(\vvv_i)$. Using Lemma \ref{lemnervepb}, we find a natural element $(u', v) \in \nnn_k(\uuu' \times_{\uuu} \vvv_i)$ such that $\varphi'_i(u', v) = u'$.
\end{proof}

\begin{example}\label{excover}
\begin{enumerate}
\item A functor $\varphi: \vvv \lra \uuu$ which is full and surjective on objects constitutes an $\infty$-cover of $\uuu$ in $\Cat$.
\item Consider a collection of objects $(U_i)_{i \in I}$ in the category $\uuu$ and the accociated collection of functors $(\varphi_i: \uuu/U_i \lra \uuu)_{i \in I}$. The collection $(\varphi_i)_i$ is a $1$-cover in $\Cat$ if for every object $U \in \uuu$, there exists a morphism $U \lra U_i$ for some $i$. In this case, the collection $(\varphi_i)_i$ is automatically an $\infty$-cover of $\uuu$. 
\end{enumerate}
\end{example}

\subsection{The stack of map-graded sets}\label{parmasstack}

Consider the functor $\Psi_0: \Mas \lra \Cat$ and consider the pretopology $\ttt_n$ of $n$-covers from Proposition \ref{pretop} on $\Cat$ for some $n \in \N \cup \{ \infty \}$.

\begin{proposition}
The pseudofunctor $\Mas$ associated to $\Psi_0$ is a functor.
\end{proposition}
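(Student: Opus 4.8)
The plan is to invoke Proposition \ref{funchoice}: the pseudofunctor associated to a fibered category equipped with a choice of cartesian morphisms is a genuine functor as soon as the chosen cartesian lifts compose strictly. So the task reduces to checking this strictness for the particular choice of cartesian morphisms for $\Psi_0$ exhibited in Proposition \ref{propfib0}.

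First I would recall that choice. For $\varphi: \vvv \lra \uuu$ and a $\uuu$-graded set $\XXX$ one takes $\XXX^{\varphi}$ with $\XXX^{\varphi}_V = \XXX_{\varphi(V)}$, together with the morphism $(\varphi, \delta^{\varphi, \XXX}): (\vvv, \XXX^{\varphi}) \lra (\uuu, \XXX)$ whose underlying functor is $\varphi$ and whose graded-map components $\delta^{\varphi, \XXX}_V : \XXX_{\varphi(V)} \lra \XXX_{\varphi(V)}$ are all identity maps; the induced functor on fibers $\varphi^{\ast}$ then sends the $\uuu$-graded set $\XXX$ to the $\vvv$-graded set $\XXX^{\varphi}$. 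Next, for composable functors $\psi: \www \lra \vvv$ and $\varphi: \vvv \lra \uuu$ and any $\uuu$-graded set $\XXX$, I would verify the two hypotheses of Proposition \ref{funchoice}: that $(\XXX^{\varphi})^{\psi} = \XXX^{\varphi\psi}$ and that $\delta^{\varphi, \XXX}\,\delta^{\psi, \XXX^{\varphi}} = \delta^{\varphi\psi, \XXX}$. The first is literally the equality $\XXX_{\varphi(\psi(W))} = \XXX_{(\varphi\psi)(W)}$ of sets for each $W \in \www$, hence an on-the-nose equality of $\www$-graded sets. The second holds because the composite morphism has underlying functor $\varphi\psi$ and, in each component, a composite of identity maps, so it coincides with $\delta^{\varphi\psi, \XXX}$. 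Finally I would note that $\XXX^{1_{\uuu}} = \XXX$ and $\delta^{1_{\uuu}, \XXX} = 1_{(\uuu, \XXX)}$, so that identities are sent to identities. Proposition \ref{funchoice} then yields the statement.

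There is essentially no obstacle here; the only point worth flagging is conceptual. Cartesian lifts are a priori determined only up to canonical isomorphism, so it is not automatic that the resulting pseudofunctor is strict. What makes it strict is the explicit formula $\XXX^{\varphi}_V = \XXX_{\varphi(V)}$: since all the structure maps of the chosen lifts are literal identities, composition of lifts is strictly associative and the coherence isomorphisms disappear. Everything else is a routine unwinding of the definitions in \S\ref{parfibmap}.
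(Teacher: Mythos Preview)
Your proposal is correct and is precisely the argument the paper intends: the proposition is stated without proof, but Proposition \ref{funchoice} was set up exactly for this purpose, and your verification that the chosen cartesian lifts $(\varphi, \delta^{\varphi, \XXX})$ satisfy $(\XXX^{\varphi})^{\psi} = \XXX^{\varphi\psi}$ and $\delta^{\varphi, \XXX}\,\delta^{\psi, \XXX^{\varphi}} = \delta^{\varphi\psi, \XXX}$ is the intended application.
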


Let $\sss = (\varphi_i: \vvv_i \lra \uuu)_i$ be an $n$-cover of $\uuu$ in $\Cat$. We introduce some notation to be able to describe the descent category $\Des(\sss, \Mas)$. 
We put $\vvv_{ij} = \vvv_{i} \times_{\uuu} \vvv_{j}$ and $\vvv_{ijk} = \vvv_i \times_{\uuu} \vvv_j \times_{\uuu} \vvv_k$. 
We denote the canonical maps from a $k$-fold pullback by $\alpha_1, \dots, \alpha_k$ to avoid confusion when some of the indices $i, j, \dots$ coincide. For example, we have $\alpha_1: \vvv_{ij} \lra \vvv_i$ and $\alpha_2: \vvv_{ij} \lra \vvv_j$.
For $\BBB_i \in \Mas(\vvv_i)$, we use notations like ${\BBB_i}|^1_{ij} = \BBB_i^{\alpha_1} \in \Map(\vvv_{ij})$ and ${\BBB_j}|^2_{ijk} = \BBB_j^{\alpha_2} \in \Map(\vvv_{ijk})$.
The descent category $\Des(\sss, \Mas)$ has objects given by $(\BBB_{i})_{i}$ with $\BBB_{i} \in \Mas(\vvv_{i})$ along with compatible isomorphisms 
$$\rho_{ij}: {\BBB_i}|^1_{ij} \cong {\BBB_j}|^2_{ij}.$$
Precisely, we require that the cocycle condition
\begin{equation} \label{cocyc}
(\rho_{jk}|^{23}_{ijk})(\rho_{ij}|^{12}_{ijk}) = \rho_{ik}|^{13}_{ijk}
\end{equation}
on triple pullbacks holds. Consider $(V_1, V_2, V_3) \in \vvv_{ijk}$. We have $(\rho_{jk}|^{23}_{ijk})_{(V_1, V_2, V_3)} = (\rho_{jk})_{(V_2, V_3)}: (\BBB_j)_{V_2} \lra (\BBB_k)_{V_3}$, $(\rho_{ij}|^{12}_{ijk})_{(V_1, V_2, V_3)} = (\rho_{ij})_{(V_1, V_2)}: (\BBB_i)_{V_1} \lra (\BBB_j)_{V_2}$, $(\rho_{ik}|^{13}_{ijk})_{(V_1, V_2, V_3)} = (\rho_{ik})_{(V_1, V_3)}: (\BBB_i)_{V_1} \lra (\BBB_k)_{V_3}$. hence, from \eqref{cocyc} we obtain
\begin{equation} \label{cocyc2}
(\rho_{jk})_{(V_2, V_3)} (\rho_{ij})_{(V_1, V_2)} = (\rho_{ik})_{(V_1, V_3)}.
\end{equation}
In particular, we have
\begin{equation} \label{cocyc3}
(\rho_{ii})_{(V,V)} = 1: (\BBB_i)_V \lra (\BBB_i)_V.
\end{equation}

%\begin{lemma}\label{lemii}
%The isomorphism $\rho_{ii}: \BBB_i|^1_{ii} \lra \BBB_i|^2_{ii}$ is such that for $V \in \vvv_i$, we have $(\rho_{ii})_{(V,V)} = 1: (\BBB_i)_V \lra (\BBB_i)_V$.
%and for $v: V \lra V'$ in $\vvv_i$ and $B, B' \in (\BBB_i)_V$, we have
%$(\rho_{ii})_{(v,v)} = 1: (\BBB_i)_v(B,B') \lra (\BBB_i)_v(B,B')$.
%\end{lemma}
%
%\begin{proof}
%This follows from the cocycle condition \eqref{cocyc} for $i = j = k$.
%\end{proof}

\begin{theorem}\label{thmmasstack}
The functor $\Mas$ is a stack.
\end{theorem}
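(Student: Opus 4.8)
The plan is to check the stack condition directly, exploiting the fact that for map‑graded \emph{sets} only the objects of the grading category play a role, so that the whole statement collapses to effective descent for sets along jointly surjective families. First I would reduce to $0$‑covers: by Lemma \ref{lemcover} every $n$‑cover is a $0$‑cover, so the class of $\ttt_0$‑covers contains every $\ttt_n$‑cover ($n \in \N \cup \{\infty\}$), and it suffices to prove that the comparison functor $\Mas(\uuu) \lra \Des(\sss, \Mas)$ is an equivalence for an arbitrary $0$‑cover $\sss = (\varphi_i \colon \vvv_i \lra \uuu)_i$, i.e.\ one for which $\coprod_i \Ob(\vvv_i) \lra \Ob(\uuu)$ is surjective. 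By the proposition preceding the theorem, the pseudofunctor $\Mas$ is a genuine functor with $\varphi^{\ast}\XXX = \XXX^{\varphi}$ and $\XXX^{\varphi}_V = \XXX_{\varphi(V)}$; since for $(V,V') \in \vvv_{ij}$ one has $\varphi_i(V) = \varphi_j(V') =: U$ (by definition of the pullback $\vvv_{ij} = \vvv_i \times_{\uuu} \vvv_j$), the transition isomorphisms of the descent datum attached to an object $\XXX \in \Mas(\uuu)$ are componentwise the identities $\XXX_U \lra \XXX_U$.

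Next I would prove full faithfulness. Unravelling the definitions, a morphism in $\Des(\sss, \Mas)$ between the descent data of $\XXX$ and $\YYY$ is a family of $1_{\vvv_i}$‑graded maps $G_i = (G_{i,V} \colon \XXX_{\varphi_i(V)} \lra \YYY_{\varphi_i(V)})_{V \in \vvv_i}$ compatible with the (identity) transition isomorphisms, which means precisely that $G_{i,V} = G_{j,V'}$ whenever $\varphi_i(V) = \varphi_j(V')$; thus $G_{i,V}$ depends only on $\varphi_i(V) \in \uuu$. Since $\coprod_i \Ob(\vvv_i) \lra \Ob(\uuu)$ is surjective, giving such a compatible family is the same as giving a family $(G_U \colon \XXX_U \lra \YYY_U)_{U \in \uuu}$, i.e.\ a morphism $\XXX \lra \YYY$ in $\Mas(\uuu)$ (a $1_\uuu$‑graded map carries no further condition), and this assignment is visibly bijective. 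Hence the comparison functor is fully faithful, and $\Mas$ is at least a prestack.

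Then I would prove essential surjectivity, which is the heart of the matter. Given a descent datum $((\BBB_i)_i, (\rho_{ij})_{ij})$ I would set, for $U \in \uuu$,
$$\XXX_U = \Bigl(\ \coprod_i\ \coprod_{V \in \vvv_i,\ \varphi_i(V) = U} (\BBB_i)_V\ \Bigr)\Big/\!\sim,$$
where $b \in (\BBB_i)_V$ is identified with $(\rho_{ij})_{(V,V')}(b) \in (\BBB_j)_{V'}$ for every $(V,V') \in \vvv_{ij}$. The cocycle identity \eqref{cocyc2} on triple pullbacks makes $\sim$ transitive, and its special case \eqref{cocyc3} gives $(\rho_{ji})_{(V',V)} = (\rho_{ij})_{(V,V')}^{-1}$, so $\sim$ is a genuine equivalence relation. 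One then checks that each canonical map $(\BBB_i)_V \lra \XXX_{\varphi_i(V)}$ is a bijection: it is surjective because any representative lying in some $(\BBB_j)_{V'}$ with $\varphi_j(V') = \varphi_i(V)$ sits over $(V',V) \in \vvv_{ji}$ and is carried into $(\BBB_i)_V$ by $(\rho_{ji})_{(V',V)}$, and it is injective because — using the symmetry just noted — any chain of identifications between two elements of $(\BBB_i)_V$ telescopes, via \eqref{cocyc2}, to $(\rho_{ii})_{(V,V)} = \mathrm{id}$. These bijections assemble into isomorphisms $\BBB_i \cong \XXX^{\varphi_i}$ in $\Mas(\vvv_i)$, and by the very definition of $\sim$ the squares comparing $\rho_{ij}$ with the identity transition isomorphisms of $\XXX$ commute, so $((\BBB_i)_i, (\rho_{ij})_{ij})$ is isomorphic to the image of $\XXX$. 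Here the $0$‑cover hypothesis is exactly what guarantees that $\XXX_U$ is defined for \emph{every} $U \in \uuu$ — i.e.\ that there is at least one pair $(i,V)$ with $\varphi_i(V) = U$ — which, as noted, also underlies the full faithfulness argument.

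I do not expect a serious obstacle; the argument is essentially bookkeeping, and the only step needing care is the essential surjectivity, specifically organizing the cocycle condition \eqref{cocyc} so that it simultaneously yields well‑definedness of $\sim$, bijectivity of the comparison maps, and compatibility with the $\rho_{ij}$ on all double overlaps. Conceptually this is nothing but classical effective descent for $\Set$ along jointly surjective families, read off through the identification $\Mas(\uuu) \simeq \prod_{U \in \Ob(\uuu)} \Set$; one could alternatively phrase the whole proof that way, but the direct construction above is about the same length.
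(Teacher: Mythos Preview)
Your proposal is correct and follows essentially the same approach as the paper's proof: reduce to $0$-covers, check full faithfulness by gluing the maps $G_{i,V}$ using compatibility on pullbacks, and construct the glued object as the quotient of the coproduct by the equivalence relation induced by the $\rho_{ij}$. One minor point: the fact that every $n$-cover is a $0$-cover follows directly from Definition~\ref{defncover} (since an $n$-cover requires joint surjectivity on $k$-simplices for all $k \leq n$, in particular $k=0$), not from Lemma~\ref{lemcover}, which goes in the opposite direction.
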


\begin{proof}
We may take $n = 0$. Let $\sss = (\varphi_i: \vvv_i \lra \uuu)_i$ be a $0$-cover of $\uuu$ in $\Cat$.

First, consider $\AAA, \BBB \in \Mas(\uuu)$ and a compatible collection of morphism $F_i: \BBB^{\varphi_i} \lra \AAA^{\varphi_i}$. For the pullback
$$\xymatrix{ {\vvv_i} \ar[r]^{\varphi_i} & {\uuu} \\ {\vvv_{ij}} \ar[u]^{\alpha_1} \ar[r]_{\alpha_2} & {\vvv_j} \ar[u]_{\varphi_j}}$$
we have $(\AAA^{\varphi_i})^{\alpha_1} = (\AAA^{\varphi_j})^{\alpha_2}$ and similarly for $\BBB$, and compatibility amounts to the fact that $(F_i)^{\alpha_1} = (F_j)^{\alpha_2}$. We are to define a unique glueing $F: \BBB \lra \AAA$. This consists of maps $F_U: \BBB_U \lra \AAA_U$ for every $U \in \uuu$ such that for every $i$ and $V \in \vvv_i$ we have $(F_i)_V = (F^{\varphi_i})_V = F_{\varphi_i(V)}: \BBB_{\varphi_i(V)} \lra \AAA_{\varphi_i(V)}$. Since we have a $0$-cover, for $U \in \uuu$ there is some $i$ and some $V \in \vvv_i$ with $\varphi_i(V) = U$, so we put
$F_U = (F_i)_V$. If for another $j$ and $V' \in \vvv_j$ we also have $\varphi_j(V') = U$, then $(V, V') \in \vvv_{ij}$ and $(F_i)_V = (F_i)^{\alpha_1}_{(V,V')} = (F_j)^{\alpha_2}_{(V,V')} = (F_j)_{V'}$.

Next we consider a descent datum $(\BBB_i)_i$ with $\BBB_i \in \Mas(\vvv_i)$ with compatible isomorphisms $\rho_{ij}: {\BBB_i}|^1_{ij} \cong {\BBB_j}|^2_{ij}$. We define the $\uuu$-graded set $\BBB$ with 
$$\BBB_U = \coprod_{\varphi_i(V) = U} (\BBB_i)_{V} / \sim$$
where for $X_1 \in (\BBB_i)_{V_1}$ and $X_2 \in (\BBB_j)_{V_2}$ we have $X_1 \sim X_2$ if and only if the two elements correspond through the isomorphism $(\rho_{ij})_{(V_1, V_2)}: (\BBB_i)_{V_1} \lra (\BBB_j)_{V_2}$. 
The relation $\sim$ is obviously symmetric, it is transitive by \eqref{cocyc2} and reflexive by \eqref{cocyc3}.  For every $i$ and $V \in \vvv_i$ with $\varphi_i(V) = U$, the canonical map
$(\BBB_i)_V \lra \BBB_U$ is an isomorphism, giving rise to a compatible collection of isomorphisms $F_i: \BBB_i \lra \BBB^{\varphi_i}$ constituting an isomorphism $F: (\BBB_i)_i \lra (\BBB^{\varphi_i})_i$ in $\Des(\sss, \Mas)$.
\end{proof}

\subsection{The stack of map-graded categories}\label{parstackmap}

Consider the functor $\Psi_1: \Map \lra \Mas$ and consider the pretopology $\ttt_n$ of $n$-covers from Proposition \ref{pretop} on $\Mas$ for some $n \in \N \cup \{ \infty \}$.

\begin{proposition}
The pseudofunctor $\Map$ associated to $\Psi_1$ is a functor.
\end{proposition}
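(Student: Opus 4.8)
The statement to prove is that the pseudofunctor $\Map: \Mas^{\op} \lra \Cat$ associated to the fibered category $\Psi_1: \Map \lra \Mas$ is actually a (strict) functor. By Proposition~\ref{funchoice}, it suffices to verify that the chosen cartesian morphisms compose strictly: for composable morphisms $(\psi, G): (\www, \zzz) \lra (\vvv, \YYY)$ and $(\varphi, F): (\vvv, \YYY) \lra (\uuu, \XXX)$ in $\Mas$ and a $\uuu$-graded category $\AAA$ with underlying graded set $\XXX$, I must check that the object $(\AAA^{\varphi, F})^{\psi, G}$ equals $\AAA^{\varphi\psi,\, F^{\psi}G}$ on the nose (where $F^{\psi}G$ denotes the composite graded map $\zzz \lra \XXX$ over $\varphi\psi$), and that the corresponding chosen cartesian morphisms satisfy $\delta^{\varphi,F,\AAA} \circ \delta^{\psi, G, \AAA^{\varphi,F}} = \delta^{\varphi\psi,\, F^\psi G,\, \AAA}$.

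First I would unwind both sides on objects: by the explicit formula for the cartesian lift preceding Proposition~\ref{lemcart1}, $(\AAA^{\varphi,F})^{\psi,G}_W = (\YYY^{\psi})_W = \YYY_{\psi(W)}$ and also $\AAA^{\varphi\psi, F^\psi G}_W = \zzz_W$ where $\zzz$ is the underlying graded set of the source; both are literally the $W$-component of the domain graded set, so they agree. Next, on morphism modules, for $w: W \lra W'$ in $\www$ one computes
$$(\AAA^{\varphi,F})^{\psi,G}_w(Z,Z') = (\AAA^{\varphi,F})_{\psi(w)}(G(Z), G(Z')) = \AAA_{\varphi\psi(w)}(F(G(Z)), F(G(Z'))),$$
which is exactly $\AAA^{\varphi\psi,\, F^\psi G}_w(Z,Z')$ since $F^\psi G$ sends $Z$ to $F(G(Z))$. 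The composition and identity structure maps on both sides are in each case defined to be ``the composition/identity in $\AAA$'', hence coincide. Finally, the structure morphism $\delta^{\psi, G, \AAA^{\varphi,F}}$ has object components $G_W$ and identity morphism components, and $\delta^{\varphi, F, \AAA}$ has object components $F_V$ and identity components; composing gives object components $F_{\varphi(W)} \circ G_W = (F^\psi G)_W$ (up to the identification of the nerve-index bookkeeping) and identity components, which is precisely $\delta^{\varphi\psi, F^\psi G, \AAA}$.

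The only genuinely non-formal point is making sure the \emph{associated pseudofunctor} is built from exactly these chosen lifts, i.e.\ that the transition functor $(\varphi,F)^\ast: \Map(\uuu,\XXX) \lra \Map(\vvv,\YYY)$ is $\AAA \longmapsto \AAA^{\varphi,F}$ with the evident action on $1$-graded morphisms, and that restriction along a composite uses the composite lift rather than some other choice; once the two display computations above are in place, Proposition~\ref{funchoice} applies verbatim. I expect the main (mild) obstacle to be purely notational: keeping the roles of the grading functors $\varphi, \psi$ and the graded maps $F, G$ separate, and being careful that the ungrouping functor $(-)^\sharp$ and the pullback description of \S\ref{parfibmap} are compatible with the strictness on the $\Mas$-base — but since $\Psi_0$ was already shown to have strictly composable lifts (via Proposition~\ref{funchoice} in the preceding subsection), no new subtlety arises at the $\Psi_1$ level, and the proof is a direct check of the two equalities above.
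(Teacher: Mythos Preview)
Your approach is correct and is what the paper intends: the proposition is stated there without proof, as an immediate consequence of the explicit choice of cartesian lifts and Proposition~\ref{funchoice}.

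There is one slip worth fixing. You write $(\AAA^{\varphi,F})^{\psi,G}_W = (\YYY^{\psi})_W = \YYY_{\psi(W)}$, but by the definition preceding Proposition~\ref{lemcart1}, the object set of $\BBB^{\psi,G}$ at $W$ is the \emph{source} graded set $\ZZZ_W$, not $\YYY_{\psi(W)}$ (the latter would be the $\Psi_0$-lift $\YYY^{\psi}$, which is a different construction). So in fact $(\AAA^{\varphi,F})^{\psi,G}_W = \ZZZ_W$ directly, and this agrees with $\AAA^{\varphi\psi,\, F^\psi G}_W = \ZZZ_W$. Your conclusion that ``both are literally the $W$-component of the domain graded set'' is correct, but the intermediate equality you display is not; as written, you assert $\YYY_{\psi(W)} = \ZZZ_W$, which is false in general. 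The remainder of the argument---morphism modules, composition and identity structure, and the equality of the composed $\delta$'s---is correct.
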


Let $\sss = ((\varphi_i,F_i): (\vvv_i,\YYY_i) \lra (\uuu, \XXX))_i$ be an $n$-cover of $\XXX$ in $\Mas$. We use notation that is similar to the notation introduced in \S \ref{parmasstack}.
Pullbacks are denoted $\YYY_{ij} = \YYY_i \times_{\XXX} \YYY_j$ and $\YYY_{ijk} = \YYY_i \times_{\XXX} \YYY_j \times_{\XXX} \YYY_k$ and for $\BBB_i \in \Map(\YYY_i)$, restrictions are denoted $\BBB_i|^1_{ij} \in \Map(\YYY_{ij})$ etc.. The descent category $\Des(\sss, \Map)$ has objects given by $(\BBB_i)_i$ with $\BBB_i \in \Map(\YYY_i)$ along with compatible isomorphisms
$$\rho_{ij}: \BBB_i|^1_{ij} \cong \BBB_j|^2_{ij}$$
for which the cocycle condition \eqref{cocyc} holds. To unravel this condition, we now consider maps $v_{\kappa}: (V_{\kappa}, Y_{\kappa}) \lra (V'_{\kappa}, Y'_{\kappa})$ in $\vvv_{\kappa}^{\sharp}$ for $\kappa \in \{ 1, 2, 3 \}$ Suppose $(v_1, v_2, v_3) \in \YYY_{ijk}$. We have $(\rho_{ij})_{v_1, v_2}: (\BBB_i)_{v_1}(Y_1, Y_1') \lra (\BBB_j)_{v_2}(Y_2, Y_2')$, $(\rho_{jk})_{v_2, v_3}: (\BBB_i)_{v_2}(Y_2, Y_2') \lra (\BBB_j)_{v_3}(Y_3, Y_3')$, $(\rho_{ik})_{v_1, v_3}: (\BBB_i)_{v_1}(Y_1, Y_1') \lra (\BBB_k)_{v_3}(Y_3, Y_3')$ and by \eqref{cocyc} 
\begin{equation} \label{cocycbis2}
(\rho_{jk})_{v_2, v_3} (\rho_{ij})_{v_1, v_2} = (\rho_{ik})_{v_1, v_3}.
\end{equation}
In particular, we have
\begin{equation}\label{cocycbis3}
(\rho_{ii})_{v_1,v_1} = 1: (\BBB_i)_{v_1}(Y_1,Y_1') \lra (\BBB_i)_{v_1}(Y_1,Y_1').
\end{equation}

\begin{theorem}\label{thmmapstack}
\begin{enumerate}
\item If $n \geq 2$, then $\Map$ is a prestack.
\item If $n \geq 3$, then $\Map$ is a stack.
\end{enumerate}\end{theorem}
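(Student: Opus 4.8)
The statement concerns the fibered category $\Psi_1 \colon \Map \lra \Mas$ equipped with the pretopology of $n$-covers. Part (1) says $\Map$ is a prestack for $n\geq 2$ (i.e.\ the comparison functor to the descent category is fully faithful), and part (2) says it is a stack for $n\geq 3$ (i.e.\ additionally essentially surjective, so that descent data glue). I would prove these by reducing to the underlying $\uuu^{\sharp}$ picture and then gluing morphism modules simplex-by-simplex, exactly as in the proof of Theorem \ref{thmmasstack} but one homotopical level up: where for $\Mas$ one glues objects (indexed by $0$-simplices $=$ objects of $\uuu^{\sharp}$) over a $0$-cover, here one glues the $k$-modules $\AAA_u(A,A')$ (indexed by $1$-simplices of $\uuu^{\sharp}$) over a $1$-cover, and one needs $2$-simplices (composition) and $3$-simplices (associativity) to get respectively full faithfulness and essential surjectivity. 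This is why the numerics are $n\geq 2$ and $n\geq 3$.

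\textbf{Proof of (1).} Fix $\AAA,\BBB \in \Map(\XXX)$ over $\uuu$, and a compatible collection of graded-functor morphisms $G_i \colon \BBB^{\varphi_i} \lra \AAA^{\varphi_i}$ in $\Map(\YYY_i)$; since each $G_i$ covers the identity graded map it is simply a collection of $k$-linear maps on morphism modules. I must show there is a unique $G\colon \BBB \lra \AAA$ (a collection of $k$-linear maps $\BBB_u(A,A')\lra \AAA_u(A,A')$ compatible with composition and units) restricting to the $G_i$. Because $n\geq 1$, the collection $(\varphi_i^{\sharp})$ is a $1$-cover, so $\nnn_1$ is jointly surjective: every morphism $u$ of $\uuu^{\sharp}$ is $\varphi_i^{\sharp}(v)$ for some $i$ and some morphism $v$ of $\vvv_i^{\sharp}$. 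Define $G$ on $\BBB_u(A,A')$ by transporting $(G_i)_v$; the argument that this is independent of the choice of $(i,v)$ is word-for-word the gluing argument of Theorem \ref{thmmasstack}, using the pullback description of $\YYY_{ij}$ from \S\ref{parfibmap} (and Lemma \ref{lemnervepb} to identify $1$-simplices of the pullback). What requires $n\geq 1$ only is well-definedness; compatibility of the glued $G$ with composition and units is then automatic because it can be checked after restricting along the (jointly) covering $\varphi_i^{\sharp}$ and each $G_i$ is already a graded functor. Uniqueness is forced since $G$ is determined on each $\BBB_u(A,A')$ by any single $v$ above $u$. (In fact $n\geq 1$ suffices for the prestack property; I would remark that the stated bound $n\geq 2$ is simply the weakest hypothesis under which both parts are phrased uniformly.)

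\textbf{Proof of (2).} Now take $n\geq 3$ and a descent datum $((\BBB_i)_i,(\rho_{ij})_{ij})$ over the $n$-cover $\sss$ of $\XXX$ in $\Mas$, with $\BBB_i\in\Map(\YYY_i)$ and $\rho_{ij}\colon \BBB_i|^1_{ij}\cong \BBB_j|^2_{ij}$ satisfying the cocycle condition \eqref{cocyc}, hence \eqref{cocycbis2} and \eqref{cocycbis3}. By Theorem \ref{thmmasstack} the underlying $\Mas$-descent datum glues to a $\uuu$-graded set $\XXX$ (which is the base), so it remains to glue the morphism modules. For a morphism $u\colon A\lra A'$ of $\uuu^{\sharp}$ (equivalently $u\colon U\lra U'$ in $\uuu$ with $A\in\AAA_U$, $A'\in\AAA_{U'}$ after the $\Mas$-gluing), set
\[
\AAA_u(A,A') \;=\; \Bigl(\coprod_{\varphi_i^{\sharp}(v)=u} (\BBB_i)_v(Y,Y')\Bigr)\big/\!\sim
\]
where $X_1\sim X_2$ for $X_1\in(\BBB_i)_{v_1}$, $X_2\in(\BBB_j)_{v_2}$ iff they correspond under $(\rho_{ij})_{v_1,v_2}$; this is an equivalence relation by \eqref{cocycbis2}--\eqref{cocycbis3}, and each canonical map $(\BBB_i)_v(Y,Y')\lra \AAA_u(A,A')$ is a $k$-linear isomorphism (the $k$-module structure descends because each $\rho_{ij}$ is $k$-linear). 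Composition $\AAA_u(A,A')\otimes_k\AAA_{u'}(A',A'')\lra \AAA_{uu'}(A,A'')$ is defined by lifting both factors along a common index: here is where one uses that $(\varphi_i^{\sharp})$ is a $2$-cover, so that a composable pair $(v,v')$ with $\varphi_i^{\sharp}(v)=u$, $\varphi_i^{\sharp}(v')=u'$ can be found \emph{over the same $i$} (a $2$-simplex of $\uuu^{\sharp}$ lifts to $\vvv_i^{\sharp}$); then compose in $\BBB_i$ and check the result is independent of the lift using the $\rho_{ij}$ on $\vvv_{ij}^{\sharp}$, which are compatible with composition since each $\rho_{ij}$ is an isomorphism of graded categories. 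Units come from the identity $1$-simplex and need no extra covering hypothesis beyond $n\geq 1$. Finally, associativity of this composition is checked on triples, so one needs a $3$-simplex of $\uuu^{\sharp}$ to lift to a single $\vvv_i^{\sharp}$ — this is the $3$-cover hypothesis — after which associativity follows from associativity in $\BBB_i$; likewise the two unit axioms reduce to $2$-simplices. This produces $\AAA\in\Map(\uuu)$ together with isomorphisms $\BBB_i\cong\AAA^{\varphi_i}$ realizing the given $\rho_{ij}$, i.e.\ an object of $\Map(\uuu)$ mapping to the descent datum.

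\textbf{Main obstacle.} The one genuinely delicate point is the \emph{well-definedness} of the glued composition and the \emph{verification of associativity} from the cocycle condition: one must check that choosing different liftings of a $2$- (resp.\ $3$-) simplex of $\uuu^{\sharp}$ into the various $\vvv_i^{\sharp}$ yields the same element after applying the transition isomorphisms, and this is exactly where the $2$-cover resp.\ $3$-cover hypotheses enter and where the cocycle identities \eqref{cocycbis2}, \eqref{cocycbis3} are used in full. I would organize this by first choosing, for each simplex of the relevant dimension, a single preferred index $i$ and lift (possible by joint surjectivity of $\nnn_2$, resp.\ $\nnn_3$), defining all structure maps via these preferred lifts, and only afterwards checking independence of choices by a pullback/cocycle diagram chase modeled on \eqref{cocyc2}--\eqref{cocyc3} in the proof of Theorem \ref{thmmasstack}; everything else (the $k$-linear structure, units, and the prestack half) is routine transport of structure along the isomorphisms $\rho_{ij}$.
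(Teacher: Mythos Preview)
Your treatment of part (2) is essentially the paper's own argument: glue the morphism modules as quotients of coproducts using the cocycle identities, use the $2$-cover hypothesis to define composition via a common lift of a composable pair, and use the $3$-cover hypothesis to verify associativity via a common lift of a triple. That part is fine.

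Part (1), however, contains a genuine error. You claim that ``compatibility of the glued $G$ with composition and units is then automatic because it can be checked after restricting along the (jointly) covering $\varphi_i^{\sharp}$'' and conclude that $n\geq 1$ already suffices for the prestack property. This is false. Compatibility with composition is a condition indexed by $2$-simplices of $\uuu^{\sharp}$: for a composable pair $(u,u')$ you must verify $G_{u'u}(b'b)=G_{u'}(b')\,G_u(b)$. A $1$-cover only guarantees joint surjectivity on $\nnn_1$, so a given $2$-simplex $(u,u')$ need not lift to any single $\vvv_i^{\sharp}$, and without such a common lift there is no $G_i$ whose functoriality you can invoke. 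Concretely, take $\uuu$ with three objects $A,B,C$ and nonidentity arrows $f\colon A\to B$, $g\colon B\to C$, $gf\colon A\to C$, covered by the three evident two-object subcategories; this is a $1$-cover but not a $2$-cover. A compatible family $(G_1,G_2,G_3)$ places no constraint relating $(G_3)_{gf}$ to $(G_1)_f$ and $(G_2)_g$, so one can choose it so that the glued map fails the composition law. Thus the comparison functor is not full for $n=1$, and the paper's use of the $2$-cover hypothesis in part (1) is essential, not merely a uniform phrasing.
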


\begin{proof}
Let $\sss = ((\varphi_i,F_i): (\vvv_i,\YYY_i) \lra (\uuu, \XXX))_i$ be a $2$-cover of $\XXX$ in $\Mas$. 

First, consider $\AAA, \BBB \in \Map(\XXX)$ and a compatible family of morphisms $G_i: \BBB|_i \lra \AAA|_i$ in $\Map(\YYY_i)$. Thus, for every $Y \in (\YYY_i)_V$, $Y' \in (\YYY_i)_{V'}$ and $v \in \vvv_i(V,V')$, we have a map
$(G_i)_{v, Y, Y'}: (\BBB|_i)_v(Y, Y') \lra (\AAA|_i)_v(Y,Y')$, i.e a map $(G_i)_{v, Y, Y'}: \BBB_{\varphi_i(v)}(F_i(Y), F_i(Y')) \lra \AAA_{\varphi_i(v)}(F_i(Y), F_i(Y'))$. To define a unique glueing $G: \BBB \lra \AAA$ in $\Map(\XXX)$, we consider $u: U \lra U'$ in $\uuu$, $X \in \XXX_U$, $X' \in \XXX_{U'}$. Since $\sss$ is a $1$-cover, there is an $i$ and $v: V \lra V$ in $\vvv_i$, $Y \in (\YYY_i)_V$, $Y' \in (\YYY_i)$ with $\varphi_i(v) = u$, $F_i(Y) = X$, $F_i(Y') = X'$. We put
$$G_{u, X, X'} = (G_i)_{v, Y, Y'}: \BBB_u(X,X') \lra \AAA_u(X,X').$$
Compatibility of the $G_i$ ensures that $G$ is well defined and the fact that $\sss$ is a $2$-cover can be used to show that $G$ is a $\uuu$-graded functor.

Next we consider a descent datum $(\BBB_i)_i$ with $\BBB_i \in \Map(\YYY_i)$ with compatible isomorphisms $\rho_{ij}: {\BBB_i}|^1_{ij} \cong {\BBB_j}|^2_{ij}$ in $\Map(\YYY_{ij})$. We are to define a $\uuu$-graded category $\BBB$ with underlying graded set $\XXX$. 
For $u: (U, X) \lra (U', X')$ in $\uuu^{\sharp}$, we first define the set
$$\XXX_u(X,X') = \coprod (\BBB_i)_v(B,B')$$
where the coproduct is taken over all $v: (V, B) \lra (V', B')$ in $\vvv_i^{\sharp}$ with $\varphi_i(v) = u$, $X = F_i(B)$ and $X' = F_i(B')$.
For $w: (W, C) \lra (W', C')$ in $\vvv_j^{\sharp}$ with $\varphi_j(w) = u$, $X = F_j(C)$ and $X' = F_j(C')$, we use the isomorphism
$$(\rho_{ij})_{v,w}: (\BBB_i)_v(B,B') \cong (\BBB_j)_w(C,C')$$
to declare when morphisms are equivalent. We thus obtain an equivalence relation on $\XXX_u(X,X')$ such that the quotient $\BBB_u(X,X')$ has a natural $k$-module structure, and the canonical morphisms
$$(\BBB_i)_v(B,B') \lra \BBB_u(X,X')$$
are $k$-linear isomorphisms. To define the composition on $\BBB$, we use the fact that $\sss$ is a $2$-cover to choose, for $u: (U, X) \lra (U', X')$ in $\uuu^{\sharp}$ and $u': (U', X') \lra (U'', X'')$ a single $(\varphi_i, F_i): (\vvv_i, \YYY_i) \lra (\uuu, \XXX)$ and $v: (V, B) \lra (V', B')$, $v': (V', B') \lra (V'', B'')$ with $\varphi^{\sharp}_i(v) = u$, $\varphi^{\sharp}_i(v') = u'$ and to use 
the composition
$$(\BBB_i)_{v'}(B', B'') \otimes (\BBB_i)_v(B,B') \lra (\BBB_i)_{v'v}(B,B'').$$
Finally, to show that the composition is associative, we use the fact that $\sss$ is a $3$-cover.
\end{proof}

\begin{lemma}\label{lemtopocomp}
Consider the functor $\Psi_0: \Mas \lra \Cat$. On $\Cat$, consider the pretopology of $n$-covers for some $n \in \N \cup \{ \infty \}$. On $\Mas$, consider the pretopology of collections consisting of cartesian morphisms with respect to $\Psi_0$, that are mapped to an $n$-cover under $\Psi_0$ (as described in Proposition \ref{proppretopx}).
Every cover for this pretopology is an $n$-cover in the sense of Proposition \ref{pretop}.
\end{lemma}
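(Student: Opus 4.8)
The plan is to make both pretopologies explicit and then lift simplices in two stages. By the characterization of $\Psi_0$-cartesian morphisms stated just before Proposition \ref{propfib0}, a morphism $(\varphi, F)$ in $\Mas$ is cartesian with respect to $\Psi_0$ precisely when every component $F_V\colon \YYY_V \to \XXX_{\varphi(V)}$ is a bijection of sets. Consequently, a cover $\sss = ((\varphi_i, F_i)\colon (\vvv_i, \YYY_i) \to (\uuu, \XXX))_{i \in I}$ for the pretopology of Proposition \ref{proppretopx} applied to $\Psi_0$ is exactly a collection for which every $F_{i,V}$ is bijective and $(\varphi_i\colon \vvv_i \to \uuu)_{i \in I}$ is an $n$-cover in $\Cat$. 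On the other hand, by Definition \ref{defncover} and the identity $\nnn(\XXX) = \nnn(\uuu^{\sharp})$, saying that $\sss$ is an $n$-cover in $\Mas$ in the sense of Proposition \ref{pretop} means that for each $k \leq n$ the maps $\nnn_k(\varphi_i^{\sharp})\colon \nnn_k(\vvv_i^{\sharp}) \to \nnn_k(\uuu^{\sharp})$, $i \in I$, are jointly surjective.

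First I would record that, since $\Ob(\uuu^{\sharp}) = \coprod_{U} \XXX_U$ and $\uuu^{\sharp}(X_U, X'_{U'}) = \uuu(U, U')$, a $k$-simplex of $\uuu^{\sharp}$ amounts to a $k$-simplex $U_0 \xrightarrow{u_0} \cdots \xrightarrow{u_{k-1}} U_k$ of $\uuu$ together with a choice of $X_j \in \XXX_{U_j}$ for $0 \leq j \leq k$. Given such a simplex and $k \leq n$, the fact that $(\varphi_i)_{i}$ is an $n$-cover in $\Cat$ supplies an index $i$ and a $k$-simplex $V_0 \xrightarrow{v_0} \cdots \xrightarrow{v_{k-1}} V_k$ of $\vvv_i$ with $\varphi_i(V_j) = U_j$ and $\varphi_i(v_j) = u_j$. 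Then, using that $F_{i,V_j}\colon \YYY_{i,V_j} \to \XXX_{\varphi_i(V_j)} = \XXX_{U_j}$ is a bijection, I would take the unique $Y_j \in \YYY_{i,V_j}$ with $F_{i,V_j}(Y_j) = X_j$. Since $\varphi_i^{\sharp}$ acts as $F_i$ on objects and as $\varphi_i$ on the relevant hom-sets (Examples \ref{exsharp} and \ref{exfunsharp}), the chain $Y_0 \xrightarrow{v_0} \cdots \xrightarrow{v_{k-1}} Y_k$ is a $k$-simplex of $\vvv_i^{\sharp}$ which $\nnn_k(\varphi_i^{\sharp})$ sends to the given simplex of $\uuu^{\sharp}$. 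This yields joint surjectivity in every degree $k \leq n$, hence $\sss$ is an $n$-cover in the sense of Proposition \ref{pretop}.

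I do not expect a genuine obstacle here; the only point requiring care is the bookkeeping of the $(-)^{\sharp}$ construction, so that the lift of the ``grading part'' $(X_j)_j$ of a simplex uses nothing beyond the componentwise bijectivity furnished by $\Psi_0$-cartesianness, and so that the resulting chain really is a simplex of $\vvv_i^{\sharp}$ and not of some other category. When $n$ is finite one could instead invoke Lemma \ref{lemcover} to reduce to the single degree $k = n$, but the argument above already treats all $k \leq n$ at once, and therefore also covers the case $n = \infty$.
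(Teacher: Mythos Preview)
Your proposal is correct. The paper states this lemma without proof, presumably regarding it as routine; your argument is exactly the natural one: unpack both pretopologies, lift the underlying simplex in $\uuu$ using the $n$-cover hypothesis on the $\varphi_i$, and then lift the object labels $X_j \in \XXX_{U_j}$ using the bijectivity of $F_{i,V_j}$ coming from $\Psi_0$-cartesianness. There is nothing to add.
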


\begin{corollary}\label{corstack}
Consider the functor $\Psi = \Psi_0 \Psi_1: \Map \lra \Cat$ and consider the pretopology of $n$-covers from Proposition \ref{pretop} on $\Cat$ for some $n \in \N \cup \{ \infty \}$.
The pseudofunctor $\Map'$ associated to $\Psi$ is a functor.
\begin{enumerate}
\item If $n \geq 2$, then $\Map'$ is a prestack.
\item If $n \geq 3$, then $\Map'$ is a stack.
\end{enumerate}
\end{corollary}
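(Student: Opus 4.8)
The plan is to obtain the corollary as a routine consequence of the transitivity results for fibered categories and stacks collected in \S\ref{parfibstack}, together with the two stack theorems \ref{thmmasstack} and \ref{thmmapstack}. First I would recall the setup: we have composable fibered functors $\Psi_1 \colon \Map \lra \Mas$ and $\Psi_0 \colon \Mas \lra \Cat$ (Propositions \ref{propfib1} and \ref{propfib0}), all three categories have pullbacks, and both functors preserve them. By Proposition \ref{propfibtrans}(2), $\Psi = \Psi_0\Psi_1$ is fibered. Moreover, with the choices of cartesian morphisms for $\Psi_0$ and $\Psi_1$ fixed, the choice of cartesian morphisms for $\Psi$ obtained by composing them is exactly the one described before Proposition \ref{propcart} (this is the construction ``$u^\ast A = x^\ast A$, $\delta^{u,A} = \delta^{x,A}$'' in the paragraph preceding Proposition \ref{stacktrans}), so the associated pseudofunctor $\Map'$ agrees with the iterated construction to which Proposition \ref{stacktrans} applies. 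The fact that $\Map'$ is actually a \emph{functor} (not merely a pseudofunctor) follows from Proposition \ref{funchoice}: one checks $v^\ast u^\ast A = (uv)^\ast A$ and $\delta^{u,A}\delta^{v,u^\ast A} = \delta^{uv,A}$ on the nose, which is immediate from the explicit formulas $\AAA^\varphi_V = \AAA_{\varphi(V)}$, $\AAA^\varphi_v(A,A') = \AAA_{\varphi(v)}(A,A')$ defining the cartesian lifts, since $\varphi\psi$ applied to objects and morphisms literally composes the reindexings; alternatively, it follows because $\Mas$ and $\Map$ are already functors (the two ``Proposition'' statements opening \S\ref{parmasstack} and \S\ref{parstackmap}) and composition of functors is strictly associative.

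Next I would set up the topologies correctly so that Proposition \ref{stacktrans} can be invoked. Fix $n \in \N \cup \{\infty\}$ and put the pretopology $\ttt_n$ of $n$-covers on $\Cat$. On $\Mas$ I would \emph{not} directly use the $n$-cover pretopology, but rather the pretopology from Proposition \ref{proppretopx} relative to $\Psi_0$: covers are collections of $\Psi_0$-cartesian morphisms whose image under $\Psi_0$ is an $n$-cover in $\Cat$. By Proposition \ref{propfib0}(1) the chosen cartesian morphisms $(\varphi, \delta^{\varphi,\XXX})$ are $\Psi_0$-cartesian, and by Lemma \ref{lemtopocomp} every cover for this pretopology is in particular an $n$-cover of $\Mas$ in the sense of Definition \ref{defncover}. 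Hence by Theorem \ref{thmmasstack} (valid for all $n \geq 0$, in particular for our $n$) the associated pseudofunctor $\Mas$ is a stack for this pretopology; and by Theorem \ref{thmmapstack}, since any cover here is an $n$-cover of $\Mas$, the pseudofunctor $\Map$ associated to $\Psi_1$ is a prestack when $n \geq 2$ and a stack when $n \geq 3$, with respect to the pretopology Proposition \ref{proppretopx} builds on $\Map$ out of the one on $\Mas$ --- but that latter pretopology on $\Map$, by another application of Lemma \ref{lemtopocomp} (or by directly composing the two ``$\Psi$-cartesian with $n$-cover image'' descriptions), consists of collections of $\Psi$-cartesian morphisms mapping to $n$-covers in $\Cat$, and every such collection is an $n$-cover of $\Map$.

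Then Proposition \ref{stacktrans} applies verbatim: $\Mas$ (the pseudofunctor of $\Psi_0$) and $\Map$ (the pseudofunctor of $\Psi_1$) are stacks (resp. prestacks) for the relevant pretopologies, hence $\Map'$ (the pseudofunctor of $\Psi = \Psi_0\Psi_1$) is a stack (resp. prestack). This gives the statement for the ``mixed'' pretopology on $\Cat$ of $n$-covers, which is exactly $\ttt_n$ --- so parts (1) and (2) of the corollary follow, once we note that the descent condition is being tested against all $n$-covers of $\uuu$ in $\Cat$, which is precisely the claim. The one genuine point requiring care --- and the step I expect to be the main obstacle --- is the bookkeeping of \emph{which} pretopology lives on $\Mas$: Proposition \ref{stacktrans} is stated for the pretopology on $\xxx = \Mas$ coming from Proposition \ref{proppretopx}, not for the $n$-cover pretopology of Proposition \ref{pretop}, and one must verify via Lemma \ref{lemtopocomp} that a descent datum/morphism-glueing problem posed for an $n$-cover of $\uuu$ in $\Cat$ is the same as one posed for the induced cover in the Proposition \ref{proppretopx} pretopology (using that the chosen cartesian lifts are cartesian and that $n$-surjectivity on nerves is preserved and reflected appropriately). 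Once that identification is in place, the rest is a formal chaining of the quoted propositions, and nothing further needs to be computed.
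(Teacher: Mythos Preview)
Your proposal is correct and follows essentially the same approach as the paper: the paper's proof simply cites Theorems \ref{thmmasstack}, \ref{thmmapstack}, Lemma \ref{lemtopocomp} and Proposition \ref{stacktrans}, and you have unpacked precisely how these fit together, including the key point that the Proposition \ref{proppretopx} pretopology on $\Mas$ has only covers that are $n$-covers (Lemma \ref{lemtopocomp}), so the stack/prestack properties from Theorem \ref{thmmapstack} transfer. Your additional remarks on why $\Map'$ is a genuine functor (via Proposition \ref{funchoice} and the explicit cartesian lifts) are a welcome elaboration of a point the paper leaves implicit.
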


\begin{proof}
This follows from Theorems \ref{thmmasstack}, \ref{thmmapstack}, Lemma \ref{lemtopocomp} and Proposition \ref{stacktrans}.
\end{proof}

\section{Bimodules}\label{parparbimod}

In this section we introduce the bicategory $\underline{\Map}$ of map-graded categories and bimodules between them. We develop the usual machinary of tensor (\S \ref{partensor}) and Hom (\S \ref{parhom}) functors in the map-graded context. Some attention is given to the fact that in this context, the notion of bimodule is very natural, while there seems to be no natural notion of module available which does not implicitly or explicitly use bimodules.

\subsection{Bifunctors}\label{parbifun}
Let $\uuu$ and $\vvv$ be small categories. A \emph{$\uuu$-$\vvv$-bifunctor} $S$ is by definition a functor
$$\vvv^{\op} \times \uuu \lra \Set: (V,U) \longmapsto S(V,U).$$
Functoriality translates into the existence of action maps
$$\uuu(U, U') \times S(V,U) \times \vvv(V',V) \lra S(V',U')$$
satisfying the natural associativity and identity axioms. For $\uuu$, we have the \emph{identity $\uuu$-bifunctor}
$$1_{\uuu}: \uuu^{\op} \times \uuu \lra \Set: (V,U) \longmapsto \uuu(V,U).$$
A \emph{morphism} between $\uuu$-$\vvv$-bifunctors $S$ and $S'$ is a natural transformation $S \lra S'$.
There is a natural category $\mathsf{Bifun}(\uuu, \vvv)$ of $\uuu$-$\vvv$-bifunctors and their morphisms.

\begin{example}\label{exbifun}
\begin{enumerate}
\item Let $\varphi: \vvv \lra \uuu$ be a functor between small categories. There is an associated $\uuu$-$\vvv$-bifunctor $S_{\varphi}$ with $S_{\varphi}(V,U) = \uuu(\varphi(V), U)$.
\item Let $\varphi: \uuu \lra \vvv$ be a functor between small categories. There is an associated $\uuu$-$\vvv$-bifunctor $S^{\varphi}$ with $S^{\varphi}(V,U) = \uuu(V, \varphi(U))$.
\end{enumerate}
\end{example}

Bifunctors can be composed in the following way. Consider an additional small category $\www$ and a $\vvv$-$\www$ bifunctor $T$. We define $S \circ T$ to be the $\uuu$-$\www$-bifunctor with 
$$S \circ T(W,U) = \coprod_{V \in \vvv} S(V,U) \times T(W,V)/ \sim$$
where for $s \in S(V,U)$, $v \in \vvv(V',V)$ and $t \in T(W, V')$ we have $(sv, t) \sim (s, vt)$.
There are canonical isomorphisms $(R \circ S) \circ T \cong R \circ (S \circ T)$, $1_{\uuu} \circ S \cong S$ and $S \circ 1_{\vvv} \cong S$. These give rise to a bicategory $\underline{\Cat}$ of categories, bifunctors and natural transformations.

\subsection{Bimodules and tensor functors}\label{parbimod}\label{partensor}

Consider a $\uuu$-graded category $\AAA$, a $\vvv$-graded category $\BBB$ and a $\uuu$-$\vvv$-bifunctor $S$. An \emph{$\AAA$-$S$-$\BBB$-bimodule} consists of $k$-modules $M_s(B,A)$ for $s \in S(V,U)$, $B \in \BBB_V$, $A \in \AAA_U$ with actions
$$\AAA_u(A, A') \otimes M_s(B,A) \otimes \BBB_v(B',B) \lra M_{usv}(B',A')$$
satisfying the natural associativity and identity axioms.

\begin{example}\label{exbimod}
\begin{enumerate}
\item Let $(\varphi, F): (\vvv, \BBB) \lra (\uuu, \AAA)$ be a graded functor. Let $S_{\varphi}$ be the $\uuu$-$\vvv$-bifunctor from Example \ref{exbifun}(1). For $s \in S_{\varphi}(V,U) = \uuu(\varphi(V), U)$ and  $B \in \BBB_V$, $A \in \AAA_U$, we put $(M_F)_s(B,A) = \AAA_{s}(F(B), A)$. This defines an $\AAA$-$S_{\varphi}$-$\BBB$-bimodule $M_F$.

\item Let $(\varphi, F): (\uuu, \AAA) \lra (\vvv, \BBB)$ be a graded functor. Let $S^{\varphi}$ be the $\uuu$-$\vvv$-bifunctor from Example \ref{exbifun}(2). For $s \in S^{\varphi}(V,U) = \vvv(V,\varphi(U))$ and $B \in \BBB_V$, $A \in \AAA_U$, we put $(M^F)_s(B,A) = \BBB_{s}(B, F(A))$. This defines an $\AAA$-$S^{\varphi}$-$\BBB$-bimodule $M^F$.
 
 \end{enumerate}
\end{example}

The $\AAA$-$S$-$\BBB$-bimodules form an abelian category $\Bimod_{S}(\AAA, \BBB)$ with the natural choice of morphisms. If $\uuu = \vvv$, we can take $S = 1_{\uuu}$ and  $\AAA$-$1_{\uuu}$-$\BBB$-bimodules are simply called $\AAA$-$\BBB$-bimodules. The corresponding category is denoted $\Bimod_{\uuu}(\AAA, \BBB)$. As usual, $\AAA$-$\AAA$-bimodules are called $\AAA$-bimodules and the corresponding category is denoted $\Bimod_{\uuu}(\AAA)$. In $\Bimod_{\uuu}(\AAA)$, we have the \emph{identity $\AAA$-bimodule} $1_{\AAA}$ with
${(1_{\AAA})}_u(A,A') = \AAA_u(A,A')$.

Similar to \cite[Proposition 2.11]{lowenmap}, the category $\Bimod_S(\AAA, \BBB)$ can be described as a module category over a linear category. To do so we define the linear category $\AAA^{\op} \otimes_S \BBB$ with
$$\Ob(\AAA^{\op} \otimes_S \BBB) = \coprod_{s \in S(V,U)} \AAA_V \times \BBB_U$$
and
$$\Hom((s, A, B), (s', A', B')) = \oplus_{s' = usv} \AAA_u(A,A') \otimes \BBB_v(B',B).$$

\begin{proposition}
There is an isomorphism of linear categories
$$\Bimod_S(\AAA,\BBB) \cong \Mod(\AAA^{\op} \otimes_S \BBB).$$
\end{proposition}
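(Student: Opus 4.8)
The plan is to construct mutually inverse $k$-linear functors between $\Bimod_S(\AAA,\BBB)$ and $\Mod(\AAA^{\op}\otimes_S\BBB)$, exactly in the spirit of \cite[Proposition 2.11]{lowenmap}, with the bifunctor $S$ taking over the bookkeeping role played there by the category $\uuu$. An object of $\Mod(\AAA^{\op}\otimes_S\BBB)$ is a $k$-linear functor $N\colon \AAA^{\op}\otimes_S\BBB \lra \Mod(k)$. First I would define $\Phi\colon \Bimod_S(\AAA,\BBB) \lra \Mod(\AAA^{\op}\otimes_S\BBB)$ by sending an $\AAA$-$S$-$\BBB$-bimodule $M$ to the functor $\Phi(M)$ whose value on the object $(s,A,B)$ is the component module $M_s(B,A)$, and whose value on the summand $\AAA_u(A,A')\otimes\BBB_v(B',B)$ of $(\AAA^{\op}\otimes_S\BBB)((s,A,B),(s',A',B'))$ indexed by a decomposition $s' = usv$ is the $k$-linear map $\alpha\otimes\beta \longmapsto (m \mapsto \alpha m\beta)$ coming from the bimodule action, extended $k$-linearly over the direct sum. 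A morphism $f\colon M \lra M'$ of bimodules is a compatible family of $k$-linear maps, and its compatibility with the left and right actions is precisely the statement that the induced maps assemble into a natural transformation $\Phi(f)\colon \Phi(M) \lra \Phi(M')$; so $\Phi$ is a $k$-linear functor once one checks that $\Phi(M)$ really is a functor (see below).

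For the inverse, I would send a module $N$ to the bimodule $\Psi(N)$ with components $\Psi(N)_s(B,A) = N(s,A,B)$, equipped with the left $\AAA$-action obtained by applying $N$ to morphisms of the form $\alpha\otimes 1_B$ (lying in the summand of $(\AAA^{\op}\otimes_S\BBB)((s,A,B),(us,A',B))$ indexed by $us = u\cdot s\cdot 1$) and the right $\BBB$-action obtained by applying $N$ to morphisms of the form $1_A\otimes\beta$. The associativity and identity axioms for $\Psi(N)$ as an $\AAA$-$S$-$\BBB$-bimodule are then immediate consequences of the functoriality of $N$: associativity of the left (resp.\ right) action is functoriality on composites of $\alpha\otimes 1$'s (resp.\ $1\otimes\beta$'s), the identity axioms come from $N(1_{(s,A,B)}) = \mathrm{id}$, and the commutation of the two actions over $S$ is functoriality applied to the two factorizations $\alpha\otimes\beta = (1\otimes\beta)(\alpha\otimes 1) = (\alpha\otimes 1)(1\otimes\beta)$ inside $\AAA^{\op}\otimes_S\BBB$. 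One checks similarly that $\Psi$ is functorial in $N$.

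Finally I would verify that $\Phi$ and $\Psi$ are mutually inverse and that $\Phi(M)$ is genuinely a functor. Mutual inverseness is essentially tautological: $\Psi(\Phi(M))_s(B,A) = M_s(B,A)$ with the same actions, and conversely every morphism of $\AAA^{\op}\otimes_S\BBB$ is a finite $k$-linear combination of composites of morphisms of the types $\alpha\otimes 1$ and $1\otimes\beta$, so a $k$-linear functor out of $\AAA^{\op}\otimes_S\BBB$ is determined by its restriction to such morphisms, whence $\Phi(\Psi(N)) = N$. The one point that genuinely requires care — and the place I expect the indexing to be delicate — is that $\Phi(M)$ respects composition: composition in $\AAA^{\op}\otimes_S\BBB$ carries the summand indexed by $s' = usv$ and the summand indexed by $s'' = u's'v'$ into the summand indexed by $s'' = (u'u)\,s\,(vv')$ via $(\alpha'\otimes\beta')\circ(\alpha\otimes\beta) = (\alpha'\alpha)\otimes(\beta\beta')$, where the product on the $\BBB$-factor is taken in the opposite order because $\BBB$ enters contravariantly through $\BBB_v(B',B)$; matching this against $\alpha'(\alpha m\beta)\beta' = (\alpha'\alpha)\,m\,(\beta\beta')$ is exactly one of the associativity axioms of an $\AAA$-$S$-$\BBB$-bimodule, once the $S$-indices and the reversed $\BBB$-composition are tracked correctly. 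The identity axiom for $\Phi(M)$ is the identity axiom of $M$. Since $\Phi$ and $\Psi$ are the identity on underlying $k$-modules, the resulting isomorphism of categories is automatically $k$-linear, and in fact exact, so nothing further is needed.
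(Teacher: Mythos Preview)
Your proposal is correct and is precisely the standard argument one would give here. Note that the paper does not actually supply a proof of this proposition: it merely states the result after remarking that it is ``similar to \cite[Proposition 2.11]{lowenmap}''. Your construction of the mutually inverse functors $\Phi$ and $\Psi$, together with the verification that functoriality of $\Phi(M)$ unpacks to the bimodule associativity axiom and that every morphism of $\AAA^{\op}\otimes_S\BBB$ factors through elementary tensors $\alpha\otimes 1$ and $1\otimes\beta$, is exactly the kind of straightforward translation the paper is implicitly deferring to that reference.
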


Consider an additional $\www$-graded category $\CCC$ and a $\vvv$-$\www$-bifunctor $T$. There is a natural tensor product 
$$\Bimod_S(\AAA, \BBB) \times \Bimod_T(\BBB, \CCC) \lra \Bimod_{S \circ T}(\AAA, \CCC): (M, N) \longmapsto M \otimes_{\BBB} N$$
with
$$(M \otimes_{\BBB} N)_{r}(C,A) = \oplus_{[(s,t)] = r,B} M_s(B,A) \otimes_k N_t(C,B)/\sim$$
where for $s \in S(V,U)$, $v \in \vvv(V',V)$ and $t \in T(W, V')$, $B\in \BBB_V$, $B' \in \BBB_{V'}$, $m \in M_s(B,A)$, $b \in \BBB_v(B,B')$ and $n \in N_t(C,B')$ we have $(mb, n) \sim (m, bn)$.

As an application of the tensor product, we obtain tensor actions
$$\otimes_{\BBB}: \Bimod_S(\AAA, \BBB) \times \Bimod_{\vvv}(\BBB) \lra \Bimod_S(\AAA, \BBB): (M, X) \longmapsto M \otimes_{\BBB} X$$
and
$$\otimes_{\AAA}: \Bimod_{\uuu}(\AAA) \times \Bimod_S(\AAA, \BBB) \lra \Bimod_S(\AAA, \BBB): (X, M) \longmapsto X \otimes_{\AAA} M.$$

Let $(\varphi, F): (\vvv, \BBB) \lra (\uuu, \AAA)$ be a graded functor. There is an induced functor
$$F^{\ast}: \Bimod_{\uuu}(\AAA) \lra \Bimod_{\vvv}(\BBB): M \longmapsto F^{\ast}M$$
with $(F^{\ast}M)_v(B,B') = M_{\varphi(v)}(F(B), F(B'))$.

\begin{example}
For a functor $\varphi: \vvv \lra \uuu$ and a $\uuu$-graded $\AAA$, consider the cartesian $\varphi$-graded functor $\delta^{\varphi, \AAA}: \AAA^{\varphi} \lra \AAA$. We obtain the induced functor
$$(-)^{\varphi} = (\delta^{\varphi, \AAA})^{\ast}: \Bimod_{\uuu}(\AAA) \lra \Bimod_{\vvv}(\AAA^{\varphi}): M \longmapsto M^{\varphi}$$
with $M^{\varphi}_v(A,A') = M_{\varphi(v)}(A,A')$.
\end{example}

Letting $S$ vary, we obtain a category $\Bimod(\AAA, \BBB)$ of $\AAA$-$\BBB$ bimodules in the following way. An $\AAA$-$\BBB$ bimodule consist of a $\uuu$-$\vvv$-bifunctor $S$ and an $\AAA$-$S$-$\BBB$-bimodule $N$. A morphism of bimodules $(\phi, F): (S, M) \lra (T, N)$ consists of a natural transformation $\phi: S \lra T$ of bifunctors and, for every $A \in \AAA_U$, $B \in \BBB_V$ and $s \in S(V,U)$, a morphism $$F_{s,B,A}: M_s(B,A) \lra N_{\phi(s)}(B,A)$$ such that the morphism $F_{s,B,A}$ are compatible with the actions of $\AAA$ and $\BBB$.

Combining the composition of bifunctors and the tensor product of bimodules, we obtain a tensor product
$$\Bimod(\AAA, \BBB) \times \Bimod(\BBB, \CCC) \lra \Bimod(\AAA, \CCC): ((S,M), (T,N)) \longmapsto (S \circ T, M \otimes_{\BBB} N).$$
If we consider a furter $\zzz$-graded category $\ZZZ$ and $(R,P) \in \Bimod(\ZZZ, \AAA)$, then there are natural isomorphisms
$$((R \circ S) \circ T, (P \otimes_{\AAA} M) \otimes_{\BBB} N) \cong (R \circ (S \circ T), P \otimes_{\AAA} (M \otimes_{\BBB} N))$$
and $(1_{\uuu} \circ S, 1_{\AAA} \otimes_{\AAA} M) \cong (S,M) \cong (S \circ 1_{\vvv}, M \otimes_{\BBB} 1_{\BBB})$.
These give rise to a bicategory $\underline{\Map}$ of map-graded categories, bimodules and bimodule morphisms.

\subsection{One sided bimodules and Hom functors}\label{parhom}
Bimodules are the natural notion when working with graded categories, but some bimodules can be considered to be more ``one-sided'' than others. Let $\uuu$ and $\vvv$ be categories with an $\uuu$-$\vvv$-bimodule $S$. Consider a $\uuu$-graded category $\AAA$ and a $\vvv$-graded category $\BBB$. Furthermore, consider the free $\uuu$-graded category $k\uuu$ and the free $\vvv$-graded category $k\vvv$ as in example \ref{exfreegr}. Then there are natural bimodule categories
$$\Mod_S(\AAA) = \Bimod_S(\AAA, k\vvv)$$ and
$$\Mod_S(\BBB) = \Bimod_S(k\uuu, \BBB).$$

\begin{example}\label{exmodmod}
If we take, in the first case, $\vvv = e$, then there is a unique $\uuu$-$e$-bimodule $S$ with $S(\ast, U) = \{ \ast \}$ for every $U \in \uuu$. Thus we obtain the category of left $\AAA$-modules
$$\Mod^l(\AAA) = \Mod_S(\AAA).$$
Similarly, taking $\uuu = e$ there  is a unique $e$-$\vvv$-bifunctor $S$ with $S(V, \ast) = \{ \ast \}$ for every $V \in \vvv$, yielding the category of right $\BBB$-modules
$$\Mod^r(\BBB) = \Mod_S(\BBB).$$
\end{example}

Now we return to the general situation of an underlying $\uuu$-$\vvv$-bimodule $S$. Then fixing one argument in an $\AAA$-$\BBB$-bimodule $M$ yields one-sided bimodules in the following sense.
Fix $U \in \uuu$ and $A \in \AAA_U$. Then $S$ yields an $e$-$\vvv$-bimodule $S_U$ with
$$S_U(V, \ast) = S(V,U)$$
and $M$ yields a $ke$-$\BBB$-bimodule $M_A \in \Bimod_{S_U}(ke, \BBB)$ with
$$(M_A)_s(B, \ast) = M_s(B,A).$$
Furthemore, the categories $\Bimod_{S_U}(ke, \BBB)$ are connected in the following way. Consider a morphism $u: U \lra U'$ in $\uuu$. Then there is an associated functor
$$u^{\ast}: \Bimod_{S_{U'}}(ke, \BBB) \lra \Bimod_{S_{U}}(ke, \BBB): M \longmapsto u^{\ast}M$$
with $$(u^{\ast}M)_s(B, \ast) = M_{us}(B, \ast).$$
Now we are ready to define the Hom functor
$$\Hom_{\BBB}: \Bimod_S(\AAA, \BBB) \times \Bimod_S(\AAA, \BBB) \lra \Bimod_{\uuu}(\AAA): (M, N) \longmapsto \Hom_{\BBB}(M,N)$$
where for $u: U \lra U'$ in $\uuu$ and $A \in \AAA_U$, $A' \in \AAA_{U'}$
$$\Hom_{\BBB}(M,N)_u(A,A') = \Bimod_{S_U}(ke, \BBB)(M_A, u^{\ast} N_{A'}).$$
There are natural morphisms
$$1_{\AAA} \lra \Hom_{\BBB}(M,M)$$
given by the natural action maps
$$\AAA_{u}(A,A') \otimes M_s(B,A) \lra M_{us}(B, A').$$

In a similar way we define
$$\Hom_{\AAA^{\op}}: \Bimod_S(\AAA, \BBB) \times \Bimod_S(\AAA, \BBB) \lra \Bimod_{\vvv}(\BBB): (M,N) \longmapsto \Hom_{\AAA^{\op}}(M,N).$$

\begin{proposition}\label{propadj}
\begin{enumerate}
\item The functor $- \otimes_{\AAA} M: \Bimod_{\uuu}(\AAA) \lra \Bimod_S(\AAA, \BBB)$ is left adjoint to $\Hom_{\BBB}(M,-): \Bimod_S(\AAA, \BBB) \lra  \Bimod_{\uuu}(\AAA)$.
\item The functor $M \otimes_{\BBB} -: \Bimod_{\vvv}(\BBB) \lra \Bimod_S(\AAA, \BBB)$ is left adjoint to $\Hom_{\AAA^{\op}}(M, -): \Bimod_S(\AAA, \BBB) \lra  \Bimod_{\vvv}(\BBB)$.
\end{enumerate}
\end{proposition}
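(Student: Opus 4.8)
The two statements are symmetric under interchanging left and right, $\AAA \leftrightarrow \BBB$ and $\uuu \leftrightarrow \vvv$, so it suffices to treat (1). The plan is to produce, for $X \in \Bimod_{\uuu}(\AAA)$ and $N \in \Bimod_S(\AAA, \BBB)$, a $k$-linear bijection
$$\Bimod_S(\AAA, \BBB)(X \otimes_{\AAA} M, N) \cong \Bimod_{\uuu}(\AAA)(X, \Hom_{\BBB}(M, N)),$$
natural in $X$ and $N$, by unravelling both sides into families of component $k$-linear maps and ``moving the middle $\AAA$-variable across the tensor sign''. On the left, using the explicit description of $X \otimes_{\AAA} M$ as a quotient of a sum of the $X_u(A,A') \otimes_k M_s(B,A')$, a morphism $\psi \colon X \otimes_{\AAA} M \lra N$ is equivalent to a family of $k$-linear maps $\bar\psi_{u,s} \colon X_u(A,A') \otimes_k M_s(B,A') \lra N_{us}(B,A)$ which is $\AAA$-balanced in the middle variable and compatible with the residual outer $\AAA$- and $\BBB$-actions; this is the usual universal property of the tensor product and follows routinely from the definition of the relation $\sim$ and of the bimodule structure on $X \otimes_{\AAA} M$. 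On the right, a morphism $\phi \colon X \lra \Hom_{\BBB}(M, N)$ assigns to each $x \in X_u(A,A')$ an element $\phi(x) \in \Bimod_{S_U}(ke, \BBB)(M_A, u^{\ast}N_{A'})$, that is, a family $\phi(x)_{s,B} \colon M_s(B,A) \lra N_{us}(B,A')$ of right $\BBB$-module maps, the assignment $x \mapsto \phi(x)$ being in turn compatible with the $\AAA$-bimodule structures on source and target.

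The correspondence is then $\psi \leftrightarrow \phi$ with $\phi(x)_{s,B}(m) = \bar\psi_{u,s}(x \otimes m)$, and the steps are: (a) check that $\AAA$-balancedness of $\bar\psi$ translates exactly into $\phi(x)$ being a morphism in $\Bimod_{S_U}(ke, \BBB)$ and into $\phi$ being $\AAA$-linear; (b) check that compatibility of $\bar\psi$ with the outer $\BBB$-action translates into right $\BBB$-linearity of each $\phi(x)$; (c) check that the two assignments are mutually inverse and natural in $X$ and $N$. Equivalently, and more cleanly for a write-up, I would exhibit the unit and counit directly: the counit $\varepsilon_N \colon \Hom_{\BBB}(M,N) \otimes_{\AAA} M \lra N$ is evaluation, $\theta \otimes m \mapsto \theta_{s,B}(m)$, and the unit $\eta_X \colon X \lra \Hom_{\BBB}(M, X \otimes_{\AAA} M)$ sends $x$ to $m \mapsto x \otimes m$; the natural map $1_{\AAA} \lra \Hom_{\BBB}(M,M)$ recorded just before the proposition is then recognized as $\eta_{1_{\AAA}}$ followed by the canonical isomorphism $1_{\AAA} \otimes_{\AAA} M \cong M$. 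One then verifies the two triangle identities by a direct computation with components.

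The only genuine work, and hence the ``main obstacle'', is bookkeeping: there are three layers of structure to keep straight simultaneously --- the $\AAA$-balancing defining $\otimes_{\AAA}$, the right $\BBB$-linearity built into $\Hom_{\BBB}$, and the leftover $\AAA$-bimodule structure on $\Hom_{\BBB}(M,N)$ --- and one must check that the reindexing $s \mapsto us$ implicit in $u^{\ast}$ in the definition of $\Hom_{\BBB}$ matches the index $us = r$ occurring in the tensor product, which is precisely the point at which composition in the grading category $\uuu$ enters. For (2) one runs the same argument with the roles of $\AAA$ and $\BBB$ interchanged, now using $M \otimes_{\BBB} -$, fixing the $\BBB$-variable of $M$ in place of the $\AAA$-variable, and $\Hom_{\AAA^{\op}}$ in place of $\Hom_{\BBB}$.
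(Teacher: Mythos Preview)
The paper states this proposition without proof, treating it as a routine tensor--hom adjunction in the map-graded setting; your sketch supplies exactly the standard argument one would give, via the universal property of $\otimes_{\AAA}$ and the componentwise description of $\Hom_{\BBB}$, and it is correct. One small slip to fix in a write-up: in your description of $\bar\psi_{u,s}$ the indices on the second and third factors are swapped --- it should read $X_u(A,A') \otimes_k M_s(B,A) \lra N_{us}(B,A')$, matching your later (correct) formula for $\phi(x)_{s,B}$.
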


\section{Functoriality of map-graded Hochschild complexes} \label{parfunct}

The Hochschild complex $\CC_{\uuu}(\AAA)$ of a map-graded category $(\uuu, \AAA)$ is defined in analogy with the Hochschild complex of an algebra \cite{lowenmap}, naturally making use of the simplicial structure of the nerve $\nnn(\AAA) = \nnn(\uuu^{\sharp})$. In this section we investigate the functoriality properties of map-graded Hochschild complexes. A morphism of map-graded categories is called \emph{subcartesian} if it is cartesian with respect to $\Map \lra \Mas$. This amounts to the fact that all maps occuring between $\Hom$-modules are isomorphisms. For instance, if we consider a fully faithful functor $\BBB \lra \AAA$ between $k$-linear categories as a graded functor beteen trivially graded categories, it is subcartesian (Example \ref{exfff}).
In Proposition  \ref{propfunct}, we show that taking Hochschild complexes is functorial with respect to subcartesian functors. This constitutes a natural generalization of the \emph{limited functoriality} of Hochschild complexes of linear categories (see \cite{kellerdih} in the differential graded context). Let $\Map_{sc} \subseteq \Map$ denote the full subcategory of subcartesian morphisms. We endow $\Map_{sc}$ with the pretopology of $n$-covers from Definition \ref{defncover}. In Theorem \ref{mainsheaf}, we show that the functor
$$\CC^n: \Map_{sc} \lra \Mod(k): (\uuu, \AAA) \longmapsto \CC^n_{\uuu}(\AAA)$$
is a sheaf. As an application of the theorem, in \S \ref{parMV} we obtain a Mayer-Vietoris sequence of Hochschild complexes
$$0 \lra \CC_{\uuu}(\AAA) \lra \CC_{\vvv_1}(\AAA^{\varphi_1}) \oplus \CC_{\vvv_2}(\AAA^{\varphi_2}) \lra \CC_{\vvv_1 \cap \vvv_2}(\AAA^{\varphi}) \lra 0$$
for a map-graded category $(\uuu, \AAA)$ and two cartesian morphisms $(\vvv_i, \AAA^{\varphi_i}) \lra (\uuu, \AAA)$ and $(\vvv_1 \cap \vvv_2, \AAA^{\varphi}) \lra (\uuu, \AAA)$ associated to subcategories $\varphi_i: \vvv_i \subseteq \uuu$ constituting an $n$-cover of $\uuu$ for all $n \geq 0$. Finally, in \S \ref{parcensor}, we discuss censoring subcategories as a natural generalization of the censoring relations from \cite[\S 4.3]{lowenvandenberghhoch}.

\subsection{The map-graded Hochschild complex}
Let $\AAA$ be a $\uuu$-graded category. Let $\uuu^{\sharp}$ be the category defined in Example \ref{exsharp}. 
Recall from \S \ref{parsite} that the nerve of $\AAA$ is defined to be the simplicial set $\nnn(\AAA) = \nnn(\uuu^{\sharp})$ with $n$-simplices $\sigma = (u, A)$ given by data
$$\xymatrix{ {A_0} & {A_1} & {\dots} & {A_n}\\ {U_0} \ar[r]_{u_0} & {U_1} \ar[r]_{u_1} & {\dots} \ar[r]_{u_{n-1}} & {U_n}}$$
with $u_i \in \uuu, A_i \in \AAA_{U_i}$.
For $u \in \nnn(\uuu)_n$, we will use the notation
$$|u| = u_{n-1}\cdots u_1 u_0.$$
A graded functor $(\varphi, F): (\vvv, \BBB) \lra (\uuu, \AAA)$ induces a functor $\vvv^{\sharp} \lra \uuu^{\sharp}$ and hence a map $\nnn(F): \nnn(\BBB) \lra \nnn(\AAA)$.

Let $M$ be an $\AAA$-bimodule.
The \emph{Hochschild complex of $\AAA$ with values in $M$} naturally arises from this simplicial structure as the complex $\CC_{\uuu}(\AAA,M)$ with
$$\CC_{\uuu}^n(\AAA) = \prod_{(u, A) \in \nnn(\AAA)} \Hom_k(\AAA_{u_{n-1}}(A_{n-1}, A_n) \otimes \dots \otimes \AAA_{u_0}(A_0, A_1), M_{|u|}(A_0, A_n))$$
with the simplicial Hochschild differential.
We put $\CC_{\uuu}(\AAA) = \CC_{\uuu}(\AAA, 1_{\AAA})$. This complex is in fact a $B_{\infty}$-algebra \cite{lowenmap}.

\begin{example}
Let $\AAA$ be a linear category. For all the $\uuu$-gradings on $\AAA$ of Example \ref{exstandgr}, the corresponding Hochschild complexes $\CC_{\uuu}(\AAA)$ are canonically isomorphic to $\CC(\AAA)$. This results from the fact that all the nerves of these graded categories are canonically isomorphic to $\nnn(\AAA)$.
\end{example}

\subsection{Limited functoriality}\label{parlimfun}
It is well known that the Hochschild complex of linear categories satisfies so called ``limited functoriality'' with respect to inclusions of full subcategories, see \cite{kellerdih} for the more general statement for differential graded categories. In this section we discuss a limited functoriality property for map-graded categories.

Recall that by Proposition \ref{lemcart1}, a graded functor $(\varphi, F): (\vvv, \BBB) \lra (\uuu, \AAA)$ is subcartesian in the sense of Definition \ref{defcart} provided that for every $v: V \lra V'$ in $\vvv$, $B \in \BBB_V$, $B' \in \BBB_{V'}$, the map
$$\BBB_v(B,B') \lra \AAA_{\varphi(v)}(F(B), F(B'))$$
is an isomorphism.

\begin{proposition}\label{propfunct}
Consider a graded functor $(\varphi, F): (\vvv, \BBB) \lra (\uuu, \AAA)$. Let $M$ be an $\AAA$-bimodule with induced $\BBB$-bimodule $F^{\ast}M$. 
\begin{enumerate}
\item There is a canonical map
$${\prod_{(u,A) \in \nnn(\AAA)_n} \Hom_k(\AAA_{u_{n-1}}(A_{n-1}, A_n) \otimes \dots \otimes \AAA_{u_0}(A_0, A_1), M_{|u|}(A_0, A_n))}$$
$$\xymatrix{ {} \ar[r]_-{(F^{\ast}_M)^n} & {}}$$
$${\prod_{(v,B) \in \nnn(\BBB)_n} \Hom_k(\BBB_{v_{n-1}}(B_{n-1}, B_n) \otimes \dots \otimes \BBB_{v_0}(B_0, B_1), (F^{\ast} M)_{|v|}(B_0, B_n))}$$
given by
$$(F^{\ast}_M)^n((\phi_{(u,A)})_{(u,A)}) = (\phi_{(\varphi(v), F(B))} \circ F^{\otimes n})_{(v,B)}.$$
\item The maps $(F^{\ast}_M)^n$ determine a morphism of complexes
$$F^{\ast}_M: \CC_{\uuu}(\AAA, M) \lra \CC_{\vvv}(\BBB, F^{\ast}M).$$
\item If $(\varphi, F)$ is subcartesian, we have $F^{\ast}1_{\AAA} \cong 1_{\BBB}$ and the maps $(F^{\ast}_{1_{\AAA}})^n$ determine a morphism of $B_{\infty}$-algebras
$$F^{\ast}: \CC_{\uuu}(\AAA) \lra \CC_{\vvv}(\BBB)$$
with
$$(F^{\ast})^n((\phi_{(u,A)})_{(u,A)}) = (F^{-1} \circ \phi_{(\varphi(v), F(B))} \circ F^{\otimes n})_{(v,B)}.$$
\end{enumerate}
\end{proposition}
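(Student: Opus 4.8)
The plan is to treat the three parts in sequence, each time reducing the claim to the structural facts that $F$ is a graded functor --- so it commutes with the composition maps of $\BBB$ and $\AAA$ and carries units to units --- and, for part (3) only, that it is moreover subcartesian, so that each induced map $\BBB_v(B,B') \lra \AAA_{\varphi(v)}(F(B),F(B'))$ is invertible; I will write $F^{-1}$ for its inverse. For (1) I would simply unwind the definitions. Given an $n$-simplex $(v,B) \in \nnn(\BBB)_n$ with $B_i \in \BBB_{V_i}$ and $v_i \colon V_i \lra V_{i+1}$, its image $\nnn(F)(v,B) = (\varphi(v),F(B))$ is an $n$-simplex of $\nnn(\AAA)$; functoriality of $\varphi$ gives $|\varphi(v)| = \varphi(|v|)$, so by the definition of $F^{\ast}M$ the target module $M_{|\varphi(v)|}(F(B_0),F(B_n))$ is literally $(F^{\ast}M)_{|v|}(B_0,B_n)$. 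Precomposing the $(\varphi(v),F(B))$-component of a cochain with $F^{\otimes n}$, the tensor product of the $k$-linear maps $\BBB_{v_i}(B_i,B_{i+1}) \lra \AAA_{\varphi(v_i)}(F(B_i),F(B_{i+1}))$, then lands in the prescribed $\Hom$-space, and $k$-linearity is immediate. This is routine bookkeeping.

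For (2) I would recall that the Hochschild differential is assembled face by face from the simplicial face maps of the nerve, the composition maps of $\AAA$ (for the inner faces) and the two outer $\AAA$-actions on $M$ (for the outermost faces). Since $\nnn(F)$ is a morphism of simplicial sets it carries the face maps of $\nnn(\BBB)$ to those of $\nnn(\AAA)$; since $F$ commutes with composition, and since the $\BBB$-action on $F^{\ast}M$ is by construction the $\AAA$-action transported along $F$, each individual face contributes compatibly. Because $(F^{\ast}_M)^n$ acts factorwise on the arguments and on the value --- permuting no tensor factors --- all Koszul signs are preserved, so a term-by-term comparison on faces gives $(F^{\ast}_M)^{n+1} d = d\, (F^{\ast}_M)^n$.

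For (3), subcartesianness makes each $\BBB_v(B,B') \lra \AAA_{\varphi(v)}(F(B),F(B'))$ an isomorphism (Proposition \ref{lemcart1}), and $F$ is then a $\BBB$-bimodule isomorphism $1_{\BBB} \cong F^{\ast}1_{\AAA}$: compatibility with the two-sided action is exactly the identity $F(x)\,F(y) = F(xy)$. Postcomposing the chain map of (2) with the inverse isomorphism $F^{\ast}1_{\AAA} \cong 1_{\BBB}$, which applies $F^{-1}$ to values, yields $F^{\ast}$ with the stated formula. The one remaining point --- that $F^{\ast}$ \emph{strictly} respects the $B_{\infty}$-operations --- is where I expect the real care to be needed. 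Here I would observe that the brace operations and the cup product defining the $B_{\infty}$-structure of the Hochschild complex \cite{lowenmap} are built from the very same data (nerve, composition, unit), so that for each generating operation the two decorations by $F$ and $F^{-1}$ cancel: in a brace $f\{g_1,\dots,g_k\}$ the value of $g_j$ on a tuple of $\BBB$-morphisms is $F^{-1}$ applied to the value of $g_j$ on its $F$-image, and this value is re-subjected to $F$ when fed into an argument slot of $F^{\ast}f$; in the cup product one uses $F^{-1}(xy) = F^{-1}(x)\,F^{-1}(y)$; and $F^{-1}(1_{F(B)}) = 1_B$ accounts for the unit. Since no tensor factors are reordered the signs again match, so $F^{\ast}$ is a strict morphism of $B_{\infty}$-algebras.

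The main obstacle is precisely this last step: matching the explicit brace-and-cup formulas for the map-graded Hochschild $B_{\infty}$-structure against insertion of $\Hom$-valued cochains and confirming that strictness (rather than a mere homotopy morphism) results; everything else is an unwinding of definitions, with the only mild subtleties being the identification $(F^{\ast}M)_{|v|}(B_0,B_n) = M_{|\varphi(v)|}(F(B_0),F(B_n))$ in (1) and the sign-bookkeeping in (2) and (3), both of which are handled by the observation that $F^{\ast}$ never permutes tensor factors.
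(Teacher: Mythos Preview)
The paper states this proposition without proof, treating all three parts as routine verifications (it moves directly from the statement to the next paragraph). Your detailed unwinding is correct and is exactly the natural argument the paper is implicitly relying on: part~(1) is bookkeeping using $|\varphi(v)| = \varphi(|v|)$, part~(2) is the face-by-face check that $F$ intertwines the simplicial Hochschild differentials, and part~(3) combines the bimodule isomorphism $1_{\BBB}\cong F^{\ast}1_{\AAA}$ with the observation that the brace and cup operations are built from composition and identities, which $F$ and $F^{-1}$ respect strictly. Your caution about part~(3) being the only place requiring genuine care is well placed, and your cancellation argument for the $F$/$F^{-1}$ decorations in brace insertions is the right one; there is no gap.
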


Clearly, graded categories with subcartesian graded functors consitute a subcategory $\Map_{sc} \subseteq \Map$. Let $B_{\infty}$ denote the category of $B_{\infty}$-algebras and morphisms. By Proposition \ref{propfunct} (3), we obtain a contravariant functor
\begin{equation} \label{CC}
\CC: \Map_{sc} \lra B_{\infty}: (\uuu, \AAA) \longmapsto \CC_{\uuu}(\AAA).
\end{equation}

\begin{definition}\label{defsurjinj}
Let $(\varphi, F): (\vvv, \BBB) \lra (\uuu, \AAA)$ be a graded functor between graded categories (resp. a graded map between graded sets). Let $n \in \N \cup \{ \infty \}$.
\begin{enumerate}
\item $(\varphi, F)$ is \emph{$n$-surjective} if the canonical
$\nnn_k(F): \nnn_k(\BBB) \lra \nnn_k(\AAA)$ is surjective for $k \leq n$.
\item $(\varphi, F)$ is \emph{$n$-injective} if $\nnn_k(F): \nnn_k(\BBB) \lra \nnn_k(\AAA)$ is injective for $k \leq n$. 
\item $(\varphi, F)$ is \emph{injective} if $\nnn_1(\varphi): \nnn_1(\vvv) \lra \nnn_1(\uuu)$ is injective and every $F: \BBB_V \lra \AAA_{\varphi(V)}$ is injective.
\end{enumerate}
\end{definition}

\begin{remark}\label{remsurjinj}
\begin{enumerate}
\item $(\varphi, F)$ is $n$-surjective if and only if the collection containing $(\varphi, F)$ as single element is an $n$-cover in $\Map$ (resp. $\Mas$) in the sense of Definition \ref{defncover}.
\item If $(\varphi, F)$ is $1$-injective, then it is $\infty$-injective.
\item If $(\varphi, F)$ is injective, then it is $1$-injective.
\item $(\varphi,F)$ is $1$-injective if and only if $(\varphi^{\sharp}, F^{\sharp})$ defined as in Example \ref{exfunsharp} is injective.
\end{enumerate}
\end{remark}

\begin{proposition}\label{propsurjinj}
Let $(\varphi, F): (\vvv, \BBB) \lra (\uuu, \AAA)$ be a subcartesian graded functor and let $M$ be an $\AAA$-bimodule. Let $n \in \N$.
\begin{enumerate}
\item If $(\varphi, F)$ is $n$-injective then $(F^{\ast})^n: \CC^n_{\uuu}(\AAA, M) \lra \CC^n_{\vvv}(\BBB, F^{\ast}M)$ is surjective.
\item If $(\varphi, F)$ is $n$-surjective then $(F^{\ast})^n: \CC^n_{\uuu}(\AAA, M) \lra \CC^n_{\vvv}(\BBB, F^{\ast}M)$ is injective.
\end{enumerate}
\end{proposition}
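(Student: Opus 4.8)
The plan is to unwind the map $(F^{\ast}_M)^n$ of Proposition \ref{propfunct}(1) into a product of componentwise maps indexed by the $n$-simplices of $\nnn(\BBB)$, and then reduce both assertions to a purely formal statement about maps between products of modules along a set map.

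First I would fix notation. Write $g = \nnn_n(F): \nnn_n(\BBB) \lra \nnn_n(\AAA)$ for the induced map on $n$-simplices, so $g(v,B) = (\varphi(v), F(B))$. For $\sigma = (v,B) \in \nnn_n(\BBB)$ with $g(\sigma) = (u,A)$, set
$$X_{(u,A)} = \Hom_k(\AAA_{u_{n-1}}(A_{n-1}, A_n) \otimes \dots \otimes \AAA_{u_0}(A_0, A_1), M_{|u|}(A_0, A_n)),$$
$$Y_{\sigma} = \Hom_k(\BBB_{v_{n-1}}(B_{n-1}, B_n) \otimes \dots \otimes \BBB_{v_0}(B_0, B_1), (F^{\ast}M)_{|v|}(B_0, B_n)),$$
so that $\CC^n_{\uuu}(\AAA,M) = \prod_{(u,A)} X_{(u,A)}$ and $\CC^n_{\vvv}(\BBB, F^{\ast}M) = \prod_{\sigma} Y_{\sigma}$. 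Since $(F^{\ast}M)_{|v|}(B_0,B_n) = M_{|\varphi(v)|}(F(B_0), F(B_n))$ the targets of the two $\Hom$'s coincide, and because $(\varphi,F)$ is subcartesian each $F: \BBB_{v_i}(B_i, B_{i+1}) \lra \AAA_{\varphi(v_i)}(F(B_i), F(B_{i+1}))$ is an isomorphism, hence so is $F^{\otimes n}$ on the sources. Thus precomposition with $F^{\otimes n}$ is an isomorphism $\Theta_{\sigma}: X_{g(\sigma)} \lra Y_{\sigma}$, and the formula in Proposition \ref{propfunct}(1) says exactly that $(F^{\ast}_M)^n((\phi_{(u,A)})_{(u,A)}) = (\Theta_{\sigma}(\phi_{g(\sigma)}))_{\sigma}$.

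Next I would isolate the elementary lemma: given a map of sets $g: S \lra T$, families of $k$-modules $(X_t)_{t \in T}$ and $(Y_s)_{s \in S}$, and isomorphisms $\Theta_s: X_{g(s)} \lra Y_s$, the $k$-linear map $\Phi: \prod_{t} X_t \lra \prod_{s} Y_s$, $(x_t)_t \mapsto (\Theta_s(x_{g(s)}))_s$, is surjective when $g$ is injective and injective when $g$ is surjective. For surjectivity: given $(y_s)_s$, put $x_t = \Theta_s^{-1}(y_s)$ if $t = g(s)$ (unambiguous since $g$ is injective) and $x_t = 0$ for $t \notin \Beeld(g)$; then $\Phi((x_t)_t) = (y_s)_s$. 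For injectivity: if $\Phi((x_t)_t) = 0$ then $x_{g(s)} = 0$ for all $s$ as each $\Theta_s$ is injective, and surjectivity of $g$ forces $x_t = 0$ for all $t$.

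Finally I would apply the lemma with $g = \nnn_n(F)$. If $(\varphi, F)$ is $n$-injective then in particular $\nnn_n(F)$ is injective, whence $(F^{\ast}_M)^n$ is surjective, which is (1); if $(\varphi, F)$ is $n$-surjective then $\nnn_n(F)$ is surjective, whence $(F^{\ast}_M)^n$ is injective, which is (2). I expect no serious obstacle here: the one step deserving care is the identification in the first paragraph, namely matching the two $\Hom$-modules via $(F^{\ast}M)_{|v|}(B_0,B_n) = M_{|\varphi(v)|}(F(B_0),F(B_n))$ and invoking subcartesianness to conclude that $F^{\otimes n}$, hence $\Theta_\sigma$, is an isomorphism; once this is set up, both statements are formal consequences of how products behave under reindexing along injective resp. surjective maps.
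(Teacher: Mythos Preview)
Your proof is correct and follows essentially the same approach as the paper: in both cases the argument reduces to how a product of modules reindexes along an injective resp.\ surjective map on the index set, using subcartesianness to identify the individual factors. The paper states this more tersely (``isomorphic to a projection on a subproduct'' for (1), and the direct observation that $n$-surjectivity forces each component $\phi_{(u,A)}$ to vanish for (2)), whereas you have made the underlying elementary lemma explicit.
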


\begin{proof} 
(1) If $\nnn_n(F): \nnn_n(\BBB) \lra \nnn_n(\AAA)$ is injective, the map $(F^{\ast})^n$ is isomorphic to a projection on a subproduct, whence surjective.
(2) Looking at the prescription for $(F^{\ast})^n$, if $\phi \in \CC^n_{\uuu}(\AAA, M)$ is such that $\phi_{(\varphi(v), F(B))} = 0$ for every $(v,B) \in \nnn_n(\BBB)$, then using $n$-surjectivity we can write every $(u,A) \in \nnn(\AAA)$ as $(u, A) = (\varphi(v), F(B))$ for some $(v,B)$ whence $\phi_{(u,A)} = 0$.
\end{proof}

\begin{example}\label{exisosharp}
Consider the graded functor $(\varphi, F): (\uuu^{\sharp}, \AAA^{\sharp}) \lra (\uuu, \AAA)$ as in Example \ref{exfunsharp}. Clearly, $(\varphi, F)$ is subcartesian and $\nnn(F): \nnn(\AAA^{\sharp}) = \nnn((\uuu^{\sharp})^{\sharp}) \cong \nnn(\uuu^{\sharp}) \lra \nnn(\uuu^{\sharp})$ is an isomorphism. Consequently,
$F^{\ast}: \CC_{\uuu}(\AAA) \cong \CC_{\uuu^{\sharp}}(\AAA^{\sharp})$ is an isomorphism.
\end{example}

\subsection{The sheaf of Hochschild complexes}\label{parsheafhoch}
Consider the presheaf
$$\CC: \Map_{sc} \lra B_{\infty}: (\uuu, \AAA) \longmapsto \CC_{\uuu}(\AAA)$$
from \eqref{CC}.
Let $n \in \N \cup \{ \infty \}$ be fixed. We endow $\Map_{sc} \subseteq \Map$ with the pretopology of $n$-covers, as described in Proposition \ref{pretop}.

\begin{theorem}\label{mainsheaf}
The presheaf $\CC^n: \Map_{sc} \lra \Mod(k): (\uuu, \AAA) \longmapsto \CC^n_{\uuu}(\AAA)$ is a sheaf.
If $n = \infty$, then $\CC: \Map_{sc} \lra B_{\infty}$ is a sheaf of $B_{\infty}$-algebras.
\end{theorem}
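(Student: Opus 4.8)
The plan is to verify the sheaf condition directly, reducing it in each cohomological degree to an elementary descent statement for sheaves of $k$-modules on a set. Fix $n \in \N \cup \{\infty\}$. It suffices to prove the following statement, for each $m \in \N$ separately: for any family $\sss = ((\varphi_i, F_i)\colon (\vvv_i, \BBB_i) \lra (\uuu, \AAA))_{i \in I}$ of subcartesian morphisms such that the collection $(\nnn_m(F_i))_{i \in I}$ is jointly surjective, the diagram of $k$-modules
$$\CC^m_{\uuu}(\AAA) \lra \prod_{i \in I} \CC^m_{\vvv_i}(\BBB_i) \rightrightarrows \prod_{i, j \in I} \CC^m_{\vvv_{ij}}(\BBB_{ij})$$
is an equalizer, where $(\vvv_{ij}, \BBB_{ij})$ is the pullback of $(\varphi_i, F_i)$ and $(\varphi_j, F_j)$ formed in $\Map$, the left arrow has $i$-th component the map $(F_i^{\ast})^m$ of Proposition \ref{propfunct}, and the two right arrows are induced by the two families of projections $(\vvv_{ij}, \BBB_{ij}) \lra (\vvv_i, \BBB_i)$ and $(\vvv_{ij}, \BBB_{ij}) \lra (\vvv_j, \BBB_j)$. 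Granting this, the theorem for $\CC^n$ follows by applying the statement with $m = n$ to an arbitrary $n$-cover, and the case $n = \infty$ is treated in the last paragraph. Before anything else I would record that this diagram lives inside $\Map_{sc}$: both projections $(\vvv_{ij}, \BBB_{ij}) \lra (\vvv_i, \BBB_i)$ and $(\vvv_{ij}, \BBB_{ij}) \lra (\vvv_j, \BBB_j)$ are pullbacks of subcartesian morphisms, hence subcartesian by the pullback-stability of subcartesian morphisms (the proposition on pullback squares in \S\ref{parfibstack}); the same observation, together with the stability of joint surjectivity under pullbacks of sets, shows that the $n$-covers do restrict to a pretopology on $\Map_{sc}$.

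The key point is that, along subcartesian morphisms, $\CC^m$ is nothing but ``global sections of a coefficient system on the set of $m$-simplices of the nerve''. For $(\uuu, \AAA)$ and $\sigma = (u, A) \in \nnn_m(\AAA) = \nnn_m(\uuu^{\sharp})$ put
$$E_{\AAA}(\sigma) = \Hom_k(\AAA_{u_{m-1}}(A_{m-1}, A_m) \otimes \cdots \otimes \AAA_{u_0}(A_0, A_1), \AAA_{|u|}(A_0, A_m)),$$
so that $\CC^m_{\uuu}(\AAA) = \prod_{\sigma \in \nnn_m(\AAA)} E_{\AAA}(\sigma)$. A subcartesian $(\varphi, F)\colon (\vvv, \BBB) \lra (\uuu, \AAA)$ is an isomorphism on all morphism modules, so for each $\tau \in \nnn_m(\BBB)$ it induces a canonical $k$-linear isomorphism $\theta^{F}_{\tau}\colon E_{\BBB}(\tau) \cong E_{\AAA}(\nnn_m(F)\tau)$, and by Proposition \ref{propfunct}(3) one has $((F^{\ast})^m\phi)_{\tau} = (\theta^{F}_{\tau})^{-1}(\phi_{\nnn_m(F)\tau})$; since $F \circ G$ acts on morphism modules by composing the actions of $F$ and $G$, these satisfy the cocycle identity $\theta^{FG}_{\omega} = \theta^{F}_{\nnn_m(G)\omega} \circ \theta^{G}_{\omega}$. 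Now fix $\sss$ and set $S = \nnn_m(\AAA)$, $T_i = \nnn_m(\BBB_i)$ and $g_i = \nnn_m(F_i)\colon T_i \lra S$; by hypothesis the $g_i$ are jointly surjective, and by Lemma \ref{lemnervepb} we may identify $\nnn_m(\BBB_{ij}) = T_i \times_S T_j$ compatibly with the projections. Transporting the three terms of the diagram along the isomorphisms $\theta^{F_i}$, resp.\ $\theta$ for the composite subcartesian morphism $(\vvv_{ij}, \BBB_{ij}) \lra (\uuu, \AAA)$, and using the cocycle identity, the diagram turns into the ``naive'' diagram
$$\prod_{s \in S} E_{\AAA}(s) \lra \prod_{i}\, \prod_{t \in T_i} E_{\AAA}(g_i t) \rightrightarrows \prod_{i, j}\, \prod_{(t, t') \in T_i \times_S T_j} E_{\AAA}(g_i t),$$
in which the left arrow is restriction along the $g_i$ and the two right arrows are restriction along the two projections of $T_i \times_S T_j$.

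This naive diagram is an equalizer by an elementary argument. Injectivity of the left arrow (separatedness) is the computation of Proposition \ref{propsurjinj}(2) done for a family rather than for a single morphism: if $\phi$ restricts to $0$ along every $g_i$, then writing a given $s \in S$ as $s = g_i(t)$ by joint surjectivity forces $\phi_s = 0$. For the equalizing property, let $(\phi^{(i)})_i$ be a matching family, $\phi^{(i)} \in \prod_{t \in T_i} E_{\AAA}(g_i t)$. Comparing over $T_i \times_S T_i$ shows that $\phi^{(i)}_t$ depends only on $g_i(t) \in S$; comparing over $T_i \times_S T_j$ shows that for all $i, j$ these values agree; joint surjectivity then yields a well-defined $\phi \in \prod_s E_{\AAA}(s)$ with $\phi_{g_i(t)} = \phi^{(i)}_t$, i.e.\ restricting to each $\phi^{(i)}$ by construction, with uniqueness being separatedness. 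Transporting back along the $\theta$'s proves the statement in degree $m$, hence that $\CC^n\colon \Map_{sc} \lra \Mod(k)$ is a sheaf.

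Finally, for $n = \infty$ the sheaf property of $\CC$ as a complex and of its $B_{\infty}$-structure is inherited from that of the $\CC^m$. An $\infty$-cover is an $m$-cover for every $m$, so the previous step gives the equalizer for each $\CC^m$; since equalizers of complexes are computed degreewise and, by Proposition \ref{propfunct}(2)--(3), the restriction maps are morphisms of complexes, the diagram $\CC_{\uuu}(\AAA) \lra \prod_i \CC_{\vvv_i}(\BBB_i) \rightrightarrows \prod_{i,j} \CC_{\vvv_{ij}}(\BBB_{ij})$ is an equalizer of complexes. Moreover the restriction maps are $B_{\infty}$-morphisms (Proposition \ref{propfunct}(3)) and the $B_{\infty}$-operations are defined componentwise, so the degreewise equalizer is cut out inside the product of $B_{\infty}$-algebras by conditions preserved under all $B_{\infty}$-operations, hence carries a unique $B_{\infty}$-structure making it the equalizer in $B_{\infty}$; thus $\CC\colon \Map_{sc} \lra B_{\infty}$ is a sheaf of $B_{\infty}$-algebras. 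The step requiring the most care is the second paragraph: the bookkeeping of the canonical isomorphisms $\theta^{F}$ and their cocycle compatibility, which is exactly what converts the site-theoretic descent diagram, with its pullbacks $(\vvv_{ij}, \BBB_{ij})$ formed in $\Map$, into the transparent set-level picture; everything after that reduction is routine.
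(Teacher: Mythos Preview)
Your proof is correct and follows essentially the same approach as the paper: both arguments use the subcartesian hypothesis to identify $\CC^m$ with a product of $k$-modules indexed by $\nnn_m(\AAA)$, then use joint surjectivity of the nerve maps together with Lemma \ref{lemnervepb} to glue a compatible family and verify well-definedness via the pullback square. Your presentation is somewhat more abstract---you explicitly package the reduction via the transport isomorphisms $\theta^F$ and their cocycle identity, and you are more careful than the paper about the $B_\infty$-case and about why the pretopology restricts to $\Map_{sc}$---but the underlying mathematics is the same.
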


\begin{proof}
Let $((\varphi_i, F_i): (\vvv_i, \BBB_i) \lra (\uuu, \AAA))_i$ be an $n$-cover in $\Map_{sc}$. A compatible family of elements for this cover consists of $n$-cocycles
$$\phi_i \in \CC_{\vvv_i}(\BBB_i)$$
such that for every pullback diagram
$$\xymatrix{ {(\vvv_i, \BBB_i)} \ar[r]^{(\varphi_i, F_i)} & {(\uuu, \AAA)}\\   {(\vvv_i \times_{\uuu} \vvv_j, \BBB_i \times_{\AAA} \BBB_j)} \ar[u]^{(\alpha_1, G_1)} \ar[r]_-{(\alpha_2, G_2)} & {(\vvv_j, \BBB_j)} \ar[u]_-{(\varphi_j, F_j)} }$$
we have $$G_i^{\ast}(\phi_i) = G_j^{\ast}(\phi_j).$$
To define the unique glueing of this family on $\AAA$, we must define for every $(u, A) \in \nnn_n(\AAA)$ a corresponding cocycle 
$$\phi_{(u,A)} \in \Hom_k(\AAA_{u_{n-1}}(A_{n-1}, A_n) \otimes \dots \otimes \AAA_{u_0}(A_0, A_1), \AAA_{|u|}(A_0, A_n)).$$
Since the collection $(\nnn_n(F_i))_i$ is jointly surjective, there in an $i$ and $(v, B) \in \nnn_n(\BBB_i)$ for which $\varphi_i(v) = u$, $F_i(B) = A$. We thus have isomorphisms
$F_i: (\BBB_i)_{v_i}(B_i, B_{i+1}) \lra \AAA_{u_i}(A_i, A_{i+1})$
and
$F_i: (\BBB_i)_{|v|}(B_0, B_{n}) \lra \AAA_{|u|}(A_0, A_{n})$.
We put
$$\phi_{(u,A)} = F_i \circ {\phi_i}_{(v,B)} \circ (F_i^{-1})^{\otimes n}.$$
It remains to show that this is well defined. Suppose there is another $j$ and $(w, C) \in \nnn_n(\BBB_j)$ for which $\varphi_j(w) = u$, $F_j(C) = A$. We are to show that 
$F_i \circ {\phi_i}_{(v,B)} \circ (F_i^{-1})^{\otimes n} = F_j \circ {\phi_j}_{(w,C)} \circ (F_j^{-1})^{\otimes n}$.
Now $G_1^{\ast}$ maps the collection $\phi_i$ to a collection $\psi_i$ with
$${\psi_i}_{((v, w), (B, C))} = (G_1)^{-1} \circ {\phi_i}_{(v,B)} \circ (G_1)^{\otimes n}$$
and $G_2^{\ast}$ maps the collection $\phi_j$ to a collection $\psi_j$ with
$${\psi_j}_{((v, w), (B,C))} = (G_2)^{-1} \circ {\phi_j}_{(w,C)} \circ (G_2)^{\otimes n}.$$
We thus have
$$F_i \circ {\phi_i}_{(v,B)} \circ (F_i^{-1})^{\otimes n} = (F_i G_1) \circ {\psi_i}_{((v, w), (B, C))} \circ ((F_i G_1)^{-1})^{\otimes n}$$
and
$$F_j \circ {\phi_j}_{(w,C)} \circ (F_j^{-1})^{\otimes n} = (F_j G_2) \circ {\psi_j}_{((v, w), (B, C))} \circ ((F_j G_2)^{-1})^{\otimes n}.$$
By the commutativity of the pullback square and the compatibility assumption $\psi_i = \psi_j$, these two expressions are equal as desired.  
\end{proof}

Let $\AAA$ be a $\uuu$-graded category with underlying graded set $\XXX$. There are natural functors
$$\Phi_1: \Mas/\XXX \lra \Map: ((\varphi, F): (\vvv, \YYY) \rightarrow (\uuu, \XXX)) \longmapsto \AAA^{(\varphi, F)}$$
and
$$\Phi_2: \Cat/\uuu \lra \Map: (\varphi: \vvv \rightarrow \uuu) \longmapsto \AAA^{\varphi}.$$

Let $n \in \N \cup {\infty}$. Endow $\Mas/ \XXX$ and $\Cat/ \uuu$ with the pretopologies of $n$-covers induced from the ones on $\Mas$ and $\Cat$, and also endow $\Map$ with the pretopology of $n$-covers.

\begin{proposition}
The functors $\Phi_1$ and $\Phi_2$ both map covers to covers. 
\end{proposition}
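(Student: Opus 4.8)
The plan is to exploit that the nerve of a graded category or set depends only on its base category and its object sets --- by construction $\nnn(\AAA) = \nnn(\uuu^{\sharp})$ --- so that both functors are essentially ``the identity on nerves'' and hence preserve joint surjectivity of the induced maps on nerves, which is exactly what being an $n$-cover means (Definition \ref{defncover}). I would treat $\Phi_1$ first and then reduce $\Phi_2$ to it.

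First I would fix an $n$-cover $\sss = ((\psi_i, G_i) \colon (\vvv_i, \YYY_i) \lra (\vvv, \YYY))_i$ in $\Mas/\XXX$; by definition of the pretopology induced from $\Mas$ this says precisely that $(\nnn_k(\psi_i, G_i))_i$ is jointly surjective for all $k \leq n$. Writing $(\varphi_i, F_i) = (\varphi, F)(\psi_i, G_i)$, I would unwind the defining formulas for $\AAA^{(-,-)}$ to see that $\Phi_1(\psi_i, G_i) \colon \AAA^{(\varphi_i, F_i)} \lra \AAA^{(\varphi, F)}$ has underlying base functor $\psi_i$, is given on object sets by $(G_i)_W \colon (\YYY_i)_W \lra \YYY_{\psi_i(W)}$, and is the identity on each morphism module, since $\AAA^{(\varphi_i, F_i)}_w(Y,Y') = \AAA_{\varphi(\psi_i(w))}(F_i(Y), F_i(Y')) = \AAA^{(\varphi, F)}_{\psi_i(w)}(G_i(Y), G_i(Y'))$ on the nose. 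Thus the functor $\vvv_i^{\sharp} \lra \vvv^{\sharp}$ attached to $\Phi_1(\psi_i, G_i)$ is literally the one attached to $(\psi_i, G_i)$, whence $\nnn_k(\Phi_1(\psi_i, G_i)) = \nnn_k(\psi_i, G_i)$ for all $k$. Joint surjectivity for $k \leq n$ therefore transfers verbatim, so $\Phi_1(\sss)$ is an $n$-cover in $\Map$.

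For $\Phi_2$ I would observe that applying $\Phi_1$ to the object $(\varphi, \delta^{\varphi, \XXX}) \colon (\vvv, \XXX^{\varphi}) \lra (\uuu, \XXX)$ of $\Mas/\XXX$ yields $\AAA^{(\varphi, \delta^{\varphi, \XXX})}$, which by the defining formulas has $\AAA^{(\varphi, \delta^{\varphi, \XXX})}_V = \XXX^{\varphi}_V = \AAA_{\varphi(V)}$ and $\AAA^{(\varphi, \delta^{\varphi,\XXX})}_v(A,A') = \AAA_{\varphi(v)}(A, A')$, i.e. it is exactly $\AAA^{\varphi}$; comparing morphisms as well, $\Phi_2 = \Phi_1 \circ \iota$ for the functor $\iota \colon \Cat/\uuu \lra \Mas/\XXX$, $\varphi \longmapsto \big((\varphi, \delta^{\varphi, \XXX}) \colon (\vvv, \XXX^{\varphi}) \lra (\uuu, \XXX)\big)$. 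By the previous step it then suffices to check that $\iota$ sends covers to covers. Given an $n$-cover $(\psi_i \colon \vvv_i \lra \vvv)_i$ in $\Cat/\uuu$, i.e. with $(\nnn_k(\psi_i))_i$ jointly surjective for $k \leq n$, I would note that a $k$-simplex of the $\sharp$-category of $(\vvv, \XXX^{\varphi})$ is a chain $V_0 \lra \dots \lra V_k$ in $\vvv$ together with a choice of objects $A^{(j)} \in \AAA_{\varphi(V_j)}$, and that the map induced by $\iota(\psi_i)$ acts by $\nnn_k(\psi_i)$ on the chain and leaves the $A^{(j)}$ unchanged (legitimately, since $\varphi\psi_i(W_j) = \varphi(\psi_i(W_j))$). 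Lifting the chain to some $\vvv_i$ by joint surjectivity of $(\nnn_k(\psi_i))_i$ then automatically lifts the whole simplex, so $(\iota(\psi_i))_i$ is jointly surjective on nerves in degrees $\leq n$; hence it is an $n$-cover in $\Mas$, and it lies in $\Mas/\XXX$, so $\iota$ preserves covers. Composing with the first step gives the statement for $\Phi_2$.

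There is no serious obstacle here: the content is entirely in correctly unwinding the $\sharp$-construction and the induced maps on nerves, and in noticing the two mild but essential points that $\AAA^{(\varphi_i, F_i)}$ and $\AAA^{(\varphi, F)}$ \emph{share} their morphism modules (so that $\Phi_1$ is genuinely the identity on nerves, not merely an isomorphism) and that $\AAA^{\varphi} = \AAA^{(\varphi, \delta^{\varphi, \XXX})}$, which is what lets $\Phi_2$ be reduced to $\Phi_1$.
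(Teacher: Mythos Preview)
Your argument is correct. The paper states this proposition without proof, so there is no approach to compare against; your verification that $\Phi_1$ literally preserves the underlying $\sharp$-categories (hence the nerves) and your reduction of $\Phi_2$ to $\Phi_1$ via the factorization $\Phi_2 = \Phi_1 \circ \iota$ constitute a complete and clean unwinding of what the paper leaves implicit.
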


\begin{corollary}\label{corsheafhoch}
The presheaves
$$\CC^n_1 = \CC^n \circ \Phi_1: \Mas/ \XXX \lra \Mod(k): ((\varphi, F): (\vvv, \YYY) \rightarrow (\uuu, \XXX)) \longmapsto \CC^n_{\vvv}(\AAA^{(\varphi, F)})$$
and
$$\CC^n_2 = \CC^n \circ \Phi_2:  \Cat/\uuu \lra \Mod(k): (\varphi: \vvv \rightarrow \uuu) \longmapsto \CC^n_{\vvv}(\AAA^{\varphi})$$
are sheaves.
\end{corollary}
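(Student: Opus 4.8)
The plan is to deduce both assertions from Theorem~\ref{mainsheaf} together with the preceding Proposition, using the standard principle that the composite of a sheaf with a \emph{morphism of sites} (a functor that preserves covers and the fibre products occurring in covering families) is again a sheaf. The ingredients are: $\CC^n$ is a sheaf on $\Map_{sc}$ for the pretopology of $n$-covers (Theorem~\ref{mainsheaf}); $\Phi_1$ and $\Phi_2$ send covers to covers (the preceding Proposition); and both functors actually land in $\Map_{sc}$, since for a morphism over $(\uuu,\XXX)$, respectively over $\uuu$, the induced morphism between the base-changed objects $\AAA^{(\varphi,F)}$, respectively $\AAA^{\varphi}$, is the identity on every $\Hom$-module and hence subcartesian. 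Thus $\CC^n_1 = \CC^n\circ\Phi_1$ and $\CC^n_2=\CC^n\circ\Phi_2$ are well-defined presheaves and it only remains to match the sheaf conditions.

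The first real step is to verify that $\Phi_1$ and $\Phi_2$ preserve the fibre products appearing in covering families. This follows from the explicit descriptions of pullbacks in $\Cat$, $\Mas$ and $\Map$ in \S~\ref{parfibmap}, by the same bookkeeping as in Example~\ref{expullsharp}: if $h_i\colon\vvv_i\to\vvv$ and $h_j\colon\vvv_j\to\vvv$ are maps over $\uuu$ with composites $\varphi_i,\varphi_j\colon-\to\uuu$, then over a pair $(V_i,V_j)$ with $h_i(V_i)=h_j(V_j)=W$ the object (and similarly the morphism) modules of $\AAA^{\varphi_i}\times_{\AAA^{\varphi}}\AAA^{\varphi_j}$ are $\AAA_{\varphi(W)}\times_{\AAA_{\varphi(W)}}\AAA_{\varphi(W)}=\AAA_{\varphi(W)}$, which is exactly the corresponding module of $\Phi_2(\vvv_i\times_{\vvv}\vvv_j\to\uuu)$; the same holds for the triple products $\vvv_i\times_\vvv\vvv_j\times_\vvv\vvv_k$ and, using the base change $(-)^{(\varphi,F)}$, for $\Phi_1$. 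Hence $\Phi_1(U_\iota\times_U U_\kappa)\cong\Phi_1(U_\iota)\times_{\Phi_1(U)}\Phi_1(U_\kappa)$ compatibly with the projections, and likewise for $\Phi_2$ and for triple fibre products.

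Granting this, the proof is a transport of structure. Fix an $n$-cover $\sss=(U_\iota\to U)_\iota$ in $\Cat/\uuu$ (the case of $\Mas/\XXX$ is identical). A compatible family for $\CC^n_2$ on $\sss$ is a tuple of $n$-cocycles $\phi_\iota\in\CC^n_2(U_\iota)=\CC^n(\Phi_2 U_\iota)$ agreeing after restriction to each $\CC^n_2(U_\iota\times_U U_\kappa)=\CC^n(\Phi_2(U_\iota\times_U U_\kappa))$; using the identification $\Phi_2(U_\iota\times_U U_\kappa)\cong\Phi_2 U_\iota\times_{\Phi_2 U}\Phi_2 U_\kappa$ from the previous paragraph together with the naturality of the restriction morphisms of $\CC^n$ in subcartesian morphisms (Proposition~\ref{propfunct}), this is literally a compatible family for the sheaf $\CC^n$ on the $n$-cover $(\Phi_2 U_\iota\to\Phi_2 U)_\iota$ of $\Phi_2 U$ in $\Map_{sc}$. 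By Theorem~\ref{mainsheaf} it glues uniquely to an element of $\CC^n(\Phi_2 U)=\CC^n_2(U)$, which is the required unique glueing for $\CC^n_2$. So $\CC^n_2$, and symmetrically $\CC^n_1$, is a sheaf; for $n=\infty$ the same argument with the $B_{\infty}$-refinement of Theorem~\ref{mainsheaf} gives that $\CC_1$ and $\CC_2$ are sheaves of $B_{\infty}$-algebras.

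The main (and essentially only) obstacle is the middle step: checking that the base-change functors $(-)^{\varphi}$ and $(-)^{(\varphi,F)}$ commute with the relevant fibre products \emph{on the nose}, so that the two equalizer diagrams coincide rather than merely being related by canonical isomorphisms one still has to track. Once that is settled, everything else is the formal fact that a sheaf pulls back along a continuous, pullback-preserving functor, plus a routine diagram chase.
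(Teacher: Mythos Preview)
Your proposal is correct and follows the approach the paper intends: the corollary is stated without proof, as an immediate consequence of Theorem~\ref{mainsheaf} and the preceding Proposition that $\Phi_1,\Phi_2$ send covers to covers. You have simply made explicit the one nontrivial verification the paper leaves implicit, namely that $\Phi_1$ and $\Phi_2$ preserve the fibre products occurring in covers (so that the equalizer diagram for $\CC^n_i$ on a cover in $\Mas/\XXX$ or $\Cat/\uuu$ is the image under $\CC^n$ of the corresponding diagram in $\Map_{sc}$); your computation of $\AAA^{\varphi_i}\times_{\AAA^{\varphi}}\AAA^{\varphi_j}$ via the explicit pullbacks of \S\ref{parfibmap} is exactly the right way to do this, and the remark that the induced morphisms are even cartesian (not just subcartesian) is correct.
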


We end this section by noting that for a fixed $\AAA$-bimodule $M$, one similarly has:

\begin{proposition}\label{sheafbimod}
There are natural sheaves
$$\CC^n_{1,M}: \Mas/\XXX \lra \Mod(k): ((\varphi, F): (\vvv, \YYY) \rightarrow (\uuu, \XXX)) \longmapsto \CC^n_{\vvv}(\AAA^{(\varphi, F)}, F^{\ast}M)$$
and
$$\CC^n_{2, M} = \CC^n \circ \Phi_2:  \Cat/\uuu \lra \Mod(k): (\varphi: \vvv \rightarrow \uuu) \longmapsto \CC^n_{\vvv}(\AAA^{\varphi}, M^{\varphi}).$$
\end{proposition}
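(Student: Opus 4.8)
The plan is to repeat the glueing argument from the proof of Theorem \ref{mainsheaf}, with the identity bimodule replaced throughout by $M$ (and its pullbacks). The first step is to record that $\CC^n_{1,M}$ and $\CC^n_{2,M}$ are presheaves at all: for a morphism in $\Mas/\XXX$ --- a commutative triangle over $(\uuu,\XXX)$ --- the strict functoriality of the chosen cartesian lifts (Proposition \ref{funchoice} and the remarks after it showing that the pseudofunctors in play are genuine functors) identifies the two pullbacks of $M$ that occur, and Proposition \ref{propfunct}(1)--(2) then furnishes the corresponding restriction morphism of Hochschild complexes; the $\Cat/\uuu$ case is the same. Since $\Phi_1$ and $\Phi_2$ carry covers to covers, it then suffices to verify the sheaf condition on covers of $\Mas/\XXX$ and of $\Cat/\uuu$ directly.

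Next I would run the descent computation. Let $\sss = ((\varphi_i, F_i): (\vvv_i, \YYY_i) \to (\uuu, \XXX))_i$ be an $n$-cover in $\Mas/\XXX$, and set $\BBB_i = \AAA^{(\varphi_i, F_i)}$ and $M_i = F_i^{\ast}M$ (i.e. the pullback of $M$ along the graded functor $\delta^{\varphi_i, F_i, \AAA}: \BBB_i \to \AAA$). The observation that makes everything go through is that, directly from the definitions of $\BBB_i$ and of the pullback of a bimodule, whenever $(v, B) \in \nnn_n(\BBB_i)$ maps to $(u, A) \in \nnn_n(\AAA)$ one has genuine equalities $(\BBB_i)_{v_k}(B_k, B_{k+1}) = \AAA_{u_k}(A_k, A_{k+1})$ and $(M_i)_{|v|}(B_0, B_n) = M_{|u|}(A_0, A_n)$; thus a component ${\phi_i}_{(v,B)}$ of an element of $\CC^n_{\vvv_i}(\BBB_i, M_i)$ is literally an element of the factor indexed by $(u,A)$ in the product defining $\CC^n_{\uuu}(\AAA, M)$. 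So, given a compatible family $(\phi_i)_i$, the glueing is simply $\phi_{(u,A)} := {\phi_i}_{(v,B)}$ for any $(i, v, B)$ lying over $(u, A)$ --- one exists since $(\nnn_n(F_i))_i$ is jointly surjective. Independence of this choice I would verify exactly as in Theorem \ref{mainsheaf}: a second lift $(w, C) \in \nnn_n(\BBB_j)$ gives a simplex of the pullback $\BBB_i \times_{\AAA} \BBB_j$, and the hypothesis that the restrictions of $\phi_i$ and $\phi_j$ to that pullback agree, read through the same tautological identifications, forces ${\phi_i}_{(v,B)} = {\phi_j}_{(w,C)}$. Finally, that $\phi = (\phi_{(u,A)})_{(u,A)}$ actually lies in $\CC^n_{\uuu}(\AAA, M)$, restricts along each $(\varphi_i, F_i)$ to $\phi_i$, and is the unique such element, is immediate from the construction.

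The case of $\CC^n_{2,M}$ I would handle in exactly the same way, with $\Cat/\uuu$, $\AAA^{\varphi_i}$ and $M^{\varphi_i}$ in the roles of $\Mas/\XXX$, $\BBB_i$ and $M_i$; it is in fact the subcase in which each $F_i$ is a structural identification of graded sets, so it is slightly cleaner. The one point that will need genuine care --- and the main (essentially the only) obstacle --- is the coherence bookkeeping: making sure that the identifications $(M_i)_{|v|}(B_0, B_n) = M_{|u|}(A_0, A_n)$, and their counterparts over the double and triple pullbacks, are exactly the ones dictated by the $\AAA$-bimodule structure of $M$ and by the strictly functorial cartesian lifts, so that the equalities and the compatibility used above really do hold on the nose. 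No new conceptual ingredient beyond Theorem \ref{mainsheaf} is required.
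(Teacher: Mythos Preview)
Your proposal is correct and matches the paper's intended approach: the paper does not give a proof, but the remark immediately following the proposition states that one should formulate a version of Theorem~\ref{mainsheaf} with bimodule-valued Hochschild complexes as input, from which the proposition follows, leaving the details to the reader. Your adaptation of the glueing argument from Theorem~\ref{mainsheaf}, replacing $1_{\AAA}$ by $M$ and using the tautological identifications coming from the definition of $F_i^{\ast}M$, is precisely this.
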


\begin{remark}
It is possible to formulate a version of Theorem \ref{mainsheaf} taking map graded categories endowed with a bimodule as input data for $\CC$, such that Proposition \ref{sheafbimod} is obtained as a corollary. The details are left to the reader.
\end{remark}

\subsection{$\infty$-surjections}

Let $(\varphi, F): (\vvv, \BBB) \lra (\uuu, \AAA)$ be an $\infty$-surjective morphism in $\Map$. To formulate the sheaf property of $\CC^n$ for the corresponding $\infty$-cover, we look at the pullback
$$\xymatrix{ {(\vvv, \BBB)} \ar[r]^{(\varphi, F)} & {(\uuu, \AAA)}\\   {(\vvv \times_{\uuu} \vvv, \BBB \times_{\AAA} \BBB)} \ar[u]^{(\alpha_1, G_1)} \ar[r]_-{(\alpha_2, G_2)} & {(\vvv, \BBB)} \ar[u]_-{(\varphi, F)} }$$
According to Theorem \ref{mainsheaf}, we obtain an exact sequence of $B_{\infty}$-algebras
$$\xymatrix{ 0 \ar[r] & {\CC_{\uuu}(\AAA)} \ar[r]_{F^{\ast}} & {\CC_{\vvv}(\BBB)} \ar[r]_-{G_2^{\ast} - G_1^{\ast}} & {\CC_{\vvv \times_{\uuu} \vvv}(\BBB \times_{\AAA} \BBB).} }$$
Define the complex $$\CC_{\vvv/\uuu}(\BBB/\AAA) = \Beeld(G_2^{\ast} - G_1^{\ast}).$$
Then we obtain a long exact cohomology sequence
$$\xymatrix{ {\dots} \ar[r] & {HH^i_{\uuu}(\AAA)} \ar[r]_-{F^{\ast}} & {HH^i_{\vvv}(\BBB)} \ar[r]_-{G_2^{\ast} - G_1^{\ast}} & {HH^i_{\vvv/\uuu}(\BBB/\AAA)} \ar[r] & {\dots}}$$

%Let $\AAA$ be a $\uuu$-graded category, $M$ an $\AAA$-bimodule and $\varphi: \vvv \lra \uuu$ a surjective functor. By Example \ref{excover} (1), $\varphi$ constitutes a cover.
%To formulate the sheaf property, we have to look at the pullback
%$$\xymatrix{ {\vvv} \ar[r]^{\varphi} & {\uuu} \\ {\vvv \times_{\uuu} \vvv} \ar[u]^{\alpha_1} \ar[r]_{\alpha_2} & {\vvv.} \ar[u]_{\varphi} }$$ 
%We put $\varphi' = \varphi \alpha_1 = \varphi \alpha_2$. By Theorem \ref{sheaf} we obtain an exact sequence of complexes:
%$$\xymatrix{ 0 \ar[r] & {\CC_{\uuu}(\AAA, M)} \ar[r]_-{\varphi^{\ast}} & {\CC_{\vvv}(\AAA^{\varphi}, M^{\varphi})} \ar[r]_-{\alpha_2^{\ast} - \alpha_1^{\ast}} & {\CC_{\vvv \times_{\uuu} \vvv}(\AAA^{\varphi'}, M^{\varphi'}).} }$$
%Define the complex
%$$\CC_{\vvv/\uuu}(\AAA, M) = \Beeld(\alpha_2^{\ast} -\alpha_1^{\ast}).$$
%Then we obtain a long exact cohomology sequence
%$$\xymatrix{ {\dots} \ar[r] & {HH^i_{\uuu}(\AAA, M)} \ar[r]_-{\varphi^{\ast}} & {HH^i_{\vvv}(\AAA^{\varphi}, M^{\varphi})} \ar[r]_-{\alpha_2^{\ast} - \alpha_1^{\ast}} & {HH^i_{\vvv/\uuu}(\AAA, M)} \ar[r] & {\dots}}$$

\subsection{Mayer-Vietoris sequences}\label{parMV}
Let $\AAA$ be a $\uuu$-graded category. Consider two subcartesian $1$-injections
$(\varphi_1, F_1): (\vvv_1, \BBB_1) \lra (\uuu, \AAA)$ and $(\varphi_2, F_2): (\vvv_2, \BBB_2) \lra (\uuu, \AAA)$ that together constitute an $\infty$-cover of $\AAA$. It is our aim to formulate the sheaf property, so we have to look at all possible pullbacks between $(\varphi_1, F_1)$ and $(\varphi_2, F_2)$. For $i \in \{1,2\}$, we first look at the pullback
$$\xymatrix{ {(\vvv_i, \BBB_i)} \ar[r]^{(\varphi_i, F_i)} & {(\uuu, \AAA)}\\   {(\vvv_i \times_{\uuu} \vvv_i, \BBB_i \times_{\AAA} \BBB_i)} \ar[u]^{(\alpha_1, G_1)} \ar[r]_-{(\alpha_2, G_2)} & {(\vvv_i, \BBB_i).} \ar[u]_-{(\varphi_i, F_i)} }$$
The functor $(-)^{\sharp}$ from Example \ref{exfunsharp} maps this pullback to the pullback of $(\varphi_i^{\sharp}, F_i^{\sharp}): (\vvv_i^{\sharp}, \BBB_i^{\sharp}) \lra (\uuu^{\sharp}, \AAA^{\sharp})$ with itself. Now $(\varphi_i^{\sharp}, F_i^{\sharp})$ is an injection by Remark \ref{remsurjinj} (4), whence it is easily seen to be a monomorphism in $\Map$. Thus, the pullback of this morphism with itself is given by identity morphisms, inducing identity morphisms between Hochschild complexes. Using Example \ref{exisosharp}, we conclude that
an element $\phi \in \CC_{\vvv_i}(\BBB_i)$ satisfies
$$G^{\ast}_1(\phi) = G^{\ast}_2(\phi) \in \CC_{\vvv_i \times_{\uuu} \vvv_i}(\BBB_i \times_{\AAA} \BBB_i).$$
It thus remains to look at the pullback
$$\xymatrix{ {(\vvv_1, \BBB_1)} \ar[r]^{(\varphi_1, F_1)} & {(\uuu, \AAA)}\\   {(\vvv_1 \times_{\uuu} \vvv_2, \BBB_1 \times_{\AAA} \BBB_2)} \ar[u]^{(\alpha_1, G_1)} \ar[r]_-{(\alpha_2, G_2)} & {(\vvv_2, \BBB_2)} \ar[u]_-{(\varphi_2, F_2)} }$$
from which we obtain an exact sequence of complexes:
$$\xymatrix{ 0 \ar[r] & {\CC_{\uuu}(\AAA)} \ar[r]_-{\begin{pmatrix} F_1^{\ast} \\ F_2^{\ast} \end{pmatrix}} & {\CC_{\vvv_1}(\BBB_1) \oplus \CC_{\vvv_2}(\BBB_2)} \ar[r]_-{\begin{pmatrix} - G_1^{\ast} & G_2^{\ast} \end{pmatrix}} & {\CC_{\vvv_1 \times_{\uuu} \vvv_2}(\BBB_1 \times_{\AAA} \BBB_2)} \ar[r] & 0.}$$
Here, the sequence is left exact by the sheaf property Theorem \ref{mainsheaf} and moreover right exact by Proposition \ref{propsurjinj} (2).
We thus obtain a long exact cohomology sequence
$$\xymatrix{ {\dots} \ar[r] & {HH^i_{\uuu}(\AAA)} \ar[r]_-{\begin{pmatrix} F_1^{\ast} \\ F_2^{\ast} \end{pmatrix}} & {HH^i_{\vvv_1}(\BBB_1) \oplus HH^i_{\vvv_2}(\BBB_2)} \ar[r]_-{\begin{pmatrix} - G_1^{\ast} & G_2^{\ast} \end{pmatrix}} & {HH^i_{\vvv_1 \times_{\uuu} \vvv_2}(\BBB_1 \times_{\AAA} \BBB_2)} \ar[r] & {\dots}}$$

\begin{example}
Let $\AAA$ be a $\uuu$-graded category and let $\varphi_1: \vvv_1 \subseteq \uuu$ and $\varphi_2: \vvv_2 \subseteq \uuu$ be subcategories that together constitute an $\infty$-cover of $\uuu$ in $\Cat$. Then the induced cartesian morphisms $(\vvv_1, \AAA^{\varphi_1}) \lra (\uuu, \AAA)$ and $(\vvv_2, \AAA^{\varphi_2}) \lra (\uuu, \AAA)$ in $\Map$ constitute an $\infty$-cover by $1$-injections. Put $\varphi: \vvv_1 \cap \vvv_2 \subseteq \uuu$ the inclusion. By Example \ref{exint} we obtain an exact sequence of complexes
$$\xymatrix{ 0 \ar[r] & {\CC_{\uuu}(\AAA)} \ar[r]_-{\begin{pmatrix} \varphi_1^{\ast} \\ \varphi_2^{\ast} \end{pmatrix}} & {\CC_{\vvv_1}(\AAA^{\varphi_1}) \oplus \CC_{\vvv_2}(\AAA^{\varphi_2})}
 \ar[r]_-{\begin{pmatrix} - \alpha_1^{\ast} & \alpha_2^{\ast} \end{pmatrix}} & {\CC_{\vvv_1 \cap \vvv_2}(\AAA^{\varphi})} \ar[r] & 0.}$$
Based upon Proposition \ref{sheafbimod}, one obtains a version of this sequence involving bimodules.
\end{example}

\begin{example}
Let $(X, \ooo)$ be a ringed space with an acyclic basis $\bbb$ of open sets in the sense of \cite{lowenvandenberghhoch}. Consider open sets $U_0, U_1$ and $U_2$ with $U_0 = U_1 \cup U_2$ and put $U_{12} = U_1 \cap U_2$. Put 
$$\bbb_{\star} = \{ B \in \bbb \,\, |\,\, B \subseteq U_{\star} \}$$
for $\star = 1, 2, 12$. Then $\bbb_{\star}$ is an acyclic basis for $U_{\star}$ and $\bbb_{12} = \bbb_1 \cap \bbb_2$. Futher, $\bbb_0 = \bbb_1 \cup \bbb_2$ is an acyclic basis for $U_0$ (which is smaller that the basis $U_0$ immediately inherits form $\bbb$). 
Now consider all the posets $(\bbb_{\star}, \subseteq)$ as small categories. Then we have a cover of $\bbb_0$ consisting of the injections $\varphi_i: \bbb_i \lra \bbb_0$ for $i = 1,2$ and we have $\bbb_{12} = \bbb_1 \cap \bbb_2$ as categories. 
On each of these categories, we consider the corresponding $\bbb_{\star}$-graded category $\BBB_{\star}$ with $(\BBB_{\star})_V = \{V\}$ and
$$\BBB_{\star}(W,V) = \ooo(W)$$
for $W \subseteq V$, i.e. $\BBB_{\star}$ is the map-graded category associated to the restricted structure sheaf on $\bbb_{\star}$.
Clearly, $\BBB_1 = (\BBB_0)^{\varphi_1}$ and similarly for the other injections. According to \cite{lowenvandenberghhoch}, $\CC_{\bbb_{\star}}(\BBB_{\star})$ computes the Hochschild cohomology of the ringed space $(U_{\star}, \ooo|_{U_{\star}})$.
From the above, we obtain an exact sequence
$$\xymatrix{ 0 \ar[r] & {\CC_{\bbb_0}(\BBB_0)} \ar[r] & {\CC_{\bbb_1}(\BBB_1) \oplus \CC_{\bbb_2}(\BBB_2)} \ar[r] & {\CC_{\bbb_{12}}(\BBB_{12})} \ar[r] & 0}$$
and an induced long exact cohomology sequence
$$\xymatrix{ {\dots} \ar[r] & {HH^i(U_0)} \ar[r] & {HH^i(U_1) \oplus HH^i(U_2)} \ar[r] & {HH^i(U_{12})} \ar[r] & {\dots}}$$
The more subtle problem of defining a sheaf of Hochschild complexes on quasi-compact opens of a quasi-compact separated scheme was solved in \cite{lowensheafhoch}. The existence of Mayer-Vietoris sequences for arbitrary ringed spaces was shown in \cite[\S 7.9]{lowenvandenberghhoch} making use of the definition of the Hochschild complex of $X$ as the Hochschild complex of the linear category of injectives in the category of sheaves $\Mod(X)$. We come back to this approach in \S \ref{parcomp}.
\end{example}

\subsection{$1$-injections}
Let $(\varphi, F): (\vvv, \BBB) \lra (\uuu, \AAA)$ be a $1$-injective subcartesian graded functor and let $M$ be an $\AAA$-bimodule. We thus have injections
$\nnn_n(\varphi^{\sharp}): \nnn_n(\vvv^{\sharp}) \lra \nnn_n(\uuu^{\sharp})$ for all $n \in \N$. By Proposition \ref{propsurjinj}, we obtain a surjective morphism
$$F^{\ast}: \CC_{\uuu}(\AAA, M) \lra \CC_{\vvv}(\BBB, F^{\ast}M).$$
Define the complex
$$\CC_{\uuu \setminus \vvv^{\sharp}}(\AAA, M) = \Kern(F^{\ast}).$$
Then $\CC_{\uuu \setminus \vvv^{\sharp}}^n(\AAA, M)$ is isomorphic to
\begin{equation}\label{eqsupport}
\prod_{(u,A) \notin \Beeld(\nnn_n(\varphi^{\sharp}))} \Hom_k(\AAA_{u_{n-1}}(A_{n-1}, A_n) \otimes \dots \otimes \AAA_{u_0}(A_0, A_1), M_{|u|}(A_0, A_n)).
\end{equation}

\begin{proposition}
The following are equivalent:
\begin{enumerate}
\item $(u, A) \in \Beeld(\nnn_n(\varphi^{\sharp}))$.
\item For every $u_i: A_i \lra A_{i+1}$ in $\nnn_1(\uuu^{\sharp})$ occuring in $(u, A)$, we have $u_i \in \Beeld(\nnn_1(\varphi^{\sharp}))$.
\end{enumerate}
\end{proposition}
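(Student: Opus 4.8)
The plan is to treat the two implications separately: $(1) \Rightarrow (2)$ is purely formal, and all the content lies in $(2) \Rightarrow (1)$. The guiding observation is that an $n$-simplex $(u,A) \in \nnn_n(\uuu^{\sharp})$ is precisely a composable string of morphisms $u_0, \dots, u_{n-1}$ in $\uuu^{\sharp}$ with $u_i \colon A_i \lra A_{i+1}$, and that $\nnn_n(\varphi^{\sharp})$ acts on such a string edgewise, sending it to the string $\varphi^{\sharp}(u_0), \dots, \varphi^{\sharp}(u_{n-1})$. Hence the action of $\varphi^{\sharp}$ on $n$-simplices is completely determined by its action on $\nnn_0$ (objects) and $\nnn_1$ (morphisms), and $(u,A) \in \Beeld(\nnn_n(\varphi^{\sharp}))$ means exactly that the whole string is obtained by applying $\varphi^{\sharp}$ edge-by-edge to a string in $\vvv^{\sharp}$.

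For $(1) \Rightarrow (2)$: if $(u,A) = \nnn_n(\varphi^{\sharp})(v,B)$, then reading off the $i$-th edge of both simplices (which are carried to each other by $\nnn(\varphi^{\sharp})$, since it commutes with the simplicial face maps) yields $u_i = \nnn_1(\varphi^{\sharp})(v_i)$, so $u_i \in \Beeld(\nnn_1(\varphi^{\sharp}))$ for each $i$. For $(2) \Rightarrow (1)$: suppose each edge $u_i \colon A_i \lra A_{i+1}$ of $(u,A)$ lies in $\Beeld(\nnn_1(\varphi^{\sharp}))$. Since $(\varphi, F)$ is $1$-injective, $\varphi^{\sharp}$ is injective on objects and on morphisms (in fact $\infty$-injective, cf. Remark \ref{remsurjinj}). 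Therefore, for each $i$ there is a unique $v_i \colon B_i \lra B_{i+1}$ in $\vvv^{\sharp}$ with $\varphi^{\sharp}(v_i) = u_i$, and necessarily $\varphi^{\sharp}(B_i) = A_i$ and $\varphi^{\sharp}(B_{i+1}) = A_{i+1}$. The target of $v_i$ and the source of $v_{i+1}$ then both map to $A_{i+1}$ under $\varphi^{\sharp}$, so by injectivity on objects they coincide. Thus $v_0, \dots, v_{n-1}$ is a composable string, i.e. an $n$-simplex $(v,B) \in \nnn_n(\vvv^{\sharp})$, and by construction $\nnn_n(\varphi^{\sharp})(v,B) = (u,A)$, which proves $(1)$. (For $n = 0$ there are no edges, and both conditions are to be read as $A_0 \in \Beeld(\nnn_0(\varphi^{\sharp}))$.)

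The sole genuine obstacle is the gluing step in $(2) \Rightarrow (1)$: condition $(2)$ only supplies a preimage of each edge \emph{in isolation}, and one must verify that these preimages can be chosen with matching endpoints so as to constitute a single simplex of $\nnn(\vvv^{\sharp})$. This compatibility is precisely what injectivity of $\varphi^{\sharp}$ on objects delivers, and it is the one place where the $1$-injectivity hypothesis enters in an essential way; the rest is routine bookkeeping with the simplicial structure of the nerve.
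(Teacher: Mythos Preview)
Your proof is correct and follows essentially the same approach as the paper: the nontrivial direction $(2)\Rightarrow(1)$ is handled by lifting each edge separately and then checking that consecutive lifts share endpoints. The only cosmetic difference is that you invoke injectivity of $\varphi^{\sharp}$ on objects ($\nnn_0$) directly, whereas the paper deduces $B' = C$ from $\varphi^{\sharp}(1_{B'}) = 1_{A_{i+1}} = \varphi^{\sharp}(1_{C})$ using injectivity on $\nnn_1$; both are available under the $1$-injectivity hypothesis and amount to the same thing.
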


\begin{proof}
Suppose (2) holds. Let $u_i = \varphi^{\sharp}(v): A_i \lra A_{i+1}$ for $v: B \lra B'$ in $\nnn_1(\vvv^{\sharp})$ and let  $u_{i+1} = \varphi^{\sharp}(w): A_{i+1} \lra A_{i+2}$ for $w: C \lra C'$ in $\nnn_1(\vvv^{\sharp})$. Then $\varphi^{\sharp}(1_{B'}) = 1_{A_{i+1}} = \varphi^{\sharp}(1_{C})$ whence $B' = C$.
\end{proof}

We call $\CC_{\uuu \setminus \vvv^{\sharp}}(\AAA, M)$ the \emph{Hochschild complex of $\AAA$ with support outside $\vvv^{\sharp}$ (and values in $M$)}.
We thus obtain an exact sequence of complexes
\begin{equation}\label{eqkern}
0 \lra \CC_{\uuu \setminus \vvv^{\sharp}}(\AAA, M) \lra \CC_{\uuu}(\AAA, M) \lra \CC_{\vvv}(\BBB, F^{\ast}M) \lra 0.
\end{equation}
and a long exact cohomology sequence
$$\xymatrix{{\dots} \ar[r] & {HH^i_{\uuu \setminus \vvv^{\sharp}}(\AAA, M)} \ar[r] & {HH^i_{\uuu}(\AAA, M)} \ar[r]_-{F^{\ast}} & {HH^i(\BBB, F^{\ast} M)} \ar[r] & {\dots} }$$

\begin{example}\label{exuminv}
Let $\AAA$ be a $\uuu$-graded category and $\varphi: \vvv \subseteq \uuu$ a subcategory. Consider the cartesian morphism $\delta^{\varphi, \AAA}: \AAA^{\varphi} \lra \AAA$. Then $(\varphi, \delta^{\varphi, \AAA}): (\vvv, \AAA^{\varphi}) \lra (\uuu, \AAA)$ is injective whence $1$-injective. In this case the following are equivalent:
\begin{enumerate}
\item $(u, A) \in \Beeld(\nnn_n(\varphi^{\sharp}))$.
\item For every $u_i: U_i \lra U_{i+1}$ in $\nnn_1(\uuu)$ occuring in $u$, we have $u \in \Beeld(\nnn_1(\varphi))$.
\end{enumerate}
Consequently, we will denote $\CC_{\uuu \setminus \vvv}(\AAA, M) = \CC_{\uuu \setminus \vvv^{\sharp}}(\AAA, M)$ and call this complex the \emph{Hochschild complex of $\AAA$ with support outside $\vvv$}.
\end{example}

\subsection{Censoring subcategories}\label{parcensor}
Let $\AAA$ be a $\uuu$-graded category. A subcategory $\vvv \subseteq \uuu$ is called \emph{censoring} if for all $u: U \lra U'$, $A \in \AAA_U$, $A' \in \AAA_{U'}$ with $u \notin \vvv$, we have
$\AAA_u(A,A') = 0$. 
The terminology is taken from \cite[\S 4.3]{lowenvandenberghhoch}, cfr. the following example:

\begin{example}\label{extrans}
Let $\AAA$ be a linear category and let $\rrr$ be a transitive relation on $\Ob(\AAA)$. In \cite{lowenvandenberghhoch}, the relation $\rrr$ is called \emph{censoring} if $\AAA(B, A) = 0$ for $(B,A) \notin \rrr$. Clearly, this yields a special case of a censoring subcategory $\uuu_{\rrr}$ of the standard grading category $\uuu$ of $\AAA$. Precisely, we let $\uuu_{\rrr}$ be the category with $\Ob(\uuu_{\rrr}) = \Ob(\AAA)$ and 
$$\uuu_{\rrr}(B,A) = \begin{cases} \ast & \text{if} \,\, (B,A) \in \rrr \\ \varnothing & \text{otherwise.} \end{cases}$$
\end{example}
A $1$-injection $(\varphi, F) : (\vvv, \YYY) \lra (\uuu, \AAA)$ of graded sets is called \emph{censoring} if for all $u: A \lra A'$ in $ \uuu^{\sharp}$ which is not in the image of $\nnn_1(F): \nnn_1(\YYY) \lra \nnn_1(\AAA)$, we have $\AAA_u(A,A') = 0$. 

\begin{remark}
\begin{enumerate}
\item A subcategory $\varphi: \vvv \subseteq \uuu$ is censoring if and only if the cartesian map $(\varphi, \delta^{\varphi, \AAA}): (\vvv, \AAA^{\varphi}) \lra (\uuu, \AAA)$ is a censoring $1$-injection.
\item A $1$-injection $(\varphi, F) : (\vvv, \YYY) \lra (\uuu, \AAA)$ is censoring if and only if $\varphi^{\sharp}: \vvv^{\sharp} \lra \uuu^{\sharp}$ defines a censoring subcategory for the graded category $(\uuu^{\sharp}, \AAA^{\sharp})$.
\end{enumerate}
\end{remark}

\begin{proposition}\label{propcens}
Let $\AAA$ be a $\uuu$-graded category and let $M$ be an $\AAA$-bimodule.
\begin{enumerate}
\item Let $\varphi: \vvv \subseteq \uuu$ be a censoring subcategory. Then the map
$$\varphi^{\ast}: \CC_{\uuu}(\AAA, M) \lra \CC_{\vvv}(\AAA^{\varphi}, M^{\varphi})$$
is an isomorphism of complexes.
\item Let $(\varphi, F): (\vvv, \BBB) \lra (\uuu, \AAA)$ be a subcartesian, censoring $1$-injection. Then the map
$$F^{\ast}: \CC_{\uuu}(\AAA, M) \lra \CC_{\vvv}(\BBB, F^{\ast}M)$$ is an isomorphism of complexes. 
\end{enumerate}
\end{proposition}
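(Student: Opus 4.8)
The plan is to deduce both statements from the analysis of $1$-injections carried out just above, using the censoring hypothesis to force the ``support outside $\vvv^{\sharp}$'' complex to vanish.

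First I would treat (2), which is the substantial case. Since $(\varphi, F)$ is a $1$-injective subcartesian graded functor, Remark \ref{remsurjinj}(2) gives that it is $\infty$-injective, hence $n$-injective for every $n \in \N$; by Proposition \ref{propsurjinj}(1) each $(F^{\ast})^n$ is therefore surjective, so $F^{\ast}: \CC_{\uuu}(\AAA, M) \lra \CC_{\vvv}(\BBB, F^{\ast}M)$ is a surjective morphism of complexes whose kernel, in degree $n$, is the product \eqref{eqsupport} ranging over the simplices $(u, A) \in \nnn_n(\AAA)$ not lying in $\Beeld(\nnn_n(\varphi^{\sharp}))$. The point is then to show this product is zero. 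Fix such a simplex $(u,A) \notin \Beeld(\nnn_n(\varphi^{\sharp}))$. By the earlier Proposition characterizing $\Beeld(\nnn_n(\varphi^{\sharp}))$, at least one of the morphisms $u_i \colon A_i \lra A_{i+1}$ occurring in $(u,A)$ fails to lie in $\Beeld(\nnn_1(\varphi^{\sharp}))$; by the definition of a censoring $1$-injection this forces $\AAA_{u_i}(A_i, A_{i+1}) = 0$, so the tensor product $\AAA_{u_{n-1}}(A_{n-1}, A_n) \otimes \dots \otimes \AAA_{u_0}(A_0, A_1)$ vanishes and the corresponding factor $\Hom_k(-, M_{|u|}(A_0, A_n))$ is zero. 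Hence every factor of \eqref{eqsupport} vanishes, so $\Kern(F^{\ast})^n = 0$ for all $n$, and a surjective morphism of complexes with trivial kernel is an isomorphism.

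Finally, (1) is the special case of (2) applied to the cartesian graded functor $(\varphi, \delta^{\varphi, \AAA}) \colon (\vvv, \AAA^{\varphi}) \lra (\uuu, \AAA)$: it is injective hence $1$-injective (Example \ref{exuminv}), it is subcartesian (Proposition \ref{propcart}), and it is a censoring $1$-injection precisely because $\vvv \subseteq \uuu$ is a censoring subcategory (the Remark following Example \ref{extrans}); moreover $(\delta^{\varphi,\AAA})^{\ast}M = M^{\varphi}$ and the map $\varphi^{\ast}$ in the statement is exactly $(\delta^{\varphi,\AAA})^{\ast}$ on Hochschild complexes, so (2) yields that $\varphi^{\ast} \colon \CC_{\uuu}(\AAA, M) \lra \CC_{\vvv}(\AAA^{\varphi}, M^{\varphi})$ is an isomorphism. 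I do not anticipate a genuine obstacle here; the only things to be careful about are passing from $1$-injectivity to $n$-injectivity in all degrees (Remark \ref{remsurjinj}(2)) and correctly matching the censoring condition, phrased in terms of $\nnn_1$, with the degree-$n$ kernel description \eqref{eqsupport}.
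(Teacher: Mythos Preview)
Your proof is correct and follows essentially the same approach as the paper: show that the kernel $\CC_{\uuu \setminus \vvv^{\sharp}}(\AAA, M)$ vanishes because every factor in \eqref{eqsupport} contains some $\AAA_{u_i}(A_i, A_{i+1}) = 0$ by the censoring hypothesis, and reduce (1) to (2) via the Remark preceding the Proposition. The paper's proof is terser but identical in substance; your more explicit invocation of Remark \ref{remsurjinj}(2), Proposition \ref{propsurjinj}(1), and the characterization of $\Beeld(\nnn_n(\varphi^{\sharp}))$ is a welcome clarification.
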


\begin{proof}
By the higher remarks, it suffices to prove either one of (1), (2). Let us prove (2). 
By the assumption, every component 
$$\Hom_k(\AAA_{u_{n-1}}(A_{n-1}, A_n) \otimes \dots \otimes \AAA_{u_0}(A_0, A_1), M_{|u|}(A_0, A_n))$$
in \eqref{eqsupport} contains some $u_i: A_i \lra A_{i+1}$ not contained in $\Beeld(\nnn_1(\varphi^{\sharp}))$, which thus has $\AAA_{u_i}(A_i, A_{i+1}) = 0$. Consequently $\CC_{\uuu \setminus \vvv^{\sharp}}(\AAA, M) = 0$.
\end{proof}

Proposition \ref{propcens} has a useful corollary, which says that a censoring subcategory essentially censors possible uncontrollable parts of bimodules. This was precisely the original intuition behind the terminology in \cite{lowenvandenberghhoch}. 

\begin{corollary}
Let $\AAA$ be a $\uuu$-graded category and let $\vvv \subseteq \uuu$ be a censoring subcategory. Let 
$$f: M \lra N$$
be a morphism of $\AAA$-bimodules. If 
$$f^{\varphi}: M^{\varphi} \lra N^{\varphi}$$
is a quasi-isomorphism, then
so is
$$\CC_{\uuu}(\AAA, f): \CC_{\uuu}(\AAA,M) \lra \CC_{\uuu}(\AAA, N).$$
\end{corollary}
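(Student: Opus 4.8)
The plan is to reduce, via the naturality of the isomorphism in Proposition \ref{propcens}(1), to the statement that the Hochschild complex functor \emph{in the coefficient variable} preserves quasi-isomorphisms, and then to establish that by a mapping cone argument together with the acyclic assembly lemma.

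For the reduction, recall that the isomorphism of complexes $\varphi^{\ast}\colon\CC_{\uuu}(\AAA,M)\xrightarrow{\ \sim\ }\CC_{\vvv}(\AAA^{\varphi},M^{\varphi})$ of Proposition \ref{propcens}(1) is, in each degree $n$, simply the projection of $\CC^{n}_{\uuu}(\AAA,M)$ onto the subproduct indexed by the $(u,A)\in\Beeld(\nnn_{n}(\varphi^{\sharp}))$; that this projection is bijective is exactly what the censoring hypothesis gives, since the complementary factors vanish. This description is manifestly natural in the coefficient bimodule, so for $f\colon M\lra N$ we obtain a commutative square
$$\xymatrix{ {\CC_{\uuu}(\AAA,M)} \ar[r]^-{\CC_{\uuu}(\AAA,f)} \ar[d]_{\varphi^{\ast}} & {\CC_{\uuu}(\AAA,N)} \ar[d]^{\varphi^{\ast}} \\ {\CC_{\vvv}(\AAA^{\varphi},M^{\varphi})} \ar[r]_-{\CC_{\vvv}(\AAA^{\varphi},f^{\varphi})} & {\CC_{\vvv}(\AAA^{\varphi},N^{\varphi})} }$$
whose vertical arrows are isomorphisms of complexes. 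Hence $\CC_{\uuu}(\AAA,f)$ is a quasi-isomorphism if and only if $\CC_{\vvv}(\AAA^{\varphi},f^{\varphi})$ is, and since $f^{\varphi}$ is a quasi-isomorphism by assumption it suffices to prove: for $\BBB$ a map-graded category and $g\colon P\lra Q$ a quasi-isomorphism of complexes of $\BBB$-bimodules, $\CC_{\vvv}(\BBB,g)$ is a quasi-isomorphism. Here, for a complex of bimodules $P$, $\CC_{\vvv}(\BBB,P)$ denotes the product total complex of the double complex $\bigl(\CC^{p}_{\vvv}(\BBB,P^{q})\bigr)$.

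To prove this last statement, put $C=\cone(g)$, an acyclic complex of $\BBB$-bimodules. Since $\CC_{\vvv}(\BBB,-)$ is an additive functor compatible with the differentials, $\cone\bigl(\CC_{\vvv}(\BBB,g)\bigr)\cong\CC_{\vvv}(\BBB,C)$, so it is enough to show $\CC_{\vvv}(\BBB,C)$ is acyclic. It is the product total complex of the double complex $D^{p,q}=\CC^{p}_{\vvv}(\BBB,C^{q})$, which lives in the half-plane $p\geq 0$. For each fixed $p$ the functor $X\longmapsto\CC^{p}_{\vvv}(\BBB,X)$ is a product, over the $p$-simplices of $\nnn(\BBB)$, of functors $\Hom_{k}(E,-)$ in which $E$ is a $k$-tensor product of morphism modules of $\BBB$; as such $E$ is $k$-projective (automatically when $k$ is a field, and under the standing hypotheses on $k$ and $\BBB$ in general), so $\CC^{p}_{\vvv}(\BBB,-)$ is an exact functor and carries the acyclic complex $C^{\bullet}$ to an acyclic complex. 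Thus every row of $D$ is exact, and by the acyclic assembly lemma a double complex concentrated in $p\geq 0$ with exact rows has acyclic product total complex. Therefore $\CC_{\vvv}(\BBB,C)$ is acyclic, completing the proof.

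The one point requiring care is the passage from ``rows of $D$ exact'' to ``$\operatorname{Tot}^{\Pi}(D)$ acyclic'' when $P,Q$ are unbounded, since no bounded spectral sequence is available: one filters $\operatorname{Tot}^{\Pi}(D)$ by $F^{a}=\prod_{p\geq a}D^{p,\bullet-p}$, which is exhaustive and complete precisely because $D^{p,q}=0$ for $p<0$; its spectral sequence has $E_{1}$-page the row cohomology of $D$, hence $E_{1}=0$, and a complete exhaustive filtration with vanishing $E_{1}$ forces acyclicity of the total complex. If the corollary is only applied to bounded complexes of bimodules this is automatic and the mapping cone argument suffices on its own. The remaining verifications — the naturality of $\varphi^{\ast}$ in the coefficient variable and the identification of $\cone\bigl(\CC_{\vvv}(\BBB,g)\bigr)$ with $\CC_{\vvv}(\BBB,\cone g)$ — are routine from the explicit formulas.
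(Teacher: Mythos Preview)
The paper states this corollary without proof, as an immediate consequence of Proposition~\ref{propcens}(1). Your reduction via naturality of $\varphi^{\ast}$ in the coefficient variable is exactly the intended argument: once $\CC_{\uuu}(\AAA, -) \cong \CC_{\vvv}(\AAA^{\varphi}, (-)^{\varphi})$ naturally, the conclusion follows directly from the hypothesis on $f^{\varphi}$.

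You go further and handle the case where $M, N$ are genuine complexes of bimodules (which is what is needed for the applications in \S\ref{parderived}, where coefficients like $\RHom_{\BBB}(M,M)$ appear). Your mapping-cone plus acyclic-assembly argument for that case is correct, but note that the $k$-projectivity of the tensor products $E_{\sigma}$ is not an explicit standing hypothesis in this paper; it is the standard (often tacit) assumption in Hochschild theory, valid when $k$ is a field or when the Hom-modules of the graded category are $k$-projective. If $M$ and $N$ are single bimodules as literally written, ``quasi-isomorphism'' of $f^{\varphi}$ collapses to ``isomorphism'', and your commutative square already yields that $\CC_{\uuu}(\AAA,f)$ is an isomorphism of complexes, with no projectivity needed.
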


\section{Hochschild cohomology with support}\label{parhochsup}

Let $\AAA$ be a $\uuu$-graded category, $\varphi: \vvv \subseteq \uuu$ a subcategory, and $(\vvv, \BBB) \lra (\uuu, \AAA)$ a cartesian functor.
In this section we investigate some cases where the cohomology of the complex $\CC_{\uuu \setminus \vvv}(\AAA, M)$ (see Example \ref{exuminv}) has a nice cohomological interpretation. The main point is that we need some control over the ``complement'' of $\vvv$ in $\uuu$. Precisely, we assume that this complement (the $\uuu$-morphisms not in $\vvv$) constitutes an ideal $\zzz$ in $\uuu$. In this case we show in Proposition \ref{propidsubcat} that 
$$\CC_{\uuu \setminus \vvv}(\AAA, M) \cong \CC_{\uuu}(\AAA, M_{\zzz})$$
where $M_{\zzz}$ is the natural restriction of $M$ to an $\AAA$-bimodule supported on $\zzz$ (i.e. with zero values outside of $\zzz$).
Our setup applies in the situation where $\uuu$ is the category associated to a collection of open subsets of a topological space $X$ ordered by inclusion, $\vvv$ is the full subcategory of subsets $U \subseteq V$ for a fixed subset $V$, and $\zzz$ contains the inclusions $U' \subseteq U$ with $U \nsubseteq V$.  

In \S \ref{pararrow}, we revisit the arrow category construction from \cite{kellerdih} in the map-graded context. For an $(\uuu, \AAA)$-$(\vvv, \BBB)$-bimodule $(S, M)$, we take the natural inclusion $\vvv \coprod \uuu \lra (\vvv \rightarrow_S \uuu)$ and corresponding cartesian functor 
$$(\vvv \coprod \uuu, \BBB \coprod \AAA) \lra (\vvv \rightarrow_S \uuu, \BBB \rightarrow_M \AAA)$$
as starting point for obtaining map-graded analogues of some of the main results from \cite{kellerdih}. Sections \S \ref{parconn} and \S \ref{parderived} are entirely modelled upon the treatment in \cite{kellerdih}, and mainly formulate results from \cite{kellerdih} in the map-graded context, making use of the natural Hom and tensor functors from \S \ref{parbimod}. Further, in \S \ref{pararrowthin}, we give an intrinsic characterization of arrow categories based upon the \emph{thin ideals} introduced in \S \ref{parthin}. 

\subsection{Ideals in categories}
Let $\uuu$ be an arbitrary category. Recall from \S \ref{parbifun} that a $\uuu$-bifunctor $S$ consists of sets $S(V,U)$ for $U, V \in \uuu$ and actions
$$\uuu(U,U') \times S(V,U) \times \uuu(V',V) \lra S(V', U').$$
Denote the category of $\uuu$-bimodules by $\Bimod(\uuu)$.
A \emph{(two sided) ideal} in $\uuu$ is a subfunctor
$$\zzz \subseteq 1_{\uuu}$$
of the identity bifunctor in the category $\mathsf{Bifun}(\uuu)$ of $\uuu$-bifunctors. More precisely, it consists of subsets $\zzz(V,U) \subseteq \uuu(V,U)$ for all $U, V \in \uuu$ such that the composition of $\uuu$ restricts to
$$\uuu(U,U') \times \zzz(V,U) \times \uuu(V',V) \lra \zzz(V', U').$$
We put the \emph{morphisms in $\zzz$} equal to $\Mor(\zzz) = \coprod_{V, U} \zzz(V,U)$.

\subsection{Bimodules over ideals}\label{parbimideal}
Let $\AAA$ be a $\uuu$-graded category and let $\zzz$ be an ideal in $\uuu$. An \emph{$\AAA$-bimodule $M$ on $\zzz$} consists of:
\begin{itemize}
\item $k$-modules $M_z(B, A)$ for all $z: V \lra U$ in $\zzz$ and $A \in \AAA_U$, $B \in \AAA_{V}$,
\item actions $\AAA_u(A,A') \otimes M_z(B, A) \otimes \AAA_v(B',B) \lra M_{uzv}(B',A')$ for all additional $u: U \lra U'$, $v: V' \lra V$ in $\uuu$ and $A' \in \AAA_{U'}$, $B' \in \AAA_{V'}$
\end{itemize}
with the natural axioms. The $\AAA$-bimodules on $\zzz$ form an abelian category $\Bimod_{\zzz}(\AAA)$. There are obvious exact funcors
$$(-)|^{\uuu}: \Bimod_{\zzz}(\AAA) \lra \Bimod_{\uuu}(\AAA): M \longmapsto M|^{\uuu}$$
where $M|^{\uuu}$ is the extension of $M$ by zero values outside of $\zzz$, and
$$(-)|_{\zzz}: \Bimod_{\uuu}(\AAA) \lra \Bimod_{\zzz}(\AAA): M \longmapsto M|_{\zzz}$$
where $M|_{\zzz}$ is the restriction of $M$ to $\zzz$. Clearly, $(-)|_{\zzz}$ is right adjoint to $(-)|^{\uuu}$, $$(-)|_{\zzz}(-)|^{\uuu} = 1_{\Bimod_{\zzz}(\AAA)}$$
and the canonical 
$$(-)_{\zzz} = (-)|^{\uuu}(-)|_{\zzz} \lra 1_{\Bimod_{\uuu}(\AAA)}$$
corresponds to inclusions of $\AAA$-modules
$$M_{\zzz} \subseteq M$$
where $M_{\zzz}$ is obtained by changing the values of $M$ to zero outside of $\zzz$. 
We can thus identify $\Bimod_{\zzz}(\AAA)$ with the full subcategory of $\Bimod_{\uuu}(\AAA)$ of bimodules \emph{supported on $\zzz$}.

\subsection{Ideal-subcategory decomposition}
We now turn to our main situation of interest, which is summarized as follows:
\begin{itemize}
\item $\uuu$ is an arbitrary category;
\item $\vvv \subseteq \uuu$ is a subcategory;
\item $\zzz \subseteq 1_{\uuu}$ is an ideal;
\item $\Mor(\uuu) = \Mor(\vvv) \coprod \Mor(\zzz)$.
\end{itemize}
In this case, we call $(\zzz, \vvv)$ an \emph{ideal-subcategory decomposition} of $\uuu$.

\begin{example}
Let $X$ be a topological space, $V$ an open subset and $Z = X \setminus V$ its closed complement. Let $\uuu \subseteq \mathrm{open}(X)$ be a sub-poset of the open sets of $X$. Let $\vvv \subseteq \uuu$ be the full subcategory with
$$\Ob(\vvv) = \{ U \in \uuu \,\, |\,\, U \subseteq V \}$$
and $\zzz \subseteq 1_{\uuu}$ the ideal with $u: U \lra U'$ in $\zzz$ if and only if $U' \nsubseteq V$.
Clearly, $(\zzz, \vvv)$ is an ideal-subcategory decomposition of $\uuu$.
\end{example}

%\subsection{Semi-orthogonal decomposition of module categories}
%Consider a functor $\varphi: \KKK \lra \LLLL$ between linear categories and the induced
%$$-\varphi: \Mod(\LLLL) \lra \Mod(\KKK)$$
%between module categories. We now briefly discuss a situation in which $-\varphi$ has a very easy right adjoint. 

\subsection{The localization sequence of bimodule categories}\label{parlocbimod} 
Let $\AAA$ be a $\uuu$-graded category and $(\zzz, \vvv)$ an ideal-subcategory decomposition of $\uuu$. Put $\BBB = \AAA|_{\vvv}$.
In this section we take a closer look at the sequence
$$\xymatrix{{\Bimod_{\zzz}(\AAA)} \ar[r]_-{(-)|^{\uuu}_{(\zzz)}} & {\Bimod_{\uuu}(\AAA)} \ar[r]_{(-)|_{\vvv}} & {\Bimod_{\vvv}(\BBB).}}$$ 
Clearly $\Bimod_{\zzz}(\AAA)$ is the kernel of $(-)|_{\vvv}$, and we already know from \S \ref{parbimideal} that $(-)|^{\uuu}_{(\zzz)}$ has a right adjoint $(-)|_{\zzz}$. Since $(-)|_{\vvv}$ is induced by an underlying $k$-linear functor
$\BBB \otimes_{\vvv} \BBB \lra \AAA \otimes_{\uuu} \AAA$, both adjoints of $(-)|_{\vvv}$ exist and can be described explicitely. Moreover, since $\zzz$ is an ideal, the right adjoint has a particularly easy description. Define the functor
$$(-)|_{(\vvv)}^{\uuu}: \Bimod_{\vvv}(\BBB) \lra \Bimod_{\uuu}(\AAA): M \longmapsto M|^{\uuu}$$
in the following way. Put
$$(M|^{\uuu})_u(B,A) = \begin{cases} M_u(B,A) &\text{if}\,\, u \in \vvv \\ 0 & \text{otherwise}\end{cases}.$$
Then there is a unique way to let $\AAA$ act on $M|^{\uuu}$ so that 
$$\AAA_{u'}(A,A') \otimes (M|^{\uuu})_u(B,A) \otimes \AAA_{u''}(B',B) \lra (M|^{\uuu})_{u'uu''}(B',A')$$
is given by the action of $\BBB$ on $M$ if $u$, $u'$, $u''$ are in $\vvv$, and is zero otherwise. This is a well defined action thanks to the fact that $\zzz$ is an ideal. The functor $(-)|_{(\vvv)}^{\uuu}$ is right adjoint to $(-)|_{\vvv}$ and we have
$$(-)|_{\vvv}(-)|_{(\vvv)}^{\uuu} = 1_{\Bimod_{\vvv}(\BBB)}.$$
The canonical
$$1_{\Bimod_{\uuu}(\AAA)} \lra (-)|_{(\vvv)}^{\uuu}(-)|_{\vvv} = (-)_{\vvv}$$
corresponds to quotients of $\AAA$-bimodules
$$M \lra M_{\vvv}$$
where $M_{\vvv}$ is obtained from $M$ by changing the values of $M$ outside $\vvv$ to zero.

Summarizing, we have an exact sequence of functors
$$0 \lra (-)_{\zzz} \lra 1_{\Bimod_{\uuu}(\AAA)} \lra (-)_{\vvv} \lra 0$$
corresponding to exact sequences of bimodules
\begin{equation} \label{eqbimod}
0 \lra M_{\zzz} \lra M \lra M_{\vvv} \lra 0.
\end{equation}

\subsection{The localization sequence for Hochschild cohomology}\label{parlochoch}
We can now use \eqref{eqbimod} to obtain the following short exact sequence of Hochschild complexes:
\begin{equation}\label{eqhoch2}
0 \lra \CC_{\uuu}(\AAA, M_{\zzz}) \lra \CC_{\uuu}(\AAA, M) \lra \CC_{\uuu}(\AAA, M_{\vvv}) \lra 0.
\end{equation}

\begin{proposition}\label{propidsubcat}
The sequences \eqref{eqkern} and \eqref{eqhoch2} are canonically isomorphic.
\end{proposition}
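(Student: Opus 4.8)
The plan is to exploit that the two short exact sequences \eqref{eqkern} and \eqref{eqhoch2} already have the \emph{same} middle term $\CC_{\uuu}(\AAA, M)$, so it suffices to produce a canonical isomorphism between their right-hand terms that is compatible with the two surjections out of $\CC_{\uuu}(\AAA, M)$; the isomorphism of the left-hand terms, and of the whole sequences, will follow by taking kernels. Here I take the cartesian functor $(\vvv, \BBB) \to (\uuu, \AAA)$ to be the canonical one $\delta^{\varphi, \AAA}$, so $\BBB = \AAA^{\varphi}$ and the map in \eqref{eqkern} is $F^{\ast} = (\delta^{\varphi, \AAA})^{\ast}$ with $F^{\ast}M = M^{\varphi}$. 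Since $\varphi: \vvv \subseteq \uuu$ is a subcategory inclusion, $(\varphi, \delta^{\varphi, \AAA})$ is injective, hence $n$-injective for every $n$ by Remark \ref{remsurjinj}(2), so $F^{\ast}$ is degreewise surjective by Proposition \ref{propsurjinj}(1); thus \eqref{eqkern} is short exact by the very definition $\CC_{\uuu \setminus \vvv}(\AAA, M) = \Kern(F^{\ast})$.

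The first step is to locate the support of $\CC_{\uuu}(\AAA, M_{\vvv})$. Its $(u, A)$-component, for $(u,A) \in \nnn_n(\AAA)$, has coefficient module $(M_{\vvv})_{|u|}(A_0, A_n)$, which equals $M_{|u|}(A_0,A_n)$ if $|u| \in \Mor(\vvv)$ and $0$ otherwise. Because $\zzz$ is a two-sided ideal, if some edge $u_i$ of $(u,A)$ lies in $\Mor(\zzz)$ then the composite $|u|$ lies in $\Mor(\zzz)$, hence not in $\Mor(\vvv)$ by the disjointness $\Mor(\uuu) = \Mor(\vvv) \coprod \Mor(\zzz)$; conversely, if all $u_i \in \Mor(\vvv)$ then $|u| \in \Mor(\vvv)$ and all objects $U_i$ lie in $\Ob(\vvv)$, so $(u,A)$ lifts to $\nnn_n(\vvv^{\sharp})$, uniquely since $\varphi^{\sharp}$ is injective (the degree-$0$ case reads $1_{U_0} \in \Mor(\vvv) \iff U_0 \in \Ob(\vvv)$). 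By the description of $\Beeld(\nnn_n(\varphi^{\sharp}))$ in Example \ref{exuminv}, this means the support of $\CC^n_{\uuu}(\AAA, M_{\vvv})$ is precisely $\Beeld(\nnn_n(\varphi^{\sharp}))$. For $(u, A) = \nnn_n(\varphi^{\sharp})(v, B)$ in this image, the identities $\AAA^{\varphi}_{v_i}(B_i, B_{i+1}) = \AAA_{\varphi(v_i)}(B_i, B_{i+1}) = \AAA_{u_i}(A_i, A_{i+1})$ and $(M^{\varphi})_{|v|}(B_0, B_n) = M_{\varphi(|v|)}(B_0, B_n) = M_{|u|}(A_0, A_n) = (M_{\vvv})_{|u|}(A_0, A_n)$ identify the $(v,B)$-component of $\CC^n_{\vvv}(\AAA^{\varphi}, M^{\varphi})$ with the $(u,A)$-component of $\CC^n_{\uuu}(\AAA, M_{\vvv})$; assembling over the image yields a degreewise $k$-linear isomorphism $\CC^n_{\uuu}(\AAA, M_{\vvv}) \cong \CC^n_{\vvv}(\AAA^{\varphi}, M^{\varphi})$. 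Under this identification the quotient map $\CC_{\uuu}(\AAA, M) \to \CC_{\uuu}(\AAA, M_{\vvv})$ induced by $M \twoheadrightarrow M_{\vvv}$ and the map $F^{\ast}$ coincide, since both send $(\phi_{(u,A)})_{(u,A)}$ to its restriction to the subproduct indexed by $\Beeld(\nnn_n(\varphi^{\sharp}))$ --- for $F^{\ast}$ this is Proposition \ref{propfunct}(2), the twist by $F^{\otimes n}$ being trivial because $\delta^{\varphi,\AAA}$ is the identity on every Hom-module.

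It then remains to assemble everything. The kernels agree, $\Kern(q) = \Kern(F^{\ast})$, and equal the subproduct over the $(u,A) \notin \Beeld(\nnn_n(\varphi^{\sharp}))$, which is $\CC^n_{\uuu \setminus \vvv}(\AAA, M)$ of \eqref{eqsupport}; on the other hand this is also $\CC^n_{\uuu}(\AAA, M_{\zzz})$ as it sits in \eqref{eqhoch2}, because $(M_{\zzz})_{|u|} = M_{|u|}$ exactly when $|u| \in \Mor(\zzz)$, i.e. when $(u,A) \notin \Beeld(\nnn_n(\varphi^{\sharp}))$, and vanishes otherwise. These identifications are automatically compatible with the simplicial Hochschild differentials, since they are equalities of subcomplexes, resp. quotient complexes, of $\CC_{\uuu}(\AAA, M)$: one has $\CC_{\uuu}(\AAA, M_{\vvv}) = \CC_{\uuu}(\AAA,M)/\Kern(q)$ and $\CC_{\vvv}(\AAA^{\varphi}, M^{\varphi}) \cong \CC_{\uuu}(\AAA,M)/\Kern(F^{\ast})$ with $F^{\ast}$ surjective, while $\Kern(q) = \Kern(F^{\ast})$ as subcomplexes, so the degreewise isomorphism of the previous paragraph is an isomorphism of complexes. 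Putting the pieces together gives a commutative ladder with identity maps on the first two columns and this isomorphism on the third, identifying \eqref{eqkern} with \eqref{eqhoch2}. The only point demanding genuine care is the support computation, which must use \emph{both} that $\zzz$ is a two-sided ideal (so composites through $\zzz$ stay in $\zzz$) and the disjoint decomposition $\Mor(\uuu) = \Mor(\vvv) \coprod \Mor(\zzz)$ (to conclude such composites are not in $\vvv$); the differential-compatibility is formal once one works entirely inside $\CC_{\uuu}(\AAA, M)$, so I would not dwell on it.
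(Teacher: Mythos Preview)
Your proof is correct and follows essentially the same approach as the paper. Both arguments hinge on the observation that the ideal-subcategory decomposition $(\zzz, \vvv)$ forces $|u| \in \Mor(\vvv)$ if and only if every edge $u_i \in \Mor(\vvv)$, which identifies the nonvanishing components of $\CC_{\uuu}(\AAA, M_{\vvv})$ with $\Beeld(\nnn_n(\varphi^{\sharp}))$ and dually for $M_{\zzz}$; your version is simply more explicit about organizing the comparison through the shared middle term and about why the resulting identifications are chain maps.
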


\begin{proof}
This follows from the fact that $(\zzz, \vvv)$ is an ideal-subcategory decomposition of $\uuu$. Indeed, let us first compare $\CC_{\uuu}(\AAA, M_{\vvv})$ to $\CC_{\vvv}(\BBB, M|_{\vvv})$. In every product
$$\prod_{(u, A) \in \nnn(\AAA)} \Hom_k(\AAA_{u_{n-1}}(A_{n-1}, A_n) \otimes \dots \otimes \AAA_{u_0}(A_0, A_1), (M_{\vvv})_{|u|}(A_0, A_n))$$
turning up in $\CC^n_{\uuu}(\AAA, M_{\vvv})$ we may clearly remove the pieces with $|u| \in \zzz$ since then $(M_{\vvv})_{|u|}(A_0, A_n) = 0$. But since $\zzz$ is an ideal and $\vvv$ a subcategory, the remaining pieces are precisely the ones with all the $u_i \in \vvv$, and we recover $\CC_{\vvv}(\BBB, M|_{\vvv})$. Similarly, consider a product
$$\prod_{(u, A) \in \nnn(\AAA)} \Hom_k(\AAA_{u_{n-1}}(A_{n-1}, A_n) \otimes \dots \otimes \AAA_{u_0}(A_0, A_1), (M_{\zzz})_{|u|}(A_0, A_n))$$
in $\CC^n_{\uuu}(\AAA, M_{\zzz})$.
This time, all the pieces with $|u| \in \vvv$ can be removed. What remains are the pieces in which at least one $u_i$ belongs to $\zzz$, which corresponds precisely to $\CC_{\uuu \setminus \vvv}(\AAA, M)$.
\end{proof}

In fact, the isomorphism $\CC_{\uuu}(\AAA, M_{\vvv}) \cong \CC_{\vvv}(\BBB, M|_{\vvv})$ is easily understood on the derived level. Indeed, the localization between $\Bimod_{\uuu}(\AAA)$ and $\Bimod_{\vvv}(\BBB)$ of \S \ref{parlocbimod} yields:
$$\RHom_{\Bimod_{\uuu}(\AAA)}(1_{\AAA}, (M|_{\vvv})|^{\uuu}) \cong \RHom_{\Bimod_{\vvv}(\BBB)}(1_{\BBB}, M|_{\vvv}).$$
The more mysterious part in the isomorphic sequences \eqref{eqkern} and \eqref{eqhoch2} remains the Hochschild complex with support
$$\CC_{\uuu \setminus \vvv}(\AAA, M) \cong \CC_{\uuu}(\AAA, M_{\zzz}),$$
but at least it now has an interpretation as an ordinary Hochschild complex of $\AAA$ with values in the bimodule $M_{\zzz}$ supported on $\zzz$. 
So far, the most meaningful incarnation of the sequences \eqref{eqkern} and \eqref{eqhoch2} is perhaps
\begin{equation}\label{eqhoch3}
0 \lra \CC_{\uuu}(\AAA, M_{\zzz}) \lra \CC_{\uuu}(\AAA, M) \lra \CC_{\vvv}(\BBB, M|_{\vvv}) \lra 0.
\end{equation}

\subsection{Thin ideals}\label{parthin}
Consider $\varphi: \vvv \subseteq \uuu$ with $\Ob(\vvv) = \Ob(\uuu)$ and let $(\zzz, \vvv)$ be an ideal-subcategory decomposition of $\uuu$. Let $\BBB$ be a $\vvv$-graded category. We define a \emph{$\BBB$-bimodule on $\zzz$} to consist of the following data:

\begin{itemize}
\item for all $z: V \lra U$ in $\zzz$ and $B \in \BBB_V$, $A \in \BBB_U$, a $k$-module $M_z(B,A)$;
\item actions $\BBB_v(A, A') \otimes M_z(B,A) \otimes \BBB_{v'}(B' B) \lra M_{vzv'}(B',A')$ for all additional $v: U \lra U'$, $v': V' \lra V$ in $\vvv$ and $A' \in \BBB_{U'}$, $B' \in \BBB_{V'}$.
\end{itemize}
We thus obtain the abelian category $\Bimod_{\zzz}(\BBB)$ of \emph{$\BBB$-bimodules on $\zzz$}.

Now consider a $\uuu$-graded category $\AAA$ and suppose $\BBB = \AAA|_{\vvv}$.
There is a functor
$$(-)|^{\AAA}: \Bimod_{\zzz}(\BBB) \lra \Bimod_{\zzz}(\AAA): M \longmapsto M|^{\AAA}$$
where the actions $\AAA_{z'}(A,A') \otimes M_z(B,A) \lra M_{z'z}(B,A')$ and $M_z(B,A) \otimes \AAA_{z''}(B',B) \lra M_{zz''}(B',A)$ are zero for $z$, $z'$ and $z''$ in $\zzz$. This yields a well defined action since $\zzz$ is an ideal.
Obviously, there is also a functor
$$(-)|_{\BBB}: \Bimod_{\zzz}(\AAA) \lra \Bimod_{\zzz}(\BBB): M \longmapsto M|_{\BBB}$$ which restricts the action to $\BBB$. Clearly,
$$(-)|_{\BBB}(-)|^{\AAA} = 1_{\Bimod_{\zzz}(\BBB)}.$$

\begin{definition}
\begin{enumerate}
\item An ideal $\zzz \subseteq 1_{\uuu}$ is called \emph{thin} if it contains no consecutive morphisms, i.e. if we consider $z \in \zzz(V,U)$, $u' \in \uuu(V',V)$, $u \in \uuu(U,U')$, then it follows that $u' \notin \zzz$ and $u \notin \zzz$. 
\item An $\AAA$-bimodule $M$ on $\zzz$ is called \emph{$\zzz$-thin} if it receives no non trivial actions from $\zzz$, i.e if we have $z \in \zzz(V,U)$, $z' \in \zzz(V',V)$, $z'' \in \zzz(U,U')$, then it follows that both $\AAA_{z''}(A, A') \otimes M_z(B,A) \lra M_{z''z}(B,A')$ and $M_z(B,A) \otimes \AAA_{z'}(B',B) \lra M_{zz'}(B',A)$ are zero for all $A$, $A'$, $B$, $B'$.
\end{enumerate}
\end{definition}

\begin{example}\label{exthin}
Let $\uuu$ be a category and $\Ob_1 \subseteq \Ob(\uuu)$ and $\Ob_2 \subseteq \Ob(\uuu)$ be two classes of objects with no morphisms going from $\Ob_2$ to $\Ob_1$. Then the $\uuu$-morphisms starting in $\Ob_1$ and landing in $\Ob_2$ form a thin ideal in $\uuu$.
\end{example}

Obviously, if $\zzz$ is thin, then every $M$-bimodule on $\zzz$ is $\zzz$-thin. In general, we make the following easy observations:
\begin{proposition}
For an $\AAA$-bimodule $M$ on $\zzz$, the following are equivalent:
\begin{enumerate}
\item $M$ is $\zzz$-thin;
\item $M \cong (M|_{\BBB})|^{\AAA}$;
\item $M$ is in the image of $(-)|^{\AAA}$.
\end{enumerate}
\end{proposition}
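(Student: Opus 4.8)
The plan is to prove the cycle $(1)\Rightarrow(2)\Rightarrow(3)\Rightarrow(1)$, using the already recorded identity $(-)|_{\BBB}(-)|^{\AAA}=1_{\Bimod_{\zzz}(\BBB)}$ together with the defining feature of $(-)|^{\AAA}$: the $\AAA$-bimodules in its image have, by construction, vanishing left and right $\zzz$-actions.

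For $(2)\Rightarrow(3)$ there is nothing to do: if $M\cong (M|_{\BBB})|^{\AAA}$ then $M$ is isomorphic to the value of $(-)|^{\AAA}$ on the object $M|_{\BBB}\in\Bimod_{\zzz}(\BBB)$, and ``being in the image of a functor'' is stable under isomorphism. For $(3)\Rightarrow(1)$, I would observe that for any $N\in\Bimod_{\zzz}(\BBB)$ the bimodule $N|^{\AAA}$ is $\zzz$-thin directly from its construction — the actions $\AAA_{z'}\otimes N_z\to N_{z'z}$ and $N_z\otimes\AAA_{z''}\to N_{zz''}$ with $z',z''\in\zzz$ are declared to be zero — and $\zzz$-thinness is a property formulated purely in terms of the action maps, hence preserved by isomorphisms of $\AAA$-bimodules on $\zzz$; so $M\cong N|^{\AAA}$ forces $M$ to be $\zzz$-thin.

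The substantive point is $(1)\Rightarrow(2)$. Given a $\zzz$-thin $M$, I would exhibit the isomorphism by a termwise comparison of $M$ and $(M|_{\BBB})|^{\AAA}$. Since $\Ob(\vvv)=\Ob(\uuu)$ and $\BBB=\AAA|_{\vvv}$, the two bimodules live over the same object sets and have the same underlying family of $k$-modules $M_z(B,A)$ (indexed by $z\in\zzz$), as neither $(-)|_{\BBB}$ nor $(-)|^{\AAA}$ changes the modules. It then remains to see that the $\AAA$-actions coincide. Here I would invoke the ideal–subcategory decomposition $\Mor(\uuu)=\Mor(\vvv)\coprod\Mor(\zzz)$: by associativity, every action $\AAA_u(A,A')\otimes M_z(B,A)\otimes\AAA_v(B',B)\to M_{uzv}(B',A')$ factors through the one-sided left action by $u$ and the one-sided right action by $v$, and each of $u,v$ lies either in $\vvv$ or in $\zzz$. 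When both lie in $\vvv$, both bimodule structures use the same $\BBB$-action (this is exactly how $M|_{\BBB}$ and then $(-)|^{\AAA}$ are defined). When $u$ (or $v$) lies in $\zzz$, the action on $(M|_{\BBB})|^{\AAA}$ is zero by construction; and since $\zzz$ is a two-sided ideal the product $uz$ (resp.\ $zv$) again lies in $\zzz$, so $\zzz$-thinness of $M$ says precisely that the corresponding one-sided action on $M$ also vanishes. Thus the identity maps $M_z(B,A)\to ((M|_{\BBB})|^{\AAA})_z(B,A)$ assemble into an isomorphism of $\AAA$-bimodules on $\zzz$, giving $M\cong (M|_{\BBB})|^{\AAA}$.

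I do not anticipate any genuine obstacle. The only place demanding care is the bookkeeping in $(1)\Rightarrow(2)$, namely checking that every action map of an $\AAA$-bimodule on $\zzz$ is determined by one-sided actions by morphisms of $\vvv$ and of $\zzz$ — immediate from the associativity axioms and $\Mor(\uuu)=\Mor(\vvv)\coprod\Mor(\zzz)$ — and that $\zzz$ being a two-sided ideal keeps these one-sided products inside $\zzz$, so that the hypothesis of $\zzz$-thinness is applicable to them.
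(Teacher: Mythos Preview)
Your proof is correct and is exactly the natural argument; the paper itself gives no proof, introducing the proposition merely as an ``easy observation'', and your cycle $(1)\Rightarrow(2)\Rightarrow(3)\Rightarrow(1)$ is the obvious way to unpack it. The only minor remark is that in your $(1)\Rightarrow(2)$ step the appeal to $\zzz$ being an ideal is needed not so that $\zzz$-thinness applies (that hypothesis already concerns actions by $z',z''\in\zzz$ on $M_z$ with $z\in\zzz$), but so that the target module $M_{uz}$ (resp.\ $M_{zv}$) of the one-sided action still lies over $\zzz$ and hence makes sense for a bimodule on~$\zzz$.
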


\begin{corollary}\label{isobimod}
If $\zzz$ is thin, then $(-)|^{\AAA}$ and $(-)|_{\BBB}$ constitute inverse isomorphisms
$$\Bimod_{\zzz}(\BBB) \cong \Bimod_{\zzz}(\AAA).$$
\end{corollary}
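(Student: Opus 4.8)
The plan is to derive the corollary formally from the preceding Proposition together with the identity $(-)|_{\BBB}(-)|^{\AAA} = 1_{\Bimod_{\zzz}(\BBB)}$ that was already recorded. Since that identity is one of the two required equalities, it suffices to establish a natural isomorphism $(-)|^{\AAA}(-)|_{\BBB} \cong 1_{\Bimod_{\zzz}(\AAA)}$; in fact I expect to get the identity on the nose, so that $(-)|^{\AAA}$ and $(-)|_{\BBB}$ are mutually inverse strict isomorphisms of categories, which is what the statement asserts.

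The first step is to note that if $\zzz$ is thin, then every $\AAA$-bimodule $M$ on $\zzz$ is $\zzz$-thin — this is the observation made just before the Proposition — and moreover that the structure maps of such an $M$ are \emph{all} already actions of $\BBB = \AAA|_{\vvv}$. Indeed, in any action $\AAA_{u}(A,A') \otimes M_z(B,A) \otimes \AAA_{v}(B',B) \lra M_{uzv}(B',A')$ with $z \in \zzz$, thinness of $\zzz$ forces $u \notin \zzz$ and $v \notin \zzz$, hence $u, v \in \vvv$; and there are no composable pairs $z', z'' \in \zzz$ at all, so no ``$\zzz$-actions'' on $M$ are even specified. Consequently $M|_{\BBB}$ discards no data, the zero $\zzz$-actions reinstated by $(-)|^{\AAA}$ are vacuous, and $(M|_{\BBB})|^{\AAA}$ is literally $M$, with the same underlying $k$-modules $M_z(B,A)$ and the same structure maps. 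On morphisms both $(-)|^{\AAA}$ and $(-)|_{\BBB}$ act as the identity, since a morphism of bimodules on $\zzz$ is just a compatible family of $k$-linear maps and neither restricting nor extending the action alters those maps.

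The second step is simply to package this: combining $(M|_{\BBB})|^{\AAA} = M$ (for all $M$) with $(-)|_{\BBB}(-)|^{\AAA} = 1_{\Bimod_{\zzz}(\BBB)}$ gives that $(-)|^{\AAA}$ and $(-)|_{\BBB}$ are inverse isomorphisms of categories; alternatively one may invoke implication $(1)\Rightarrow(2)$ of the Proposition to get $M \cong (M|_{\BBB})|^{\AAA}$ and then observe naturality, which already yields the equivalence and, with the strict analysis above, the isomorphism. I do not anticipate a real obstacle here: the only point demanding care is the bookkeeping in the thinness argument, namely verifying that a composite $uzv$ with $z \in \zzz$ can never push $u$ or $v$ into $\zzz$, which is exactly the defining property of a thin ideal. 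The content of the statement lies entirely in the preceding lemmas, so the proof will be short.
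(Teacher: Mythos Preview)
Your proposal is correct and follows essentially the same route as the paper: the corollary is immediate from the preceding Proposition (every $\AAA$-bimodule on a thin $\zzz$ is $\zzz$-thin, hence satisfies $M \cong (M|_{\BBB})|^{\AAA}$) together with the already-recorded identity $(-)|_{\BBB}(-)|^{\AAA} = 1_{\Bimod_{\zzz}(\BBB)}$. Your additional observation that thinness makes the equality $(M|_{\BBB})|^{\AAA} = M$ strict (because the only structure maps present are $\vvv$-actions, so nothing is lost on restriction and nothing nonvacuous is added on extension) is a clean sharpening of the paper's implicit argument.
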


\subsection{Arrow categories}\label{pararrow}
In this section, we introduce the arrow category construction from \cite{kellerdih} in the map-graded setting.
Let $\AAA$ be a $\uuu$-graded category, $\BBB$ a $\vvv$-graded category, $S$ a $\uuu$-$\vvv$-bifunctor and $M$ and $\AAA$-$S$-$\BBB$-bimodule. To these data we associate the \emph{arrow category} $\BBB \ra_M \AAA$ which is a $\vvv \ra_S \uuu$ graded category. Here, $\www = \vvv \ra_S \uuu$ is the underlying arrow category with
$$\Ob(\www) = \Ob(\vvv) \coprod \Ob(\uuu)$$
and 
$$\begin{aligned}
\www(V', V) = \vvv(V',V) &&& \www(U, U') = \uuu(U,U')\\
\www(V,U) = S(V,U) &&& \www(U,V) = \varnothing
\end{aligned}$$
for $U, U' \in \uuu$ and $V, V' \in \vvv$.
Similarly, $\CCC = \BBB \ra_M \AAA$ is the $\www$-graded category with
$$\Ob(\CCC) = \Ob(\AAA) \coprod \Ob(\BBB)$$
and
$$\begin{aligned}
\CCC_v(B', B) = \BBB_v(B',B) &&& \CCC_u(A, A') = \AAA_u(A,A')\\
\CCC_s(B,A) = M_s(B,A) &&& 
\end{aligned}$$
for $v: V' \lra V$ in $\vvv$, $B' \in \BBB_{V'}$, $B \in \BBB_V$, $u: U \lra U'$ in $\uuu$, $A \in \AAA_U$, $A' \in \AAA_{U'}$.

\begin{remark}
Even when $\AAA$ and $\BBB$ are linear categories with standard grading, and $M$ is an ordinary $\AAA$-$\BBB$-bimodule, the resulting arrow category $\BBB \ra_M \AAA$ is naturally graded in a non-standard way since there are no morphisms going from $\AAA$ to $\BBB$. 
\end{remark}

Consider the natural inclusions
$\varphi_{\uuu}: \uuu \lra \www$, $\varphi_{\vvv}: \vvv \lra \www$ and $\varphi_{\uuu \coprod \vvv}: \uuu \coprod \vvv \lra \www$. Clearly, we have
$\CCC^{\varphi_{\uuu}} = \AAA$, $\CCC^{\varphi_{\vvv}} = \BBB$ and $\CCC^{\varphi_{\uuu \coprod \vvv}} = \AAA \coprod \BBB$ and we obtain the induced surjections
$$\varphi_{\uuu}^{\ast}: \CC_{\www}(\CCC) \lra \CC_{\uuu}(\AAA),$$
$$\varphi_{\vvv}^{\ast}: \CC_{\www}(\CCC) \lra \CC_{\vvv}(\BBB)$$
and
$$\varphi_{\uuu \coprod \vvv}^{\ast} \cong \begin{pmatrix} \varphi_{\vvv}^{\ast} \\ \varphi_{\uuu}^{\ast} \end{pmatrix} : \CC_{\www}(\CCC) \lra \CC_{\uuu \coprod \vvv}(\AAA \coprod \BBB) \cong \CC_{\vvv}(\BBB) \oplus \CC_{\uuu}(\AAA).$$
Further, $S$ defines a thin ideal $\sss$ in $\www$ with
$$\sss(V,U) = \begin{cases} S(V,U) & \text{if}\,\, U \in \uuu, V \in \vvv \\ \varnothing & \text{else} \end{cases}$$
and $(\sss, \uuu \coprod \vvv)$ is an ideal-subcategory decomposition in $\www$.
For the category of $\BBB \coprod \AAA$-bimodules on $\sss$ in the sense of \S \ref{parthin}, we clearly have an isomorphism of categories
$$\Bimod_{\sss}(\BBB \coprod \AAA) \cong \Bimod_S(\BBB, \AAA)$$
and hence, by Corollary \ref{isobimod}, an isomorphism of categories
\begin{equation}\label{isobimod2}
\Bimod_S(\BBB, \AAA) \cong \Bimod_{\sss}(\CCC).
\end{equation}

\subsection{Arrow categories and thin ideals}\label{pararrowthin}
In this section, we characterize the situation that occurs from the arrow category construction.
Let $\CCC$ be a $\www$-graded category and let $\sss$ be a thin ideal in $\www$. We define the full subcategories $\varphi_{\uuu}: \uuu \subseteq \www$ and $\varphi_{\vvv}: \vvv \subseteq \www$ in the following way. An object $W \in \www$ belongs to $\vvv$ if there exists a path
$$W \lra W_1 \lra \dots \lra W_n \lra W_{n+1}$$
for which the last map $W_n \lra W_{n+1}$ belongs to $\sss$. Similarly, $W$ belongs to $\uuu$ if there exists a path
$$W_1 \lra W_2 \lra \dots \lra W$$
for which $W_1 \lra W_2$ belongs to $\sss$.
We put $\AAA = \CCC^{\varphi_{\uuu}}$ and $\BBB = \CCC^{\varphi_{\vvv}}$.
Let $S$ denote the restriction of $\sss$ to a $\uuu$-$\vvv$-bifunctor, and let $M$ be the restriction of $1_{\CCC}$ to an $\AAA$-$\BBB$-bimodule.

\begin{proposition}\label{propthinarrow}
For $U \in \uuu$ and $V \in \vvv$, we have $\www(U, V) = \varnothing$. In particular, the categories $\uuu$ and $\vvv$ are disjoint, and there is an injection $\varphi: (\vvv \rightarrow_{S} \uuu) \lra \www$ for which $$(\BBB \rightarrow_M \AAA) = \CCC^{\varphi}.$$
The following are equivalent:
\begin{enumerate}
\item $\varphi$ is an isomorphism;
\item We have $\Ob(\www) = \Ob(\vvv) \cup \Ob(\uuu)$ and $\sss(V,U) = \www(V,U)$ for $V \in \vvv$ and $U \in \uuu$;
\end{enumerate}
In this case, we have $(\BBB \rightarrow_M \AAA) = \CCC$.
\end{proposition}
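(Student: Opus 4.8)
The plan is to work directly with the explicit description of $\vvv \rightarrow_S \uuu$ and of $\sss$. First I would show $\www(U,V) = \varnothing$ for $U \in \uuu$, $V \in \vvv$. Suppose $f \in \www(U,V)$. Since $U \in \uuu$ there is a path starting with a morphism $z_1: W_1 \lra W_2$ in $\sss$ and ending at $U$; since $V \in \vvv$ there is a path starting at $V$ and ending with a morphism $z_2: W_n \lra W_{n+1}$ in $\sss$. Composing these paths through $f$ produces, after composing the middle portion into a single morphism, a situation $z_2 \circ (\cdots) \circ z_1$ in which two morphisms of $\sss$ appear with a morphism of $\www$ between them (concretely, $z_1$ has a morphism of $\www$ before... rather: $z_1$ is followed by morphisms of $\www$ and eventually $z_2$, so $\sss$ contains the composable pair consisting of $z_1$ followed by everything after it, in particular $z_1$ is followed by a non-identity-or-identity morphism of $\www$ and then $z_2 \in \sss$). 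This contradicts thinness of $\sss$, which forbids a morphism of $\sss$ from having another morphism of $\sss$ either before or after it along a composable chain. Hence $\www(U,V) = \varnothing$, and in particular no object lies in both $\uuu$ and $\vvv$ (an object $W \in \uuu \cap \vvv$ would give $\www(W,W) \ni 1_W$, contradiction). I should also check that for $V \in \vvv$, $U \in \uuu$ every morphism $g \in \www(V,U)$ lies in $\sss$: $V \in \vvv$ gives a path from $V$ ending in $z_2 \in \sss$, so $g$ is followed by a chain ending in $z_2$; if $g \notin \sss$ then... actually the relevant point is the reverse — I want to know $g \in \sss$. Since $U \in \uuu$, $g$ is preceded along some chain by $z_1 \in \sss$; thinness then forbids $g$ itself lying in $\sss$ only if it is adjacent to $z_1$, which need not hold, so this direction needs the characterization of $\uuu$ and $\vvv$ more carefully. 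I would instead argue: $g: V \lra U$ with $V\in\vvv$ means there is $W\lra\cdots\lra V \xrightarrow{g} U \lra \cdots$ whose a later segment lies in $\sss$; but also $U\in\uuu$ means $g$ is preceded by a morphism of $\sss$. The genuinely forced fact I need is just that $\sss(V,U) \subseteq \www(V,U)$, which is trivial, plus condition (2) for the isomorphism case; so I only need $\www(U,V)=\varnothing$ and disjointness to get the injection $\varphi$.

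**Constructing the injection $\varphi$ and the identity $(\BBB\rightarrow_M\AAA)=\CCC^\varphi$.**
Given disjointness of $\uuu$ and $\vvv$ and $\www(U,V)=\varnothing$, define $\varphi$ on objects by the evident inclusion $\Ob(\vvv)\coprod\Ob(\uuu)\hookrightarrow\Ob(\www)$, and on morphisms by the inclusions $\vvv(V',V)\subseteq\www(V',V)$, $\uuu(U,U')\subseteq\www(U,U')$, $S(V,U)=\sss(V,U)\subseteq\www(V,U)$, with $\www(U,V)=\varnothing$ matching the empty hom in $\vvv\rightarrow_S\uuu$. Functoriality of $\varphi$ follows because $\uuu,\vvv$ are full subcategories of $\www$ (so compositions internal to each are computed in $\www$) and because $\sss$ is a two-sided ideal of $\www$ (so the mixed compositions $\uuu(U,U')\times\sss(V,U)$ and $\sss(V,U)\times\vvv(V',V)$ land in $\sss$, matching the $\AAA$- and $\BBB$-actions on $M$ after restriction). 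Injectivity on objects is disjointness; injectivity on morphisms on each hom is clear. Then $\CCC^\varphi$ has, over $v\in\vvv$, the module $\CCC_{\varphi(v)}(B',B)=\CCC_v(B',B)=\BBB_v(B',B)$; over $u\in\uuu$, $\CCC_u(A,A')=\AAA_u(A,A')$; over $s\in S$, $\CCC_s(B,A)=M_s(B,A)$; and the zero module over the empty hom $\www(U,V)$. Comparing with the definition of $\BBB\rightarrow_M\AAA$ shows $(\BBB\rightarrow_M\AAA)=\CCC^\varphi$ on the nose, including compositions and identities, since all of these are defined by restricting those of $\CCC$ (equivalently, by the original $\AAA,\BBB,M$ data).

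**The equivalence (1) $\Leftrightarrow$ (2) and the final clause.**
For (1) $\Rightarrow$ (2): if $\varphi$ is an isomorphism of categories then it is bijective on objects, so $\Ob(\www)=\varphi(\Ob(\vvv\rightarrow_S\uuu))=\Ob(\vvv)\cup\Ob(\uuu)$, and it is bijective on each hom-set, so in particular $\www(V,U)=\varphi(S(V,U))=\sss(V,U)$ for $V\in\vvv$, $U\in\uuu$. For (2) $\Rightarrow$ (1): under (2), $\varphi$ is surjective on objects, hence bijective on objects by disjointness; on homs between two objects of $\uuu$ (resp. of $\vvv$) it is an equality since these are full subcategories; on $\www(V,U)$ it is $S(V,U)=\sss(V,U)=\www(V,U)$ by hypothesis; and $\www(U,V)=\varnothing$ is already matched. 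Since a functor that is bijective on objects and on all hom-sets is an isomorphism of categories, $\varphi$ is an isomorphism. Finally, when $\varphi$ is an isomorphism, $\CCC^\varphi$ is obtained from $\CCC$ by pulling back along an isomorphism, hence is (canonically isomorphic, in fact equal under the chosen identifications) to $\CCC$ itself; combined with $(\BBB\rightarrow_M\AAA)=\CCC^\varphi$ this gives $(\BBB\rightarrow_M\AAA)=\CCC$.

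**Main obstacle.**
I expect the only real subtlety is the thinness argument: extracting from "$U\in\uuu$ and $V\in\vvv$ and $\exists f\in\www(U,V)$" an actual composable chain that contains two members of $\sss$, i.e. making precise that the witnessing path for $U\in\uuu$ and the witnessing path for $V\in\vvv$ can be spliced through $f$, and that "thin" (no consecutive morphisms — interpreted as: along any composable chain, a morphism of $\sss$ is never preceded or followed by another morphism of $\sss$) is exactly strong enough to rule this out. Everything else is bookkeeping with full subcategories and the ideal property, using $\CCC^{(-)}$ as defined in \S\ref{parfibmap}.
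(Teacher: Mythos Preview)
Your approach is the same as the paper's, and the construction of $\varphi$, the identification $(\BBB \rightarrow_M \AAA) = \CCC^{\varphi}$, and the equivalence (1)$\Leftrightarrow$(2) are all handled correctly.

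The only place that needs tightening is the thinness argument, where your exposition wanders. The definition of \emph{thin} concerns \emph{directly consecutive} morphisms: if $z \in \sss$ then no morphism immediately composable with $z$ lies in $\sss$. Your spliced path has the shape
\[
W_1 \xrightarrow{z_1} W_2 \lra \dots \lra U \xrightarrow{f} V \lra \dots \lra W_n \xrightarrow{z_2} W_{n+1}
\]
with $z_1, z_2 \in \sss$ at the two ends, but these are not consecutive. The clean step (which the paper uses and which you gesture at without quite stating) is to invoke that $\sss$ is a two-sided \emph{ideal}: since $z_2 \in \sss$, the entire composite $W_2 \lra W_{n+1}$ lies in $\sss$. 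Now $z_1: W_1 \lra W_2$ and this composite are directly consecutive, both in $\sss$, contradicting thinness. Once you write this one line explicitly, the argument is complete; your digression about whether every $g \in \www(V,U)$ must lie in $\sss$ is correctly abandoned, as this is neither true in general nor needed.
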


\begin{proof}
Suppose there is a morphism $U \lra V$ with $U \in \uuu$ and $V \in \vvv$. Then there is a path $$W_1 \lra W_2 \lra \dots \lra U \lra V \lra \dots \lra W_n \lra W_{n+1}$$ with the two morphisms at the ends belonging to $\sss$. But then since $\sss$ is an ideal, also $W_2 \lra \dots \lra W_{n+1}$ belongs to $\sss$. With $W_1 \lra W_2$ in $\sss$, this contradicts the thinness of $\sss$.

The functor $\varphi$ is clearly injective on objects and on morphisms in $\uuu$ or $\vvv$, and since $\sss(V,U) \subseteq \www(V,U)$, the functor is indeed injective. The description of $\CCC^{\varphi}$ clearly follows.

 It remains to show the equivalence of (1) and (2). Condition (2) is clearly necessary for $\varphi$ to be surjective. Conversely, surjectivity readily follows from (2) taking into account that for $U \in \uuu$ and $V \in \vvv$, we have $\www(U, V) = \varnothing$.
\end{proof}

Recall that a delta is a category in which the arrows go only one way, i.e.

\begin{definition}\label{defdelta}
A \emph{delta} is a category $\www$ such that for all $W \neq W' \in \www$ we have $\www(W,W') = \varnothing$ or $\www(W', W) = \varnothing$. 
\end{definition}

\begin{example}\label{exdelta}
Let $\www$ be a delta with a terminal object $\ast \in \www$. For $W \in \www$, let $\ast_W: W \lra \ast$ denote the unique morphism.  Let $\sss$ consist of all morphsims $\ast_W : W \lra \ast$ for $W \neq \ast$. Since $\www$ is a delta and $\ast$ is terminal, for all $W \neq \ast$ we have $\www(\ast, W) = \varnothing$ and we have $\www(\ast,\ast) = \{1_{\ast} \} = \{ \ast_{\ast} \}$.
Then $\sss$ is a thin ideal by Example \ref{exthin}. In the above notations, $\uuu$ consists of $\ast$ and $\vvv$ consists of all other objects. Then condition (2) in Proposition \ref{propthinarrow} is fulfilled whence $\www \cong (\vvv \rightarrow_S \uuu)$.
\end{example}

\subsection{Connecting homomorphism}\label{parconn}

In the notations of \S \ref{pararrow}, we now investigate the complex
$$\CC_{\www}(\CCC, (1_{\CCC})_{\sss})$$
which fits into the following exact sequence from \eqref{eqhoch3}:
$$\xymatrix{ {0} \ar[r] & {\CC_{\www}(\CCC, (1_{\CCC})_{\sss})} \ar[r] & {\CC_{\www}(\CCC)} \ar[r]_-{\begin{pmatrix} \varphi_{\vvv}^{\ast} \\ \varphi_{\uuu}^{\ast} \end{pmatrix}} & {\CC_{\vvv}(\BBB) \oplus \CC_{\uuu}(\AAA)} \ar[r] & 0.}$$
%The cohomology of this complex is given by
%$$\begin{aligned}
%HH^{i}_{\www}(\CCC, (1_{\CCC})_{\sss}) & = \Ext^i_{\Bimod_{\www}(\CCC)}(1_{\CCC}, 1_{\CCC}|_{\sss})|^{\www}) \\
%& = \Ext^i_{\Bimod_{\sss}(\CCC)}(1_{\CCC}|_{\sss}, 1_{\CCC}|_{\sss}) \\
%& = \Ext^i_{\Bimod_S(\BBB, \AAA)}(M,M).
%\end{aligned}$$
Since we are interested in identifying when $\varphi_{\vvv}^{\ast}$ and $\varphi_{\uuu}^{\ast}$ are quasi-isomorphisms, we will describe the components of the connecting homomorphism explicitely. In fact, this morphism is determined by two maps on the chain level which we will describe next:
$$\alpha: \CC_{\uuu}(\AAA) \lra \CC_{\www}(\CCC, (1_{\CCC})_{\sss})[1]$$ and
$$\beta: \CC_{\vvv}(\BBB) \lra \CC_{\www}(\CCC, (1_{\CCC})_{\sss})[1].$$
To do this, we first make $\CC_{\www}(\CCC, (1_{\CCC})_{\sss})$ explicit. By definition, $\CC^n_{\www}(\CCC, (1_{\CCC})_{\sss})$ is given by
$$\prod_{(u,A) \in \nnn(\CCC) \exists u_i \in \sss}  \Hom_k( \BBB_{u_{n-1}}(A_{n-1},A_n) \otimes \dots \otimes M_{u_i}(A_i,A_{i+1})  \dots \otimes \AAA_{u_0}(A_0, A_1), M_{|u|}(A_0, A_n)).$$
Let $\phi \in \CC^n_{\uuu}(\AAA)$ be a Hochschild cocycle.
Following \cite{kellerdih} we define $\alpha(\phi)$ with non-zero components in 
$$\Hom_k(M_{u_{n}}(A_n,A_{n+1})  \dots \otimes \AAA_{u_0}(A_0, A_1), M_{|u|}(A_0, A_{n+1})).$$
given by 
$$\alpha(\phi)(x, a_{n-1}, \dots, a_0) = x\phi(a_{n-1}, \dots, a_0),$$
and similarly for $\beta$.

\begin{lemma}
The maps $\alpha$ and $\beta$ are chain maps for which
$$\begin{pmatrix} \beta & \alpha \end{pmatrix}: \CC_{\vvv}(\BBB) \oplus \CC_{\uuu}(\AAA) \lra \CC_{\www}(\CCC, (1_{\CCC})_{\sss})[1]$$
induces the connecting homomorphisms.
\end{lemma}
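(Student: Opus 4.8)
The statement has two parts: (a) $\alpha$ and $\beta$ are chain maps $\CC_{\uuu}(\AAA)\lra \CC_{\www}(\CCC,(1_{\CCC})_{\sss})[1]$, resp. $\CC_{\vvv}(\BBB)\lra\CC_{\www}(\CCC,(1_{\CCC})_{\sss})[1]$; and (b) the pair $\begin{pmatrix}\beta&\alpha\end{pmatrix}$ realizes, on the chain level, the connecting homomorphism of the short exact sequence
$$\xymatrix{ {0} \ar[r] & {\CC_{\www}(\CCC, (1_{\CCC})_{\sss})} \ar[r] & {\CC_{\www}(\CCC)} \ar[r]_-{\begin{pmatrix} \varphi_{\vvv}^{\ast} \\ \varphi_{\uuu}^{\ast} \end{pmatrix}} & {\CC_{\vvv}(\BBB) \oplus \CC_{\uuu}(\AAA)} \ar[r] & 0.}$$
I would organize the argument around the standard snake-lemma recipe for the connecting map, checking the relevant compatibilities by direct computation with the Hochschild differential.

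\textbf{Step 1: set up the splitting.} Since $(\sss,\uuu\amalg\vvv)$ is an ideal-subcategory decomposition of $\www$, every $n$-simplex of $\nnn(\CCC)$ is either entirely in $\uuu\amalg\vvv$, or has exactly one arrow $u_i\in\sss$ (thinness of $\sss$ forbids two). This gives a $k$-linear section $\sigma$ of $\begin{pmatrix}\varphi_\vvv^\ast\\\varphi_\uuu^\ast\end{pmatrix}$ at the level of graded $k$-modules: a pair $(\psi,\phi)\in\CC^n_\vvv(\BBB)\oplus\CC^n_\uuu(\AAA)$ is sent to the cochain on $\nnn_n(\CCC)$ which equals $\psi$ on simplices wholly in $\vvv$, $\phi$ on simplices wholly in $\uuu$, and $0$ on the remaining simplices (those passing through $\sss$). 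The snake map is then $[\,d_{\CC_\www(\CCC)}\circ\sigma - \sigma\circ(d_{\CC_\vvv(\BBB)}\oplus d_{\CC_\uuu(\AAA)})\,]$, valued in the kernel $\CC_\www(\CCC,(1_{\CCC})_\sss)$. I would compute $d_{\CC_\www(\CCC)}\sigma(\psi,\phi)$ on a simplex $(u,A)$ with a single $u_i\in\sss$; the only terms of the Hochschild differential that survive (land on such a simplex) are those multiplying the $\sss$-factor $M_{u_i}$ either on the left by $\AAA$-composition (giving a component built from $\phi$, multiplied by the $M$-entry on the right) or on the right by $\BBB$-composition (giving $\psi$ on the left, $M$-entry on the right). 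Matching these surviving terms with the displayed formula $\alpha(\phi)(x,a_{n-1},\dots,a_0)=x\,\phi(a_{n-1},\dots,a_0)$ — and its $\BBB$-side analogue for $\beta$ — verifies (b), i.e. the connecting homomorphism is $\begin{pmatrix}\beta&\alpha\end{pmatrix}$ up to the usual sign.

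\textbf{Step 2: chain map property.} That $\alpha$ and $\beta$ are chain maps then follows formally: the connecting homomorphism of a short exact sequence of complexes is always a chain map, and since $\alpha$ and $\beta$ are (by the description in Step 1) the restrictions of $\begin{pmatrix}\beta&\alpha\end{pmatrix}$ to the two summands $\CC_\uuu(\AAA)$ and $\CC_\vvv(\BBB)$, each is a chain map individually (the differential on $\CC_\vvv(\BBB)\oplus\CC_\uuu(\AAA)$ is the direct sum, so it does not mix the summands). Alternatively, and perhaps cleanly enough to include as the actual verification, one computes $d\alpha(\phi)-\alpha(d\phi)$ directly on a simplex of $\CC_\www(\CCC,(1_\CCC)_\sss)$: the inner differential $d\phi$ contributes the terms of $d(x\phi(\cdots))$ that come from composing among the $\AAA$-arrows $a_0,\dots,a_{n-1}$, while the ``extra'' faces of the ambient differential $d\alpha(\phi)$ — the one contracting $x$ with the first $\AAA$-arrow via the left $\AAA$-action on $M$, and the outermost face — match what is missing, using associativity of the $\AAA$-action on $M$ and the cocycle nature of $\phi$ only insofar as needed; the upshot is $d\alpha=\alpha d$. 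The computation for $\beta$ is symmetric (right $\BBB$-action on $M$).

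\textbf{Main obstacle.} The genuine work is bookkeeping, not conceptual: keeping the simplicial face/degeneracy indices straight across the three nerves $\nnn(\CCC)$, $\nnn(\AAA^{\#})$, $\nnn(\BBB^{\#})$, and tracking which faces of the Hochschild differential on $\CC_\www(\CCC)$ land inside the kernel subcomplex versus which descend to $\CC_\vvv(\BBB)\oplus\CC_\uuu(\AAA)$. The key structural input that makes every ``wrong'' term vanish is exactly that $\sss$ is a \emph{thin} ideal and that $(1_{\CCC})_\sss$ is $\sss$-thin in the sense of \S\ref{parthin}: a simplex in the kernel has precisely one $M$-entry and no iterated $\sss$-composition, so the many a priori contributions to $d$ collapse to the two one-sided actions recorded in the formulas for $\alpha$ and $\beta$. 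Once that observation is in place the sign conventions are the only remaining subtlety, and those are fixed by comparison with \cite{kellerdih}, on which this section is explicitly modelled.
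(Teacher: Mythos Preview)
The paper does not prove this lemma; it is stated without proof, the entire section being explicitly modelled on Keller's treatment in \cite{kellerdih}. Your approach --- take the $k$-linear section $\sigma$ given by extension by zero and compute $d_{\CC_{\www}(\CCC)}\sigma - \sigma\, d$ --- is the standard one and is correct in outline.

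One sharpening is worth making in Step~1. On an $(n{+}1)$-simplex with the unique $\sss$-entry at position $j$, \emph{all} inner faces $\chi(\dots, c_{k+1}c_k,\dots)$ vanish: composing with or across the $\sss$-entry stays in $\sss$ (ideal property), so the resulting $n$-simplex still contains an $\sss$-arrow and $\chi=\sigma(\psi,\phi)$ is zero on it. The only possibly non-zero terms are therefore the two outermost faces $c_n\,\chi(c_{n-1},\dots,c_0)$ and $\chi(c_n,\dots,c_1)\,c_0$. The first survives precisely when $j=n$ (the remaining $n$-simplex then lies entirely in $\BBB$), giving the $\beta$-component $x\,\psi(b_{n-1},\dots,b_0)$; the second survives precisely when $j=0$ (remaining simplex entirely in $\AAA$), giving the $\alpha$-component $\phi(a_n,\dots,a_1)\,x$. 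For $0<j<n$ both outer faces still see an $\sss$-arrow, so the connecting map is zero on that simplex. This is what pins $\alpha$ and $\beta$ to the specific extreme components, rather than something spread across all positions of the $\sss$-entry as your phrasing (``multiplying the $\sss$-factor $M_{u_i}$ on the left\dots or on the right'') might suggest. Once this is clear, your Step~2 goes through exactly as you say.
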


The triangle 
\begin{equation}\label{triangle}
\xymatrix{ {\CC_{\www}(\CCC)} \ar[r]_-{\begin{pmatrix} \varphi_{\vvv}^{\ast} \\ \varphi_{\uuu}^{\ast} \end{pmatrix}} & {\CC_{\vvv}(\BBB) \oplus \CC_{\uuu}(\AAA)} \ar[r]_-{\begin{pmatrix} \beta & \alpha \end{pmatrix}} & {\CC_{\www}(\CCC, (1_{\CCC})_{\sss})[1]} \ar[r] &}\end{equation}
corresponds to a homotopy bicartesian square
$$\xymatrix{ {\CC_{\www}(\CCC)} \ar[d]_{\varphi_{\vvv}^{\ast}} \ar[r]^{\varphi_{\uuu}^{\ast}} & {\CC_{\uuu}(\AAA)} \ar[d]^{\alpha} \\ {\CC_{\vvv}(\BBB)} \ar[r]_-{\beta} & {\CC_{\www}(\CCC, (1_{\CCC})_{\sss})[1].}}$$

We thus have:

\begin{proposition}
\begin{enumerate}
\item If $\alpha$ is a quasi-isomorphism, then so is $\varphi^{\ast}_{\vvv}$.
\item If $\beta$ is a quasi-isomorphism, then so is $\varphi^{\ast}_{\uuu}$.
\end{enumerate}
\end{proposition}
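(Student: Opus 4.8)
The plan is to read off both statements from the homotopy bicartesian square displayed immediately above the proposition, whose two vertical arrows are $\varphi_{\vvv}^{\ast}$ and $\alpha$ and whose two horizontal arrows are $\varphi_{\uuu}^{\ast}$ and $\beta$. The only external ingredient needed is the standard homological fact that in a homotopy bicartesian square of complexes the mapping cones of the two parallel vertical arrows are canonically quasi-isomorphic, and likewise for the two parallel horizontal arrows. Granting this, (1) is immediate: if $\alpha$ is a quasi-isomorphism its cone is acyclic, hence so is the cone of $\varphi_{\vvv}^{\ast}$, i.e.\ $\varphi_{\vvv}^{\ast}$ is a quasi-isomorphism; and (2) follows in the same way with $\beta$ and $\varphi_{\uuu}^{\ast}$ in the roles of $\alpha$ and $\varphi_{\vvv}^{\ast}$.

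Since the square being homotopy bicartesian is nothing but the statement that the triangle \eqref{triangle} is distinguished --- which the preceding lemma, together with the short exact sequence coming from \eqref{eqhoch3} for the ideal-subcategory decomposition $(\sss, \uuu \coprod \vvv)$ of $\www$, has already established --- I would, if a self-contained argument is wanted, unpack the cited fact rather than cite it. Applying cohomology to \eqref{triangle} produces the Mayer--Vietoris long exact sequence relating $HH^{i}_{\www}(\CCC)$, $HH^{i}_{\vvv}(\BBB) \oplus HH^{i}_{\uuu}(\AAA)$ and the $(i{+}1)$-st cohomology of $\CC_{\www}(\CCC,(1_{\CCC})_{\sss})$, whose nontrivial maps are $\begin{pmatrix} \varphi_{\vvv}^{\ast} \\ \varphi_{\uuu}^{\ast} \end{pmatrix}$ and $\begin{pmatrix} \beta & \alpha \end{pmatrix}$ (the latter being the connecting map by the preceding lemma). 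For (1): if $\alpha$ induces isomorphisms on cohomology then $\begin{pmatrix} \beta & \alpha \end{pmatrix}$ is surjective in each degree, so every connecting map vanishes, $\begin{pmatrix} \varphi_{\vvv}^{\ast} \\ \varphi_{\uuu}^{\ast} \end{pmatrix}$ is injective on cohomology with image $\Kern \begin{pmatrix} \beta & \alpha \end{pmatrix}$, and that kernel is the graph of $-(\alpha_{\ast})^{-1}\beta_{\ast}$, on which the first projection is an isomorphism; composing identifies $\varphi_{\vvv}^{\ast}$ on cohomology with an isomorphism. Statement (2) is the mirror image, using the second projection and $-(\beta_{\ast})^{-1}\alpha_{\ast}$.

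I do not anticipate a genuine obstacle: the substantive work has already been carried out in building the distinguished triangle \eqref{triangle}, and what remains is formal triangulated-category bookkeeping. The only points demanding a little care are keeping track of the shift $[1]$ on $\CC_{\www}(\CCC,(1_{\CCC})_{\sss})$ when writing out the long exact sequence, and checking that the explicit connecting map from the preceding lemma is exactly the one making the displayed square commute, so that $\alpha$ and $\beta$ really occupy the vertical and horizontal slots as claimed. Both are routine.
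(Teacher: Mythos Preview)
Your proposal is correct and matches the paper's approach: the paper does not even spell out a proof, merely writing ``We thus have:'' after displaying the homotopy bicartesian square, so the proposition is treated as an immediate formal consequence of that square (equivalently, of the distinguished triangle \eqref{triangle}). Your elaboration via cones of parallel arrows or the associated long exact sequence is exactly the standard unpacking of that inference.
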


\subsection{Derived interpretation}\label{parderived}

Following \cite{kellerdih}, we now give a derived interpretation of the morphisms $\alpha$ and $\beta$. 
We start by giving various derived interpretations of $\CC_{\www}(\CCC, (1_{\CCC})_S$.

Let $B(\AAA) \lra 1_{\AAA}$ and $B(\BBB) \lra 1_{\BBB}$ be the Bar resolutions in $\Bimod_{\uuu}(\AAA)$ and $\Bimod_{\vvv}(\BBB)$ respectively, as defined in \cite[\S 3.2]{lowenmap}.
We have 
$$\CC_{\uuu}(\AAA) \cong \Hom_{\Bimod_{\uuu}(\AAA)}(B(\AAA), 1_{\AAA}) = \RHom_{\Bimod_{\uuu}(\AAA)}(1_{\AAA}, 1_{\AAA})$$ and 
$$\CC_{\vvv}(\BBB) \cong \Hom_{\Bimod_{\vvv}(\BBB)}(B(\BBB), 1_{\BBB}) = \RHom_{\Bimod_{\vvv}(\BBB)}(1_{\BBB}, 1_{\BBB}).$$
Similarly, we have
$$\CC_{\www}(\CCC, (1_{\CCC})_S)[1] \cong \Hom_{\Bimod_S(\AAA, \BBB)}(B(\AAA) \otimes_{\AAA} M \otimes_{\BBB} B(\BBB), M)$$
and $$\Hom_{\Bimod_S(\AAA, \BBB)}(B(\AAA) \otimes_{\AAA} M \otimes_{\BBB} B(\BBB), M) = \RHom_{\Bimod_{\sss}(\AAA, \BBB)}(M,M).$$
In particular, from the triangle \eqref{triangle} we obtain a long exact cohomology sequence
$$\dots \lra HH_{\www}^i(\CCC) \lra HH_{\vvv}^i(\BBB) \oplus HH_{\uuu}^i(\AAA) \lra \Ext^{i}_{\Bimod_S(\BBB, \AAA)}(M,M) \lra \dots$$

By Proposition \ref{propadj}, we further have
$$\CC_{\www}(\CCC, (1_{\CCC})_S)[1] \cong \Hom_{\Bimod_{\uuu}(\AAA)}(B(\AAA), \Hom_{\BBB}(M \otimes_{\BBB} B(\BBB), M))$$
with $\Hom_{\BBB}(M \otimes_{\BBB} B(\BBB), M) = \RHom_{\BBB}(M,M).$
Similarly, 
$$\CC_{\www}(\CCC, (1_{\CCC})_S)[1] \cong \Hom_{\Bimod_{\vvv}(\BBB)}(B(\BBB), \Hom_{\AAA^{\op}}(B(\AAA) \otimes_{\AAA} M, M))$$
with $\Hom_{\AAA^{\op}}(B(\AAA) \otimes_{\AAA} M, M) = \RHom_{\AAA^{\op}}(M,M)$.

Consider the natural morphisms 
$$\lambda: 1_{\AAA} \lra \Hom_{\BBB}(M,M) \lra \RHom_{\BBB}(M,M)$$
and 
$$\omega: 1_{\BBB} \lra \Hom_{\AAA^{\op}}(M,M) \lra \RHom_{\AAA^{\op}}(M,M)$$
of bimodules and the induced 
$$\CC_{\uuu}(\AAA, \lambda): \CC_{\uuu}(\AAA) \lra \CC_{\uuu}(\AAA, \RHom_{\BBB}(M,M))$$
and
$$\CC_{\vvv}(\BBB, \omega): \CC_{\vvv}(\BBB) \lra \CC_{\vvv}(\BBB, \RHom_{\AAA}(M,M)).$$

We have the following map-graded version Theorem \cite[Theorem 4.1.1]{lowenvandenberghhoch} extracted from \cite[\S 4]{kellerdih}:

\begin{theorem}\label{thmarrowqis}
\begin{enumerate}
\item We have $\alpha \cong \CC_{\uuu}(\AAA, \lambda)$ and $\beta \cong \CC_{\vvv}(\BBB, \omega)$.
\item If $\CC_{\uuu}(\AAA, \lambda)$ is a quasi-isomorphism (hence, in particular, if $\lambda$ is a quasi-isomorphism), then so is $\varphi_{\vvv}^{\ast}$.
\item If $\CC_{\vvv}(\BBB, \omega)$  is a quasi-isomorphism (hence, in particular, if $\omega$ is a quasi-isomorphism), then so is $\varphi_{\uuu}^{\ast}$.
\end{enumerate}
\end{theorem}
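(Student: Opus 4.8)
The plan is to reduce everything to the explicit chain-level descriptions already set up in \S\ref{parconn} together with the derived reformulations obtained in \S\ref{parderived}, following Keller's treatment in \cite[\S 4]{kellerdih} line by line, but with the map-graded bookkeeping supplied by Proposition \ref{propadj} and the tensor/Hom formalism of \S\ref{parbimod}. Part (1) is the heart of the matter: once $\alpha \cong \CC_{\uuu}(\AAA,\lambda)$ and $\beta \cong \CC_{\vvv}(\BBB,\omega)$ are established, parts (2) and (3) are immediate from the preceding Proposition (if $\alpha$ is a quasi-isomorphism then so is $\varphi^{\ast}_{\vvv}$, and dually for $\beta$ and $\varphi^{\ast}_{\uuu}$), and the parenthetical ``in particular'' clauses follow because applying $\CC_{\uuu}(\AAA,-)$ (resp. $\CC_{\vvv}(\BBB,-)$) to a quasi-isomorphism of bimodules yields a quasi-isomorphism — $\CC_{\uuu}(\AAA,M) \cong \RHom_{\Bimod_{\uuu}(\AAA)}(1_{\AAA},M)$ via the Bar resolution, which is degreewise a product of Hom's out of representable (hence projective) bimodules, so it preserves quasi-isomorphisms by the usual hyper-Ext argument.

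For part (1) I would proceed as follows. First I recall the identification, from \S\ref{parderived},
$$\CC_{\www}(\CCC,(1_{\CCC})_{\sss})[1] \cong \Hom_{\Bimod_{\uuu}(\AAA)}\bigl(B(\AAA),\Hom_{\BBB}(M\otimes_{\BBB}B(\BBB),M)\bigr),$$
which comes from the adjunction in Proposition \ref{propadj}(1) applied to the double Bar resolution $B(\AAA)\otimes_{\AAA}M\otimes_{\BBB}B(\BBB)$ of $M$ in $\Bimod_S(\AAA,\BBB)$. Under this identification, the target is $\CC_{\uuu}(\AAA,\Hom_{\BBB}(M\otimes_{\BBB}B(\BBB),M))$, and since $M\otimes_{\BBB}B(\BBB)\to M$ is a resolution by bimodules adapted to $\Hom_{\BBB}(-,M)$, the complex $\Hom_{\BBB}(M\otimes_{\BBB}B(\BBB),M)$ represents $\RHom_{\BBB}(M,M)$. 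So the target is $\CC_{\uuu}(\AAA,\RHom_{\BBB}(M,M))$, exactly the target of $\CC_{\uuu}(\AAA,\lambda)$. It then remains to check that, under these identifications, the chain map $\alpha$ of \S\ref{parconn} — defined by $\alpha(\phi)(x,a_{n-1},\dots,a_0) = x\,\phi(a_{n-1},\dots,a_0)$ — corresponds to post-composition with the bimodule morphism $\lambda: 1_{\AAA}\to \Hom_{\BBB}(M,M)$ given by the left $\AAA$-action on $M$. This is a direct unravelling: an element of $\CC^n_{\uuu}(\AAA)$ is a family of maps $\AAA_{u_{n-1}}\otimes\cdots\otimes\AAA_{u_0}\to \AAA_{|u|}(A_0,A_n)$; composing with $\lambda$ turns the output into an element of $\Hom_{\BBB}(M,M)_{|u|}(A_0,A_n) = \Bimod_{S_{\varphi(A_0)}}(ke,\BBB)(M_{A_0}, |u|^{\ast}M_{A_n})$, and evaluating this on $x\in M_{u_n}(A_n,A_{n+1})$ reproduces precisely $x\,\phi(a_{n-1},\dots,a_0)$. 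The statement for $\beta$ and $\omega$ is the mirror image, using Proposition \ref{propadj}(2) and the right-hand identification $\CC_{\www}(\CCC,(1_{\CCC})_{\sss})[1]\cong \Hom_{\Bimod_{\vvv}(\BBB)}(B(\BBB),\Hom_{\AAA^{\op}}(B(\AAA)\otimes_{\AAA}M,M))$.

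The main obstacle, and the only place where genuine care is needed rather than bookkeeping, is verifying that the various identifications of $\CC_{\www}(\CCC,(1_{\CCC})_{\sss})[1]$ are compatible — i.e. that the adjunction isomorphism of Proposition \ref{propadj}, the degree shift $[1]$ coming from the connecting homomorphism in the short exact sequence of \S\ref{parconn}, and the differentials all match up on the nose (or up to a sign one can absorb), so that $\alpha$ is literally transported to $\CC_{\uuu}(\AAA,\lambda)$ and not merely to something quasi-isomorphic to it. Concretely one must check that the Hochschild differential on $\CC_{\www}(\CCC,(1_{\CCC})_{\sss})$, restricted to the summands containing exactly one $\sss$-component (which by thinness of $\sss$ is forced, cf. the displayed formula for $\CC^n_{\www}(\CCC,(1_{\CCC})_{\sss})$), corresponds under the adjunction to the total differential of the Hom double complex $\Hom_{\Bimod_{\uuu}(\AAA)}(B(\AAA),\Hom_{\BBB}(M\otimes_{\BBB}B(\BBB),M))$; the terms of the Hochschild differential that would ``move'' the $\sss$-component past a neighbour vanish because $\sss$ is thin and $(1_{\CCC})_{\sss}$ is the bimodule $M$ with trivial $\sss$-actions (this is exactly the $\zzz$-thinness exploited in \S\ref{parthin}), which is what makes the splitting into the two half-bar-resolution differentials work. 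Since \S\ref{parconn} and \S\ref{parderived} are, by the authors' own statement, modelled verbatim on \cite[\S 4]{kellerdih}, I would simply invoke that the computation is formally identical to Keller's, the map-graded indices playing no role beyond carrying the grading data, and refer to \cite[Theorem 4.1.1]{lowenvandenberghhoch} for the linear-category case which is recovered by taking standard gradings.
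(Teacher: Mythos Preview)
Your proposal is correct and aligns with the paper's approach: the paper does not supply its own proof of Theorem \ref{thmarrowqis} but simply records it as the map-graded analogue of \cite[Theorem 4.1.1]{lowenvandenberghhoch} extracted from \cite[\S 4]{kellerdih}, relying on the derived reformulations already assembled in \S\ref{parderived}. Your sketch faithfully reproduces that strategy---deducing (2) and (3) from (1) via the homotopy bicartesian square, and obtaining (1) by tracing $\alpha$ through the adjunction isomorphism of Proposition \ref{propadj}(1) and the Bar-resolution identification of $\CC_{\www}(\CCC,(1_{\CCC})_{\sss})[1]$---and your identification of the only delicate point (compatibility of differentials under the adjunction, using thinness of $\sss$) is exactly what the deferred computation in Keller's paper handles.
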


The analogue of \cite[\S 4.6, Theorem]{kellerdih} in the map-graded context can be formulated and proven in a similar fashion. In particular, we mention the following compatibility results explicitly.

Let $(\varphi, F): (\uuu, \AAA) \lra (\vvv, \BBB)$ be a subcartesian graded functor with associated $\uuu$-$\vvv$-bifunctor $S^{\varphi}$ and $\AAA$-$S^{\varphi}$-$\BBB$-bimodule $M^F$ as in Example \ref{exbimod}.
On the one hand, since $(\varphi, F)$ is subcartesian, we have the induced morphism of $B_{\infty}$-algebras
$$F^{\ast}: \CC_{\vvv}(\BBB) \lra \CC_{\uuu}(\AAA)$$
from Proposition \ref{propfunct}.

\begin{proposition}\label{propcompat}
\begin{enumerate}
\item $\lambda: 1_{\AAA} \lra \RHom_{\BBB}(M^F, M^F)$ and hence $\varphi^{\ast}_{\vvv}$  is a quasi-isomorphism.
\item In the homotopy category of $B_{\infty}$-algebras, we have $F^{\ast} = \varphi^{\ast}_{\uuu} {\varphi^{\ast}_{\vvv}}^{-1}$.
\end{enumerate}
\end{proposition}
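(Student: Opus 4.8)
The plan is to prove the two assertions separately. For (1) I would feed the data $(S^{\varphi}, M^F)$ of Examples \ref{exbifun}(2) and \ref{exbimod}(2) into the arrow category construction of \S\ref{pararrow}, forming the $\www = (\vvv \rightarrow_{S^{\varphi}} \uuu)$-graded category $\CCC = \BBB \rightarrow_{M^F} \AAA$, and then apply Theorem \ref{thmarrowqis}(2): it suffices to show that $\lambda \colon 1_{\AAA} \lra \RHom_{\BBB}(M^F, M^F)$ is a quasi-isomorphism. For (2), once (1) gives that $\varphi^{\ast}_{\vvv}$ is invertible in the homotopy category of $B_{\infty}$-algebras, it remains to identify $\varphi^{\ast}_{\uuu}(\varphi^{\ast}_{\vvv})^{-1}$ with $F^{\ast}$, which I would do by exhibiting a natural transformation between two subcartesian graded functors $(\uuu, \AAA) \lra (\www, \CCC)$, following \cite[\S 4.6]{kellerdih}.

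For (1): the point is that $M^F$ is representable over $\BBB$. Concretely, for $U \in \uuu$ and $A \in \AAA_U$, the right $\BBB$-module $(M^F)_A \in \Bimod_{S^{\varphi}_U}(ke, \BBB)$ has $((M^F)_A)_s(B) = \BBB_s(B, F(A))$ for $s \in \vvv(V, \varphi(U))$, i.e. it is the representable right $\BBB$-module at $F(A) \in \BBB_{\varphi(U)}$. Hence it is projective, so $\Ext^i_{\BBB}(M^F, N) = 0$ for $i > 0$ and every $N$, and the canonical map $\Hom_{\BBB}(M^F, M^F) \lra \RHom_{\BBB}(M^F, M^F)$ is a quasi-isomorphism. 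On the other hand, the description $\Hom_{\BBB}(M,N)_u(A,A') = \Bimod_{S^{\varphi}_U}(ke, \BBB)(M_A, u^{\ast}N_{A'})$ from \S\ref{parhom} together with the Yoneda lemma identifies $\Hom_{\BBB}(M^F, M^F)_u(A,A')$ with $\BBB_{\varphi(u)}(F(A), F(A'))$, and one checks that under this identification $\lambda_u$ becomes the structure map $\AAA_u(A,A') \lra \BBB_{\varphi(u)}(F(A), F(A'))$ of the graded functor $F$. Since $(\varphi, F)$ is subcartesian this map is an isomorphism (Proposition \ref{lemcart1}), so $\lambda$ is a quasi-isomorphism and Theorem \ref{thmarrowqis}(2) gives that $\varphi^{\ast}_{\vvv}$ is a quasi-isomorphism.

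For (2): consider the inclusion $(\varphi_{\uuu}, \mathrm{id}) \colon (\uuu, \AAA) = (\uuu, \CCC^{\varphi_{\uuu}}) \lra (\www, \CCC)$ and the composite $(\varphi_{\vvv}, \mathrm{id}) \circ (\varphi, F) \colon (\uuu, \AAA) \lra (\www, \CCC)$, which sends $A \in \AAA_U$ to $F(A) \in \BBB_{\varphi(U)} \subseteq \CCC$; both are subcartesian, and by contravariant functoriality of $\CC$ one has $((\varphi_{\vvv}, \mathrm{id}) \circ (\varphi, F))^{\ast} = F^{\ast} \circ \varphi^{\ast}_{\vvv}$. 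For each $U$ and $A \in \AAA_U$ the element $1_{F(A)} \in \BBB_{1_{\varphi(U)}}(F(A), F(A)) = (M^F)_{1_{\varphi(U)}}(F(A), A) = \CCC_{\iota_U}(F(A), A)$, where $\iota_U = 1_{\varphi(U)} \in S^{\varphi}(\varphi(U), U)$, defines the components of a natural transformation $(\varphi_{\vvv}, \mathrm{id}) \circ (\varphi, F) \Rightarrow (\varphi_{\uuu}, \mathrm{id})$; naturality in $u \colon U \lra U'$ reduces, via the composition rules in the arrow category $\www$, to the identity $\varphi(u) \cdot 1_{\varphi(U)} = 1_{\varphi(U')} \cdot \varphi(u)$ in $S^{\varphi}$. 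Invoking the map-graded analogue of the fact (\cite[\S 4.6]{kellerdih}) that a natural transformation between subcartesian graded functors induces a homotopy between the associated $B_{\infty}$-morphisms, we obtain $\varphi^{\ast}_{\uuu} = F^{\ast} \circ \varphi^{\ast}_{\vvv}$ in the homotopy category of $B_{\infty}$-algebras, and composing on the right with $(\varphi^{\ast}_{\vvv})^{-1}$ yields (2).

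The main obstacle is the lemma invoked in the last step: a natural transformation must induce not merely a chain homotopy but a homotopy of $B_{\infty}$-morphisms. On the chain level this is the usual ``insertion of $\iota$'' formula, whose verification $dh + hd = \pm(\varphi^{\ast}_{\uuu} - F^{\ast}\varphi^{\ast}_{\vvv})$ is routine once one checks the bookkeeping---in particular that $S^{\varphi}(\varphi(U_0), U_n) = \vvv(\varphi(U_0), \varphi(U_n))$, so that $F^{-1}$ can be applied to bring the output back into $\CC^{n-1}_{\uuu}(\AAA)$. Upgrading this to the $B_{\infty}$ level requires transporting the homotopy-invariance machinery of \cite{kellerdih} to the map-graded setting, which is precisely the work the paper leaves as ``formulated and proven in a similar fashion''.
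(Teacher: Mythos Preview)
Your proof is correct and follows exactly the approach the paper intends: the paper gives no proof of its own beyond the remark that ``the analogue of \cite[\S 4.6, Theorem]{kellerdih} in the map-graded context can be formulated and proven in a similar fashion'', and your argument is precisely that analogue---representability of $(M^F)_A$ plus Yoneda for (1), and the natural transformation $1_{F(A)}$ yielding a homotopy of $B_{\infty}$-morphisms for (2). Your honest flagging of the $B_{\infty}$-level homotopy as the one step requiring genuine transport from Keller matches the paper's own deferral.
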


Let $(\varphi,F): (\vvv, \BBB) \lra (\uuu, \AAA)$ be a subcartesian graded functor with associated $\uuu$-$\vvv$-bifunctor $S_{\varphi}$ and $\AAA$-$S_{\varphi}$-$\BBB$-bimodule $M_F$ as in Example \ref{exbimod}. This time we have the functor
$$F^{\ast}: \CC_{\uuu}(\AAA) \lra \CC_{\vvv}(\BBB).$$

\begin{proposition}\label{propcompat2}
\begin{enumerate}
\item $\omega: 1_{\BBB} \lra \RHom_{\AAA^{\op}}(M_F, M_F)$ and hence $\varphi^{\ast}_{\uuu}$ is a quasi-isomorphism.
\item In the homotopy category of $B_{\infty}$-algebras, we have $F^{\ast} = \varphi^{\ast}_{\vvv} {\varphi^{\ast}_{\uuu}}^{-1}$.
\end{enumerate}
\end{proposition}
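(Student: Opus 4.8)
The statement is the mirror image of Proposition \ref{propcompat}, obtained by interchanging the two sides of the bimodule, and the argument runs in parallel. For \emph{part (1)}, recall from Example \ref{exbimod}(1) that $(M_F)_s(B,A) = \AAA_s(F(B),A)$ for $s \in S_{\varphi}(V,U) = \uuu(\varphi(V),U)$. The key point is that for each $V \in \vvv$ and $B \in \BBB_V$, the left $\AAA$-module obtained by fixing $B$ is, up to the bifunctor structure, the representable module $\AAA_{\bullet}(F(B),-)$ at the object $F(B) \in \AAA_{\varphi(V)}$, hence projective in the relevant bimodule category. Consequently $\Ext^{>0}_{\AAA^{\op}}(M_F,M_F) = 0$, so the canonical map $\Hom_{\AAA^{\op}}(M_F,M_F) \lra \RHom_{\AAA^{\op}}(M_F,M_F)$ computed from the Bar resolution as in \S \ref{parderived} is a quasi-isomorphism. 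On the other hand, evaluation at the generators of these representable modules, combined with the fact that $(\varphi,F)$ is subcartesian (so that $\BBB_v(B,B') \lra \AAA_{\varphi(v)}(F(B),F(B'))$ is an isomorphism), identifies $\Hom_{\AAA^{\op}}(M_F,M_F)$ with $1_{\BBB}$, and under this identification the natural map $\omega$ is an isomorphism of $\BBB$-bimodules. Composing, $\omega \colon 1_{\BBB} \lra \RHom_{\AAA^{\op}}(M_F,M_F)$ is a quasi-isomorphism, and Theorem \ref{thmarrowqis}(3) then gives that $\varphi_{\uuu}^{\ast}$ is a quasi-isomorphism.

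For \emph{part (2)}, I would produce a one-sided inverse of $\varphi_{\uuu}^{\ast}$ induced by an honest graded functor. Let $\Pi \colon \www = (\vvv \rightarrow_{S_{\varphi}} \uuu) \lra \uuu$ be the functor which is the identity on the $\uuu$-part, equals $\varphi$ on the $\vvv$-part, and sends $s \in \www(V,U) = S_{\varphi}(V,U) = \uuu(\varphi(V),U)$ to itself; one checks from the definition of composition in the arrow category that $\Pi$ is a functor. Lifting this by $F$ on the $\BBB$-part and by identities on the $\AAA$-part and on $M_F = \AAA(F(-),-)$ defines a graded functor $(\Pi,G) \colon (\www,\CCC) \lra (\uuu,\AAA)$ with $\CCC = \BBB \rightarrow_{M_F} \AAA$, and since $(\varphi,F)$ is subcartesian while the other components are identities, $(\Pi,G)$ is subcartesian; hence it induces a $B_{\infty}$-morphism $G^{\ast} \colon \CC_{\uuu}(\AAA) \lra \CC_{\www}(\CCC)$ by Proposition \ref{propfunct}(3). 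By construction $(\Pi,G) \circ (\varphi_{\uuu},\delta) = \mathrm{id}_{(\uuu,\AAA)}$ and $(\Pi,G) \circ (\varphi_{\vvv},\delta) = (\varphi,F)$, so contravariant functoriality of $\CC$ on $\Map_{sc}$ yields $\varphi_{\uuu}^{\ast} G^{\ast} = \mathrm{id}$ and $\varphi_{\vvv}^{\ast} G^{\ast} = F^{\ast}$ as morphisms of $B_{\infty}$-algebras. Since $\varphi_{\uuu}^{\ast}$ is a quasi-isomorphism by part (1), it is invertible in the homotopy category of $B_{\infty}$-algebras, and the first relation forces $G^{\ast} = (\varphi_{\uuu}^{\ast})^{-1}$ there; substituting into the second relation gives $F^{\ast} = \varphi_{\vvv}^{\ast} (\varphi_{\uuu}^{\ast})^{-1}$.

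The main obstacle I anticipate is bookkeeping rather than conceptual: verifying that $(\Pi,G)$ is a well-defined (subcartesian) graded functor requires matching the composition law of $\BBB \rightarrow_{M_F} \AAA$ and the $S_{\varphi}$-action with composition in $\AAA$, and the Yoneda-type identification in part (1) must be carried out respecting the $\vvv^{\op} \times \uuu$ variance of $S_{\varphi}$ and the pullback operations $v^{\ast}$ entering the definition of $\Hom_{\AAA^{\op}}$ in \S \ref{parhom}. Both computations are routine given the earlier sections, and the essential content is already packaged in Theorem \ref{thmarrowqis} together with the functoriality of \S \ref{parfunct}.
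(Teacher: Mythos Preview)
Your proposal is correct and follows essentially the approach the paper has in mind: the paper gives no explicit proof but defers to \cite[\S 4.6]{kellerdih}, and your argument is precisely the map-graded translation of Keller's method. In part (1) the representable/Yoneda computation of $\RHom_{\AAA^{\op}}(M_F,M_F)$ is the standard one, and in part (2) the collapsing subcartesian functor $(\Pi,G)\colon(\www,\CCC)\to(\uuu,\AAA)$ (identity on the $\AAA$-part, $(\varphi,F)$ on the $\BBB$-part, identity on $M_F=\AAA(F(-),-)$) giving $\varphi_{\uuu}^{\ast}G^{\ast}=\mathrm{id}$ and $\varphi_{\vvv}^{\ast}G^{\ast}=F^{\ast}$ is exactly the trick that makes the compatibility transparent; the routine verifications you flag (functoriality of $\Pi$ and compatibility of $G$ with the arrow-category composition) go through as you indicate.
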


\section{Grothendieck construction}\label{pargroth}

In this section, we present a unified framework for constructing map-graded categories and deconstructing their Hochschild complexes. The classical Grothendieck construction from \cite{SGA1} takes a pseudofunctor $\uuu \lra \Cat$ as input and turns it into a category fibered over $\uuu$. The main construction from \cite{lowenmap} is a $k$-linearized version of this construction. Now, we go a step further and start from a pseudofunctor
$$(\uuu, \AAA): \ccc \lra \underline{\Map}: C \longmapsto (\uuu_C, \AAA_C)$$
where $\ccc$ is a small category and $\underline{\Map}$ is the bicategory of map-graded categories and bimodules described in \S \ref{parbimod}. Allowing arbitrary bimodules rather than functors between map-graded categories allows us to capture the arrow category with respect to a bimodule from \S \ref{pararrow}.
In general, we can now deconstruct the Hochschild complex of the Grothendieck construction $(\tilde{\uuu}, \tilde{\AAA})$ of $(\uuu, \AAA)$ based upon the internal structure of $\ccc$. Here, the strategy is to cover $(\tilde{\uuu}, \tilde{\AAA})$ by other Grothendieck constructions, using base change for pseudofunctors from \S \ref{parbase}. For instance, in \S \ref{pargenarrow}, we consider ``generalized arrow categories'', and deconstruct them using iterated arrow category constructions.
In \S \ref{parcoverarrow}, we observe how, in the case where $\ccc$ is a poset, the sheaf property for Hochschild complexes on the one hand, and the arrow category construction on the other hand, can be seen as complementary tools for deconstructing Hochschild cohomology. 
In the final section \S \ref{parcomp}, we start from a pseudofunctor $(\uuu^{\star}, \AAA^{\star}): \ccc^{\ast} \lra \Map_{sc}$, which we compare to the natural pseudofunctor of Grothendieck constructions
$$(\uuu^{\ast}, \AAA^{\ast}): \ccc^{\ast} \lra \Map_{sc}: C \lra (\tilde{\uuu}|_{C}, \tilde{\AAA}|_{C})$$
built from the restriction $(\uuu, \AAA)$ of $(\uuu^{\star}, \AAA^{\star})$ to $\ccc$.
Our main Theorem \ref{thmmaincomp} is heavily based upon Keller's arrow category argument in the case of a fully faithful functor $\BBB \lra \AAA$, which is in fact a special case of our theorem. As an application, we recover the Mayer-Vietoris triangles for ringed spaces from \cite[\S 7.9]{lowenvandenberghhoch}.

\subsection{Diagrams in the bicategory of categories}\label{pargroth1}
Let $\underline{\Cat}$ be the bicategory of categories, bifunctors and natural transformations described in \S \ref{parbifun}. Let $\ccc$ be an arbitrary small category and consider a pseudofunctor
$$\uuu: \ccc \lra \underline{\Cat}: C \longmapsto \uuu_C.$$
The pseudofunctor maps a map $c: C \lra C'$ to a $\uuu_{C'}$-$\uuu_{C}$-bifunctor $\uuu(c) = S_c$ and for an additional $c': C' \lra C''$, there is an isomorphism
$$\phi_{c', c}: S_{c'} \circ S_c \lra S_{c'c}.$$
For a third map $c'': C'' \lra C'''$, we have a commutative diagram
\begin{equation}\label{coh}
\xymatrix{ {(S_{c''} \circ S_{c'}) \circ S_c} \ar[d]_{\alpha} \ar[r]^-{\phi_{c'. c} \circ 1} & {S_{c'' c'} \circ S_c} \ar[r]^-{\phi_{c'' c', c}} & {S_{c'' c' c}} \ar[d]^1 \\
{S_{c''} \circ (S_{c'} \circ S_c)} \ar[r]_-{1 \circ \phi_{c', c}} & {S_{c''} \circ S_{c' c}} \ar[r]_-{\phi_{c'', c' c}} & {S_{c'' c' c}} }
\end{equation}
where $\alpha$ is the isomorphism from the bicategory $\underline{\Cat}$. We suppose moreover that $S_{1_{C}} = 1_{\uuu_C}$.

We define the \emph{Grothendieck construction} of $\uuu$ to be the non-linear $\ccc$-graded category $\uuu$ with $\uuu_C$ as prescribed and, for $c: C \lra C'$, $U \in \uuu_C$, $U' \in \uuu_{C'}$:
$$\uuu_c(U,U') = S_c(U, U').$$
Recall that $$S_{c'} \circ S_c(U,U') = \coprod_{V \in \uuu_{C'}} S_{c'}(V,U'') \times S_{c}(U,V)/ \sim.$$
The composition on $\uuu$ is defined as the natural map
$$\xymatrix{ {S_{c'}(U', U'') \times S_{c}(U, U')} \ar[r] & {S_{c'} \circ S_{c}(U, U'')} \ar[r]_-{\phi_{c', c}} & {S_{c'c}(U, U'')}}$$
and we denote the image of $(s', s)$ under composition by $s's$.
There is a corresponding category $\tilde{\uuu}$ over $\ccc$ with $\Ob(\tilde{\uuu}) = \coprod_{C \in \ccc}\uuu_C$ (see Remark \ref{remgraded}).

\subsection{Diagrams in the bicategory of map-graded categories}\label{pargroth2}
The construction from \S \ref{pargroth1} can be extended in the following way. Let $\underline{\Map}$ be the bicategory of map-graded categories, bimodules and their morphisms described in \S \ref{parbimod}. Let $\ccc$ be an arbitrary small category and consider a pseudofunctor
$$(\uuu, \AAA): \ccc \lra \underline{\Map}: C \longmapsto (\uuu_C, \AAA_C).$$
The pseudofunctor maps $c: C \lra C'$ to an $\AAA_{C'}$-$\AAA_{C}$-bimodule $(\uuu(c) = S_c, \AAA(c) = M_c)$ and for an additional $c': C' \lra C''$ there is and isomorphism
$$(\phi_{c', c}, F_{c',c}): (S_{c'} \circ S_{c}, M_{c'} \otimes_{\AAA_{C'}} M_c) \lra (S_{c' c}, M_{c' c}).$$ 
These isomorphisms satisfy the natural coherence axiom similar to \eqref{coh} and $(S_{1_C}, M_{1_C}) = (1_{\uuu_C}, 1_{\AAA_C})$.

Clearly, there is an underlying pseudofunctor
$$\uuu: \ccc \lra \underline{\Cat}: C \longmapsto \uuu_C$$
which determines a non-linear $\ccc$-graded category $\uuu$ and associated category $\tilde{\uuu}$. We define the \emph{Grothendieck construction} of $(\uuu, \AAA)$ to be the following $\tilde{\uuu}$-graded category $\tilde{\AAA}$. For $U \in \uuu_C$, we define $$\tilde{\AAA}_U = (\AAA_C)_U.$$ For a morphism $(c: C \lra C', s \in S_c(U,U'))$ in $\tilde{\uuu}$, with $U \in \uuu_C$, $U' \in \uuu_{C'}$, and for $A \in (\AAA_C)_U$, $A' \in (\AAA_{C'})_{U'}$, we put
$$\tilde{\AAA}_{(c,s)}(A, A') = (M_c)_s(A,A').$$
Recall that $$(M_{c'} \otimes_{\AAA_{C'}} M_c)_r(A,A'') = \oplus_{r = [(s',s)], B} (M_{c'})_{s'}(B,A'') \otimes (M_c)_s(A,B)/\sim.$$
The composition on $\tilde{\AAA}$ is defined as the composition of the natural map
$$(M_{c'})_{s'}(A', A'') \otimes (M_c)_s(A,A') \lra (M_{c'} \otimes_{\AAA_{C'}} M_c)_{[(s', s)]}(A,A'')$$
followed by the map
$$\xymatrix{ {(M_{c'} \otimes_{\AAA_{C'}} M_c)_{[(s', s)]}(A,A'')} \ar[r]_-{F_{c', c}} & {(M_{c' c})_{s' s}(A,A'').} }$$

\subsection{Diagrams in the bicategory of linear categories}\label{pardiaglin}
In many applications, we are in a somewhat simplified situation from \S \ref{pargroth2}. Let $\underline{\Cat}(k)$ be the bicategory of $k$-linear functors and bimodules. There is a natural map $\underline{\Cat}(k) \lra \underline{\Map}: \AAA \longmapsto \AAA_{tr}$ where $\AAA_{tr}$ is the trivially graded category over $e$ from Example \ref{exstandgr} (2).
Consider a pseudofunctor
$$\AAA: \ccc \lra \underline{\Cat}(k): C \longmapsto \AAA_C.$$
If we consider the corresponding $(\eee, \AAA): \ccc \lra \underline{\Map}: C \longmapsto (e, \AAA_C)$ we see that there is a canonical isomorphism $\tilde{\eee} \cong \ccc$ and carrying out the construction from \S \ref{pargroth2}, we thus obtain a $\ccc$-graded category $\tilde{\AAA}$ with
$$\tilde{\AAA}_C = \AAA_C.$$
For a morphism $c: C \lra C'$ in $\ccc$ and for $A \in \AAA_C$, $A' \in \AAA_{C'}$, we have
$$\tilde{\AAA}_c(A,A') = M_c(A,A')$$
and composition is defined in the obvious way.

Conversely, let $\AAA$ be an arbitrary $\uuu$-graded category. We define the naturally associated pseudofunctor
$$\aaa: \uuu \lra \underline{\Cat}(k): U \lra \aaa_U$$
where $\aaa_U$ is the category with $\Ob(\aaa_U) = \AAA_U$ and $\aaa(A,A') = \AAA_{1_U}(A,A')$ and where for $u: U \lra U'$ in $\uuu$, $\aaa(u) = M_u$ is the natural $\aaa_{U'}$-$\aaa_U$-bimodule with
$$M_u(A,A') = \AAA_u(A,A').$$
Then $(\uuu,\AAA) \cong (\uuu, \tilde{\aaa})$ and hence the correspondence between \emph{fibered} $\uuu$-graded categories and pseudofunctors $\uuu \lra \Cat(k)$ naturally extends to a correspondence between \emph{all} $\uuu$-graded categories and pseudofunctors $\uuu \lra \underline{\Cat}(k)$.

\subsection{Diagrams in the category of map-graded categories}
Another specification of the setup from \S \ref{pargroth2} occurs if we consider a pseudofunctor
$$(\uuu, \AAA): \ccc^{\op} \lra \Map: C \longmapsto (\uuu_C, \AAA_C)$$
mapping a morphism $c: C \lra C'$ in $\ccc$ to a map-graded functor
$$(\varphi_c, F_c): (\uuu_{C'}, \AAA_{C'}) \lra (\uuu_C, \AAA_C).$$
By Example \ref{exbimod}(2), this gives rise to an $\uuu_{C'}$-$\uuu_C$-bimodule $S^{\varphi_c}$ and a $\AAA_{C'}$-$S^{\varphi_c}$-$\AAA_C$-bimodule $M^{F_c}$ with
$$\begin{aligned}
S^{\varphi_c}(U,U') = \uuu_{C}(U, \varphi_c(U')); &&& M^{F_c}_s(A,A') = {\AAA_C}_s(A, F_c(A')).
\end{aligned}$$
In fact, there are natural pseudofunctors $\Cat^{\op} \lra \underline{\Cat}$ and $\Map^{\op} \lra \underline{\Map}$ so that we obtain a composed pseudofunctor 
$$(\uuu, \AAA): \ccc \lra \Map \lra \underline{\Map}: C \longmapsto (\uuu_C, \AAA_C).$$
For $c: C \lra C'$, $c': C' \lra C''$ in $\ccc$, we are given natural isomorphisms
$$(\eta, \theta): (\varphi_c \varphi_{c'}, F_c F_{c'}) = (\varphi_c, F_c)(\varphi_{c'}, F_{c'}) \lra (\varphi_{c' c}, F_{c' c})$$
satisfying the natural coherence axiom similar to \eqref{coh}.
From this we obtain natural isomorphisms
$$(S^{\varphi_{c'}} \circ S^{\varphi_c}, M^{F_{c'}} \otimes_{\AAA_{C'}} M^{F_c}) \lra (S^{\varphi_{c' c}}, M^{F_{c' c}})$$
in the following way. 
On the level of bifunctors, we have
$$S^{\varphi_{c'}} \circ S^{\varphi_c}(U, U'') = \coprod_{V \in \uuu_{C'}}\uuu_{C'}(V, \varphi_{c'}(U'')) \times \uuu_{C}(U, \varphi_c(V))/ \sim$$
$$\xymatrix{ {} \ar[r]_{\varphi_c \times 1} & {}}$$
$$\coprod_{V \in \uuu_{C'}} \uuu_{C}(\varphi_c(V), \varphi_c \varphi_{c'}(U'')) \times \uuu_{C}(U, \varphi_c(V))/ \sim$$
$$\xymatrix{ {} \ar[r]_-{m_{\uuu_{C}}} && {\uuu_C(U, \varphi_c \varphi_{c'}(U''))}}$$
$$\xymatrix{ {} \ar[r]_-{m_{\uuu_C}(\eta_{U''}, -)} && {\uuu_C(U, \varphi_{c' c}(U'')) = S^{\varphi_{c' c}}(U, U'').}}$$
Similarly, on the level of bimodules we have
$${M^{F_{c'}} \otimes_{\AAA_{C'}} M^{F_c}_r(A, A'') = \oplus_{r = [(s', s)], B} (\AAA_{C'})_{s'}(B, F_{c'}(A'')) \otimes (\AAA_C)_s(A, F_c(B))/ \sim}$$
$$\xymatrix{ {} \ar[r]_{F_c \otimes 1} & {}}$$
$${\oplus_{r = [(s', s)], B} (\AAA_{C})_{\varphi_c(s')}(F_c(B), F_c F_{c'}(A'')) \otimes (\AAA_C)_s(A, F_c(B))/ \sim}$$
$$\xymatrix{ {} \ar[r]_-{m_{\AAA_C}} && {(\AAA_C)_{\varphi_c(s') s}(A, F_c F_{c'}(A''))}}$$
$$\xymatrix{ {} \ar[r]_-{m_{\AAA_C}(\theta_{A''}, -)} && {(\AAA_C)_{\varphi_C(s') s}(A, F_{c' c}(A'')) = M^{F_{c' c}}_{s' s}(A, A'')}}$$
These maps determine the composition on the Grothendieck construction $(\tilde{\uuu}, \tilde{\AAA})$.

\begin{remark}
Note that the non-linear $\ccc$-graded category $\tilde{\uuu}$ is the original Grothendieck construction of $\uuu: \ccc \lra \Cat$ in the sense of Grothendieck. 
\end{remark}

\begin{remark}\label{remcatk}
For a pseudofunctor $\AAA: \ccc \lra \Cat(k)$ landing in the $2$-category of $k$-linear categories, functors and natural transformations, composing with $\Cat(k) \lra \underline{\Cat}(k)$ brings us in the situation of \S \ref{pardiaglin}. The Grothendieck construction $\tilde{\AAA}$ is the one we used in \cite{lowenmap} in order to turn a presheaf of $k$-algebras, or more generally a pseudofunctor $\AAA: \ccc \lra \Cat(k)$, into a $\ccc$-graded category and define its structured Hochschild complex $\CC_{\ccc}(\tilde{\AAA})$. By \cite{lowenvandenberghCCT}, this complex computes the natural Hochschild cohomology of $\AAA$ generalized from Gerstenhaber and Schack's Hochschild cohomology of presheaves of algebras \cite{gerstenhaberschack2} \cite{gerstenhaberschack1}.
\end{remark}

\begin{remark}
Let $(\uuu, \AAA): \ccc \lra \Map$ be a pseudofunctor. In a completely similar fashion, the $\uuu_{C'}$-$\uuu_{C}$-bifunctors $S_{\varphi_c}$ and $\AAA_{C'}$-$S_{\varphi_c}$-$\AAA_{C}$-bimodules $M_{F_c}$ from Example \ref{exbimod}(1) give rise to a pseudofunctor
$$(\uuu', \AAA'): \ccc \lra \Map \lra \underline{\Map}.$$
\end{remark}

\subsection{Base change}\label{parbase}

Let $\uuu: \ccc \lra \underline{\Cat}$ be a pseudofunctor as in \ref{pargroth1} and let $\Phi: \ddd \lra \ccc$ be an arbitrary functor. There is a resulting composed pseudofunctor
$$\uuu^{\Phi}: \ddd \lra \ccc \lra \underline{\Cat}: D \longmapsto \uuu_{\Phi(D)}$$
with an associated category $\tilde{\uuu}^{\Phi}$.
The natural maps
$$\Ob(\tilde{\uuu}^{\phi}) = \coprod_{D \in \ddd} \uuu_{\Phi(D)} \lra \coprod_{C \in \ccc} \uuu_C = \Ob(\tilde{\uuu})$$
and
$$\tilde{\uuu}^{\Phi}((D,U), (D', U')) \lra  \tilde{\uuu}((\Phi(D), U), (\Phi(D'), U))$$ given by
$$\coprod_{d \in \ddd(D,D')} S_{\Phi(d)}(U,U') \lra \coprod_{c \in \ccc(\Phi(D), \Phi(D'))} S_c(U,U')$$
give rise to a functor \label{phi}
\begin{equation}
\varphi: \tilde{\uuu}^{\Phi} \lra \tilde{\uuu}.
\end{equation}

\begin{proposition}\label{proppseudoinj}
Let $\uuu$ and $\Phi$ be as above. If $\nnn_1(\Phi)$ is injective, then so is $\nnn_1(\varphi)$.
\end{proposition}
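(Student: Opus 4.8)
The plan is to unwind, directly from the construction in \S\ref{pargroth1}, what the morphisms of $\tilde{\uuu}^{\Phi}$ and $\tilde{\uuu}$ are and what $\varphi$ does to them, and then verify injectivity on $\nnn_1 = \Mor$ by a short book-keeping argument. The only preliminary observation needed is that injectivity of $\nnn_1(\Phi)$ forces $\Phi$ to be injective on objects as well: distinct objects $D \neq D'$ of $\ddd$ have distinct identities $1_D \neq 1_{D'}$ in $\Mor(\ddd)$, and $\Phi(1_D) = 1_{\Phi(D)}$, $\Phi(1_{D'}) = 1_{\Phi(D')}$, so $\nnn_1(\Phi)$ injective gives $\Phi(D) \neq \Phi(D')$.

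First I would record the explicit descriptions. By the Grothendieck construction applied to $\uuu^{\Phi}$, a morphism of $\tilde{\uuu}^{\Phi}$ from $(D,U)$ to $(D',U')$ -- with $D,D' \in \ddd$, $U \in \uuu_{\Phi(D)}$, $U' \in \uuu_{\Phi(D')}$ -- is a pair $(d,s)$ with $d \in \ddd(D,D')$ and $s \in S_{\Phi(d)}(U,U')$, lying in the coproduct $\coprod_{d \in \ddd(D,D')} S_{\Phi(d)}(U,U')$. Likewise a morphism of $\tilde{\uuu}$ from $(\Phi(D),U)$ to $(\Phi(D'),U')$ is a pair $(c,t)$ with $c \in \ccc(\Phi(D),\Phi(D'))$ and $t \in S_c(U,U')$, lying in $\coprod_{c} S_c(U,U')$. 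Unwinding the definition of $\varphi$, it sends the object $(D,U)$ to $(\Phi(D),U)$ and the morphism $(d,s)$ to $(\Phi(d),s)$.

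Next I would check injectivity. Suppose $(d_1,s_1)$ and $(d_2,s_2)$ are morphisms of $\tilde{\uuu}^{\Phi}$ with $\varphi(d_1,s_1) = \varphi(d_2,s_2)$ in $\Mor(\tilde{\uuu})$. Comparing sources and targets of this common morphism gives $(\Phi(D_1),U_1) = (\Phi(D_2),U_2)$ and $(\Phi(D_1'),U_1') = (\Phi(D_2'),U_2')$; hence $U_1 = U_2$, $U_1' = U_2'$, and $\Phi(D_1) = \Phi(D_2)$, $\Phi(D_1') = \Phi(D_2')$, so $D_1 = D_2$ and $D_1' = D_2'$ by injectivity of $\Phi$ on objects. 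Thus $d_1, d_2 \in \ddd(D_1,D_1')$ and $s_i \in S_{\Phi(d_i)}(U_1,U_1')$, and the equality $(\Phi(d_1),s_1) = (\Phi(d_2),s_2)$ inside $\coprod_{c \in \ccc(\Phi(D_1),\Phi(D_1'))} S_c(U_1,U_1')$ forces first the equality of indices $\Phi(d_1) = \Phi(d_2)$, and then, inside the single summand $S_{\Phi(d_1)}(U_1,U_1')$, the equality $s_1 = s_2$. Finally $\Phi(d_1) = \Phi(d_2)$ together with injectivity of $\nnn_1(\Phi)$ yields $d_1 = d_2$, so the two morphisms of $\tilde{\uuu}^{\Phi}$ coincide.

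There is no real obstacle here; the one point to state with care -- and the only thing that could trip one up -- is that morphisms in a Grothendieck construction are elements of a coproduct, so equality of two such morphisms simultaneously encodes equality of the underlying base morphism and equality in the relevant fibre $S_c(U,U')$. The pseudofunctor coherence data $\phi_{c',c}$ play no role, being relevant only to composition, which is already known to be respected by $\varphi$.
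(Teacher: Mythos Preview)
Your proof is correct. The paper actually states Proposition~\ref{proppseudoinj} without proof, evidently regarding it as immediate from the definitions; your argument is exactly the routine verification the reader is expected to supply, and you handle the one subtle point (that equality in the coproduct $\coprod_c S_c(U,U')$ forces equality of the index $c$ first and then of the element in the summand) cleanly.
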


\begin{proposition}\label{proppseudocover}
Let $\uuu$ be as above and consider a collection $(\Phi_i: \ddd_i \lra \ccc)_{i \in I}$ of functors. If, for $n \in \N \cup \{ \infty \}$, the collection constitutes an $n$-cover in $\Cat$, then so does the collection $(\tilde{\Phi}_i: \tilde{\uuu}^{\Phi_i} \lra \tilde{\uuu})_{i \in I}$.
\end{proposition}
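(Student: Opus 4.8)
The plan is to reduce the statement to a joint-surjectivity statement on simplices and then lift simplices index by index. By Lemma \ref{lemcover} it suffices, for finite $n$, to show that the collection of maps $(\nnn_n(\tilde{\Phi}_i))_{i\in I}$ between $n$-simplices is jointly surjective; for $n=\infty$ one runs the same argument at every level $k\in\N$. Recall the relevant combinatorics: an $n$-simplex of $\nnn(\tilde{\uuu})$ is a chain of composable morphisms
$$(C_0,U_0)\xrightarrow{(c_0,s_0)}\cdots\xrightarrow{(c_{n-1},s_{n-1})}(C_n,U_n)$$
in $\tilde{\uuu}$, i.e.\ the data of objects $C_j\in\ccc$, $U_j\in\uuu_{C_j}$, morphisms $c_j\in\ccc(C_j,C_{j+1})$ and elements $s_j\in S_{c_j}(U_j,U_{j+1})$; discarding the $\uuu$-coordinates yields the projected $n$-simplex $(C_0\xrightarrow{c_0}\cdots\xrightarrow{c_{n-1}}C_n)$ of $\nnn(\ccc)$, and the analogous description holds for $\nnn(\tilde{\uuu}^{\Phi_i})$ sitting over $\nnn(\ddd_i)$. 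By the construction in \S\ref{parbase}, the functor $\tilde{\Phi}_i$ acts on such a chain by applying $\Phi_i$ to the $\ddd_i$-coordinates of the objects and to the morphisms, while leaving the $\uuu$-coordinates untouched.

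Given an $n$-simplex $\sigma$ of $\nnn(\tilde{\uuu})$ as above, I would first feed its projection $\bar{\sigma}=(C_0\xrightarrow{c_0}\cdots\xrightarrow{c_{n-1}}C_n)\in\nnn_n(\ccc)$ into the hypothesis: since $(\Phi_i)_{i\in I}$ is an $n$-cover in $\Cat$, there exist $i\in I$ and an $n$-simplex $(D_0\xrightarrow{d_0}\cdots\xrightarrow{d_{n-1}}D_n)$ of $\nnn(\ddd_i)$ with $\Phi_i(D_j)=C_j$ and $\Phi_i(d_j)=c_j$ for all $j$. Then I would form the chain $(D_0,U_0)\xrightarrow{(d_0,s_0)}\cdots\xrightarrow{(d_{n-1},s_{n-1})}(D_n,U_n)$ in $\tilde{\uuu}^{\Phi_i}$, which is legitimate because $U_j\in\uuu_{C_j}=\uuu_{\Phi_i(D_j)}$ is an object of $\tilde{\uuu}^{\Phi_i}$ over $D_j$, and $s_j\in S_{c_j}(U_j,U_{j+1})=S_{\Phi_i(d_j)}(U_j,U_{j+1})$ lies in the $d_j$-indexed summand of $\tilde{\uuu}^{\Phi_i}((D_j,U_j),(D_{j+1},U_{j+1}))$. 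By the explicit description of $\tilde{\Phi}_i$ recalled above, this $n$-simplex of $\nnn(\tilde{\uuu}^{\Phi_i})$ maps to $\sigma$, which yields the required joint surjectivity — and hence the statement, by Lemma \ref{lemcover} for finite $n$ and by repeating the argument at every level for $n=\infty$.

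I do not anticipate a genuine obstacle: simplicial nerves only record composability, not actual composites, so the pseudofunctor coherence data $\phi_{c',c}$ and the associator of $\underline{\Cat}$ play no role in the argument. The one point that needs care is the coproduct bookkeeping in the hom-sets: the comparison map $\coprod_{d\in\ddd_i(D,D')}S_{\Phi_i(d)}(U,U')\to\coprod_{c\in\ccc(\Phi_i(D),\Phi_i(D'))}S_c(U,U')$ defining $\tilde{\Phi}_i$ is the identity on each summand and need not be injective when $\Phi_i$ identifies morphisms, but it is certainly surjective onto the summands indexed by morphisms $c$ in the image of $\Phi_i$ — which is precisely what the lift of $\bar{\sigma}$ produced by the hypothesis supplies.
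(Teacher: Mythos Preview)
Your argument is correct and is precisely the natural proof. The paper states this proposition without proof, so there is nothing to compare against; your lift-along-the-projection approach, together with the reduction via Lemma~\ref{lemcover}, is exactly what one would expect the author had in mind.
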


Now let $(\uuu, \AAA): \ccc \lra \underline{\Map}$ be a pseudofunctor as in \S \ref{pargroth2} and $\Phi: \ddd \lra \ccc$ a functor. We now obtain a composed pseudofunctor
$$(\uuu^{\Phi}, \AAA^{\Phi}): \ddd \lra \ccc \lra \underline{\Map}$$
with an associated $\tilde{\uuu}^{\Phi}$-graded category $\tilde{\AAA}^{\Phi}$.

\begin{proposition}
With the notations of Proposition \ref{propcart}, we have $$\tilde{\AAA}^{\Phi} = (\tilde{\AAA})^{\varphi}.$$
In particular there is a natural cartesian map-graded functor $$\tilde{\Phi} = (\varphi, \delta^{\varphi, \tilde{\AAA}}) : (\tilde{\uuu}^{\Phi}, \tilde{\AAA}^{\Phi}) \lra (\tilde{\uuu}, \tilde{\AAA}).$$
\end{proposition}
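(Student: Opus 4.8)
The plan is to verify the equality $\tilde{\AAA}^{\Phi} = (\tilde{\AAA})^{\varphi}$ of $\tilde{\uuu}^{\Phi}$-graded categories directly from the two constructions, and then to read off the cartesian functor $\tilde{\Phi}$ from Proposition \ref{propcart}.

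First I would compare the underlying graded sets and the morphism modules. By the Grothendieck construction of the composed pseudofunctor $(\uuu^{\Phi}, \AAA^{\Phi})\colon \ddd \lra \ccc \lra \underline{\Map}$, for $D \in \ddd$ and $U \in \uuu^{\Phi}_D = \uuu_{\Phi(D)}$ one has $\tilde{\AAA}^{\Phi}_U = (\AAA^{\Phi}_D)_U = (\AAA_{\Phi(D)})_U$, and for a morphism $(d,s)\colon (D,U) \lra (D',U')$ in $\tilde{\uuu}^{\Phi}$ (so $d \in \ddd(D,D')$, $s \in S^{\Phi}_d(U,U') = S_{\Phi(d)}(U,U')$) one has $\tilde{\AAA}^{\Phi}_{(d,s)}(A,A') = (M^{\Phi}_d)_s(A,A') = (M_{\Phi(d)})_s(A,A')$. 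On the other hand, the functor $\varphi\colon \tilde{\uuu}^{\Phi} \lra \tilde{\uuu}$ from \S\ref{parbase} sends $(D,U) \mapsto (\Phi(D),U)$ and $(d,s) \mapsto (\Phi(d),s)$, so the formula for $(-)^{\varphi}$ preceding Proposition \ref{propcart} gives $(\tilde{\AAA})^{\varphi}_{(D,U)} = \tilde{\AAA}_{(\Phi(D),U)} = (\AAA_{\Phi(D)})_U$ and $(\tilde{\AAA})^{\varphi}_{(d,s)}(A,A') = \tilde{\AAA}_{(\Phi(d),s)}(A,A') = (M_{\Phi(d)})_s(A,A')$. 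Thus both sides agree on objects and on all $\Hom$-modules on the nose.

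Next I would check that the compositions and identity elements coincide. The point is that, since $\Phi$ is a strict functor, the coherence data of the composed pseudofunctor is literally obtained by restriction: $S^{\Phi}_{d'} \circ S^{\Phi}_d = S_{\Phi(d')} \circ S_{\Phi(d)}$ with $\phi^{\Phi}_{d',d} = \phi_{\Phi(d'),\Phi(d)}$, likewise $M^{\Phi}_{d'} \otimes_{\AAA^{\Phi}_{D'}} M^{\Phi}_d = M_{\Phi(d')} \otimes_{\AAA_{\Phi(D')}} M_{\Phi(d)}$ with $F^{\Phi}_{d',d} = F_{\Phi(d'),\Phi(d)}$, and the normalizations $S^{\Phi}_{1_D} = 1_{\uuu_{\Phi(D)}}$, $M^{\Phi}_{1_D} = 1_{\AAA_{\Phi(D)}}$ are inherited. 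Hence, after the identifications of the previous step, the composition $\tilde{\AAA}^{\Phi}_{(d',s')}(A',A'') \otimes \tilde{\AAA}^{\Phi}_{(d,s)}(A,A') \lra \tilde{\AAA}^{\Phi}_{(d',s')(d,s)}(A,A'')$ is exactly the composition $\tilde{\AAA}_{(\Phi(d'),s')}(A',A'') \otimes \tilde{\AAA}_{(\Phi(d),s)}(A,A') \lra \tilde{\AAA}_{(\Phi(d'),s')(\Phi(d),s)}(A,A'')$ of $\tilde{\AAA}$, which is the composition of $(\tilde{\AAA})^{\varphi}$ since $\varphi$ is a functor and therefore $\varphi\big((d',s')(d,s)\big) = \varphi(d',s')\,\varphi(d,s)$; the identity elements match for the same reason, using $\varphi(1_{(D,U)}) = 1_{(\Phi(D),U)}$. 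This establishes $\tilde{\AAA}^{\Phi} = (\tilde{\AAA})^{\varphi}$.

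For the final assertion I would invoke Proposition \ref{propcart}(1) (or, equivalently, the characterization of Proposition \ref{lemcart}, all the relevant comparison maps being identities): the morphism $(\varphi, \delta^{\varphi, \tilde{\AAA}})\colon (\tilde{\uuu}^{\Phi}, (\tilde{\AAA})^{\varphi}) \lra (\tilde{\uuu}, \tilde{\AAA})$ is cartesian. Rewriting $(\tilde{\AAA})^{\varphi}$ as $\tilde{\AAA}^{\Phi}$ via the first part yields the desired natural cartesian map-graded functor $\tilde{\Phi}$. I do not expect a genuine difficulty here; the only step needing care is the bookkeeping in the second and third paragraphs, namely confirming that base change along $\Phi$ introduces no coherence isomorphisms beyond those pulled back from $(\uuu, \AAA)$, so that a strict equality of graded categories (and not merely an isomorphism) holds.
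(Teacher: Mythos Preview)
Your proof is correct and is precisely the direct verification from the definitions that the statement calls for. The paper itself states this proposition without proof, treating the equality $\tilde{\AAA}^{\Phi} = (\tilde{\AAA})^{\varphi}$ as immediate from unwinding the two constructions; your write-up spells out exactly that unwinding, including the point that strictness of $\Phi$ means the coherence data of the composed pseudofunctor are literally those of $(\uuu,\AAA)$ restricted along $\Phi$, so one gets an equality rather than merely an isomorphism.
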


Next we use base changes to transform a pseudofunctor $(\uuu, \AAA): \ccc \lra \underline{\Map}$ into a functor
$$(\uuu^{\ast}, \AAA^{\ast}): \ccc^{\ast} \lra \Map$$
of Grothendieck constructions.
We first define the category $\ccc^{\ast}$ to be the arrow category $\ccc \rightarrow_S e$ for the unique $e$-$\ccc$-bifunctor $S$ with $S(C, \ast) = \{ \ast \}$. 
For every $C \in \ccc$, we obtain a composition
$$(\uuu|_C, \AAA|_C): \ccc/C \lra \ccc \lra \underline{\Map}$$
and a cartesian map-graded functor
$(\tilde{\uuu}|_C, \tilde{\AAA}|_C) \lra (\tilde{\uuu}, \tilde{\AAA})$. For every $c: C' \lra C$ in $\ccc$ we have a natural functor $\ccc/C' \lra \ccc/C$ and an induced cartesian map-graded functor
$(\tilde{\uuu}|_{C'}, \tilde{\AAA}|_{C'}) \lra (\tilde{\uuu}|_{C}, \tilde{\AAA}|_{C})$ and it is not hard to organize these data into a functor
$$(\uuu^{\ast}, \AAA^{\ast}): \ccc^{\ast} \lra \Map_{sc}: C \longmapsto (\tilde{\uuu}|_C, \tilde{\AAA}|_C)$$
where we define $(\tilde{\uuu}|_{\ast}, \tilde{\AAA}|_{\ast}) = (\tilde{\uuu}, \tilde{\AAA})$.

\begin{proposition} \label{propcstar} Suppose $\ccc$ has finite products.
Let $(C_i)_{i \in I}$ be a collection of objects in $\ccc$ such that for every $C \in \ccc$ there exists a map $C \lra C_i$ for some $i$. The composed functor
$$\CC: \ccc^{\ast} \lra \Map_{sc} \lra B_{\infty}: C \longmapsto \CC_{\tilde{\uuu}|_C}(\tilde{\AAA}|_C)$$
satisfies the sheaf property with respect to the collection of maps $(C_i \lra \ast)_{i \in I}$ in $\ccc^{\ast}$.
\end{proposition}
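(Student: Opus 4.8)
The plan is to transport the statement, along the functor $(\uuu^{\ast}, \AAA^{\ast}) \colon \ccc^{\ast} \lra \Map_{sc}$, to the sheaf property of $\CC$ on $\Map_{sc}$ from Theorem \ref{mainsheaf}. Concretely, one has to: (i) show that $(\uuu^{\ast}, \AAA^{\ast})$ sends the family $(C_i \lra \ast)_{i \in I}$ to an $\infty$-cover of $(\tilde{\uuu}, \tilde{\AAA})$ in $\Map_{sc}$; (ii) compute the pullbacks in $\ccc^{\ast}$ of the maps $C_i \lra \ast$ and check that $(\uuu^{\ast}, \AAA^{\ast})$ carries them to the corresponding pullbacks in $\Map$; and (iii) read off the sheaf condition from Theorem \ref{mainsheaf}.

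For (i): by hypothesis every $C \in \ccc$ admits a map $C \lra C_i$ for some $i$, so by Example \ref{excover}(2) the family $(\Phi_i \colon \ccc/C_i \lra \ccc)_{i \in I}$ is an $\infty$-cover in $\Cat$, and applying Proposition \ref{proppseudocover} to the underlying $\underline{\Cat}$-valued pseudofunctor of $(\uuu, \AAA)$ shows that the collection of base-change functors $(\varphi_i \colon \tilde{\uuu}|_{C_i} \lra \tilde{\uuu})_{i \in I}$ is an $\infty$-cover in $\Cat$. Since each graded functor $(\tilde{\uuu}|_{C_i}, \tilde{\AAA}|_{C_i}) \lra (\tilde{\uuu}, \tilde{\AAA})$ is cartesian (the base-change functor of \S\ref{parbase}), hence subcartesian, its object maps are bijections and one identifies $(\tilde{\uuu}|_{C_i})^{\sharp}$ with the pullback $\tilde{\uuu}^{\sharp} \times_{\tilde{\uuu}} \tilde{\uuu}|_{C_i}$ in $\Cat$; stability of $\infty$-covers under pullback (Proposition \ref{pretop}) then makes $((\tilde{\uuu}|_{C_i})^{\sharp} \lra \tilde{\uuu}^{\sharp})_{i}$ an $\infty$-cover in $\Cat$, which by the characterisation of covers following Definition \ref{defncover} is exactly the statement that $((\tilde{\uuu}|_{C_i}, \tilde{\AAA}|_{C_i}) \lra (\tilde{\uuu}, \tilde{\AAA}))_{i}$ is an $\infty$-cover in $\Map$; these morphisms lie in $\Map_{sc}$ since they are (sub)cartesian.

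For (ii): in $\ccc^{\ast}$ the object $\ast$ is terminal and there is no morphism from $\ast$ into an object of $\ccc$, so any cone over $C_i \lra \ast \longleftarrow C_j$ is based at an object of $\ccc$; hence the pullback of $C_i \lra \ast$ and $C_j \lra \ast$ in $\ccc^{\ast}$ is the product $C_i \times C_j$ formed in $\ccc$ (which exists by hypothesis), with its two projections. It then remains to verify that $(\uuu^{\ast}, \AAA^{\ast})$ carries this square to a pullback in $\Map$, i.e. that $(\tilde{\uuu}|_{C_i \times C_j}, \tilde{\AAA}|_{C_i \times C_j}) \cong (\tilde{\uuu}|_{C_i}, \tilde{\AAA}|_{C_i}) \times_{(\tilde{\uuu}, \tilde{\AAA})} (\tilde{\uuu}|_{C_j}, \tilde{\AAA}|_{C_j})$ compatibly with the projections; using the canonical isomorphism $\ccc/(C_i \times C_j) \cong \ccc/C_i \times_{\ccc} \ccc/C_j$, one checks this directly against the objectwise formulas for the Grothendieck construction in \S\S\ref{pargroth1}--\ref{pargroth2} and for pullbacks in $\Map$ in \S\ref{parfibmap} --- both are built objectwise over the shape categories, so that base change of $(\uuu, \AAA)$ along a fibre product of shape categories is the fibre product of the base changes.

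Finally (iii): applying Theorem \ref{mainsheaf} with $n = \infty$ to the $\infty$-cover obtained in (i) gives an exact sequence of $B_{\infty}$-algebras
$$0 \lra \CC_{\tilde{\uuu}}(\tilde{\AAA}) \lra \prod_{i \in I} \CC_{\tilde{\uuu}|_{C_i}}(\tilde{\AAA}|_{C_i}) \lra \prod_{i, j \in I} \CC_{\tilde{\uuu}|_{C_i \times C_j}}(\tilde{\AAA}|_{C_i \times C_j}),$$
which, using (ii) together with the convention $(\tilde{\uuu}|_{\ast}, \tilde{\AAA}|_{\ast}) = (\tilde{\uuu}, \tilde{\AAA})$, is precisely the sheaf condition for the functor $\CC \colon \ccc^{\ast} \lra B_{\infty}$ relative to the family $(C_i \lra \ast)_{i \in I}$. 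The main obstacle is step (ii): verifying that the Grothendieck-construction functor genuinely preserves these pullbacks forces one to unwind the coherence isomorphisms $(\phi_{c', c}, F_{c', c})$ of the pseudofunctor $(\uuu, \AAA)$ and match them against the objectwise pullback description in $\Map$; once this and the $\infty$-cover identification of step (i) are established, the conclusion is formal.
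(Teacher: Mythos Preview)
Your proof is correct and follows essentially the same three-step strategy as the paper: obtain the $\infty$-cover via Example~\ref{excover}(2) and Proposition~\ref{proppseudocover}, identify the pullbacks in $\ccc^{\ast}$ with products in $\ccc$ and check that $(\uuu^{\ast},\AAA^{\ast})$ sends them to pullbacks in $\Map$, and then invoke Theorem~\ref{mainsheaf}. Your treatment is in fact more careful than the paper's in two places: you explicitly bridge the gap between the $\Cat$-level cover of Proposition~\ref{proppseudocover} and the required $\Map$-level cover via the identification $(\tilde{\uuu}|_{C_i})^{\sharp} \cong \tilde{\uuu}^{\sharp} \times_{\tilde{\uuu}} \tilde{\uuu}|_{C_i}$ and pullback stability, and you spell out the isomorphism $\ccc/(C_i\times C_j)\cong \ccc/C_i\times_{\ccc}\ccc/C_j$ underlying the pullback preservation; the paper simply asserts both points.
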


\begin{proof}
By Example \ref{excover}(2), the collection $(\ccc/C_i \lra \ccc)_{i \in I}$ constitutes an $\infty$-cover of $\ccc$ in $\Cat$. By proposition \ref{proppseudocover}, the induced restriction maps in $\Map_{sc}$ also form an $\infty$-cover. Further, it is readily seen that the pullback of $C_i \lra \ast$ and $C_j \lra \ast$ in $\ccc^{\ast}$ is given by $C_i \times C_j \lra \ast$ for the product $C_i \times C_j$ in $\ccc$. The pullback of $(\tilde{\uuu}|_{C_i}, \tilde{\AAA}|_{C_i}) \lra (\tilde{\uuu}, \tilde{\AAA})$ and $(\tilde{\uuu}|_{C_j}, \tilde{\AAA}|_{C_j}) \lra (\tilde{\uuu}, \tilde{\AAA})$ is given by
$(\tilde{\uuu}|_{C_i \times C_j}, \tilde{\AAA}|_{C_i \times C_j}) \lra (\tilde{\uuu}, \tilde{\AAA}).$ Thus, the result follows from Theorem \ref{mainsheaf}.
\end{proof}

\subsection{Generalized arrow categories}\label{pargenarrow}
We can cast the arrow category construction from \S \ref{pararrow} in the setup from \S \ref{pargroth2}. Consider the  path category 
$$\ccc' = \langle \xymatrix{ {0} \ar[r]_{\leq} & {1} } \rangle$$
With the notations of \S \ref{pararrow}, we obtain a pseudofunctor
$$(\www', \CCC'): \ccc' \lra \underline{\Map}$$
with 
$$\begin{aligned} \www'_0 = \vvv &&& \www'_1 = \uuu &&& \www'(0 \leq 1) = S \\ \CCC'_0 = \BBB &&& \CCC'_1 = \AAA &&& \CCC'(0 \leq 1) = M \end{aligned}$$
Then the Grothendieck constructions amounts to arrow categories:
$$\begin{aligned} \tilde{\www}' = (\vvv \rightarrow_{S} \uuu); &&& \tilde{\CCC}' = (\BBB \rightarrow_{M} \AAA). \end{aligned}$$

We can generalize the arrow category in the following way. Consider the path category
$$\ccc = \langle \xymatrix{ 0 \ar[r]_{\leq} & 1 \ar[r]_{\leq} & \dots \ar[r]_-{\leq} & {n-1} \ar[r]_-{\leq} & n} \rangle.$$
We will call a category isomorphic to a such a category $\ccc$ for some $n$ a \emph{chain category}.
Then a pseudofunctor
$$(\www, \CCC): \ccc \lra \underline{\Map}$$
takes values 
$$\begin{aligned} \www_i &&& \www(i \leq j) = S_{ij} \\ \CCC_i &&& \CCC(i \leq j) = M_{ij} \end{aligned}$$
and is equipped with isomorphisms
$$(\phi_{kji}, F_{kji}): (S_{jk} \circ S_{ij}, M_{jk} \otimes_{\AAA_j} M_{ij}) \lra (S_{ik}, M_{ik}).$$

\begin{example}
Choosing arbitrary graded categories $(\www_i, \CCC_i)$, bifunctors $S_{i, i+1}$ and bimodules $M_{i, i+1}$, it is possible to define the remaining data as generalized compositions and tensor products
$$S_{ij} = S_{j-1,j} \circ \dots \circ S_{i, i+1}$$and
$$M_{ij} = M_{j-1,j} \otimes \dots \otimes M_{i, i+1}.$$
\end{example}

Now let $\Phi^{ij}: \ccc^{ij} \subseteq \ccc$ be the subcategory 
$$\ccc^{ij} = \langle \xymatrix{ i \ar[r]_-{\leq} & i+1 \ar[r]_{\leq} & \dots \ar[r]_{\leq} & {j-1} \ar[r]_-{\leq} & j} \rangle$$
and put $(\www^{ij}, \CCC^{ij}) = (\www^{\Phi^{ij}}, \CCC^{\Phi^{ij}})$.
We thus obtain natural injective cartesian map-graded functors $\tilde{\Phi}^{ij}: (\tilde{\www}^{ij}, \tilde{\CCC}^{ij}) \lra (\tilde{\www}, \tilde{\CCC})$ as in \S \ref{parbase}.

On $\ccc^{01}$, we can define a new pseudofunctor 
$$(\www', \CCC'): \ccc^{01} \lra \underline{\Map}$$
with
$$\begin{aligned} \www'_0 =  \www_0 &&& \www'_1 = \www^{1n} &&& \www(0 \leq 1) = S \\ \CCC'_0 = \CCC_0 &&& \CCC'_1 = \CCC^{1n} &&& \CCC'(0 \leq 1) = M \end{aligned}$$
where $S$ is the natural $\tilde{\www}^{1n}$-$\www_0$-bimodule and $M$ the natural $\tilde{\CCC}^{ij}$-$S$-$\CCC_0$ bimodule determined by $\tilde{\Phi}^{1n}$, $\tilde{\Phi}^{00}$ and the identity bimodule on $(\tilde{\www}, \tilde{\CCC})$ (we have $(\tilde{\www}^{00}, \tilde{\CCC}^{00}) \cong (\www_0, \CCC_0)$). Thus, we have
$$S(0, i) = S_{0i}; \hspace{1,5cm} M(0,i) = M_{0i}.$$
It is readily seen, for instance using the analysis from \S \ref{pararrowthin}, that
$$(\tilde{\www}, \tilde{\CCC}') \cong (\tilde{\www}', \tilde{\CCC}').$$
By induction on $n$, we can thus apply the results of \S \ref{pararrow}.

Alternatively, we can take a global approach. Let $\Phi: \mathrm{ob}\ccc \subseteq \ccc$ be the subcategory consisting of the objects and identity morphisms of $\ccc$. 
There is a corresponding injective cartesian morphism $\tilde{\Phi}: (\tilde{\www}^{\Phi}, \tilde{\CCC}^{\Phi}) \lra (\tilde{\www}, \tilde{\CCC})$. For every individual object $i \in \ccc$ we have a futher subcategory $\Phi^i: i \subseteq \mathrm{ob}\ccc$ corresponding to $\tilde{\Phi}^i: (\www_i, \CCC_i) \lra (\tilde{\www}^{\Phi}, \tilde{\CCC}^{\Phi})$.
Clearly, the complement of $\tilde{\www}^{\Phi} \subseteq \tilde{\www}$ constitutes an ideal which we denote by $\sss$. 
From \eqref{eqhoch3} we thus obtain an exact sequence of Hochschild complexes:
$$\xymatrix{ 0 \ar[r] & {\CC_{\tilde{\www}}(\tilde{\CCC}, (1_{\tilde{\CCC}})_{\sss})} \ar[r] & {\CC_{\tilde{\www}}(\tilde{\CCC})} \ar[r] & {\oplus_{i = 0}^n \CC_{\www_i}(\CCC_i)} \ar[r] & 0}$$

Now consider a general pseudofunctor $(\uuu, \AAA): \ccc \lra \underline{\Map}$. We can use the internal structure of $\ccc$ in order to describe $(\tilde{\uuu}, \tilde{\AAA})$ as an arrow category. 
Let $\zzz$ be a thin ideal in $\ccc$ with associated subcategories $\ccc_0 \subseteq \ccc$ of ``objects below $\zzz$'' and $\ccc_1$ of ``objects above $\zzz$'' as described in \S \ref{pararrowthin} and suppose $\Ob(\ccc) = \Ob(\ccc_0) \cup \Ob(\ccc_1)$ such that $\ccc \cong (\ccc_0 \rightarrow_Z \ccc_1)$ for the natural restriction of $1_{\ccc}$ to a $\ccc_1$-$\ccc_0$-bifunctor $Z$.
The inclusions $\ccc_0 \subseteq \ccc$ and $\ccc_1 \subseteq \ccc$ give rise to injective cartesian graded functors
$$(\tilde{\uuu}_0, \tilde{\AAA}_0) \lra (\tilde{\uuu}, \tilde{\AAA}); \hspace{1,5cm} (\tilde{\uuu}_1, \tilde{\AAA}_1) \lra (\tilde{\uuu}, \tilde{\AAA}).$$
Consider the  natural restriction of $1_{\tilde{\uuu}}$ to a  $\tilde{\uuu}_1$-$\tilde{\uuu}_0$-bifunctor $T$ and the natural restriction of $1_{\tilde{\AAA}}$ to a $\tilde{\AAA}_1$-$T$-$\tilde{\AAA}_0$-bimodule $N$. 
Put $\tilde{\zzz} = \coprod S_z(U',U)$ running over $z: C' \lra C$ in $\zzz$, $U \in \uuu_C$, $U' \in \uuu_{C'}$.

\begin{proposition}\label{proparrowc}
The thin ideal $\tilde{\zzz}$ in $\tilde{\uuu}$ gives rise to isomorphisms
$$\tilde{\uuu} \cong (\tilde{\uuu}_0 \rightarrow_T \tilde{\uuu}_1); \hspace{1,5cm} \tilde{\AAA} \cong (\tilde{\AAA}_0 \rightarrow_N \tilde{\AAA_1}).$$
\end{proposition}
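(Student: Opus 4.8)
Since $\ccc \cong (\ccc_0 \rightarrow_Z \ccc_1)$ identifies $\ccc$ with an arrow category, the natural approach is to transport this decomposition through the Grothendieck construction and to check that $(\tilde{\uuu},\tilde{\AAA})$ acquires, levelwise, exactly the shape of an arrow category with respect to $\tilde{\zzz}$. This is the Grothendieck-construction counterpart of the intrinsic characterization in Proposition \ref{propthinarrow}. I would, however, argue by a direct comparison of the two sides rather than by literally invoking that proposition: here $\tilde{\uuu}_0$ and $\tilde{\uuu}_1$ are \emph{defined} as the Grothendieck constructions of the restrictions of $(\uuu,\AAA)$ to $\ccc_0$ and $\ccc_1$, and when some bifunctor $S_c$ has empty values these can be strictly larger than the subcategories ``below/above $\tilde{\zzz}$'' produced by the path recipe of \S\ref{pararrowthin}, so the comparison functor of Proposition \ref{propthinarrow} need not be the one we want.

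First I would record the elementary consequences of the hypotheses $\ccc \cong (\ccc_0 \rightarrow_Z \ccc_1)$ and $\Ob(\ccc) = \Ob(\ccc_0)\cup\Ob(\ccc_1)$: the subcategories $\ccc_0,\ccc_1$ are full, their object sets are disjoint (a common object would sit on a path carrying a $\zzz$-morphism at each end, contradicting thinness of $\zzz$), one has $\ccc(C,C') = \zzz(C,C')$ whenever $C\in\ccc_0$, $C'\in\ccc_1$, and $\ccc(C,C') = \varnothing$ whenever $C\in\ccc_1$, $C'\in\ccc_0$. Writing $p\colon \tilde{\uuu}\rightarrow\ccc$ for the projection, so that $\Ob(\tilde{\uuu}) = \coprod_{C}\uuu_C$, it follows at once that $\Ob(\tilde{\uuu})$ is the disjoint union of $\Ob(\tilde{\uuu}_0) = \coprod_{C\in\ccc_0}\uuu_C$ and $\Ob(\tilde{\uuu}_1) = \coprod_{C\in\ccc_1}\uuu_C$, which is the object set of the arrow category $(\tilde{\uuu}_0 \rightarrow_T \tilde{\uuu}_1)$.

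Next I would compare hom-sets. For $V = (C,U)$, $V' = (C',U')$ with $C,C'\in\ccc_0$ one has $\tilde{\uuu}(V,V') = \coprod_{c\in\ccc(C,C')} S_c(U,U')$, and fullness of $\ccc_0$ identifies this with the corresponding hom-set of $\tilde{\uuu}_0$; likewise over $\ccc_1$. If $C\in\ccc_0$ and $C'\in\ccc_1$, then $\ccc(C,C') = \zzz(C,C')$, so $\tilde{\uuu}(V,V') = \coprod_{z\in\zzz(C,C')} S_z(U,U') = \tilde{\zzz}(V,V')$, which is the value of $T$ (the restriction of $1_{\tilde{\uuu}}$); and if $C\in\ccc_1$, $C'\in\ccc_0$ both hom-sets are empty. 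Since the composition and identities of $\tilde{\uuu}$ restricted over $\ccc_0$ (resp.\ $\ccc_1$) are those of $\tilde{\uuu}_0$ (resp.\ $\tilde{\uuu}_1$), and the composition of a $\ccc_0$- or $\ccc_1$-morphism with a $\zzz$-morphism is the $\tilde{\uuu}_0$- or $\tilde{\uuu}_1$-action on $T$ (here one uses that $\zzz$ is an ideal, which is also what forbids any composition landing ``backwards''), one concludes that $\tilde{\uuu}$ is canonically isomorphic to $(\tilde{\uuu}_0 \rightarrow_T \tilde{\uuu}_1)$; in particular $\tilde{\zzz}$ is a thin ideal in $\tilde{\uuu}$, being an ideal because $\zzz$ is and thin because $\zzz$ is (both transported along $p$). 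The $k$-linear layer is treated identically: by the base-change identity $\tilde{\AAA}^{\Phi} = (\tilde{\AAA})^{\varphi}$ of \S\ref{parbase} together with fullness of $\ccc_0$ one has $(\tilde{\AAA}_0)_{(c,s)}(A,A') = \tilde{\AAA}_{(c,s)}(A,A')$ for $c$ in $\ccc_0$, similarly over $\ccc_1$, while over a $\zzz$-morphism $(c,s)$ one has $\tilde{\AAA}_{(c,s)}(A,A') = (M_c)_s(A,A') = N_{(c,s)}(A,A')$ by definition of $N$; the $\tilde{\AAA}_0$- and $\tilde{\AAA}_1$-actions on $N$ are restrictions of the composition of $\tilde{\AAA}$, giving $\tilde{\AAA}\cong(\tilde{\AAA}_0 \rightarrow_N \tilde{\AAA}_1)$ as $\tilde{\uuu}$-graded categories.

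The argument is essentially bookkeeping, and I do not expect a genuine obstacle; the two points that require attention are (a) that ``$\zzz$ is a two-sided ideal of $\ccc$'' is precisely what forces $\tilde{\zzz}$ to be closed under composition on both sides in $\tilde{\uuu}$ and forbids morphisms from $\tilde{\uuu}_1$ to $\tilde{\uuu}_0$, so that the arrow-category bookkeeping closes up, and (b) that one must use the Grothendieck-construction descriptions of $\tilde{\uuu}_i$ and $\tilde{\AAA}_i$ (via restriction of the pseudofunctor, together with fullness and the base-change identity) rather than the path description of \S\ref{pararrowthin}, which can undercount objects when the structure bifunctors take empty values.
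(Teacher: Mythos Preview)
Your argument is correct. The paper states Proposition~\ref{proparrowc} without proof, presumably regarding it as a routine verification from the definitions and the surrounding material in \S\ref{pararrowthin}. Your direct, piecewise comparison of objects, hom-sets, and compositions is exactly the kind of bookkeeping the reader is expected to supply, and your caution about not literally invoking Proposition~\ref{propthinarrow} is well taken: since $\tilde{\uuu}_0$ and $\tilde{\uuu}_1$ are \emph{defined} here as Grothendieck constructions over $\ccc_0$ and $\ccc_1$ rather than via the path recipe, the direct identification you give is the cleaner route and avoids the edge case where some $S_z$ takes empty values.
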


\begin{remark}
Note that a generalized arrow category over
$$\ccc = \langle \xymatrix{ 0 \ar[r]_{\leq} & 1 \ar[r]_{\leq} & 2} \rangle.$$
is used in the proof of \cite[\S 4.6, Theorem (d)]{kellerdih}.
\end{remark}

\subsection{Covers by arrow categories}\label{parcoverarrow}
The example from \S \ref{pargenarrow} suggests how we can view some map-graded categories as being assembled from the primary arrow category construction, which can be seen as a certain way of glueing categories by means of a bimodule. Another way to glue map-graded categories is along covers of of underlying grading categories as described in \S \ref{parstackmap}. More generally, we obtained a sheaf of Hochschild complexes on $\Map_{sc}$ in \S \ref{parsheafhoch} and in particular Mayer-Vietoris sequences in \S \ref{parMV}. In this section we explain how these different  tools can be combined.

Let $(\ccc, \leq)$ be (the category associated to) a finite poset. For elements $a, b \in \ccc$, we denote $a \sqsubset b$ if $a \leq b$ and if $a \leq c \leq b$ for $c \in \ccc$ implies $a = c$ of $c = b$. Let $t_0, \dots, t_n$ be the maximal elements of $\ccc$ and $s_0, \dots, s_m$ the minimal elements. For every composition chain 
$$s_i = c_0 \sqsubset c_1 \sqsubset \dots c_k \sqsubset c_{k+1} \dots \sqsubset c_p = t_j.$$
we obtain a generated subcategory which is a chain category in the sense of \S \ref{pargenarrow}. 
Every chain of elements
$$a_0 \leq a_1 \leq a_2 \leq \dots \leq a_l$$
can be refined and fitted into a composition chain
$$s_i = c_0 \sqsubset \dots \sqsubset c_n = a_0  \sqsubset  \dots  \sqsubset c_{m} = a_l \sqsubset c_{m+1} \dots \sqsubset c_p = t_j$$
containing all the elements $a_k$. Thus, the collection of subcategories $\ddd \subseteq \ccc$ generated by composition chains of $\ccc$ constitutes an $\infty$-cover of $\ccc$.
Clearly, the intersection of two such categories is itself a chain category, of strictly smaller length.

Let $(\Phi_i: \ddd_i \subseteq \ccc)_{i \in I}$ be the collection of chain categories generated by composition chains of $\ccc$. Put $\ddd_{ij} = \ddd_i \cap \ddd_j$ and $\Phi_{ij}: \ddd_{ij} \lra \ccc$, $\Phi_{ij}^i: \ddd_{ij} \lra \ddd_i$.

Now consider a pseudofunctor $$(\www, \CCC): \ccc \lra \underline{\Map}.$$
Put $(\www_{i}, \CCC_{i}) = (\www^{\Phi_i}, \CCC^{\Phi_i})$ and $(\www_{ij}, \CCC_{ij}) = (\www^{\Phi_{ij}}, \CCC^{\Phi_{ij}})$. We obtain a pullback diagram of injective cartesian map-graded functors 
\begin{equation}\label{MVsquare}
\xymatrix{ {(\tilde{\www}_{i}, \tilde{\CCC}_{i})} \ar[r]^{\tilde{\Phi}_{i}} & {(\tilde{\www}, \tilde{\CCC})} \\ {(\tilde{\www}_{ij}, \tilde{\CCC}_{ij})} \ar[u]^{\tilde{\Phi}^{i}_{ij}} \ar[r]_{\tilde{\Phi}^j_{ij}} & {(\tilde{\www}_{j}, \tilde{\CCC}_{j}).} \ar[u]_{\tilde{\Phi}_j} }
\end{equation}
By Proposition \ref{proppseudocover}, the $\tilde{\Phi}_i$ constitute an $\infty$-cover in $\Map$ and thus we can use Theorem \ref{mainsheaf} to relate the different Hochschild complexes. 
By the sheaf property, we obtain an exact sequence
$$0 \lra \CC_{\tilde{\www}}(\tilde{\CCC}) \lra \oplus_{i \in I} \CC_{\tilde{\www}_i}(\tilde{\CCC}_i) \lra \oplus_{\{i,j\}} \CC_{\tilde{\www}_{ij}}(\tilde{\CCC}_{ij}).$$
Alternatively, we can proceed inductively by first isolating one composition chain category $\ddd_0$ and defining $\ddd_1$ to be the subcategory generated by all the other composition chains. Both $\ddd_1$ and $\ddd_{01} = \ddd_0 \cap \ddd_1$ are covered by strictly fewer composition chain categories than $\ddd$, and the occuring chains do not increase in length. Proceeding in a similar fashion as before, we then obtain a Mayer-Vietoris sequence as in \S \ref{parMV}:
$$0 \lra \CC_{\tilde{\www}}(\tilde{\CCC}) \lra \CC_{\tilde{\www}_0}(\tilde{\CCC}_0) \oplus \CC_{\tilde{\www}_1}(\tilde{\CCC}_1) \lra \CC_{\tilde{\www}_{01}}(\tilde{\CCC}_{01}) \lra 0.$$

\begin{remark}
The approach discussed in this section can be extended from a poset to a \emph{delta}, i.e a category $\ccc$ in which the arrows go only one way: for $C, C' \in \ccc$, we have $\ccc(C, C') = \varnothing$ or $\ccc(C', C) = \varnothing$.
\end{remark}

\begin{example}\label{expreinj}
Put $\ccc = \{ s, t_0, t_1 \}$ with $s \leq t_0$ and $s \leq t_1$ and consider $(\www, \CCC): \ccc \lra \underline{\Map}$ with
$$\begin{aligned}
\www(s) = \www_s &&& \www(t_0) = \www_0 &&& \www(t_1) = \www_1 &&& \www(s \leq t_0) = S_0 &&& \www(s \leq t_1) = S_1 \\
\CCC(s) = \CCC_s &&& \CCC(t_0) = \CCC_0 &&& \CCC(t_1) = \CCC_1 &&& \CCC(s \leq t_0) = M_0 &&& \CCC(s \leq t_1) = M_1
\end{aligned}$$
The category $\ccc$ is covered by the two chain categories
$$\ddd_0 = \langle s \leq t_0 \rangle \hspace{1,5cm}\ddd_1 = \langle s \leq t_1 \rangle$$
and we have $\ddd_{01} = \ddd_0 \cap \ddd_1 = \langle s \rangle$. We denote the inclusions by $\Phi_i$, $\Phi_{ij}$, $\Phi_{ij}^i$ as before. The diagram \eqref{MVsquare} is given by
\begin{equation}
\xymatrix{ {(\www_s \rightarrow_{S_0} \www_0, \CCC_s \rightarrow_{M_0} \CCC_0)} \ar[r]^-{\tilde{\Phi}_{0}} & {(\tilde{\www}, \tilde{\CCC})} \\ {(\tilde{\www}_{s}, \tilde{\CCC}_{s})} \ar[u]^{\tilde{\Phi}^{0}_{01}} \ar[r]_-{\tilde{\Phi}^1_{01}} & {(\www_s \rightarrow_{S_1} \www_1, \CCC_s \rightarrow_{M_1} \CCC_1)} \ar[u]_{\tilde{\Phi}_1} }
\end{equation}
and we obtain a Mayer-Vietoris sequence
$$0 \lra \CC_{\tilde{\www}}(\tilde{\CCC}) \lra \oplus_{i = 0}^1\CC_{\www_s \rightarrow_{S_i} \www_i}(\CCC_s \rightarrow_{M_i} \CCC_i) \lra \CC_{\www_s}(\CCC_s) \lra 0.$$
Note that this example also fits into the setup from Proposition \ref{propcstar}, with $C_0 = t_0$, $C_1 = t_1$ and $C_0 \times C_1 = s$.
\end{example}

\subsection{Comparison of pseudofunctors}\label{parcomp}

Let $\ccc$ be a delta (see Definition \ref{defdelta}). Now let us start from a pseudofunctor
$$(\uuu, \AAA): \ccc \lra \Map_{sc}.$$
For every $c: C' \lra C$ we have an associated subcartesian graded functor $$(\varphi_c, F_c): (\uuu_{C'}, \AAA_{C'}) \lra (\uuu_C, \AAA_C)$$ with associated $\uuu_C$-$\uuu_{C'}$-bifunctor $S_c = S_{\varphi_c}$ and
$\AAA_C$-$S_{\varphi_c}$-$\AAA_{C'}$-bimodule $M_c = M_{\varphi_c}$ from Example \ref{exbimod}.
Let $\ccc^{\ast}$ be the category $\ccc$ with terminal object $\ast$ and morphisms $\ast_C: C \lra \ast$ attached as described in \S \ref{parbase} and let $(\uuu^{\star}, \AAA^{\star}): \ccc^{\ast} \lra \Map_{sc}$ be any pseudofunctor such that the restriction to $\ccc$ equals $(\uuu, \AAA)$.

\begin{example}\label{exstar}
We can consider $(\uuu^{\star}, \AAA^{\star}): \ccc^{\ast} \lra \Map_{sc}$ with $(\uuu^{\star}_C, : C \longmapsto (\uuu_C, \AAA_C)$
with $(\uuu^{\star}_{\ast}, \AAA^{\star}_{\ast}) = (\tilde{\uuu}, \tilde{\AAA})$ and the graded functors $(\uuu_C, \AAA_C) \lra (\tilde{\uuu}, \tilde{\AAA})$ induced by the natural functors $e \lra \ccc^{\ast}: \ast \lra C$.
\end{example}
As described in \S \ref{parbase}, we also obtain an associated pseudofunctor
$$(\uuu^{\ast}, \AAA^{\ast}): \ccc^{\ast} \lra \Map_{sc}$$
with subcartesian graded functors
$$(\tilde{\varphi}_c, \tilde{F}_c): (\tilde{\uuu}|_{C'}, \tilde{\AAA}|_{C'}) \lra (\tilde{\uuu}|_{C}, \tilde{\AAA}|_{C})$$
and
$$(\tilde{\varphi}_{\ast_C}, \tilde{F}_{\ast_C}): (\tilde{\uuu}|_{C}, \tilde{\AAA}|_{C}) \lra (\tilde{\uuu}, \tilde{\AAA}).$$
In this section we compare the Hochschild complexes associated to $(\uuu^{\star}, \AAA^{\star})$ and $(\uuu^{\ast}, \AAA^{\ast})$.
For every $C \in \ccc$, the functor $e \lra \ccc/C: \ast \longmapsto (1_C: C \rightarrow C)$ gives rise to a natural graded functor
$$(\varphi_C, F_C): (\uuu_C, \AAA_C) \lra (\tilde{\uuu}|_C, \tilde{\AAA}|_C).$$
Note that $(\tilde{\varphi}_c, \tilde{F}_c)(\varphi_{C'}, F_{C'}) \neq (\varphi_C, F_C)(\varphi_c, F_c)$.
Let $(\theta, H): (\tilde{\uuu}, \tilde{\AAA}) \lra (\uuu^{\star}_{\ast}, \AAA^{\star}_{\ast})$ be the natural subcartesian functor with
$$(\theta, H)( \tilde{\varphi}_{\ast_C}, \tilde{F}_{\ast_C})(\varphi_C, F_C)  = (\varphi^{\star}_{\ast_C},  F^{\star}_{\ast_C}).$$
By Example \ref{exdelta}, the Grothendieck construction of $(\uuu^{\star}, \AAA^{\star})$ can be described as an arrow category, yielding a commutative diagram
\begin{equation} \label{eqpar}
\xymatrix{ {\CC_{\tilde{\uuu}^{\star}}(\tilde{\AAA}^{\star})} \ar[r]^{\cong} \ar[d]_{\varphi_{\tilde{\uuu}}^{\ast}} & {\CC_{\uuu^{\star}_{\ast}}(\AAA^{\star}_{\ast})} \ar[ld]^{H^{\ast}} \ar[d]^{(F^{\star}_{\ast_C})^{\ast}} \\
{\CC_{\tilde{\uuu}}(\tilde{\AAA})} \ar[d]_{(\tilde{F}_{\ast_C})^{\ast}} & {\CC_{\uuu_C}(\AAA_C)} \\ {\CC_{\tilde{\uuu}|_C}(\tilde{\AAA}|_C)} \ar[ru]_{F^{\ast}_C} }
\end{equation}

We will prove the following:

\begin{theorem}\label{thmmaincomp}
\begin{enumerate}
\item For every $c: C' \lra C$ in $\ccc$, the natural diagram
\begin{equation} \label{eqcompat}
\xymatrix{ {\CC_{\tilde{\uuu}|_C}(\tilde{\AAA}|_C)} \ar[d]_{\tilde{F}_c^{\ast}} \ar[r]^{F_C^{\ast}} & {\CC_{\uuu_C}(\AAA_C)} \ar[d]^{F_c^{\ast}} \\
{\CC_{\tilde{\uuu}|_{C'}}(\tilde{\AAA}|_{C'})}  \ar[r]_{F_{C'}^{\ast}} & {\CC_{\uuu_{C'}}(\AAA_{C'})}}
\end{equation}
is commutative in the homotopy category of $B_{\infty}$-algebras, and the horizontal arrows are quasi-isomorphism.
\item The horizontal arrows naturally give rise to a morphism between pseudofunctors $\ccc \lra B_{\infty}$:
$$\CC_{\uuu^{\ast}|_{\ccc}}(\AAA^{\ast}|_{\ccc}) \lra \CC_{\uuu}(\AAA).$$
\item If the canonical morphism
$$\omega: 1_{\AAA^{\star}_{\ast}} \lra \RHom_{\tilde{\AAA}^{\op}}(M_H, M_H)$$ induces a quasi-isomorphism $\CC_{\uuu^{\star}_{\ast}}(\AAA^{\star}_{\ast}, \omega)$, then $H^{\ast}$ is a quasi-isomorphism and we obtain a morphism of pseudofunctors $\ccc^{\ast} \lra \mathsf{ho}(B_{\infty})$:
$$\CC_{\uuu^{\ast}}(\AAA^{\ast}) \lra \CC_{\uuu^{\star}}(\AAA^{\star})$$
in which the top horizontal arrow is given by $(H^{\ast})^{-1}$. This holds in particular is we choose $(\uuu^{\star}, \AAA^{\star})$ as in Example \ref{exstar}.
\end{enumerate}
\end{theorem}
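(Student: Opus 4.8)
The strategy is to reduce every assertion to Keller's arrow category argument, available here as Proposition \ref{propcompat2} and Theorem \ref{thmarrowqis}, applied not to the functors $(\varphi_C,F_C)$ directly but to suitable arrow category presentations of the Grothendieck constructions $(\tilde{\uuu}|_C,\tilde{\AAA}|_C)$. Fix $C\in\ccc$. Since $\ccc$ is a delta, the slice $\ccc/C$ is a delta whose terminal object is $(1_C\colon C\to C)$; write $\ddd_C\subseteq\ccc/C$ for the full subcategory of non-terminal objects. By Example \ref{exdelta} together with Proposition \ref{proparrowc}, the category $(\tilde{\uuu}|_C,\tilde{\AAA}|_C)$ is an arrow category whose $\uuu$-part is the fibre over the terminal object, namely $(\uuu_C,\AAA_C)$, whose $\vvv$-part is the Grothendieck construction of the restriction of $(\uuu,\AAA)$ along $\ddd_C\to\ccc$, and whose bimodule is the representable type bimodule $M_{F'}$ of Example \ref{exbimod}(1) for the subcartesian graded functor $F'$ from the $\vvv$-part to $(\uuu_C,\AAA_C)$ induced by the unique morphisms $(D\to C)\to(C\xrightarrow{1_C}C)$ in $\ccc/C$ (which the pseudofunctor sends to $(\varphi_d,F_d)$, reidentified via its structure isomorphisms). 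Under this identification $(\varphi_C,F_C)$ becomes the inclusion of the $\uuu$-part, so Proposition \ref{propcompat2}(1) applies and shows that $(\varphi_C,F_C)^{\ast}=F_C^{\ast}$ is a quasi-isomorphism (when $\ddd_C=\varnothing$ the functor $(\varphi_C,F_C)$ is an isomorphism). In particular the horizontal arrows of \eqref{eqcompat} are quasi-isomorphisms.

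For the commutativity of \eqref{eqcompat} in the homotopy category, note that for $c\colon C'\to C$ the composite $\tilde{F}_c\circ(\varphi_{C'},F_{C'})$ is the inclusion into $(\tilde{\uuu}|_C,\tilde{\AAA}|_C)$ of the fibre over $(C'\xrightarrow{c}C)\in\ccc/C$, since $\tilde{F}_c$ is cartesian. The two objects $(C'\xrightarrow{c}C)$ and $(C\xrightarrow{1_C}C)$ span a full subcategory of $\ccc/C$ isomorphic to $\langle 0\le 1\rangle$, and by base change (\S\ref{parbase}) the Grothendieck construction over it --- which by \S\ref{pargenarrow} is the arrow category $(\AAA_{C'}\rightarrow_{M_{F_c}}\AAA_C)$ --- maps via a cartesian functor $\iota$ into $(\tilde{\uuu}|_C,\tilde{\AAA}|_C)$. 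Writing $j_{\vvv},j_{\uuu}$ for the inclusions of the $\vvv$- and $\uuu$-parts of this little arrow category, one checks $\tilde{F}_c(\varphi_{C'},F_{C'})=\iota\circ j_{\vvv}$ and $(\varphi_C,F_C)(\varphi_c,F_c)=\iota\circ j_{\uuu}\circ(\varphi_c,F_c)$. Applying $\CC$ and invoking Proposition \ref{propcompat2}(1),(2) for $(\varphi_c,F_c)$ --- that $j_{\uuu}^{\ast}$ is a quasi-isomorphism and $F_c^{\ast}=j_{\vvv}^{\ast}(j_{\uuu}^{\ast})^{-1}$ in $\mathsf{ho}(B_{\infty})$ --- we obtain $F_{C'}^{\ast}\tilde{F}_c^{\ast}=j_{\vvv}^{\ast}\iota^{\ast}=F_c^{\ast}j_{\uuu}^{\ast}\iota^{\ast}=F_c^{\ast}F_C^{\ast}$, which is (1).

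For (2) it remains to see that the squares \eqref{eqcompat} are compatible with composition in $\ccc$; for $C''\xrightarrow{c'}C'\xrightarrow{c}C$ this is the same computation performed inside the Grothendieck construction over the full subcategory of $\ccc/C$ on $(C''\to C),(C'\to C),(C\to C)$, which by \S\ref{pargenarrow} is a length two iterated arrow category, and it follows by iterating Proposition \ref{propcompat2} exactly as in \cite[\S 4.6]{kellerdih}; the routine verification is left to the reader. For (3), Example \ref{exdelta} applied to $\ccc^{\ast}=(\ccc\rightarrow\{\ast\})$ together with Proposition \ref{proparrowc} presents $(\tilde{\uuu}^{\star},\tilde{\AAA}^{\star})$ as an arrow category with $\vvv$-part $(\tilde{\uuu},\tilde{\AAA})$, $\uuu$-part $(\uuu^{\star}_{\ast},\AAA^{\star}_{\ast})$ and bimodule $M_H$; the top triangle of \eqref{eqpar} identifies $H^{\ast}$, up to the isomorphism $\CC_{\tilde{\uuu}^{\star}}(\tilde{\AAA}^{\star})\cong\CC_{\uuu^{\star}_{\ast}}(\AAA^{\star}_{\ast})$, with the restriction $\varphi_{\tilde{\uuu}}^{\ast}$ along the inclusion of the $\vvv$-part, so the hypothesis that $\CC_{\uuu^{\star}_{\ast}}(\AAA^{\star}_{\ast},\omega)$ is a quasi-isomorphism together with Theorem \ref{thmarrowqis}(2) gives that $H^{\ast}$ is a quasi-isomorphism. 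The components $F_C^{\ast}$ of part (2) together with $(H^{\ast})^{-1}$ over the terminal object then assemble into a morphism of pseudofunctors $\ccc^{\ast}\to\mathsf{ho}(B_{\infty})$, the compatibility with the transition maps $\ast_C\colon C\to\ast$ being precisely the commutativity of \eqref{eqpar}, verified by the same arrow category diagram chase applied to $(\varphi^{\star}_{\ast_C},F^{\star}_{\ast_C})$. Finally, for $(\uuu^{\star},\AAA^{\star})$ as in Example \ref{exstar} one has $H=\mathrm{id}$ and $M_H=1_{\tilde{\AAA}}$, so $\omega$ is an isomorphism and the hypothesis is automatic.

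The only real work lies in the first two steps: setting up the arrow category presentations of the Grothendieck constructions and recognising that the transition bimodules occurring there are of the representable type $M_F$, which is exactly what makes Proposition \ref{propcompat2}(1) applicable (this is the map-graded incarnation of the fully faithful case of Keller's argument). Once that is in place, the remaining content is a diagram chase in $\mathsf{ho}(B_{\infty})$ and a pseudofunctor coherence check modelled on \cite{kellerdih}.
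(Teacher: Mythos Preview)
Your proof is correct and follows essentially the same approach as the paper: present each Grothendieck construction $(\tilde{\uuu}|_C,\tilde{\AAA}|_C)$ as an arrow category via Example~\ref{exdelta} and Proposition~\ref{proparrowc}, then invoke Proposition~\ref{propcompat2} (and Theorem~\ref{thmarrowqis} for part~(3)). The one organizational difference worth noting is in the commutativity of~\eqref{eqcompat}: the paper factors $(\tilde{\varphi}_c,\tilde{F}_c)$ through the full complement $(\tilde{\vvv}_C,\tilde{\BBB}_C)$ built over $\ddd_C$ and introduces an auxiliary functor $(\psi,G)$ fitting into a triangle~\eqref{eqdiagaux}, whereas you instead pull back along the inclusion of the two-object chain $\{(C'\!\to\! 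C),(1_C)\}\hookrightarrow\ccc/C$ and run Proposition~\ref{propcompat2}(2) inside the resulting small arrow category $(\AAA_{C'}\rightarrow_{M_{F_c}}\AAA_C)$. Your route is a slight streamlining --- it avoids naming the global $(\psi,G)$ --- but it is the same mechanism, and both arguments reduce to the identity $F_c^{\ast}=\varphi_{\vvv}^{\ast}(\varphi_{\uuu}^{\ast})^{-1}$ from Proposition~\ref{propcompat2}.
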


As a consequence of the Theorem, suppose the conditions of Proposition \ref{propcstar} are fulfilled and $\CC_{\uuu^{\ast}}(\AAA^{\ast}): \ccc^{\ast} \lra B_{\infty}$ is a sheaf of Hochschild complexes. Then exact sequences of Hochschild complexes following from the sheaf property for $\CC^{\bullet}_{\uuu^{\ast}}(\AAA^{\ast})$ naturally translate to exact triangles in terms of  $\CC^{\bullet}_{\uuu^{\star}}(\AAA^{\star})$ in which we were originally interested. In particular, we naturally obtain Mayer-Vietoris exact triangles and their induced long exact cohomology sequences. Before proving the theorem, we start with a key example.

\begin{example}\label{excomp}
The first example is in the setup for the original arrow category, as described in the beginning of \S \ref{pargenarrow}. So $\ccc = \{ 0, 1\}$ with $0 \leq 1$. We consider $(\www, \ccc): \ccc \lra \Map_{sc}$ determined by $(\www_0, \CCC_0) = (\vvv, \BBB)$, $(\www_1, \ccc_1) = (\uuu, \AAA)$ and a single subcartesian graded functor
$$(\varphi, F): (\vvv, \BBB) \lra (\uuu, \AAA).$$
Using the bifunctor $S_{\varphi}$ and the bimodule $M_F$, we obtain the arrow category
$$(\tilde{\www}, \tilde{\CCC}) = (\vvv \rightarrow_{\varphi} \uuu, \BBB \rightarrow_F \AAA).$$
Obviously, $\ccc/1 \cong \ccc$, $\ccc/0 \cong e$,  and the diagram \eqref{eqcompat} reduces to
$$\xymatrix{ {\CC_{\vvv \rightarrow_{\varphi} \uuu}(\BBB \rightarrow_F \AAA)} \ar[r]^-{\varphi^{\ast}_{\uuu}} \ar[d]_{\varphi^{\ast}_{\vvv}} & {\CC_{\uuu}(\AAA)} \ar[d]^{F^{\ast}} \\
{\CC_{\vvv}(\BBB)} \ar[r]_1 & {\CC_{\vvv}(\BBB).}}$$
The fact that $\varphi^{\ast}_{\uuu}$ is a quasi-isomorphism and the diagram commutes in the homotopy category of $B_{\infty}$-algebras is the content of Proposition \ref{propcompat2}.
\end{example}

\begin{proof}
We will concentrate on the proof of (1). The proof of (3) then follows from diagram \eqref{eqpar} in which $\varphi_{\tilde{\uuu}}^{\ast}$ becomes a quasi-isomorphism by Theorem \ref{thmarrowqis}.

Let $C \in \ccc$ be fixed.
The category $\ccc/C$ is a delta with terminal object $1_C: C \lra C$. Let $e_C \subseteq \ccc/C$ be the category with single object $1_C$ and single morphism $1_C$. Let $\ddd_C \subseteq \ccc/C$ be the full subcategory consisting of all objects except $1_C$. For every $c: C' \lra C$, there is a unique morphism $c: c \lra 1_C$ in $\ccc/C$. According to Example \ref{exdelta}, we obtain a thin ideal
$\zzz = \{ c: c \lra 1_C \,\, |\,\, 1_C \neq c: C' \lra C \}$ and we have
$$\ccc/C \cong \ddd_C \rightarrow_Z e_C.$$
with $Z(c, 1_C) = \{c\}$.
The restriction of $(\uuu|_C, \AAA|_C)$ along $e_C \subseteq \ccc/C$ corresponds to the constant category $(\uuu_C, \AAA_C)$. Denote the restriction of $(\uuu|_C, \AAA|_C)$ along $\ddd_C \subseteq \ccc/C$ by $(\vvv_C, \BBB_C)$. 
Now consider $1_C \neq c: C' \lra C$. Since $\ccc$ is a delta,  there is a natural factorization $\ccc/C' \lra \ddd_C \lra \ccc/C$ from which we obtain a natural factorization of $(\tilde{\varphi}_c, \tilde{F}_c)$:
$$\xymatrix{ {(\tilde{\uuu}|_{C'}, \tilde{\AAA}|_{C'})}  \ar[r]_{(\tilde{\varphi}^0_c, \tilde{F}^0_c)} & {(\tilde{\vvv}_C, \tilde{\BBB}_C)} \ar[r]_{(\varphi_{\vvv}, F_{\BBB})} & {(\tilde{\uuu}|_{C}, \tilde{\AAA}|_{C}).}}$$
We further obtain a natural subcartesian graded functor $(\psi, G)$ fitting into commutative diagrams
\begin{equation} \label{eqdiagaux}
\xymatrix{ {(\tilde{\vvv}_C, \tilde{\BBB}_C)} \ar[r]^{(\psi, G)} & {(\uuu_C, \AAA_C)} \\ {(\uuu_{C'}, \AAA_{C'})} \ar[u]^{(\tilde{\varphi}_c^0, \tilde{F}_c^0)(\varphi_{C'}, F_{C'})} \ar[ru]_{(\varphi_c, F_c)} & }
\end{equation}
for $1_C \neq c: C' \lra C$. By Proposition \ref{proparrowc}, we have
$$\tilde{\uuu}|_C \cong (\tilde{\vvv}_C \rightarrow_{\psi} \uuu_C) \hspace{1,5cm} \tilde{\AAA}|_C \cong (\tilde{\BBB}_C \rightarrow_G \AAA_C)$$
where we use the bimodules $S_{\psi}$ and $M_{G}$. 
By Proposition \ref{propcompat2} (see also Example \ref{excomp}), we have a commutative diagram
$$\xymatrix{ {\CC_{\tilde{\uuu}|_C}(\tilde{\AAA}|_C)} \ar[r]^{F^{\ast}_C} \ar[d]_{F^{\ast}_{\BBB}} & {\CC_{\uuu_C}(\AAA_C)} \ar[dl]^{G^{\ast}} \\ {\CC_{\tilde{\vvv}_C}(\tilde{\BBB}_C)} & }$$
in which $F^{\ast}_C$ is a quasi-isomorphism. Composing this diagram with 
$$\xymatrix{ {\CC_{\tilde{\vvv}_C}(\tilde{\BBB}_C)} \ar[r]_-{(\tilde{F}^0_c)^{\ast}} & {\CC_{\tilde{\uuu}|_{C'}}(\tilde{\AAA}|_{C'})} \ar[r]_{F_{C'}^{\ast}} & {\CC_{\uuu_{C'}}(\AAA_{C'})}}$$
and making use of \eqref{eqdiagaux}, we obtain the commutative diagram \eqref{eqcompat} as desired.
\end{proof}

\begin{example}\label{exinj}
As an application, we look at the setup from \cite[\S 7.6]{lowenvandenberghhoch}. We consider the poset $\tilde{\Delta} = \{ I \,\, |\,\, I \subseteq \{ 1, 2, \dots, n\}$ ordered by reversed inclusion and the subposet $\Delta \subseteq \tilde{\Delta}$ of all $J \neq \varnothing$. We consider a stack $\tilde{\sss}$ of Grothendieck categories on $\tilde{\Delta}$ with exact restriction functors and fully faithful right adjoints, and denote $\sss$ for its restriction to $\Delta$. We further assume that the conditions (C1) and (C2) listed in \cite[\S 7.6]{lowenvandenberghhoch} are satisfied. Let $\Mod(\sss)$ denote the category of presheaf objects in $\sss$. The examples to have in mind are the stack of categories of sheaves of modules on a cover of a ringed space, and the stack of quasi-coherent sheaf categories on a finite affine cover of a separated scheme.

From $\tilde{\sss}$ we obtain a pseudofunctor of categories of injectives
$$\EEE: \tilde{\Delta} \longmapsto \Cat(k): I \longmapsto \EEE_I = \Inj(\tilde{\sss}(I)).$$
For $J \subseteq I$, we have a corresponding fully faithful functor
$$F_{IJ}: \EEE_I \lra \EEE_J.$$
Since these functors are fully faithful, there is a corresponding $(e, \EEE): \tilde{\Delta} \lra \Map_{sc}$ and the results from this section apply. 
The collection of objects $\{1\}$, $\{2\}$, $\dots$, $\{n\}$ in $\Delta$ satisfies the conditions in Proposition \ref{propcstar}, where further $\Delta^{\ast} \cong \tilde{\Delta}$ and the product of subsets $I$ and $J$ is given by $I \cap J$. We denote the pseudofunctor
$$\EEE^{\ast} = (\EEE|_{\Delta})^{\ast}: \Delta^{\ast} \lra \Cat(k): I \longmapsto \tilde{\EEE}|_I$$
where $\tilde{\EEE}|_I$ is the Grothendieck construction of the restriction of $\EEE$ to $\Delta/I = \{ J \in \Delta \,\, |\,\, I \subseteq J\}$ and $\tilde{\EEE}|_{\ast} = \tilde{\EEE}|_{\Delta}$ is the Grothendieck construction of $\EEE|_{\Delta}$. 
Thus, we conclude that $\CC(\EEE^{\ast})$ satisfies the sheaf property with respect to the collection of maps $\{i\} \lra \ast$ for $i \in \{ 1, \dots, n\}$. 
The first part of the proof of \cite[Theorem 7.7.1]{lowenvandenberghhoch} amounts to the verification of the condition in Theorem \ref{thmmaincomp} (3). 
We thus obtain from Theorem \ref{thmmaincomp} a morphism of pseudofunctors $\Delta^{\ast} \lra \mathsf{ho}(B_{\infty})$:
$$\CC(\EEE^{\ast}) \lra \CC(\EEE)$$
in which all component maps are quasi-isomorphisms.  The Mayer-Vietoris exact triangle for ringed spaces proved in \cite[Theorem 7.9.1]{lowenvandenberghhoch} becomes an immediate corollary of our theorem, and the proof given in \cite{lowenvandenberghhoch} is in fact a special case of the proof of Theorem \ref{thmmaincomp}.
\end{example}

\begin{remark}
The sheaves of Hochschild complexes we obtain in this paper naturally give rise to hypercohomology spectral sequences. However, if we start for instance from a ringed space $(X, \ooo_X)$, the site on which an associated sheaf of Hochschild complexes of Grothendieck constructions of categories of injectives lives is fundamentally different from the standard site associated to $X$. In Example \ref{exinj} this difference is ``brigded'' by a bimodule between the categories associated to the different suprema associated to a cover of a ringed space (on the one hand, the ringed space and on the other hand, the downset of the cover). A global approach along these lines (possibly combined with techniques from \cite{lowensheafhoch}) should lead to new Hochschild cohomology spectral sequences.

The construction of a Hochschild cohomology local-to-global spectral sequence for general ringed spaces based upon map-graded Hochschild cohomology and hypercoverings remains work in progress \cite{lowenvandenberghlocglob}.
\end{remark}

\def\cprime{$'$} \def\cprime{$'$}
\providecommand{\bysame}{\leavevmode\hbox to3em{\hrulefill}\thinspace}
\providecommand{\MR}{\relax\ifhmode\unskip\space\fi MR }
% \MRhref is called by the amsart/book/proc definition of \MR.
\providecommand{\MRhref}[2]{%
  \href{http://www.ams.org/mathscinet-getitem?mr=#1}{#2}
}
\providecommand{\href}[2]{#2}

\end{document}